\documentclass[review,onefignum,onetabnum]{siamart171218}

\textwidth = 6.2 in
\textheight = 8.5 in
\oddsidemargin = 0.0 in
\evensidemargin = 0.0 in
\topmargin = 0.0 in
\headheight = 0.0 in
\headsep = 0.3 in
\parskip = 0.05 in
\parindent = 0.3 in

\usepackage{enumerate}
\usepackage{amsmath}
\usepackage{mathrsfs}
\usepackage{xcolor}	
\usepackage{amssymb}
\usepackage{bm}
\usepackage{graphicx}
\usepackage{subcaption}
\usepackage{adjustbox}
\usepackage{soul}
\usepackage{listings}
\usepackage{array}  
\usepackage{mathtools}

\usepackage{mdframed}
\newmdenv[linecolor=green!50!black, fontcolor=green!50!black, backgroundcolor=green!20, linewidth=2pt, roundcorner=10pt]{gnote}

\definecolor{amber}{rgb}{1.0, 0.49, 0.0}
\definecolor{darkpastelgreen}{rgb}{0.01, 0.75, 0.24}
\definecolor{darkred}{rgb}{0.64, 0.0, 0.0}
\definecolor{amberhl}{rgb}{1.0, 0.75, 0.0}


\newcommand{\red}[1]{\textcolor{black}{#1}}
\newcommand{\black}[1]{\textcolor{black}{#1}}

\newmdenv[linecolor=blue!50!black, fontcolor=blue!50!black, backgroundcolor=blue!20, linewidth=2pt, roundcorner=10pt]{anote}

\newtheorem{prop}[theorem]{Proposition}

                     
\newcolumntype{L}{>{$}l<{$}} 

\newcommand{\ba}{{\bm a}}
\newcommand{\bb}{{\bm b}}
\newcommand{\bc}{{\bm c}}
\newcommand{\bd}{{\bm d}}

\newcommand{\bw}{{\bm w}}
\newcommand{\bx}{{\bm x}}
\newcommand{\by}{{\bm y}}
\newcommand{\bz}{{\bm z}}

\newcommand{\bfz}{{\bm 0}}

\newcommand{\cB}{\mathcal{B}}

\newcommand{\cI}{\mathcal{I}}

\newcommand{\cL}{L}

\newcommand{\cN}{\mathcal{N}}

\newcommand{\cP}{\mathcal{P}}

\newcommand{\cR}{\mathcal{R}}
\newcommand{\cS}{\mathcal{S}}

\newcommand{\secref}[1]{Section~\ref{#1}}

\headers{Error Analysis of ZFP Compression for Floating-Point Data}{J. Diffenderfer, A. Fox, J. Hittinger, G. Sanders, P. Lindstrom}

\title{Error Analysis of ZFP Compression for
  Floating-Point Data\thanks{Submitted to the editors 25 January
    2018.\funding{This work was performed under the auspices 
    of the U.S. Department of Energy by Lawrence Livermore National
    Laboratory under Contract DE-AC52-07NA27344 and was supported by the
    LLNL-LDRD Program under Project No. 17-SI-004, LLNL-JRNL-744818-DRAFT.}}}
\author{James Diffenderfer\thanks{The University of Florida,
    Gainesville, FL (\email{jdiffen1@ufl.edu})}
  \and Alyson Fox\thanks{Lawrence Livermore National Laboratory,
    Livermore, CA (\email{fox33@llnl.gov})}
\and Jeffrey Hittinger\thanks{Lawrence Livermore National Laboratory,
    Livermore, CA (\email{hittinger1@llnl.gov})}
\and Geoffrey Sanders\thanks{Lawrence Livermore National Laboratory,
    Livermore, CA (\email{sanders29@llnl.gov})} 
\and Peter Lindstrom\thanks{Lawrence Livermore National Laboratory,
    Livermore, CA (\email{lindstrom2@llnl.gov})}}

\ifpdf
\hypersetup{
  pdftitle={Error Analysis of ZFP Compression for Floating-Point Data},
  pdfauthor={J. Diffenderfer, A. Fox, J. Hittinger, G. Sanders, P. Lindstrom}
}
\fi

\ifpdf
\hypersetup{
	pdftitle={Error Analysis of ZFP Compression for Floating-Point Data},
	pdfauthor={J. Diffenderfer, A. Fox, J. Hittinger, G. Sanders, P. Lindstrom}
}
\fi
\RequirePackage[normalem]{ulem} 
\RequirePackage{color}\definecolor{RED}{rgb}{1,0,0}\definecolor{BLUE}{rgb}{0,0,1} 
\providecommand{\DIFadd}[1]{{\protect\color{blue}\uwave{#1}}} 
\providecommand{\DIFaddbegin}{} 
\providecommand{\DIFaddend}{} 
\providecommand{\DIFdelbegin}{} 
\providecommand{\DIFdelend}{} 
\providecommand{\DIFaddFL}[1]{\DIFadd{#1}} 

\begin{document}

\maketitle

\begin{abstract}
Compression of floating-point data will play an important role in
high-performance computing as data bandwidth and storage become dominant
costs.  Lossy compression of floating-point data is powerful, but
theoretical results are needed to bound its errors when used to store
look-up tables, simulation results, or even the solution state during
the computation.  \black{In this paper, we analyze the round-off error
introduced by ZFP, a 
lossy compression algorithm.} The stopping criteria for ZFP depends on the compression mode specified
by the user; either fixed rate, fixed accuracy, or fixed precision
\cite{zfp-doc}. While most of our discussion is focused on the fixed precision mode
of ZFP, we establish a bound on the error introduced by all three compression 
modes. In order to tightly
capture the error, we first introduce a vector space that allows us to
work with binary 
representations of components. Under this vector space, we define
operators that implement each step of the ZFP compression and
decompression to establish a bound on the error caused by
ZFP.
To conclude, numerical tests are provided
to demonstrate the accuracy of the established bounds.  
\end{abstract}

\begin{keywords}
  Lossy compression, floating-point representation, error bounds
\end{keywords}

\begin{AMS}
  65G30, 65G50, 68P30
\end{AMS}

\section {Introduction}
\label{sec:introduction}
For several reasons, the trade-offs to obtain high performance in computing
have shifted.  Traditionally, the emphasis on algorithmic complexity in numerical
computation has focused on operation counts, which was justifiable when
processor clock rates were increasing and memory was cheap and
plentiful.  With the end of Dennard
scaling~\cite{752522}, clock speeds have 
frozen (or even reduced), so more capability, in terms of
FLOPs, is now being obtained by adding more processing units~\cite{BorkarChien2011}.
Simultaneously, the ubiquity of hand-held devices and the power requirements of
extreme-scale supercomputers is encouraging a shift to lower-power
processors and co-processors.

Unfortunately, advances in memory and memory bandwidth are not
increasing apace with the advances in processors.  Thus, the memory per core
and the bandwidth per core are decreasing as the number of processing
units increases~\cite{Xcutting2010,XDMAV2011}.  The on-node cost of data motion (cache and
main memory accesses), both in time and
power, is increasingly the limiting factor in many
calculations~\cite{Xcutting2010,Williams:2009:RIV:1498765.1498785}.
Since off-node and I/O data motion have 
historically been orders of magnitude slower than on-node data motion,
the movement of data anywhere on a computer system must now be seriously
considered as the leading-order cost. 

An obvious approach to address this challenge would be to consider data
compression techniques.  Indeed, lossless data compression is routinely used in
network communications. However, for floating-point data typical of the
scientific calculations done on high performance computers, standard
lossless compression techniques such as Lempel--Ziv~\cite{1055714,1055934},
DEFLATE~\cite{deflate,katz91}, Lempel–-Ziv–-Welch~\cite{1659158}, fpzip~\cite{fpzip} and other variants,
which reproduce the original data with 
no degradation, struggle to produce significant compression
rates~\cite{Ratanaworabhan:2006:FLC:1126009.1126035,fpzip}. 
Lossy compression algorithms for floating-point data, e.g.,
SZ~\cite{sz} and ZFP~\cite{zfp}, allow an inexact approximation of
the original data to be reconstructed from the compressed data.  Lossy 
data compression typically produces a much higher rate of data reduction
than lossless compression at the cost of introducing additional
approximation error into the data.

Lossy floating-point compression may be a useful tool in reducing data
motion costs, particularly if there is a schema that allows for
progressive (cf. global) decompression of data on demand.  Certainly,
for storage (e.g., tabular data) and I/O operations (data and restart
files), there may be much to gain by using lossy compression provided
that the data retain sufficient accuracy for the intended purposes.  We
propose that, in addition, solution state data in a simulation could be
stored in a compressed state and be decompressed, operated on, and
recompressed in a lossy way inline during each time step or iteration of a numerical
algorithm.  Numerical simulation is fundamentally about approximation,
and the solution state already contains truncation, iteration, and other
roundoff errors. However, the repeated application of compression and
decompression does generate an additional error, and it must be shown
that these lossy compression errors can be bounded to prove that such a
process is stable.

As a first step towards this goal, we consider the ZFP lossy compression
algorithm and develop an approach to analyze and bound the error resulting from lossy
compression and decompression. \black{While recent works have provided empirical studies of ZFP  and other lossy compression algorithms on real-world 
	data sets \mbox{\cite{gmd-9-4381-2016, Laney:2013:AED:2503210.2503283, Lindstrom2017}}, 
	this paper establishes the first closed form expression for bounds on the error introduced by ZFP.
It is expected that our} approach can be generalized to other algorithms involving the manipulation of components represented using bits.  ZFP, which operates on blocks of $4^d$ values, can encode and truncate data using one of
three modes:  fixed rate, fixed accuracy, or fixed precision. The
fixed rate mode compresses a block to a fixed number of bits, the
fixed precision compresses to a variable number of bits while retaining
a fixed number of bit planes, and fixed accuracy mode compresses a block
with relation to the tolerated maximum error.  The goal of this paper is
to provide an error analysis for the fixed precision mode, as it
is the simplest to represent algebraically. However, as a result of the analysis of
the fixed precision mode we are able to develop bounds on the error
introduced by the fixed accuracy and fixed rate compression modes.

The remainder of this paper is structured as follows.  In the next
section, we describe the ZFP compression algorithm.
In~\secref{sec:notation}, we introduce the notation, definitions, and
lemmas that we will use in~\secref{sec:analysis} to prove bounds on
the error introduced at each stage of the ZFP compression algorithm.
In~\secref{sec:bounds}, we derive the error bounds for  the fixed precision mode for the composite
compression and decompression action, as well as an error bound for both fixed accuracy and fixed rate modes. Finally, we demonstrate
the validity of these results numerically in~\secref{sec:results}.

\section{ZFP Data Compression}
\label{sec:zfp}
We first provide a brief overview of the current ZFP compression algorithm. Further details of the implementation of ZFP can be found in \cite{zfp} with modifications described in the software documentation \cite{zfp-doc}. \black{For clarity purposes, a small example of ZFP is provided in Appendix \ref{sec:appendixa}. }
 \begin{itemize}
\item[\it Step 1:]The $d$-dimensional array is partitioned into arrays of dimension $4^d$, called \emph{blocks}. A 2-d example is depicted in Figure \ref{fig:cut4dblock}. If the $d$-dimensional array cannot be partitioned exactly into blocks, then the boundary of the $d$-dimensional array is padded until an exact partition is possible.
\begin{figure}
    \centering
        \includegraphics[width=.45\textwidth]{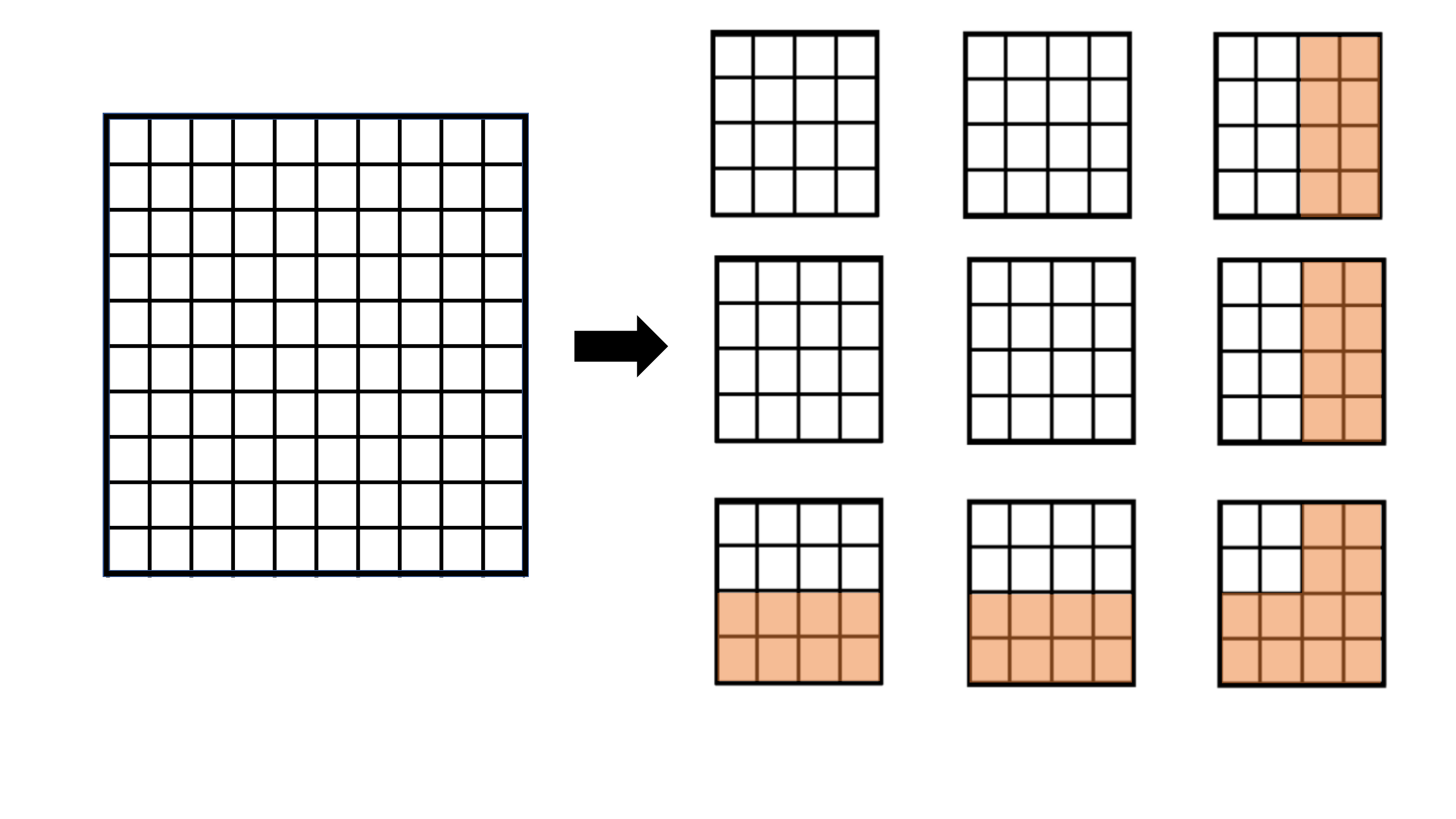}
    \caption{Deconstruction of a $10 \times 10$ 2-dimensional array into independent $4\times4$ blocks. If the data is not divisible by 4 the data at the boundaries is padded (shown in orange).}
    \label{fig:cut4dblock}
\end{figure}
\begin{figure}
    \centering
        \includegraphics[width=.8\textwidth]{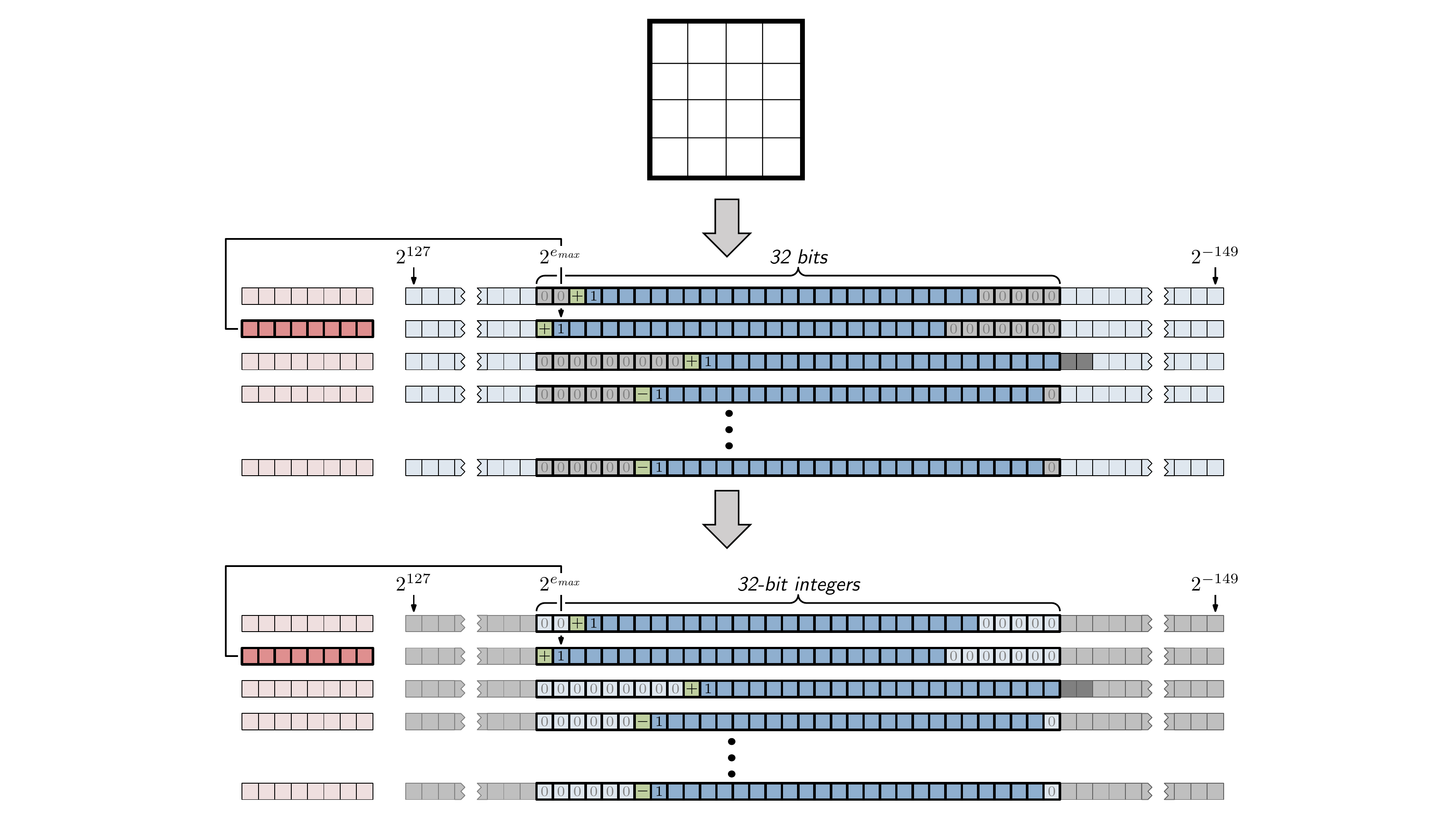}
    \caption{Floating-point bit representation in single precision converted to a block-floating-point representation and its corresponding signed integers. Note that, depending on the relative disparity of the 16 numbers, some truncation may occur for the numbers of the smallest magnitude. }
   \label{fig:bfprepresentation}
\end{figure}
\item[\it Step 2:] The floating-point values in each block are converted
  to a block-floating-point representation using a common exponent for
  each block \cite{MitrablockRoundingError} and integers in two's complement
  format. The block-floating point representation is then shifted and
  rounded to $4^d$ signed integers as seen in Figure
  \ref{fig:bfprepresentation}. 
\item[\it Step 3:]The integers are decorrelated using a custom, high-speed, near orthogonal transform that is similar to the discrete cosine transform. \black{The idea is that continuous fields tend to exhibit autocorrelation which can be viewed as redundant information.  The decorrelating transform removes these redundancies via a change in basis resulting in a ``sparser" representation with smaller magnitude coefficients.  The many leading zeros in the small coefficients offer an opportunity for compression (see Section \ref{Step3Sec} for details.)}
\item[\it Step 4:] \black{Coefficient magnitude tends to correlate (inversely) with sequency. A 2-d example of total sequency can be seen in Figure \ref{fig:totalsequency}. Sequency ordering is done to place the coefficients roughly in order of decreasing magnitude, which tends to group ones together and zeros together in each bit plane.  This facilitates compression as often small coefficients tend to share leading zeros.} 
\begin{figure}
    \centering
        \includegraphics[width=.5\textwidth]{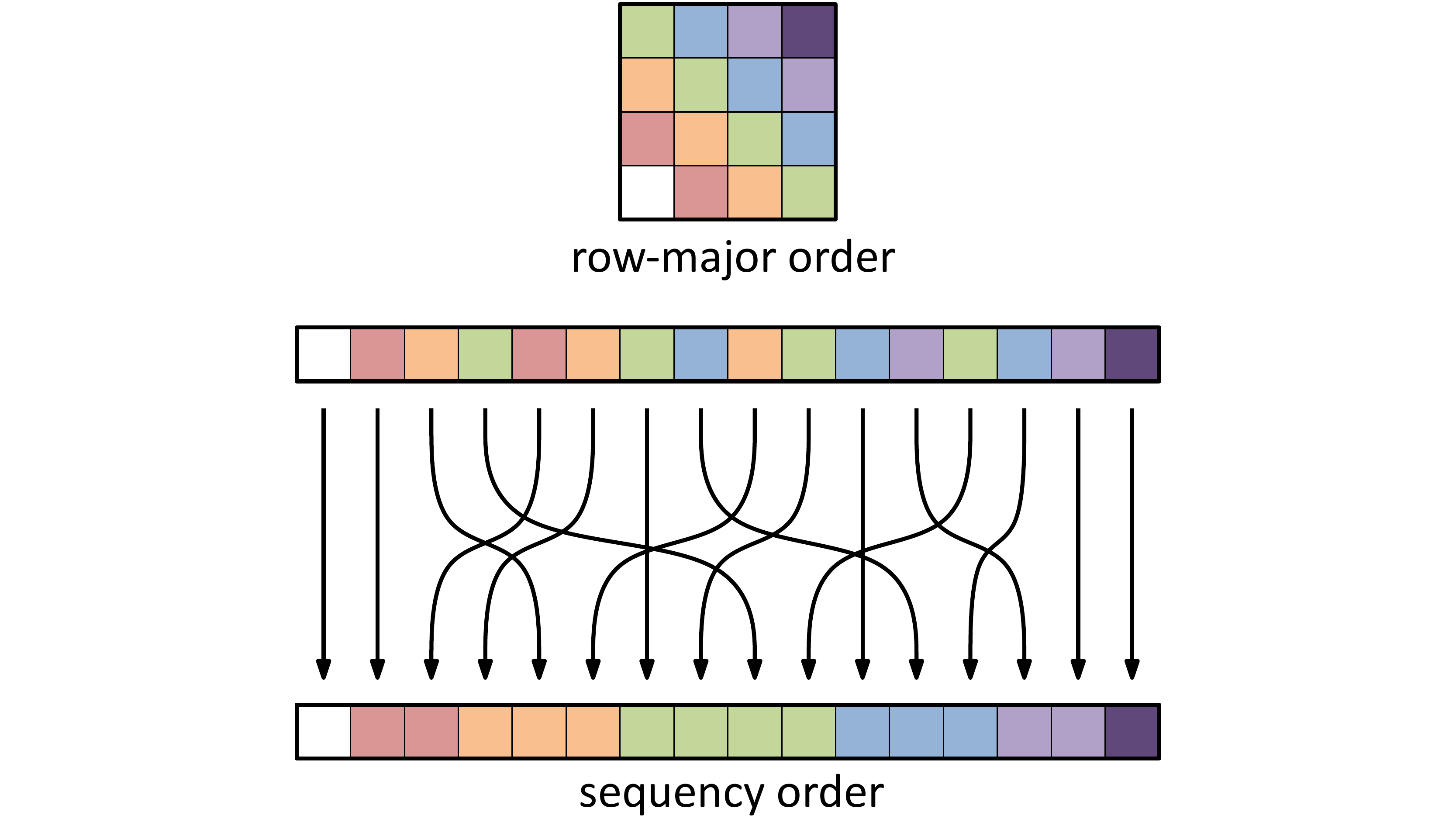}
    \caption{Total sequency ordering for a 2-dimensional array, which groups the diagonal elements together. }
    \label{fig:totalsequency}
\end{figure}
\item[\it Step 5:] \black{The sign bit is typically the left most bit in any traditional binary representation, which does not provide any useful information until the leading one-bit is encountered, \black{i.e., the transition from 0 to 1 (for positive values) or from 1 to 0 (for negative values)}. However, the first nonzero bit encountered in a negabinary representation immediately informs of the sign and magnitude. For example, if the leftmost one-bit in negabinary is at position $e$, then the magnitude of the number is in $2^e\cdot[1/3, 4/3]$.  If $e$ is even, then the number is positive; otherwise it is negative. Thus, the two's complement signed integers (the standard integer representation) are converted to their negabinary representation \cite{Knuth}. The negabinary representation also ensures that the error caused by the remaining steps is mostly centered around zero with a slight bias depending on the index of truncated bit-plane. } 
\item[\it Step 6:] The bits that represent the list of $4^d$ integers
  are transposed so that they are ordered by bit plane, from most to
  least significant bit, instead of by coefficient.  
\item[\it Step 7:] Each bit plane is compressed losslessly using
  embedded coding, which exploits the property that the transform
  coefficients tend to have many leading zeros. \black{The idea is to encode groups of zero-bits together using a single bit to indicate that the whole group consists of zeros.} As this step is
  lossless, the encoding details are omitted.  
\item[\it Step 8:]The embedded coder emits one bit at a time until
  stopping \black{criterion} are satisfied. The exact stopping criteria is dependent on the mode of ZFP compression: either fixed rate, fixed precision, or
  fixed accuracy.   
\end{itemize}

Most of our discussion and error analysis in Section~\ref{sec:analysis}
will focus on Steps 2, 3, and 8 as these steps are likely to introduce
additional round-off error during compression and, in some cases, during
decompression. Our approach for bounding the round off error introduced
by ZFP compression and decomporession is to define an operator for each
step of the algorithm, compose these operators to form a ZFP compression
and a ZFP decompression operator, compute the value returned after
compressing and decompressing an arbitrary input, and finally compare
this value to the value of the original input. The next section will
focus on introducing notation and definitions that will be useful in
defining mathematical operators for each step of the ZFP compression
algorithm.   
\renewcommand\arraystretch{1.2}
\begin{table}[htp!]
\caption{Notation Table} 
\begin{center}
\begin{adjustbox}{width=1\textwidth}
\begin{tabular}{|c l c|}
\hline 
Symbol & Description & Location \\
\hline 
\hline

$d$ & dimension of the input data & \S \ref{Step1Sec} \\
$k$ & the number of IEEE mantissa bits, including the leading one-bit& \S \ref{Step2Sec}\\
$q$ & the number of consecutive bits used to represent an element in the block-floating point transform &  \S \ref{Step2Sec} \\
$\beta$ & number of bit planes kept in Step 8 &  \S  \ref{Step8Sec}  \\
\hline
$\cI$ & active bit set & \S \ref{sec:notation}  \\
$\cB^n$ & infinite binary vector space & \S \ref{sec:bitvectorSpace} \\
$\cB^n_k$ & subset of $\cB^n$ with finite active bit set & \S \ref{sec:bitvectorSpace} \\
$\cN^n$ & infinite negabinary vector space & \S  \ref{sec:bitvectorSpace}\\
$\cN_k^n$ & subset of $\cN^n$ with finite active bit set & \S \ref{sec:bitvectorSpace}\\
$\bf{0}_\cB$,$\bf{0}_\cN$  & additive identity in $\cB^n$ and $\cN^n$, respectively & \S \ref{sec:bitvectorSpace}\\ 
$\bf{1}_\cB $, $\bf{1}_\cN$  & multiplicative identity in $\cB^n$ and $\cN^n$, respectively & \S \ref{sec:bitvectorSpace}\\
$\| \cdot \|_{\cB, p}$, $\| \cdot \|_{\cN, p}$  & $p$-norm with respect to $\cB^n$ and $\cN^n$, respectively & \S \ref{NormsSubsection} \\

\hline
$f_\cB$, $f_\cB^{-1}$, $f_\cN$, $f_\cN^{-1}$ & bijective maps from $\cB \rightarrow \mathbb{R}$, $\mathbb{R} \rightarrow \cB$, $\cN \rightarrow \mathbb{R}$ and $\mathbb{R} \rightarrow \cN$, respectively & Eqn. (\ref{eqn:f2})\\
\black{$F_\cB$,$F_\cB^{-1}$, $F_{\cN}$,$F_\cN^{-1}$}  & \black{bijective maps from $\cB^n \rightarrow \mathbb{R}^n$, $\mathbb{R}^n \rightarrow \cB^n$, $\cN^n \rightarrow \mathbb{R}^n$ and $\mathbb{R}^n \rightarrow \cN^n$, respectively} & \S \ref{sec:bitvectorSpace}\\
$\oplus $ &bit vector addition  & Lma. \ref{FieldLemma}\\
$\odot$ &bit vector multiplication & Lma. \ref{FieldLemma}\\
\hline

$s_l$, $S_l$ & shift operator on $\cB$ and $\cB^n$, respectively & \S \ref{ShiftSubsection} \\
$t_{\cS_k},T_{\cS_k} $ & truncation operator with respect to the set $\cS_k$ & \ref{DefCOperators} \\
{$r $} & rounding operator for two's complement representation & \S \ref{Step3Sec} \\
$e_{min}$, $e_{max}$ & min and max exponent of the floating-point representation of the block &  Def.  \ref{def:emaxemin}\\
$\epsilon_m$ & constant $\epsilon_m := 2^{1-m}$, with $m \in \mathbb{N}$ &  \S \ref{sec:bitvectorSpace} \\
\hline
$L$, $L_d$ & one and $d$-dimension forward decorrelating linear transform &  Eqn.   \ref{eqn:T} \\
$\tilde{L}$, $\tilde{L}_d$ & floating-point arithmetic approximation of $L$ and $L_d$ &  \S \ref{Step3Sec}   \\
$L_d^{-1} $,  $\tilde{L}^{-1} _d$ &  $d$-dimension backward decorrelating linear transform and the floating-point arithmetic approximation of $L^{-1}$&  \S \ref{Step3Sec} \\
\hline
$C_k$, $\tilde{C_k}$ & lossless/lossy operator for Step $k$ of ZFP compression & \S \ref{sec:analysis} \\
$D_k$, $\tilde{D_k}$ & lossless/lossy operator for Step $k$ of ZFP decompression & \S \ref{sec:analysis} \\
\hline
\end{tabular}
\end{adjustbox}
\end{center}

\label{table:notation}
\end{table}


\section{Preliminary Notations, Definitions, and Lemmas}
\label{sec:notation}
\label{IntroSec:seqspace} 
\black{As noted in Section \ref{sec:zfp},} many of the steps of ZFP are defined by direct
manipulation of the bits used to represent each component of the
input. While we could attempt to define operators that imitate the steps
of ZFP over the real vector space, it would be more straightforward to
work with the bitwise representation that is manipulated at each step of
the ZFP compression algorithm. Hence, in order to define operators for
each step of ZFP as actions on the bitwise representation of each
component, we construct vector spaces under which the components
correspond to binary or negabinary representations of the real
numbers. Accordingly, let $\mathbb{B} = \{ 0, 1 \}$ and define  
\begin{align}
\mathcal{C} := \left\{ \{ c_i \}_{i = -\infty}^{\infty} : c_i \in \mathbb{B} \ \text{for all} \ i \in \mathbb{Z} \right\} \label{eqn:infinitebitvector}.
\end{align}
For \black{$c \in \mathcal{C}$,} we define the \emph{active bit set of
  $c$} by $\mathcal{I} (c) := \{ i \in \mathbb{Z} : c_i = 1 \}.$
Additionally, we define the following operators on $\mathcal{C}$ that
will be used as building blocks for defining each step of ZFP. 
\begin{definition}\label{DefCOperators}
Let $\mathcal{S} \subseteq \mathbb{Z}$. The \emph{truncation operator},
$t_{\mathcal{S}} : \mathcal{C} \to \mathcal{C}$,  is defined by  
\begin{align*}
t_{\mathcal{S}} (c)_i = \left\{ \begin{array}{ccc} 
                c_i &: & i \in \mathcal{S} \\
                0 &: & i \not\in \mathcal{S} \\
                \end{array} \right., \ \ \ \text{for all} \ c \in \mathcal{C} \ \text{and all} \ i \in \mathbb{Z}.
\end{align*}
Let $\ell \in \mathbb{Z}$. The \emph{shift operator}, $s_{\ell} :
\mathcal{C} \to \mathcal{C}$, is defined by 
\begin{align*}
s_{\ell} (c)_i = c_{i + \ell}, \ \ \ \text{for all} \ c \in \mathcal{C} \ \text{and all} \ i \in \mathbb{Z}.
\end{align*}
\end{definition}
\noindent From these definitions, it follows that $t_{\mathcal{S}}$ is a
nonlinear operator and \black{$s_{\ell}$ is a linear operator.} \black{These operators
can be extended to operators on $\mathcal{C}^n$ 
by defining
$T_{\mathcal{S}} : \mathcal{C}^n \to \mathcal{C}^n$ and $S_{\ell} : \mathcal{C}^n \to \mathcal{C}^n$ by 
\begin{align*}
T_{\mathcal{S}} (\bm{c}) = \begin{bmatrix}
\ t_{\mathcal{S}} ({\bc}_1) \ \\
\ \vdots \ \\
\ t_{\mathcal{S}} ({\bc}_n) \ \
\end{bmatrix} \ \ \ \ \ \ \text{and} \ \ \ \ \ \ S_{\ell} (\bc) = \begin{bmatrix}
\ s_{\ell} ({\bc}_1) \ \\
\ \vdots \ \\
\ s_{\ell} ({\bc}_n) \ \
\end{bmatrix}\text{, for all } \bc\in \mathcal{C}^n.
 \end{align*}}
\noindent\black{For clarity, note that the components of $\bm{c}$ are each binary sequences as $\bc_i \in \mathcal{C}$, for $1 \leq i \leq n$.} Additionally, note that $S_{\ell}$ is invertible with $S_{\ell}^{-1} : \mathcal{C}^n \to \mathcal{C}^n$ given by $S_{\ell}^{-1} = S_{-\ell}$. 

\subsection{Defining Signed Binary and Negabinary Bit-Vector Spaces}
\label{sec:bitvectorSpace}
Let $x \in \mathbb{R}$  be given. Then there exist $c, d \in \mathcal{C}$ and $p \in \mathbb{B}$ such that $x$ can be represented in signed binary and negabinary as 
\begin{align}
{\text{Signed Binary: }} x = (-1)^p \sum_{i = - \infty}^{\infty} c_i 2^i \ \ \ \text{ and}  \ \ \ {\text{ Negabinary: }} x = \sum_{i = - \infty}^{\infty} d_i (-2)^i. \label{CrepX}
\end{align}
As such, there exist subsets $\mathcal{A}$ and $\mathcal{N}$ of
$\mathcal{C}$ such that, for each $x \in \mathbb{R}$, there exist unique
elements $c \in \mathcal{A}$, $p \in \mathbb{B}$, and $d \in
\mathcal{N}$ such that $x$ can be represented in the binary and
negabinary form in (\ref{CrepX}) using $c$, $p$, and $d$,
respectively. In particular, we choose $\mathcal{A}$ and $\mathcal{N}$
such that $\mathcal{I} (c)$ and $\mathcal{I} (d)$ are finite whenever
possible. \black{This choice is made so that elements can be represented using finitely many nonzero bits.} Now define
$0_{\mathcal{C}}, 1_{\mathcal{C}} \in \mathcal{C}$ to be the elements
satisfying $\mathcal{I} \left( 0_{\mathcal{C}} \right) = \emptyset$ and
$\mathcal{I} \left( 1_{\mathcal{C}} \right) = \{ 0 \}$. It follows from
our choice of $\mathcal{A}$ and $\mathcal{N}$ that $\left\{
0_{\mathcal{C}}, 1_{\mathcal{C}} \right\} \subset \mathcal{A} \cap
\mathcal{N}$.  

Defining $\mathcal{B} := \{ (p, a) \in  \mathbb{B} \times \mathcal{A} :
(p, a) \neq ( 1, 0_{\mathcal{C}} ) \}$, we have that, for each $x \in
\mathbb{R}$, there exists a unique $b = (p, a) \in \mathcal{B}$ such that
$x = (-1)^p \sum_{i = - \infty}^{\infty} a_i 2^i$. Additionally, it is
clear from our choice of $\mathcal{N}$ that, for each $x \in \mathbb{R}$,
there exists a unique $d \in \mathcal{N}$ such that $x = \sum_{i = -
  \infty}^{\infty} d_i (-2)^i$. We now define $f_\cB : \mathcal{B} \to
\mathbb{R}$ by 
\begin{equation}
\label{eqn:f2}
f_\cB (b) = (-1)^{{p}} \sum_{i = -\infty}^{\infty} a_i 2^i, \ \ \ \ \ \text{for all} \ b = ({p}, a) \in \mathcal{B},
\end{equation}
and $f_\cN : \mathcal{N} \to \mathbb{R}$ by 
\begin{equation}
\label{eqn:fN}
f_\cN (d) = \sum_{i = -\infty}^{\infty} d_i (-2)^i, \ \ \ \ \ \text{for all} \ d \in \mathcal{N}.
\end{equation}
\black{By our choice of $\mathcal{B}$ and $\mathcal{N}$, $f_\cB$ and $f_\cN$ are bijections and with inverses denoted by $f_\cB^{-1} : \mathbb{R} \to \mathcal{B}$ and $f_\cN^{-1} : \mathbb{R} \to \mathcal{N}$, respectively. We now define binary operators $\oplus_{\mathcal{B}} : \mathcal{B} \times \mathcal{B} \to \mathcal{B}$ and $\odot_{\mathcal{B}} : \mathcal{B} \times \mathcal{B} \to \mathcal{B}$ by}
\begin{align}
\alpha \oplus_{\mathcal{B}} \beta = f_\cB^{-1} \left( f_\cB( \alpha ) + f_\cB(\beta) \right) \ \ \ \ \ \ \text{and} \ \ \ \ \ \ \alpha \odot_{\mathcal{B}} \beta = f_\cB^{-1} \left( f_\cB(\alpha) \cdot f_\cB(\beta) \right) \label{oplustimesdef}
\end{align}
for all $\alpha, \beta \in \mathcal{B}$, where $+$ and $\cdot$ represent standard addition and multiplication in $\mathbb{R}$. Similarly, we can define $\oplus_{\mathcal{N}} : \mathcal{N} \times \mathcal{N} \to \mathcal{N}$ and $\odot_{\mathcal{N}} : \mathcal{N} \times \mathcal{N} \to \mathcal{N}$ by replacing all $\mathcal{B}$ with $\mathcal{N}$ in (\ref{oplustimesdef}). With these definitions in place, we now have the following result.
\begin{lemma}\label{FieldLemma}
\black{$(\mathcal{B}, \oplus_\mathcal{B}, \odot_\mathcal{B})$ and $(\mathcal{N}, \oplus_\mathcal{N}, \odot_\mathcal{N})$ are fields with additive and multiplicative identities $0_{\mathcal{B}} := (0,0_{\mathcal{C}})$ and $1_{\mathcal{B}} := (0, 1_{\mathcal{C}})$ and $0_{\mathcal{N}} := 0_{\mathcal{C}}$ and $1_{\mathcal{N}} := 1_{\mathcal{C}}$, respectively.}
\end{lemma}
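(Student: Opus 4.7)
The plan is to exploit the fact that $f_\cB : \cB \to \mathbb{R}$ and $f_\cN : \cN \to \mathbb{R}$ are bijections and that the operations $\oplus$ and $\odot$ are defined in (\ref{oplustimesdef}) precisely so that these bijections become isomorphisms of the underlying algebraic structures. Since $(\mathbb{R}, +, \cdot)$ is a field, the pulled-back structures on $\cB$ and $\cN$ inherit every field axiom automatically. In effect, the proof is a transport-of-structure argument along $f_\cB$ and $f_\cN$.

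First I would confirm that $f_\cB$ and $f_\cN$ are truly bijections. For $f_\cN$, this is immediate from the choice of $\cN$ to give each real its unique negabinary representation with finite active bit set whenever possible. For $f_\cB$, the signed-binary representation of a real is unique except for the sign bit of zero, so excluding $(1, 0_\cC)$ from $\cB$ eliminates the lone duplicate and yields a bijection onto $\mathbb{R}$. Next, for each field axiom --- associativity and commutativity of both operations, distributivity, existence of identities, additive inverses, and multiplicative inverses of nonzero elements --- I would substitute the definitions in (\ref{oplustimesdef}) and reduce to the corresponding axiom in $\mathbb{R}$ applied to the images under $f_\cB$ (respectively $f_\cN$). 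Concretely, the additive inverse of $\alpha \in \cB$ is $f_\cB^{-1}(-f_\cB(\alpha))$ and the multiplicative inverse of $\alpha \neq 0_\cB$ is $f_\cB^{-1}(f_\cB(\alpha)^{-1})$; both lie in $\cB$ because $f_\cB$ is surjective, and the latter is well-defined since $\alpha \neq 0_\cB$ forces $f_\cB(\alpha) \neq 0$ by injectivity.

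For the identity claims I would compute $f_\cB(0_\cB) = 0$ from $\cI(0_\cC) = \emptyset$ and $f_\cB(1_\cB) = 2^0 = 1$ from $\cI(1_\cC) = \{0\}$, then apply (\ref{oplustimesdef}) to conclude $0_\cB \oplus_\cB \alpha = f_\cB^{-1}(0 + f_\cB(\alpha)) = \alpha$ and $1_\cB \odot_\cB \alpha = f_\cB^{-1}(1 \cdot f_\cB(\alpha)) = \alpha$ for every $\alpha \in \cB$. The parallel computation $f_\cN(0_\cN) = 0$ and $f_\cN(1_\cN) = (-2)^0 = 1$ handles $\cN$.

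The main obstacle is really bookkeeping rather than genuine algebra: one must verify that inverses and intermediate outputs always land in $\cB$ (rather than in the excluded element $(1, 0_\cC)$) and in $\cN$. This is handled uniformly by observing that every defined operation is expressed as $f_\cB^{-1}(\cdot)$ or $f_\cN^{-1}(\cdot)$, whose images equal $\cB$ and $\cN$ by construction; in particular, the only borderline case $f_\cB^{-1}(0)$ returns $0_\cB = (0, 0_\cC)$, not the forbidden $(1, 0_\cC)$, so closure is automatic. No deeper algebraic difficulty arises beyond this sanity check.
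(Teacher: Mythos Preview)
Your proposal is correct; the transport-of-structure argument along the bijections $f_\cB$ and $f_\cN$ is exactly the intended justification. The paper itself states this lemma without proof, treating it as an immediate consequence of the definitions in (\ref{oplustimesdef}), so your explicit verification of the axioms and identities simply fills in the details the authors left to the reader.
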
 

For the remainder of the discussion, the sign bit will be omitted from elements of $\mathcal{B}$ by letting $a$ represent the element $(0,a) \in \mathcal{B}$ and $- a$ represent $(1,a) \in \mathcal{B}$. Additionally, to simplify the notation in the following sections, we will write $+$ instead of $\oplus_{\mathcal{B}}$ or $\oplus_{\mathcal{N}}$ where the operation should be clear from the context in which it is used.

Note that $f_\cB$ and $f_\cN$ can be generalized to vector-valued functions by defining $F_\cB : \mathcal{B}^n \to \mathbb{R}^n$ and $F_\cN : \mathcal{N}^n \to \mathbb{R}^n$ as \black{ $F_\cB(\bm{a}) =[\ f_\cB({\ba}_1), \cdots, f_\cB({\ba}_n)]^t$ and $F_\cN(\bm{a}) =[\ f_\cN({\bd}_1), \cdots, f_\cN({\bd}_n)]^t$, where $\ba \in \cB^n$ and $\bd \in \cN^n$, respectively.}
By definition, $F_\cB$ and $F_\cN$ are invertible with inverses $F_\cB^{-1}$ and $F_\cN^{-1}$ defined by applying $f_\cB^{-1}$ and $f_\cN^{-1}$ componentwise, respectively. We will let $\bm{0}_{\mathcal{B}}$ and $\bm{0}_{\mathcal{N}}$ denote the the additive identity in $\mathcal{B}^n$ and $\mathcal{N}^n$, respectively, and $I_{\mathcal{B}} : \mathcal{B}^n \to \mathcal{B}^n$ and $I_{\mathcal{N}} : \mathcal{N}^n \to \mathcal{N}^n$ denote the identity map on $\mathcal{B}^n$ and $\mathcal{N}^n$, respectively.

\black{To imitate floating-point representations} 
we define the following subsets of $\cB$ and $\cN$. Given $k \in \mathbb{N}$, 
\begin{align*}
\mathcal{B}_{k}  = \left\{ (p,a) \in \mathcal{B} : \mathcal{I} (a) \subseteq \{ i, i + 1, \ldots, {i + k -1} \} \ \text{for some} \ i \in \mathbb{Z} \right\}
\end{align*}
and
\begin{align*}
\mathcal{N}_{k} = \left\{ d \in \mathcal{N} : \mathcal{I} (d) \subseteq \{ i, i + 1, \ldots,  i + k -1 \} \ \text{for some} \ i \in \mathbb{Z} \right\}. 
\end{align*}
Here, $k$ represents the maximum number of nonzero bits allotted for each representation. For example, in $\cB_k$, $k$ represents the number of bits allotted for the \black{mantissa and $i$ indicates} the exponent in IEEE. It should be noted that $\mathcal{B}_{k}$ and $\mathcal{N}_k$ are subsets but \emph{not} subspaces of $\mathcal{B}$ and $\mathcal{N}$, respectively, as they do not satisfy the property of closure under $\oplus$ and $\odot$. As such, the analysis will take place in $\cB$, $\cN$, or $\mathbb{R}$ with the use of the truncation operator, $T_\cS$, to imitate working with finite precision elements. 

\subsection{Meaningful Norms on $\mathcal{B}^n$ and $\mathcal{N}^n$}\label{NormsSubsection} 
From Lemma \ref{FieldLemma}, it follows that $\mathcal{B}^n$ is a vector space under $+$. \black{Let $\| \cdot \|_p$ be the standard $p$-norm on $\mathbb{R}^n$. Accordingly, we define $\| \cdot \|_{\mathcal{B}, p} : \mathcal{B}^n \to [0, \infty)$ by 
\begin{align}
		\| \bm{a} \|_{\mathcal{B}, p} = \left\{ \begin{array}{ccc} 
		\big( \sum_{i = 1}^{n} \left| f_\cB(\bm{a}_i) \right|^{p} \big)^{1/p} &: & 1 \leq p < \infty \\
		\max_{1 \leq i \leq n} \left| f_\cB(\bm{a}_i) \right| &: & p = \infty \\
		\end{array} \right. \DIFaddend \label{BpNorm}
		\end{align}}
 The following result is an immediate consequence from the definition of $\| \cdot \|_{\mathcal{B}, p}$.
\begin{lemma}
\label{BNormLemma}
For all $\bm{a} \in \mathcal{B}^n$, $\bm{x} \in \mathbb{R}^n$, and $1 \leq p \leq \infty$, $\| \cdot \|_{\mathcal{B}, p}$ is a norm satisfying 
\begin{align*}
\| F_\cB (\bm{a}) \|_p = \| \bm{a} \|_{\mathcal{B}, p} \ \ \ \text{and} \ \ \ \| \bm{x} \|_p = \| F_\cB^{-1} (\bm{x}) \|_{\mathcal{B}, p}.
\end{align*}
\end{lemma}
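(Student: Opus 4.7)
The plan is to observe that $F_\cB : \cB^n \to \mathbb{R}^n$ is a bijection that intertwines the additive structures on $\cB^n$ and $\mathbb{R}^n$ (by the very definition of $\oplus_\cB$ in Lemma \ref{FieldLemma}), so $\| \cdot \|_{\cB,p}$ is just the pullback of the standard $p$-norm through $F_\cB$. Once this is observed, both the norm axioms and the two displayed identities reduce to routine verification.

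First I would dispatch the two identities. For any $\ba \in \cB^n$, unpacking the definition of $F_\cB$ gives $F_\cB(\ba) = (f_\cB(\ba_1), \ldots, f_\cB(\ba_n))^t \in \mathbb{R}^n$, and then
\[
\| F_\cB(\ba) \|_p = \Bigl( \sum_{i=1}^{n} | f_\cB(\ba_i) |^p \Bigr)^{1/p} = \| \ba \|_{\cB,p}
\]
by comparing with (\ref{BpNorm}); the case $p = \infty$ is analogous. The second identity follows by applying the first to $\ba := F_\cB^{-1}(\bx)$ and using bijectivity of $F_\cB$.

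Next I would verify the three norm axioms by pulling them back from $\mathbb{R}^n$. Non-negativity is immediate from the definition. For positive definiteness, $\| \ba \|_{\cB,p} = 0$ iff $\| F_\cB(\ba) \|_p = 0$ iff $F_\cB(\ba) = \mathbf{0}$ in $\mathbb{R}^n$, which by injectivity of $F_\cB$ forces $\ba = \bm{0}_\cB$. For the triangle inequality, the key fact is that $F_\cB$ is additive, i.e.\ $F_\cB(\ba + \bb) = F_\cB(\ba) + F_\cB(\bb)$, which follows componentwise from the definition $\alpha \oplus_\cB \beta = f_\cB^{-1}(f_\cB(\alpha) + f_\cB(\beta))$ in (\ref{oplustimesdef}); combining this with the triangle inequality for $\| \cdot \|_p$ on $\mathbb{R}^n$ and the first identity yields $\| \ba + \bb \|_{\cB,p} \leq \| \ba \|_{\cB,p} + \| \bb \|_{\cB,p}$. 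Homogeneity with respect to a scalar $\gamma \in \cB$ (the natural scalar field here) is handled identically, using $F_\cB(\gamma \odot_\cB \ba) = f_\cB(\gamma) \cdot F_\cB(\ba)$ to obtain $\| \gamma \odot_\cB \ba \|_{\cB,p} = |f_\cB(\gamma)| \cdot \| \ba \|_{\cB,p}$, with $|f_\cB(\cdot)|$ serving as the natural absolute value on $\cB$.

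There is no real obstacle here: the entire argument is a transport-of-structure along the bijection $F_\cB$. The only subtlety worth flagging is what "norm" means over the field $\cB$ (as opposed to over $\mathbb{R}$); the cleanest reading is to take the absolute value on $\cB$ to be $|\gamma|_\cB := |f_\cB(\gamma)|$, under which $\| \cdot \|_{\cB,p}$ is a bona fide norm on the $\cB$-vector space $\cB^n$. The analogous statement for $\cN^n$ would be proved in exactly the same way with $F_\cN$ replacing $F_\cB$.
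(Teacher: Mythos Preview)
Your proposal is correct and aligns with the paper's approach: the paper simply states that the lemma ``is an immediate consequence from the definition of $\| \cdot \|_{\mathcal{B}, p}$'' and gives no further detail, so your transport-of-structure argument via $F_\cB$ is exactly the intended (and only natural) verification, just spelled out more fully than the paper does. Your remark about interpreting homogeneity over the scalar field $\cB$ via $|\gamma|_\cB := |f_\cB(\gamma)|$ is a helpful clarification that the paper leaves implicit.
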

\noindent For any $1 \leq p \leq \infty$, we now have that $\mathcal{B}^n$ is a normed vector space with norm $\| \cdot \|_{\mathcal{B}, p}$. Additionally, we can define a norm for operators defined on $\mathcal{B}^n$.
\black{\begin{definition}
\label{BopNorm}
Let $m,n \in \mathbb{N}$ and $1 \leq p \leq \infty$. The induced p-norm on $\Phi : \mathcal{B}^n \to \mathcal{B}^m$ is given by
\begin{align*}
\| \Phi \|_{\mathcal{B}, p} = \sup \left\{ \frac{\| \Phi (\bm{a}) \|_{\mathcal{B}, p}}{\| \bm{a} \|_{\mathcal{B}, p}} : \bm{a} \in \mathcal{B}^n, \bm{a} \neq \bm{0}_{\mathcal{B}} \right\}.
\end{align*}
\end{definition}}
\noindent \black{As an immediate consequence of this definition, $\| \Phi (\bm{a}) \|_{\mathcal{B}, p} \leq \| \Phi \|_{\mathcal{B}, p} \| \bm{a} \|_{\mathcal{B}, p}$ holds for all $\bm{a} \in \mathcal{B}^n \setminus \bfz_\cB$. In a similar manner, given $1 \leq p \leq \infty$ we can define the norm $\| \cdot \|_{\mathcal{N}, p}$ over $\mathcal{N}$ satisfying a result analogous to Lemma \ref{BNormLemma} by replacing every $\mathcal{B}$ by $\mathcal{N}$ in (\ref{BpNorm}) and Definition \ref{BopNorm}.}

\subsection{Two's Complement}
\label{TwoCompSec}
From the end of Step 2 through the beginning of Step 5, the ZFP implementation stores the integer components of each block using a normalized fixed-point, two's complement binary integer representation. For $z \in \mathbb{Z}$, the two's complement representation is of the form
\begin{align*}
\text{Two's Complement:} \ \ \ \ z = - t_{N-1} 2^{N-1} + \sum_{i = 0}^{N-2} t_i 2^i,
\end{align*}
for some $N \in \mathbb{N}$ and some $t \in \mathcal{C}$. \black{Typically, the
value of $N$ is chosen beforehand to be the number of bits allotted for
storing each integer. Unfortunately, this aspect of the two's complement
representation does not lend itself to a construction of a vector
space, unlike $\mathcal{B}^n$ and $\mathcal{N}^n$  constructed in
Section~\ref{sec:bitvectorSpace}. Thus, instead of working explicitly in
two's complement, we will take care when defining operators for Steps 2
through 5 in $\cB^n$ to ensure they mimic the behavior of the
two's complement representation used in ZFP. }


\subsection{Truncation Operator on $\mathcal{B}^n$ and $\mathcal{N}^n$}\label{ProjectionSubsection}
In this section, we consider some properties of the truncation operator, $T_{\mathcal{S}}{(\cdot)} $, over the normed vector spaces  $\mathcal{B}^n$ and $\mathcal{N}^n$. The usefulness of these definitions and results will be evident during the analysis of the ZFP compression algorithm in Section \ref{sec:bounds}.

\begin{definition}
\label{def:emaxemin}
Let $\bm{x} \in \mathbb{R}^n$. The \emph{maximum exponent of $\bm{x}$
  with respect to $\mathcal{B}$} is 
\begin{align}
e_{max, \mathcal{B}}(\bm{x}) = \max_{1 \leq i \leq n} \max_{j} \left\{ j
\in \mathcal{I} \left( f_{\mathcal{B}}^{-1} \left(\bm{x}_i \right)
\right) \right\} \label{eqn:emax}, 
\end{align}
and \emph{minimum exponent of $\bm{x}$ with respect to $\mathcal{B}$} is
\begin{align}
e_{min, \mathcal{B}}(\bm{x}) = \min_{1 \leq i \leq n} \min_{j} \left\{ j \in \mathcal{I} \left( f_{\mathcal{B}}^{-1} \left(\bm{x}_i \right) \right) \right\}, \label{eqn:emin}
\end{align}
provided that the minimum exists. Similarly, we define the \emph{maximum} and \emph{minimum exponent of $\bm{x}$ with respect to $\mathcal{N}$} by replacing $\mathcal{B}$ with $\mathcal{N}$ in (\ref{eqn:emax}) and (\ref{eqn:emin}). 
\end{definition}
\black{When it is clear from context which space, $\cB$ or $\cN$, the vector $\bm{x}$ is in, we will simply write $e_{max}$ or  $e_{min}$. The next result provides a relation between elements of $\mathcal{B}$ and $\mathcal{N}$ and $e_{max}$.}
\black{\begin{lemma}
\label{ProjectionLemma1}
For $\bm{a} \in \cB^n$, $\bm{d} \in \cN^n$: $(i)$ $\| \bm{a} \|_{\mathcal{B}, \infty} \geq 2^{e_{max, \mathcal{B}}(F_{\mathcal{B}}(\bm{a}))} \ \ \ (ii)$  $\| \bm{d} \|_{\mathcal{N}, \infty} \geq \frac{1}{3} 2^{e_{max, \mathcal{N}}(F_{\mathcal{N}}(\bm{d}))}$.
\end{lemma}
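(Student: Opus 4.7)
The plan is to, for each of the two inequalities, pick the component that realizes the outer maximum in the definition of $e_{max}$, reducing the problem to a single bit-sequence, and then bound that component's magnitude from below by its leading term minus whatever the lower bits can subtract from it. Part (i) is essentially immediate because in the signed binary expansion all active bits contribute with a single global sign, while part (ii) requires handling the alternating signs of $(-2)^j$, and it is exactly there that the constant $\tfrac{1}{3}$ appears.

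For (i), I would let $i^{\ast}$ attain the outer maximum in Definition \ref{def:emaxemin} and set $e = e_{max, \mathcal{B}}(F_{\mathcal{B}}(\bm{a}))$, so that $\bm{a}_{i^{\ast}} = (p, c)$ with $e \in \mathcal{I}(c)$ and every other element of $\mathcal{I}(c)$ is at most $e$. Because the global sign $(-1)^p$ is pulled out in front of the series defining $f_{\mathcal{B}}$ in \eqref{eqn:f2}, every active bit contributes positively to the magnitude, so
\begin{equation*}
|f_{\mathcal{B}}(\bm{a}_{i^{\ast}})| \;=\; \sum_{j \in \mathcal{I}(c)} 2^j \;\geq\; 2^e.
\end{equation*}
Lemma \ref{BNormLemma} (equivalently, the definition of $\|\cdot\|_{\mathcal{B}, \infty}$) then gives $\|\bm{a}\|_{\mathcal{B}, \infty} \geq |f_{\mathcal{B}}(\bm{a}_{i^{\ast}})| \geq 2^e$, which is (i).

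For (ii), I would pick $i^{\ast}$ attaining the outer maximum in the $\mathcal{N}$-version of Definition \ref{def:emaxemin}, set $e = e_{max, \mathcal{N}}(F_{\mathcal{N}}(\bm{d}))$, and decompose
\begin{equation*}
f_{\mathcal{N}}(\bm{d}_{i^{\ast}}) \;=\; (-2)^e + R, \qquad R \;=\; \sum_{\substack{j \in \mathcal{I}(\bm{d}_{i^{\ast}}) \\ j < e}} (-2)^j.
\end{equation*}
Then I would write $R = R_{+} - R_{-}$, where $R_{+}$ collects the terms with $j<e$ of the same parity as $e$ (so $(-2)^j$ shares its sign with $(-2)^e$) and $R_{-}$ collects those of the opposite parity. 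Only $R_{-}$ can shrink the magnitude, and it is bounded by the full geometric tail over opposite-parity positions,
\begin{equation*}
0 \;\leq\; R_{-} \;\leq\; \sum_{k=0}^{\infty} 2^{e-1-2k} \;=\; \frac{2^{e-1}}{1 - 1/4} \;=\; \frac{2}{3}\,2^{e}.
\end{equation*}
Hence $|f_{\mathcal{N}}(\bm{d}_{i^{\ast}})| \geq 2^e - R_{-} \geq \tfrac{1}{3}\,2^{e}$, and the $\mathcal{N}$-analogue of Lemma \ref{BNormLemma} finishes the proof of (ii).

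The main obstacle is the negabinary estimate in (ii): a naive triangle inequality on $R$ yields only $|R| \leq 2^e$, which is useless. The key observation is that the bits of the same parity as $e$ always \emph{add} to the magnitude and can simply be discarded from the lower bound, leaving only the opposite-parity geometric tail whose sum is exactly $\tfrac{2}{3}\,2^e$. This also shows that the constant $\tfrac{1}{3}$ is optimal, being approached in the limit of infinitely many opposite-parity lower bits.
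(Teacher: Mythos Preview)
Your proof is correct and follows essentially the same approach as the paper: for (i) both arguments note that all active bits contribute with a single sign so the leading term already gives the bound, and for (ii) both isolate the leading negabinary digit $(-2)^e$ and bound the maximal cancellation coming from the opposite-parity lower bits by the geometric tail $\sum_{k\ge 0} 2^{e-1-2k}=\tfrac{2}{3}\,2^e$. Your decomposition $R=R_{+}-R_{-}$ is just a slightly more explicit version of the paper's use of the index set $\mathcal{J}(e-1)$; the only cosmetic point is that your sign conventions for $R_{+},R_{-}$ are written for the case $e$ even, so when writing it up you may want to either say ``without loss of generality $e$ is even'' or factor out $\operatorname{sign}((-2)^e)$ before splitting.
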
}
\begin{proof}
$(i)$ follows immediately from the definition of $e_{max, \mathcal{B}}(\bm{x})$. Next, for $z \in \mathbb{Z}$ even we define $\mathcal{J}(z) = \{ 2j \in
\mathbb{Z} : 2j \leq z\}$ and for $z \in \mathbb{Z}$ odd we define
$\mathcal{J}(z) = \{ 2j+1 \in \mathbb{Z}:2j+1 \leq z\}$. Letting $e = e_{max, \mathcal{N}}(\bm{x})$, $(ii)$ follows from observing 
\begin{align*}
\| \bm{d} \|_{\mathcal{N}, \infty} \geq \left| (-2)^e - \sum_{i \in
  \mathcal{J}(e-1)} (-2)^{i} \right| = 2^e - \frac{2}{3} 2^e =
\frac{1}{3} 2^{e_{max, \mathcal{N}}(F_{\mathcal{N}}(\bm{d}))}.
\end{align*}
\end{proof}

We now provide a result establishing the relationship between $\bm{a}$
and $T_{\mathcal{S}} (\bm{a})$ given $\bm{a} \in \mathcal{B}^n$ or
$\bm{a} \in \mathcal{N}^n$. First, note that for certain choices of $m
\in \mathbb{Z}$ the constant term $2^{1 - m}$ will regularly occur in
error bounds established throughout this paper. Hence, we will let
$\epsilon_m := 2^{1-m}$ for any $m \in \mathbb{Z}$. For example, machine
epsilon \cite{higham2002accuracy} is defined as $\epsilon_k = 2^{1-k}$
for precision $k$.  
\black{\begin{lemma}
\label{ProjectionLemma2}
Suppose $p, q \in \mathbb{Z}$ and let $\mathcal{S} = \{ i \in \mathbb{Z} : i > p - q \}$. If $\bm{a} \in \cB^n$ and $\bm{d} \in \cN^n$ then
\begin{enumerate}
\item[$(i)$] $\bm{a} = T_{\mathcal{S}} (\bm{a}) + \Delta \bm{a}$, for some $\Delta \bm{a} \in \mathcal{B}^n$ satisfying $\| \Delta \bm{a} \|_{\mathcal{B}, \infty} \leq \epsilon_q 2^{p}$.
 \item[$(ii)$] $\bm{d} =T_{\mathcal{S}} (\bm{d}) + \Delta \bm{d}$, for some $\Delta \bm{d} \in \mathcal{N}^n$ satisfying $\| \Delta \bm{d} \|_{\mathcal{N}, \infty} \leq \frac{2}{3} \epsilon_q 2^{p}$.
\end{enumerate}
\end{lemma}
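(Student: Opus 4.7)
My plan is to take $\Delta\bm{a}$ (respectively $\Delta\bm{d}$) to be precisely the tail that the truncation discards, and then bound the infinity norm by summing the resulting geometric series. Concretely, set $\mathcal{T} := \mathbb{Z} \setminus \mathcal{S} = \{i \in \mathbb{Z} : i \leq p-q\}$ and define $\Delta\bm{a} := T_{\mathcal{T}}(\bm{a})$ and $\Delta\bm{d} := T_{\mathcal{T}}(\bm{d})$. Since $\mathcal{S}$ and $\mathcal{T}$ partition $\mathbb{Z}$, the active bit sets of $T_{\mathcal{S}}(\bm{a}_j)$ and $\Delta\bm{a}_j$ are disjoint; applying $f_{\mathcal{B}}$ componentwise and using the fact that the series in \eqref{CrepX} splits over disjoint supports of indices yields $\bm{a} = T_{\mathcal{S}}(\bm{a}) + \Delta\bm{a}$ in $\mathcal{B}^n$. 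The analogous argument in $\mathcal{N}^n$ produces the decomposition for $\bm{d}$.

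For $(i)$, each component $\Delta\bm{a}_j \in \mathcal{B}$ has active bit set contained in $\mathcal{T}$, so summing the geometric series gives
\begin{align*}
\left|f_{\mathcal{B}}(\Delta\bm{a}_j)\right| \;\leq\; \sum_{i \leq p-q} 2^i \;=\; 2^{p-q+1} \;=\; \epsilon_q\, 2^p.
\end{align*}
Taking a maximum over $j$ and invoking Lemma \ref{BNormLemma} yields the claimed bound.

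For $(ii)$, the negabinary case is subtler because the expansion alternates in sign. I will split $f_{\mathcal{N}}(\Delta\bm{d}_j) = P - N$, where $P$ collects the contributions at even indices $i \leq p-q$ and $N$ collects the contributions at odd indices $i \leq p-q$ (both nonnegative). Then $|P - N| \leq \max(P, N)$, and each of $P$ and $N$ is bounded by a geometric series of ratio $1/4$ whose leading term is either $2^{p-q}$ or $2^{p-q-1}$, depending on the parity of the largest eligible index. In either case the dominant sum evaluates to $\tfrac{4}{3}\, 2^{p-q} = \tfrac{2}{3}\,\epsilon_q\, 2^p$, which gives the negabinary bound.

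The step most worth scrutinizing is the decomposition $T_{\mathcal{S}}(\bm{a}) \oplus_{\mathcal{B}} \Delta\bm{a} = \bm{a}$ (and its $\oplus_{\mathcal{N}}$ counterpart) as an equality in the \emph{field} operation, not merely at the bitwise level: one must rule out carries. This is immediate from the bijections $f_{\mathcal{B}}$ and $f_{\mathcal{N}}$, since the series over $i \in \mathbb{Z}$ decomposes additively into its $\mathcal{S}$- and $\mathcal{T}$-indexed pieces, whence $f_{\mathcal{B}}(T_{\mathcal{S}}(\bm{a})_j) + f_{\mathcal{B}}(\Delta\bm{a}_j) = f_{\mathcal{B}}(\bm{a}_j)$ forces the claimed identity under $\oplus_{\mathcal{B}}$. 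Once that observation is in place, the remainder is routine geometric-series estimation, with the negabinary parity bookkeeping being the only nontrivial arithmetic.
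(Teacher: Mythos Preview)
Your proposal is correct and follows essentially the same route as the paper: define $\Delta\bm{a}$ (resp.\ $\Delta\bm{d}$) as the tail discarded by truncation, then bound each component by a geometric series, handling the negabinary case by isolating the same-parity indices that dominate the alternating sum. The paper packages the parity argument via an index set $\mathcal{J}(p-q)$ of all indices $\leq p-q$ having the same parity as $p-q$, which is exactly your $\max(P,N)$ bound; your $|P-N|\leq\max(P,N)$ step and your care in justifying the $\oplus$-decomposition via $f_{\mathcal{B}}$, $f_{\mathcal{N}}$ are, if anything, slightly more explicit than the paper's write-up.
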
}
\begin{proof}
Let $1 \leq m \leq n$. Observing 
\begin{align*}
\left| f_{\mathcal{B}} \left( \Delta \bm{a}_m \right) \right| = \left| f_{\mathcal{B}} \left( \bm{a}_m - T _{\mathcal{S}} (\bm{a})_m \right) \right| = \sum_{i \in \mathcal{I} \left( \bm{a}_m \ominus T_{\mathcal{S}} (\bm{a})_m \right)} 2^i \leq \sum_{i = -\infty}^{p - q} 2^i = 2^{{p} - q + 1}
\end{align*}
concludes the proof of $(i)$. Next, for $z \in \mathbb{Z}$ even we define $\mathcal{J}(z) = \{ 2j \in
\mathbb{Z} : 2j \leq z\}$ and for $z \in \mathbb{Z}$ odd we define
$\mathcal{J}(z) = \{ 2j+1 \in \mathbb{Z}:2j+1 \leq z\}$. Then $(ii)$ follows by observing that  
\begin{align*}
\left| f_{\mathcal{N}} \left( \Delta \bm{d}_m \right) \right| = \left|
f_{\mathcal{N}} \left( \bm{d}_m - T_{\mathcal{S}} (\bm{d})_m \right)
\right| = \left| \sum_{i \in \mathcal{I} \left( \bm{a}_m \ominus
  T_{\mathcal{S}} (\bm{a})_m \right)} (-2)^i \right| \leq \left| \sum_{i
  \in \mathcal{J}(p-q)} (-2)^i \right| = \frac{2}{3} 2^{p -q + 1}. 
\end{align*}
\end{proof}

Hence, if $\mathcal{S} = \{ i \in \mathbb{Z} : i > {e_{\max}(F_{\mathcal{B}}(\bm{a}))} - q \}$ then the additional round-off error incurred by the truncation
operator is dependent on $q$ and the magnitude of the input. Using
Lemma~\ref{ProjectionLemma2}, we now observe that the component-wise
relative error introduced by the truncation operator is bounded by 
\begin{align} 
\max_{m, f_\cB(\bm{a}_m) \neq 0} f_\cB \left( \left | \frac{\bm{a}_m - T_\cS(\bm{a}_m)}{\bm{a}_m} \right | \right) & \leq \frac{ \left \|\Delta \ba\right \|_{\cB,\infty} }{\min_{m, f_\cB(\bm{a}_m) \neq 0} f_\cB(| \bm{a}_m |)} = \epsilon_q \black{2^{e_{max}(F_{\mathcal{B}}(\bm{a}))-e_{min}(F_{\mathcal{B}}(\bm{a}))}},   \label{eqn:componentwiseerror}
\end{align}
for $\ba \in \cB^n $ with $F_\cB(\ba) \neq \bfz$. So the component-wise
error relative to the input is dependent on $q$ and the exponent range, i.e.,
$\rho =
e_{max}(F_{\mathcal{B}}(\bm{a}))-e_{min}(F_{\mathcal{B}}(\bm{a}))$. In
\cite{zfp}, it was noted that in many real-world examples, the exponent
range was reasonable ($\rho \leq 8$). Thus, depending on $\rho$, $q$
could be chosen to ensure the component-wise error relative to the input
remains smaller than machine epsilon. 

\subsection{Shift Operator on $\mathcal{B}^n$}\label{ShiftSubsection}
\black{We now wish to determine what can be said about the norm of the shift operator defined on $\mathcal{B}^n$.} For $a \in \mathcal{B}$ and $\ell \in \mathbb{Z}$, observe that
\begin{align*}
\left| f_\cB(s_{\ell} (a)) \right| = \left| \sum_{i \in \mathcal{I}(a)} 2^{i - \ell} \right| = 2^{-\ell} \left| \sum_{i \in \mathcal{I}(a)} 2^{i} \right| = 2^{-\ell} \left| f_\cB(a) \right|.
\end{align*}
\black{This observation, together with the definition of $\| \cdot \|_{\mathcal{B}, p}$, yields the following result.}

\begin{lemma}
\label{ShiftLemma}
Suppose $\ell \in \mathbb{Z}$ and $1\leq p \leq \infty$. Then $\| S_{\ell} \|_{\mathcal{B}, p} = 2^{-\ell}$ and $\| S_{\ell}^{-1} \|_{\mathcal{B}, p} = 2^{\ell}$.
\end{lemma}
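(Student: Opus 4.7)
The plan is to combine the scalar scaling identity already derived in the paragraph above, namely $|f_\cB(s_\ell(a))| = 2^{-\ell}|f_\cB(a)|$ for $a \in \mathcal{B}$, with the componentwise definition of $S_\ell$ on $\mathcal{B}^n$ and the explicit formula for $\|\cdot\|_{\mathcal{B},p}$ given in (\ref{BpNorm}). The key observation is that $S_\ell$ acts as a uniform rescaling by the factor $2^{-\ell}$ on every component of its input, so the induced $p$-norm should reduce to a single scalar factor outside the sum.

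First I would fix $\ba \in \mathcal{B}^n \setminus \{\bfz_\cB\}$ and compute $\|S_\ell(\ba)\|_{\mathcal{B},p}$ directly. For $1 \leq p < \infty$, using the definition of $S_\ell$ componentwise and the scalar identity yields
\begin{align*}
\|S_\ell(\ba)\|_{\mathcal{B},p}^p = \sum_{i=1}^n |f_\cB(s_\ell(\ba_i))|^p = \sum_{i=1}^n \bigl(2^{-\ell}\bigr)^p |f_\cB(\ba_i)|^p = 2^{-\ell p}\,\|\ba\|_{\mathcal{B},p}^p,
\end{align*}
so $\|S_\ell(\ba)\|_{\mathcal{B},p} = 2^{-\ell}\,\|\ba\|_{\mathcal{B},p}$. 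For $p = \infty$, the same identity applied inside the maximum gives $\|S_\ell(\ba)\|_{\mathcal{B},\infty} = 2^{-\ell}\max_i|f_\cB(\ba_i)| = 2^{-\ell}\|\ba\|_{\mathcal{B},\infty}$.

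Next, I would plug this equality into Definition \ref{BopNorm}. Because the ratio $\|S_\ell(\ba)\|_{\mathcal{B},p}/\|\ba\|_{\mathcal{B},p}$ is identically $2^{-\ell}$ for every nonzero $\ba$, the supremum is attained (in fact constant), giving $\|S_\ell\|_{\mathcal{B},p} = 2^{-\ell}$. Finally, since $S_\ell$ was already noted to be invertible with $S_\ell^{-1} = S_{-\ell}$, replacing $\ell$ by $-\ell$ in the argument above immediately yields $\|S_\ell^{-1}\|_{\mathcal{B},p} = 2^{\ell}$.

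There is no real obstacle here; the work is essentially bookkeeping. The only mild subtlety is making sure the scalar identity extends cleanly to the vector case, which it does because $S_\ell$ is defined componentwise and the $p$-norm depends on the components only through $|f_\cB(\ba_i)|$. This means the factor $2^{-\ell}$ pulls uniformly out of the sum (or maximum), producing an equality rather than merely an inequality, which is why the norm is determined exactly rather than only bounded.
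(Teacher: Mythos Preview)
Your proposal is correct and follows essentially the same approach as the paper: use the scalar identity $|f_\cB(s_\ell(a))|=2^{-\ell}|f_\cB(a)|$ componentwise inside the $p$-norm formula, observe the ratio is constant so the supremum in Definition~\ref{BopNorm} equals $2^{-\ell}$, and then invoke $S_\ell^{-1}=S_{-\ell}$ for the second claim. The paper in fact omits the formal proof, simply noting that the result follows from that scalar observation together with the definition of $\|\cdot\|_{\mathcal{B},p}$; your write-up supplies exactly the bookkeeping the paper leaves implicit.
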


\black{To summarize, we have constructed the normed vector spaces $\cB^n$
and $\cN^n$ and bijective maps between $\cB^n$ and $\mathbb{R}^n$ and
$\cN^n$  and $\mathbb{R}^n$. We can represent floating-point or fixed-point 
representations by applying the truncation operator
$T_\cS{(\cdot)} $ on elements of $\cB^n$ or $\cN^n$, and
multiplication by powers of two in $\mathbb{R}^n$ is equivalent to
applying shift operator, $S_{\ell}$, to elements in $\cB^n$. 
We now have the tools to define operators for each step of the ZFP compression
algorithm, as described is Section~\ref{sec:zfp}. }

\section{Error Analysis of Individual Steps of the ZFP Compression Algorithm}
\label{sec:analysis}
The goals of this section are to define operators for each step of the ZFP
compression algorithm and to determine the error resulting from each
step. For each step of ZFP, we define an operator that carries out
the implemented version of ZFP compression and decompression as well as
a lossless version. The lossless version of each operator will be useful
in determining the error introduced at each step of the
algorithm. Decompression for steps corresponding to invertible
compression steps are merely the inverse operators of the compression
step. Since ZFP compression is lossy in nature, some of the steps
implemented in the compression phase are not invertible. For such steps,
the corresponding decompression step is defined by an injective map
that restores only the information that has not been lost to the correct
format for the next step of decompression. For the sake of brevity, any
step of the algorithm that does not affect the error analysis will not
be considered in much depth.  

\subsection{Partition $d$-dimensional array into blocks of $4^d$ values}
\label{Step1Sec}
In Step 1, the $d$-dimensional array is partitioned into blocks of size $4^d$. Since Steps 2 through 8 are then applied to each $4^d$ block individually, it is not necessary to consider Step 1 in the error analysis. Accordingly, we do not define any operators for this step.

\subsection{Block-Floating-Point Transform}
\label{Step2Sec}
Suppose that $\bx \in \mathbb{R}^{4^d}$ such that $F_\cB^{-1}(\bx) \in
\cB_k^{4^d}$ for some precision $k$ (i.e., every element in $\bx$ can be
represented with at most $k$-consecutive bits). For frequently used
IEEE floating-point types, $k \in \{24, 53\}$. This assumption on
$\bm{x}$ implies that we are working 
with a floating point representation of a real number. To perform Step
2, we first convert each component in $\bx$ to its corresponding
representation in $\mathcal{\cB}$. Each element is then shifted to the
left by a deterministic number of bits and truncated. As a by-product of
type-casting to an integer, in the implementation of ZFP, each value is
rounded down to zero. Applying the shift operator followed by the
truncation operator, as outlined above, results in the same outcome.  

\black{The operator in Step 2 is dependent on the fixed set $\mathcal{S} := \{
i \in \mathbb{Z} : i \geq 0 \}$ and the value $q \in
\mathbb{N}$, where $q$ denotes the maximum number of nonzero consecutive
bits (precision) that can be used for the representation of each component of the
input.} ZFP requires each value to have one bit as a safe-guard
against overflow, which occurs when the calculation produces a result
that exceeds the capacity of the finite bit representation. \black{In the
current ZFP implementation, if the input values are IEEE single or double precision, $q \in \{30, 62\}$ in $\cB$, since one 
bit is used to represent the sign bit and another to represent the
overflow guard bit. 
}Step 2 is defined by 
the map $\tilde{C}_2:\mathbb{R}^{4^d} \rightarrow \cB^{4^d}$ where 
\begin{align*}*
\tilde{C}_2 (\bm{x}) := T_{\mathcal{S}} S_{\ell} F_{\cB}^{-1}(\bm{x}),
\text{ for all } \bm{x} \in \mathbb{R}^{4^d}, 
\end{align*}
where \red{$\ell = e_{max}(F_\cB^{-1}(\bx))- q+1$.} We define the lossless
operator, $C_2$, by removing all noninvertible maps from
$\tilde{C}_2$. Hence, 
\begin{align*}
C_2(\bx ) := S_{\ell}F_{\cB}^{-1} (\bx).
\end{align*}
%

{The decompression operator for Step 2 converts the block-floating point back to its original floating-point representation that is representable in $\cB_k$ for $k \in \mathbb{N}$. In IEEE, the $q\in \{30,62\}$ consecutive bits must be converted back to $k \in \{24,53\}$ with its respective exponent information. This conversion can be seen as a typical floating-point round off error. The lossy decompression operator for Step 2 is then defined by undoing the shift performed in $\tilde{C}_2$ and converting each component back to a floating point representation. Hence, $\tilde{D}_2: {\cB}^{4^d}\rightarrow \mathbb{R}^{4^d}$ is defined by 
	\begin{align*}
	\black{\tilde{D}_2(\ba) := F_{\cB} S_{-\ell} fl_k (\ba)}, \ \text{for all} \ \bm{a} \in
	\mathcal{{\cB}}^{4^d},
	\end{align*}
	where \black{$fl_k (\ba)_i  = t_{\cR_{ik}} (\ba_i)$ with $\cR_{ik} = \{ j \in \mathbb{Z}: j > e_{max, \cB} (\bm{a}_i) - k \}$, for all $1 \leq i \leq 4^d$. Note that the $fl_k$ operator converts each component of $\bm{a}$ to a floating point representation with $k$ mantissa bits in a bit vector format.} The lossless decompression operator is then defined as  $D_2: {\cB}^{4^d}\rightarrow \mathbb{R}^{4^d}$ with
	$D_2(\ba) := F_{\cB} S_{-\ell}(\ba)$, for all $\bm{a} \in
	\mathcal{{\cB}}^{4^d}$. We conclude our discussion of this step by
	presenting a result that will be useful during the error analysis in
	Section~\ref{sec:bounds}. 
	\begin{prop}Suppose $\bm{x} \in \mathbb{R}^{4^d}$ and $\bm{a} \in \cB^{4^d}$, such that \red{$e_{max,\cB}(F_\cB (\ba)) \geq q-1.$} 
		\label{Step2Prop}
		\begin{itemize}
			\item[(i)] Then $\| \tilde{C}_2 \bx - C_2
			\bx \|_{\infty} \leq 2^{-\ell} \epsilon_{q} \| \bm{x} \|_{\infty}$. 
			\item[(ii)] Then $\| \tilde{D}_2 \ba - D_2
			\ba \|_{\infty} \leq 2^{\ell} \epsilon_{k} \| \bm{a} \|_{\infty}$. 
		\end{itemize}
	\end{prop}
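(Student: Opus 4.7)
My plan is to reduce both inequalities to a careful composition of Lemmas \ref{ProjectionLemma1}, \ref{ProjectionLemma2}, \ref{ShiftLemma}, and \ref{BNormLemma}. The guiding observation is that the shift $S_\ell$ and the bijection $F_\cB$ are lossless and scale norms in a controlled way, so all error is introduced by the truncation operators. Once a truncation error is bounded in the $\cB$-norm, Lemma \ref{BNormLemma} transfers it to the standard $\mathbb{R}^n$ norm used in the statement, and Lemma \ref{ShiftLemma} accounts for the multiplicative factor contributed by $S_{\pm \ell}$.

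For (i), I would first write $\bm{a}' := C_2(\bx) = S_\ell F_\cB^{-1}(\bx)$, so that the quantity to bound becomes $T_\cS(\bm{a}') - \bm{a}'$. The choice $\ell = e_{max}(F_\cB^{-1}(\bx)) - q + 1$ is exactly calibrated so that $\bm{a}'$ has maximum exponent equal to $q-1$, and consequently the truncation set $\cS = \{i \in \mathbb{Z} : i \geq 0\}$ corresponds to applying Lemma \ref{ProjectionLemma2}(i) with parameter choice $p = q - 1$. That yields $\|T_\cS(\bm{a}') - \bm{a}'\|_{\cB,\infty} \leq \epsilon_q \, 2^{q-1}$. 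To replace $2^{q-1}$ by a multiple of $\|\bx\|_\infty$, Lemma \ref{ProjectionLemma1}(i) gives $\|\bx\|_\infty \geq 2^{e_{max}(F_\cB^{-1}(\bx))} = 2^{\ell + q - 1}$, whence $2^{q-1} \leq 2^{-\ell}\|\bx\|_\infty$, and (i) follows after composing.

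For (ii), I would begin from $\tilde D_2 \ba - D_2 \ba = F_\cB S_{-\ell}(fl_k(\ba) - \ba)$. Each $fl_k(\ba)_i$ is a truncation of $\ba_i$ against the set $\cR_{ik} = \{j > e_{max,\cB}(\ba_i) - k\}$, so a componentwise replay of the inner calculation in the proof of Lemma \ref{ProjectionLemma2} yields $|f_\cB(fl_k(\ba)_i - \ba_i)| \leq \epsilon_k \, 2^{e_{max,\cB}(\ba_i)}$; maximizing over $i$ and applying Lemma \ref{ProjectionLemma1}(i) converts this to $\|fl_k(\ba) - \ba\|_{\cB,\infty} \leq \epsilon_k \|\ba\|_{\cB,\infty}$. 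Lemma \ref{ShiftLemma} then contributes the factor $2^\ell$ through $S_{-\ell}$, and Lemma \ref{BNormLemma} lets $F_\cB$ pass through without affecting the norm, giving (ii).

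I expect the main subtlety to be bookkeeping rather than mathematical depth: keeping careful track of the sign of $\ell$ (which may be negative), verifying that the calibration of $\ell$ really forces $e_{max,\cB}(F_\cB(\bm{a}')) = q - 1$ so the Lemma \ref{ProjectionLemma2} parameters align, and interpreting the unadorned $\|\cdot\|_\infty$ on $\cB^{4^d}$-valued differences as $\|\cdot\|_{\cB,\infty}$ via $F_\cB$. The hypothesis $e_{max,\cB}(F_\cB(\bm{a})) \geq q - 1$ does not appear strictly necessary for the bound in (ii) to hold, but it plays the consistency role of ensuring that $\bm{a}$ carries the precision expected of a genuine Step-2 output, mirroring the implicit calibration used for $\bx$ in (i).
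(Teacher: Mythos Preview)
Your proposal is correct and follows essentially the same route as the paper's proof: for (i) you rewrite $\tilde C_2\bx - C_2\bx$ as $T_{\mathcal S}(\bm a') - \bm a'$ with $\bm a' = S_\ell F_\cB^{-1}(\bx)$, invoke Lemma~\ref{ProjectionLemma2}(i) at $p = q-1$ together with Lemma~\ref{ProjectionLemma1}(i) to get the $\epsilon_q$ factor relative to $\|\bm a'\|_{\cB,\infty}$, and then pass back to $\|\bx\|_\infty$ via Lemmas~\ref{ShiftLemma} and~\ref{BNormLemma}; for (ii) the paper likewise argues ``similar to the proof of (i)'' by pulling $F_\cB S_{-\ell}$ outside and bounding $\|fl_k(\ba)-\ba\|_{\cB,\infty}$ componentwise. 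Your observation that the hypothesis $e_{max,\cB}(F_\cB(\ba)) \ge q-1$ is not strictly needed for the inequality in (ii) is also accurate.
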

	\begin{proof} For $(i)$, observe that 
		\begin{align}
		\| \tilde{C}_2 \bx - C_2 \bx \|_{\mathcal{B}, \infty} = \|
		T_{\mathcal{S}} S_{\ell}F_\cB^{-1}(\bm{x}) - S_{\ell} F_\cB^{-1}(\bm{x})
		\|_{\mathcal{B}, \infty}&\leq  \epsilon_q \| S_\ell F_\cB^{-1}(\bm{x})
		\|_{\mathcal{B}, \infty} \label{4.1eq0},
		\\  
		&\leq 2^{-\ell} \epsilon_q  \| \bm{x} \|_{ \infty}, \label{4.1eq1}
		\end{align}
		where the inequality in (\ref{4.1eq0}) follows from Lemma
		\ref{ProjectionLemma2} (i) and Lemma
		\ref{ProjectionLemma1} (i). From Lemma~\ref{ShiftLemma}, we have that
		$\|S_{\ell} \|_{\mathcal{B}, \infty} = 2^{-\ell}$, and from
		Lemma~\ref{BNormLemma}, we have
		$\| F_\cB^{-1}(\bm{x}) \|_{\mathcal{B},\infty}= \| \bm{x}
		\|_{\infty}$. These together yield the inequality in (\ref{4.1eq1}). 
		Next, $(ii)$ follows from an argument similar to the proof of $(i)$ by observing that
		\black{$\displaystyle \| \tilde{D}_2 \ba - D_2 \ba \|_{ \infty} = \|
		F_{\cB} S_{-\ell} fl_k (\ba) - F_{\cB} S_{-\ell}(\ba)
		\|_{\infty} \leq 2^\ell \|fl_k (\ba) - \ba
		\|_{\mathcal{B}, \infty} \leq 2^{\ell} \epsilon_k  \| \bm{a} \|_{\cB,\infty}$.}
	\end{proof}
}

\subsection{Decorrelating Linear Transform}
\label{Step3Sec} 
In Step 3, the output from Step 2 is acted on by a linear
transformation, $L$. \red{$L$ is a near-orthogonal transform that is similar to the discrete cosine transform, both of which possess the energy compaction property \cite{Rao1990}, i.e., most of the signal energy is confined to the first, lowest-frequency transform coefficients.} In $d$-dimensions, the transform operator can be
applied to each dimension separately, and the operator can be
represented as a Kronecker product. For $A \in \mathbb{R}^{n_1,m_1}$ and
$B \in \mathbb{R}^{n_2,m_2}$, the Kronecker product is defined as  
\begin{align*}
A\otimes B =\begin{bmatrix}a_{1,1}B &a_{1,2} B & \cdots a_{1,m_1}B \\  \vdots & \ddots& \vdots  \\ 
a_{n_1,1}B &a_{n_1,2} B & \cdots a_{n_1,m_1}B \end{bmatrix}. 
\end{align*}
Then, the total forward transform operator used in of ZFP is defined as 
\begin{align*}
\cL_d = \underbrace{\cL\otimes \cL \otimes \cdots \otimes \cL}_\text{$(d - 1)$-products},
\end{align*}
where $\cL \in \mathbb{R}^{4\times 4}$ is defined by
\begin{align}
\cL = \frac{1}{16} \begin{bmatrix}
\begin{array}{rrrr}
4 & 4 & 4 & 4 \\
5 & 1 & -1 & -5 \\
-4 & 4 & 4 & -4 \\
-2 & 6 & -6 & 2
\end{array}
\end{bmatrix} \quad \text{and} \quad \cL^{-1}= \frac{1}{4} \begin{bmatrix}
\begin{array}{rrrr}
4 & 6 & -4 & -1 \\
4 & 2 & 4 & 5 \\
4 & -2 & 4 & -5 \\
4 & -6 & -4 & 1
\end{array}
\end{bmatrix}. 
\end{align}
Note that $\| \cL\|_{\infty} = 1$ and $\| \cL^{-1} \|_{\infty} = 15/4$.

First, we define the lossless compression operator for Step 3 by $C_3:
\cB^{4^d} \rightarrow \cB^{4^d}$, where
\begin{align}\label{c3}
C_3(\ba) = F_\cB^{-1}\cL_dF_\cB(\ba), \text{ for all } \bm{a} \in \mathcal{B}^{4^d}.
\end{align}
In order to define the lossy operator used in the implementation, it is
necessary to account for the finite bit constraint on a machine. Based
on Step 2 of ZFP compression, the components provided as the input for
Step 3 represent integers. Hence, for some $q \in \mathbb{N}$, it
follows that the input for Step 3 is an element of
$\mathcal{B}_{q}^{4^d}$. Here, $q$ represents the number of bits
available for storing each component. As $\mathcal{B}_q$ is not closed
under addition and multiplication, given $a, b \in \mathcal{B}_q$,
addition or multiplication of $a$ and $b$ may not result in an element
of $\mathcal{B}_q$ and must be rounded. This circumstance is referred to as round-off; error that occurs when the calculation produces a result that
exceeds the 
capacity of the finite bit representation. Since the transformation
could result in round-off, the operator used in the
implementation of the algorithm will be defined as   
\begin{align}\label{tildec3}
\tilde{C}_3 = F_\cB^{-1} \tilde{\cL}_dF_\cB(\ba), \text{ for all } \bm{a} \in \mathcal{B}^{4^d},\end{align}
where $\tilde{\cL}_d$ is an operator such that $\tilde{\cL}_d F_\cB(\ba) \in \cB^{\black{4^d}}_q$, for all $\bm{a} \in \mathcal{B}^{4^d}$. 

As the linear transform operator, $\cL$, is invertible, the lossless
decompression operator $D_3: \cB^{4^d} \rightarrow \cB^{4^d}$ is defined
as  
\[ D_3(\ba) = F_\cB^{-1}\cL_d^{-1}F_\cB(\ba), \text{ for all } \bm{a} \in \mathcal{B}^{4^d}. \]
Again, since the operation $\cL_d^{-1}$ may result in round-off, the operator used in the implementation is defined as
\[ \tilde{D}_3(\ba) = F_\cB^{-1}\tilde{\cL}_d^{-1}F_\cB(\ba), \text{ for all } \bm{a} \in \mathcal{B}^{4^d},  \]
where $\tilde{\cL}_d^{-1}$ is an approximation of $\cL_d^{-1}$. 

From \cite{higham2002accuracy} (Equation (3.12)), the forward error bound of the floating-point representation of a matrix-vector product, $\cL_d\bx \in \mathbb{R}^{4^d}$, is 
\begin{equation} 
\left\| \cL_d\bx  - \tilde{\cL}_d \bx \right \|_p \leq \gamma \left \|\cL_d \right \|_p \left \|\bx \right\|_p, \label{eqn:T} 
\end{equation}
where $\gamma = 4^d \epsilon_m/ (1 - 4^d \epsilon_m)$ and $\epsilon_m =
2^{1-m}$ represents machine epsilon with precision
$m$. From~\cite{tensor}, we have that $\|\cL_d\|_p \leq\|\cL\|_p^d$, for
$1 \leq p \leq \infty$. Hence, (\ref{eqn:T}) yields 
\begin{equation} 
\left\| \cL_d\bx  - \tilde{\cL}_d \bx \right \|_p \leq \gamma \left \|\cL \right \|^d_p \left \|\bx \right\|_p. \label{eqn:Td} 
\end{equation}
Note that $\tilde{\cL}_d^{-1}$ satisfies a forward error bound analogous
to (\ref{eqn:Td}). The forward error bound represented in (\ref{eqn:Td})
is the worst possible error that can occur for an arbitrary linear
transform. As ZFP uses particular transformations, we aim to establish bounds specific to the
transformations $\cL_d$ and $\cL_d^{-1}$. Accordingly, we note that the
action of $\cL$ and $\cL^{-1}$ can be written in a very efficient C
implementation. The action of $\cL$ and $\cL^{-1}$ on $\ba = [ \ba_1,
  \ba_2, \ba_3, \ba_4 ]^T \in \cB^4$ under this implementation is
outlined in Table \ref{table:actionT}.

\begin{table}[!h]
	\begin{adjustbox}{width=\textwidth} 
		\begin{tabular}{| L  L   L || L   L  L | }
			\hline
			\multicolumn{3}{| c ||}{$\cL$} & \multicolumn{3}{ c |}{$\cL^{-1}$} \\ \hline 
			\ba_1 \leftarrow \ba_1+ \ba_4 & \ba_1 \leftarrow s_1(\ba_1) &\ba_4 \leftarrow \ba_4 - \ba_1 &\ba_2 \leftarrow \ba_2 + s_1(\ba_4)&\ba_4 \leftarrow \ba_4- s_1(\ba_2) & \\
			\ba_3 \leftarrow \ba_3+\ba_2 & \ba_3 \leftarrow s_1(\ba_3)&\ba_2 \leftarrow \ba_2- \ba_3 &\ba_2 \leftarrow \ba_2+ \ba_4 &\ba_4 \leftarrow s_{-1}(\ba_4) &\ba_4 \leftarrow \ba_4- \ba_2 \\
			\ba_1 \leftarrow \ba_1+\ba_3 & \ba_1 \leftarrow s_1(\ba_1)&\ba_3 \leftarrow \ba_3- \ba_1 &\ba_3 \leftarrow \ba_3 + \ba_1&\ba_1 \leftarrow s_{-1}(\ba_1) &\ba_1 \leftarrow \ba_1- \ba_3  \\
			\ba_4 \leftarrow \ba_4+\ba_2 & \ba_4 \leftarrow s_1(\ba_4) &\ba_2 \leftarrow \ba_2 - \ba_4 &\ba_2 \leftarrow \ba_2 + \ba_3 &\ba_3 \leftarrow s_{-1}(\ba_3) &\ba_3 \leftarrow \ba_3- \ba_2  \\
			\ba_4 \leftarrow \ba_4 + s_1(\ba_2) & \ba_2  \leftarrow \ba_2 - s_1(\ba_4) & &\ba_4 \leftarrow \ba_4 + \ba_1   &\ba_1 \leftarrow s_{-1}(\ba_1) & \ba_1\leftarrow \ba_1 - \ba_4 \\ \hline
		\end{tabular}
	\end{adjustbox}
	\caption{\label{table:actionT} Bit arithmetic steps for ZFP's forward (left) and backward (right) transform (read from left to right).
	} 
\end{table}
This implementation is straightforward and efficient as it only requires bit addition/subtraction and division/multiplication by two. 
In ZFP, the bit vectors are padded so that any overflow that may occur
is represented (i.e., for each component in the block one extra bit is
allotted to ensure that, if a calculation results is a value greater
than what can be represented in $q$ bits, then the value is not
approximated). Thus, for the following analysis, it suffices to
calculate the error due to round-off. Additionally, as the components of
the input for Step 3 represent signed integers, round-off can only occur
during division by two (i.e., one bit shift to the right using
$s_1(\cdot)$).

As noted in Section~\ref{TwoCompSec}, care must be taken in Step 3,
since the implementation of ZFP uses a two's complement representation
of each integer. For our error analysis, the main concern is that
rounding to an integer in two's complement after a right bit shift
always results in rounding towards negative infinity. However, under the
representation defined in $\mathcal{B}$, this same sequence of
operations results in rounding towards zero. So, in order to mimic the
implementation, we define the operator $r : \mathcal{B} \to \mathcal{B}$
by  
 \begin{align*}
r (a) := \begin{cases}
t_\cS s_{1}(a) & :  f_{\mathcal{B}} (a) \geq 0, \\
t_\cS s_{1} \left( a - 1_\cB \right) & : f_{\mathcal{B}} (a) < 0, 
\end{cases}
\end{align*}
for all $a \in \mathcal{B}$. Since $s_{1}(\cdot)$ performs a single
right bit shift and $t_\cS (\cdot)$, where $\cS = \{ i \in \mathbb{Z} :
i\geq 0\}$, rounds the value towards zero, $r (\cdot)$ will always round
the right bit shift toward negative infinity. The following lemma
considers the error of $r (\cdot)$ when compared to $s_1 (\cdot)$. 
\begin{lemma}\label{lemma:boundr}
Suppose $\cS = \{ i \in \mathbb{Z} : i \geq 0\}$ and $p \in \mathbb{Z}$.
If $a = f_\cB^{-1} (p)$, then $\left\| r (a) - s_1(a)
\right\|_{\mathcal{B}, \infty} \leq \frac{1}{2}.$ 
\end{lemma}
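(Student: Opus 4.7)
The plan is to reduce the claim to a scalar inequality and then resolve the two cases in the definition of $r$. Since $r(a)$ and $s_1(a)$ are both single $\cB$-elements, the norm $\|\cdot\|_{\mathcal{B},\infty}$ collapses to $|f_\cB(\cdot)|$, so the target reduces to $|f_\cB(r(a)) - f_\cB(s_1(a))| \leq \tfrac{1}{2}$. By the computation given just before Lemma~\ref{ShiftLemma}, $f_\cB(s_1(a)) = 2^{-1}f_\cB(a) = p/2$, so it suffices to show $|f_\cB(r(a)) - p/2| \leq \tfrac{1}{2}$, which I will do by establishing $f_\cB(r(a)) = \lfloor p/2 \rfloor$ in every case.

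First, I would dispatch the case $p \geq 0$. Here $r(a) = t_\cS s_1(a)$, and $s_1(a)$ has $f_\cB$-value exactly $p/2$, with the only possibly nonzero subunit bit located at index $-1$ (present precisely when $p$ is odd). Applying $t_\cS$ with $\cS = \{i \in \mathbb{Z} : i \geq 0\}$ zeros out this lone bit, yielding $f_\cB(r(a)) = \lfloor p/2 \rfloor$.

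The main obstacle is the case $p < 0$, where the subtraction of $1_\cB$ is inserted precisely to emulate an arithmetic (two's complement, round-to-$-\infty$) right shift using the sign-and-magnitude representation of $\cB$, under which truncation rounds magnitudes, and hence negative values, \emph{toward zero}. I would proceed by noting that $a - 1_\cB$ represents the integer $p - 1 < 0$, so $s_1(a - 1_\cB)$ has $f_\cB$-value $(p-1)/2$, and $t_\cS$ then zeros any bit of the magnitude at index $-1$. A short parity check finishes the case: if $p$ is even then $p-1$ is odd, the magnitude carries a bit at index $-1$, and truncation raises the value by $1/2$ to give $f_\cB(r(a)) = (p-1)/2 + 1/2 = p/2 = \lfloor p/2 \rfloor$; if $p$ is odd then $p-1$ is even, no bit at index $-1$ is present, and $f_\cB(r(a)) = (p-1)/2 = \lfloor p/2 \rfloor$.

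Combining both cases, $f_\cB(r(a)) = \lfloor p/2 \rfloor$ for every integer $p$, and since $|p/2 - \lfloor p/2 \rfloor| \in \{0, 1/2\}$ for all integers $p$, translating back via $\|\cdot\|_{\mathcal{B},\infty} = |f_\cB(\cdot)|$ on $\cB$ yields $\|r(a) - s_1(a)\|_{\mathcal{B},\infty} \leq \tfrac{1}{2}$, as desired. The only subtlety to flag for the reader is the $p < 0$ case, where the $-1_\cB$ adjustment is what turns "truncate-toward-zero" into "floor," so I would present the parity split explicitly rather than gesturing at it.
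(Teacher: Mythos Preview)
Your proposal is correct and follows essentially the same approach as the paper: both split into $p \geq 0$ versus $p < 0$, with the negative case further divided by parity, and both reduce the norm to the scalar computation $|f_\cB(r(a)) - p/2|$. Your framing via the single formula $f_\cB(r(a)) = \lfloor p/2 \rfloor$ is a slightly cleaner packaging of the same case analysis the paper carries out explicitly.
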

\begin{proof}
As $p \in \mathbb{Z}$ and $a = f_\cB^{-1} (p)$, we have that
$\mathcal{I} (a) \subseteq \{ i \in \mathbb{Z} : i \geq 0 \}$. Now
suppose $p \geq 0$. Then  
\begin{align*}
\mathcal{I} ( r (a) ) = \mathcal{I} ( t_{\mathcal{S}} s_1(a) ) = \mathcal{I} ( s_1(a) ) \setminus \{ -1 \}. 
\end{align*}
Hence, $\left\| r (a) - s_1 (a) \right\|_{\mathcal{B}, \infty} \leq \frac{1}{2}$. 

On the other hand, suppose $p < 0$. If $p$ is even, then $p = 2k$ for
some $k \in \mathbb{Z}$, and $f_{\mathcal{B}} ( r (a) ) =
f_{\mathcal{B}} ( t_{\mathcal{S}} s_1 (a - 1_{\mathcal{B}}) ) = k.$ So  
\begin{align*}
\left\| r (a) - s_1 (a) \right\|_{\mathcal{B}, \infty} = \left| f_{\mathcal{B}} ( r (a) ) - f_{\mathcal{B}} ( s_1 (a) ) \right| = \left| k - \frac{p}{2} \right| = | k - k | = 0.
\end{align*}
If $p$ is odd, then $p = 2k - 1$ for some $k \in \mathbb{Z}$, and
$f_{\mathcal{B}} ( r (a) ) = f_{\mathcal{B}} ( t_{\mathcal{S}} s_1 (a -
1_{\mathcal{B}}) ) = k - 1$. Hence, $\left\| r (a) - s_1 (a)
\right\|_{\mathcal{B}, \infty} = \left| f_{\mathcal{B}} ( r (a) ) -
f_{\mathcal{B}} ( s_1 (a) ) \right| = \left| k - 1 - \frac{p}{2} \right|
= \frac{1}{2}$. 
\end{proof}
Thus, by replacing $s_1(\cdot)$ by $r (\cdot)$ in
Table~\ref{table:actionT}, we obtain the analogous lossy operators,
denoted $\tilde{\cL}$ and $\tilde{\cL}^{-1}$, outlined in
Table~\ref{table:actionTlossy}.
 \begin{table}[!h]
	\begin{adjustbox}{width=\textwidth} 
		\begin{tabular}{| L  L   L || L   L  L | }
			\hline
			\multicolumn{3}{| c ||}{$\tilde{\cL}$} & \multicolumn{3}{ c |}{$\tilde{\cL}^{-1}$} \\ \hline 
			\ba_1 \leftarrow \ba_1+ \ba_4& \ba_1 \leftarrow r(\ba_1) &\ba_4 \leftarrow \ba_4 - \ba_1 &\ba_2 \leftarrow \ba_2 + r(\ba_4)&\ba_4 \leftarrow \ba_4- r(\ba_2) & \\
			\ba_3 \leftarrow \ba_3+\ba_2 & \ba_3 \leftarrow r(\ba_3)&\ba_2 \leftarrow \ba_2- \ba_3 &\ba_2 \leftarrow \ba_2+ \ba_4 &\ba_4 \leftarrow s_{-1} (\ba_4) &\ba_4 \leftarrow \ba_4- \ba_2 \\
			\ba_1 \leftarrow \ba_1+\ba_3 & \ba_1 \leftarrow r(\ba_1)&\ba_3 \leftarrow \ba_3- \ba_1 &\ba_3 \leftarrow \ba_3 + \ba_1&\ba_1 \leftarrow s_{-1}(\ba_1) &\ba_1 \leftarrow \ba_1- \ba_3  \\
			\ba_4 \leftarrow \ba_4+\ba_2 & \ba_4 \leftarrow r(\ba_4) &\ba_2 \leftarrow \ba_2 - \ba_4 &\ba_2 \leftarrow \ba_2 + \ba_3 &\ba_3 \leftarrow s_{-1} (\ba_3) &\ba_3 \leftarrow \ba_3- \ba_2  \\
			\ba_4 \leftarrow \ba_4 + r(\ba_2) & \ba_2  \leftarrow \ba_2 - r(\ba_4) & &\ba_4 \leftarrow \ba_4 + \ba_1   &\ba_1 \leftarrow s_{-1} (\ba_1) & \ba_1\leftarrow \ba_1 - \ba_4 \\ \hline
		\end{tabular}
	\end{adjustbox}
	\caption{\label{table:actionTlossy} Bit arithmetic steps for the lossy implementation of ZFP's forward (left) and backward (right) transform.
	} 
\end{table}

\red{Now that all the required notation and tools have been discussed, the following lemma establishes a forward error bound for $\tilde{L}$. }  
\begin{lemma}
\label{LemmaL1DBound}
Suppose $\bx \in \mathbb{Z}^4$ such that \red{$e_{max}(\bx) \geq q-1$} and $\bx \neq \bfz$. Given the bit arithmetic implementation in Table \ref{table:actionT} and Table \ref{table:actionTlossy} for ZFP's forward linear transforms, we have
\red{\begin{align*}
\|\cL\bx - \tilde{\cL}\bx\|_\infty \leq \frac{7}{4} \epsilon_q \|\bx\|_\infty.
\end{align*}}
\end{lemma}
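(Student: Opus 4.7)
The plan is to exploit the near-linear structure of $\tilde{\cL}$: it differs from $\cL$ only in that six occurrences of $s_1(\cdot)$ in Table \ref{table:actionT} are replaced by $r(\cdot)$ in Table \ref{table:actionTlossy} (one in each of rows $1$--$4$ and two in row $5$). By Lemma \ref{lemma:boundr}, each such substitution can be written as $r(a) = s_1(a) + \delta_i$ with $|\delta_i| \leq \tfrac{1}{2}$, and because every other operation involved (addition, subtraction, and the single-bit shift $s_1$) is linear, the final difference $\tilde{\cL}\bx - \cL\bx$ is \emph{exactly} a linear combination of the six scalar errors $\delta_1,\dots,\delta_6$ with fixed rational coefficients determined by the butterfly pattern.

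Concretely, I would introduce $\bx^{(j)}$ and $\tilde{\bx}^{(j)}$ for the exact and lossy intermediate states after row $j$ and propagate $e_i^{(j)} := \bx_i^{(j)} - \tilde{\bx}_i^{(j)}$ component-wise. Rows $1$ and $2$ immediately give $e_1^{(2)} = -\delta_1$, $e_4^{(2)} = \delta_1$, $e_2^{(2)} = \delta_2$, $e_3^{(2)} = -\delta_2$. Row $3$ couples components $1$ and $3$ (halving and subtracting) before injecting $\delta_3$, and row $4$ does the same for components $2$ and $4$ with $\delta_4$; finally, row $5$ mixes components $2$ and $4$ twice more via $\delta_5$ and $\delta_6$. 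Carrying the algebra through yields closed-form expressions $e_i^{(5)} = \sum_k c_{i,k}\delta_k$; a direct triangle-inequality bound then gives $|e_1^{(5)}|,|e_3^{(5)}|\leq 1$ and $|e_4^{(5)}| \leq \tfrac{5}{4}$, while the dominant second component collects contributions $\tfrac{5}{16}+\tfrac{1}{16}+\tfrac{10}{16}+\tfrac{4}{16}+\tfrac{8}{16} = \tfrac{28}{16}$ from $\delta_1,\delta_2,\delta_4,\delta_5,\delta_6$, proving $|e_2^{(5)}| \leq \tfrac{7}{4}$. Hence $\|\cL\bx - \tilde{\cL}\bx\|_\infty \leq \tfrac{7}{4}$.

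To pass from this absolute bound to the claimed relative bound, I would invoke the hypothesis $e_{max}(\bx) \geq q-1$: combined with Lemma \ref{ProjectionLemma1}(i) and Lemma \ref{BNormLemma} it gives $\|\bx\|_\infty \geq 2^{q-1}$, so $\epsilon_q \|\bx\|_\infty = 2^{1-q}\|\bx\|_\infty \geq 1$, and therefore $\tfrac{7}{4} \leq \tfrac{7}{4}\epsilon_q\|\bx\|_\infty$, which is the desired inequality. The main obstacle is purely bookkeeping in the second step: the nested butterflies in rows $3$--$5$ couple the errors in a delicate way, so the coefficients $c_{i,k}$ must be tracked carefully. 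A looser argument that treats the six $\delta_k$ symmetrically (or that applies Lemma \ref{lemma:boundr} after each individual $r$-call to the \emph{current} bit vector rather than tracking the exact linear dependencies) would produce a noticeably worse constant than $\tfrac{7}{4}$.
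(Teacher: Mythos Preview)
Your proposal is correct and takes essentially the same approach as the paper: both arguments trace the six $r(\cdot)$ versus $s_1(\cdot)$ discrepancies through the linear butterfly structure, bound each by $\tfrac{1}{2}$ via Lemma~\ref{lemma:boundr}, identify the second output component as the worst case with accumulated coefficient $\tfrac{28}{16}=\tfrac{7}{4}$, and then convert to a relative bound using $\|\bx\|_\infty \geq 2^{q-1}$. The only cosmetic difference is that the paper writes out the full nested closed form for $\cL_{\cB}\ba$ and peels the rounding errors off from the outside in, whereas you propagate error variables $e_i^{(j)}$ forward through the rows; the resulting decomposition $\tfrac{1}{2}+\tfrac{1}{4}+\tfrac{5}{8}+\tfrac{1}{16}+\tfrac{5}{16}$ in the paper is just a reordering of your $\tfrac{5}{16}+\tfrac{1}{16}+\tfrac{10}{16}+\tfrac{4}{16}+\tfrac{8}{16}$.
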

\begin{proof}
\red{First the action of ${L}$ and $\tilde{L}$ will be formed as a composite operator of each step, as depicted in Table \ref{table:actionT} and \ref{table:actionTlossy}, respectively. Then a bound on the error between the action of $L$ and $\tilde{L}$ will be constructed using the Lemma \ref{lemma:boundr} and the triangle inequality.} Define $\ba = [ \ba_1, \ba_2, \ba_3, \ba_4 ]^T  = F_\cB^{-1}(\bx) \in
\cB^4$ to be the representation of $\bx$ in $\mathcal{B}^4$. From
Table~\ref{table:actionT}, the lossless operator for the first two steps
can be written as  
\[  \ba_1 \leftarrow \ba_1+ \ba_4   \quad \Rightarrow \quad \cL_{\cB,1} ( \ba )  = [ \ba_1 + \ba_4, \ba_2, \ba_3, \ba_4 ]^T ,\] 
and
\[  \ba_1 \leftarrow s_1(\ba_1), \quad \Rightarrow \quad \cL_{\cB,2} ( \ba )  = [s_1(\ba_1), \ba_2, \ba_3,\ba_4 ]^T ,  \] 
where we write $\cL_{\cB,i}$ to represent the action at the $i$th step
of the operator $\cL$ on an element of $\mathcal{B}^4$ from Table \ref{table:actionT}. The composite
operator for the first two steps can now be expressed as  
\[\cL_{\cB,2} \circ \cL_{\cB,1} ( \ba ) = [ s_1 (\ba_1 + \ba_4), \ba_2, \ba_3, \ba_4 ]^t.  \]
\red{Let} $\cL_{\cB}$ denote the action of $\cL$ in the vector space
$\mathcal{B}^4$. Continuing in the same manner as above, we have  
\red{\begin{align*}
\cL_\cB \ba &= \cL_{\cB,14} \circ \cdots \circ \cL_{\cB,1} (\ba), \\
&= \begin{bmatrix} s_{1} (s_{1} (\ba_4 + \ba_1) + s_{1} (\ba_2 + \ba_3)) \\
\bz - s_1(\bz+\bw) -  s_1(s_1( \bz+\bw)+ s_1(\bz - s_1(\bz+\bw))) \\
s_1 (\ba_2 + \ba_3) - s_1 ( s_1 ( \ba_4 + \ba_1 ) + s_1 (\ba_2 + \ba_3)) \\
s_1( \bz+\bw)+ s_1(\bw- s_1(\bz+\bw))
\end{bmatrix}, 
\end{align*}
where $\bz = \ba_4 - s_1(\ba_4+\ba_1)$ and $\bw = \ba_2-s_1(\ba_2+\ba_3)$. } Next, from Table \ref{table:actionTlossy}, we obtain the analogous lossy operator, denoted $\tilde{\cL}_{\cB}$. By replacing $s_1 (\cdot)$ by $r (\cdot)$ we obtain $\tilde{\cL}_{\cB} \ba$. Now 
\begin{align}
\|\cL\bx - \tilde{\cL}\bx\|_\infty = \| \cL_{\cB} \ba - \tilde{\cL}_{\cB} \ba \|_{\cB, \infty} = \max_{1 \leq i \leq 4} | f_\cB((\cL_{\cB} \ba)_i) - f_\cB((\tilde{\cL}_{\cB} \ba)_i) |. \label{LopMax}
\end{align}
In particular, we found that the maximum in
(\ref{LopMax}) is attained for $i = 2$. Using Lemma~\ref{lemma:boundr}
and using $s_{1}(c) = f_\cB^{-1}( f_{\cB} (c)/2) = \frac{1}{2}c$ for all
$c \in \cB$, we derive the following bound. Letting $\bm{y} = \cL_{\cB}
\ba$ and
$\tilde{\bm{y}} = \tilde{\cL}_{\cB} \ba$, we find that 
\red{\begin{align}
	\| \bm{y}_2 - \tilde{\bm{y}}_2 \|_{\cB,1} &= \left\|  \frac{1}{4}\left(\ba_2 +\ba_3 -\ba_1 -\ba_4\right) -   \left[\bz - s_1(\bz+\bw) -  s_1(s_1( \bz+\bw)+ s_1(\bz - s_1(\bz+\bw))) \right] \right\|_{\cB,1},  \nonumber \\ 
	&\leq \frac{1}{2} + \left\|  \frac{1}{4}\left(\ba_2 +\ba_3 -\ba_1 -\ba_4\right) -  \left[\bz - \frac{3}{2}s_1(\bz+\bw) -\frac{1}{2} s_1(\bz - s_1(\bz+\bw)) \right] \right\|_{\cB,1},  \label{LErrBoundEq1}\\
		&\leq \frac{1}{2} + \frac{1}{4} +\left\|  \frac{1}{4}\left(\ba_2 +\ba_3 -\ba_1 -\ba_4\right) -  \left[\frac{3}{4}\bz -  \frac{5}{4} s_1(\bz+\bw) \right] \right\|_{\cB,1},\nonumber \\
			&\leq \frac{1}{2} + \frac{1}{4} + \frac{5}{8} + \left\|  \frac{1}{4}\left(\ba_2 +\ba_3 -\ba_1 -\ba_4\right) -  \left[\frac{1}{8}\bz - \frac{5}{8}\bw \right] \right\|_{\cB,1}, \nonumber\\
			&\leq \frac{1}{2} + \frac{1}{4} + \frac{5}{8} +\frac{1}{16} + \frac{5}{16} = \frac{28}{16}\nonumber
	\end{align}}
where (\ref{LErrBoundEq1}) follows from the triangle inequality and Lemma \ref{lemma:boundr}. Similarly, \red{$\| \bm{y}_i - \tilde{\bm{y}}_i \|_{\cB,1} \leq  \frac{28}{16}$}, for all $1 \leq i \leq 4$.  Since \red{$\|\bx \|_\infty \geq 2^{q-1}$}, it now follows that \red{
\[ \|\cL\bx - \tilde{\cL}\bx \|_\infty \leq  \frac{28}{16} 2^{-q+1} \|\bx\|_\infty = \frac{7}{4} \epsilon_q \|\bx\|_\infty. \] }
\end{proof}
\red{The following result extends the 1-$d$ error caused by the lossy forward transform operator established in Lemma \ref{LemmaL1DBound} to $d$ dimensions. }
\begin{lemma}
\label{lemma:boundT}
Suppose $\bx \in \mathbb{Z}^{4^d}$ such that $e_{max, \mathcal{B}}(\bx) = q$. Then 
\begin{align*}
\|\cL_d\bx - \tilde{\cL}_d \bx \|_\infty & \leq k_\cL\epsilon_q \|\bx\|_\infty, 
\end{align*}
\red{where $k_\cL = \frac{7}{4} \left( 2^d - 1 \right)$. }
\end{lemma}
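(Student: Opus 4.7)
The plan is a telescoping decomposition that reduces the $d$-dimensional estimate to $d$ applications of the one-dimensional bound in Lemma \ref{LemmaL1DBound}. Since $\cL_d = \cL^{\otimes d}$, it factors as the composition $\cL_d = \cL^{(d)} \circ \cdots \circ \cL^{(1)}$, where $\cL^{(i)}$ denotes the action of $\cL$ applied along the $i$-th dimension of a $4^d$-tensor, and analogously $\tilde{\cL}_d = \tilde{\cL}^{(d)} \circ \cdots \circ \tilde{\cL}^{(1)}$. Defining intermediate iterates $\bm{u}_1 := \bx$ and $\bm{u}_i := \tilde{\cL}^{(i-1)}(\bm{u}_{i-1})$ for $i \geq 2$, a standard add-and-subtract argument gives the telescoping identity
\begin{align*}
\cL_d \bx - \tilde{\cL}_d \bx \;=\; \sum_{i=1}^d \bigl(\cL^{(d)} \cdots \cL^{(i+1)}\bigr) \bigl(\cL^{(i)} - \tilde{\cL}^{(i)}\bigr) \bm{u}_i .
\end{align*}

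The next step is to bound each summand in the max-entry $\ell^\infty$ norm. Since $\|\cL\|_\infty = 1$, applying $\cL$ along any single dimension is non-expansive in max-entry norm, hence $\|\cL^{(d)} \cdots \cL^{(i+1)}\|_\infty \leq 1$. Because $r(\cdot)$ rounds to integers, each $\tilde{\cL}^{(j)}$ preserves integer entries, and consequently every $\bm{u}_i$ lies in $\mathbb{Z}^{4^d}$. Applying Lemma \ref{LemmaL1DBound} fiber-wise along dimension $i$ then yields $\|(\cL^{(i)} - \tilde{\cL}^{(i)})\bm{u}_i\|_\infty \leq \tfrac{7}{4}\epsilon_q \|\bm{u}_i\|_\infty$, so each summand contributes at most $\tfrac{7}{4}\epsilon_q \|\bm{u}_i\|_\infty$ after composition with the non-expansive tail.

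To close the argument, I would control the growth of $\|\bm{u}_i\|_\infty$ inductively: the triangle inequality combined with the estimates above gives
\begin{align*}
\|\bm{u}_{i+1}\|_\infty \leq \|\cL^{(i)} \bm{u}_i\|_\infty + \|(\cL^{(i)} - \tilde{\cL}^{(i)})\bm{u}_i\|_\infty \leq \bigl(1 + \tfrac{7}{4}\epsilon_q\bigr)\|\bm{u}_i\|_\infty \leq 2\|\bm{u}_i\|_\infty ,
\end{align*}
whenever $\tfrac{7}{4}\epsilon_q \leq 1$, a mild precision assumption. Iterating yields $\|\bm{u}_i\|_\infty \leq 2^{i-1}\|\bx\|_\infty$, and summing the resulting geometric series produces $\sum_{i=1}^{d} \tfrac{7}{4}\epsilon_q \, 2^{i-1} \|\bx\|_\infty = \tfrac{7}{4}(2^d - 1)\epsilon_q \|\bx\|_\infty$, which is exactly the claimed bound with $k_{\cL} = \tfrac{7}{4}(2^d - 1)$.

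The main subtlety lies in the fiber-wise invocation of Lemma \ref{LemmaL1DBound}: its hypothesis $e_{\max}(\bm{f}) \geq q - 1$ may fail on individual fibers whose entries are small relative to $\|\bx\|_\infty$. For such fibers the absolute bound $\tfrac{7}{4}$ extracted inside the proof of Lemma \ref{LemmaL1DBound} nonetheless holds, and the standing assumption $e_{\max,\cB}(\bx) = q$ (so that $\|\bx\|_\infty \geq 2^q$ and $\epsilon_q\|\bx\|_\infty \geq 2$) is precisely what is needed to absorb this absolute constant into the desired relative form $\tfrac{7}{4}\epsilon_q \|\bm{u}_i\|_\infty$. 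Beyond this bookkeeping, the argument is essentially a geometric-series calculation layered on top of the one-dimensional lemma.
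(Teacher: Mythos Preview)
Your telescoping argument is correct and reaches the same constant, but it is genuinely different from the paper's proof. The paper writes $\tilde{\cL}=\cL+\Delta\cL$, expands the Kronecker product $(\cL+\Delta\cL)^{\otimes d}$ binomially, and bounds each of the $2^d-1$ cross terms by pulling out one factor $\tfrac{7}{4}\epsilon_q$ and estimating the remaining factors via $\|\cL\|_\infty=1$ and $\|\Delta\cL\|_\infty\leq 1$; the sum $\sum_{i=1}^d\binom{d}{i}=2^d-1$ then yields $k_{\cL}$. Your route instead factors $\cL_d$ and $\tilde{\cL}_d$ as compositions along each dimension, telescopes, and obtains $2^d-1$ as the geometric sum $\sum_{i=1}^d 2^{i-1}$ coming from growth of the intermediates. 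Your approach has the advantage of never needing to treat the nonlinear perturbation $\Delta\cL=\tilde{\cL}-\cL$ as if it were a linear operator in a tensor-product expansion, which is a point the paper's argument glosses over.

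One small loose end in your last paragraph: you want to absorb the absolute fiber bound $\tfrac{7}{4}$ into $\tfrac{7}{4}\epsilon_q\|\bu_i\|_\infty$, which would require $\|\bu_i\|_\infty\geq 2^{q-1}$; the hypothesis $e_{\max,\cB}(\bx)=q$ gives this for $\bu_1=\bx$ but not a priori for later $\bu_i$, since applying $\tilde{\cL}$ along a dimension can shrink all entries. The clean fix is to absorb directly into $\tfrac{7}{4}\epsilon_q\,2^{i-1}\|\bx\|_\infty$: the absolute bound $\tfrac{7}{4}$ on each summand, together with $\epsilon_q\|\bx\|_\infty\geq 2$ and $2^{i-1}\geq 1$, already gives $\tfrac{7}{4}\leq \tfrac{7}{4}\epsilon_q\,2^{i-1}\|\bx\|_\infty$, after which the geometric sum closes the argument. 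With that adjustment the growth estimate for $\|\bu_i\|_\infty$ becomes unnecessary, and in fact the absolute bound alone yields the sharper total $\tfrac{7}{4}d\leq\tfrac{7}{4}(2^d-1)$.
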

\begin{proof}
Let $\Delta \cL$ represent a perturbation of the action of  $\cL$ such that $ \tilde{\cL} = \cL+\Delta \cL$. From Lemma \ref{LemmaL1DBound}, we have \red{$\|\Delta \cL \by \|_\infty \leq \frac{7}{4} \epsilon_q \|\by\|_\infty$}, for all \red{$\by \in \{\bz \in \mathbb{Z}^4 : e_{max}(\bz)\geq q-1\}$.} Hence, 
\red{\begin{align*}
\|(I_{4^{d}} \otimes \Delta \cL) \bx \|_\infty = \| (\Delta \cL \otimes {I_{4^{d}}}) \bx \|_\infty \leq \frac{7}{4} \epsilon_q \| \bx \|_\infty.
\end{align*}} Using the inequalities $\|\Delta \cL\|_\infty \leq 1$ and $\| \cL\|_\infty =1$, we have
\red{\begin{align*}
\|\cL_d\bx - \tilde{\cL}_d \bx \|_\infty & = \| \left({\cL}\otimes \cdots\otimes  {\cL}\right) \bx - \left((\cL+\Delta \cL)  \otimes \cdots\otimes (\cL+\Delta \cL)\right) \bx \|_\infty ,\\
& \leq \left( \sum_{i=1}^d \left (\begin{matrix}d \\ i \end{matrix} \right )\|L\|^{d-i}_\infty \|\Delta L\|^{i-1}_{\infty} \right) \frac{7}{4} \epsilon_q \|\bx\|_\infty, \\
& = \frac{7}{4} \left( \sum_{i=1}^d \left (\begin{matrix}d \\ i \end{matrix} \right ) \right ) \epsilon_q \|\bx\|_\infty, \\
& = \frac{7}{4} \left( 2^d - 1 \right) \epsilon_q \|\bx\|_\infty.
\end{align*}}
\end{proof}
\red{At this point, it remains to consider ZFP's backward linear transform with respect to Table~\ref{table:actionT} and Table \ref{table:actionTlossy}. For this particular implementation of ZFP, if Steps 3 through 8 of the compression algorithm are applied before the backwards transform, no additional error occurs\footnote{\red{The first two steps of the backwards transform operator, depicted in Table \ref{table:actionTlossy}, may result in round-off. However, the additional error that may occur depends on the user-defined parameters that define the action of Step 8. If Step 8 is performed losslessly, i.e., no bit planes are discarded, then each step of the backward transform, in bit arithmetic, undoes the associated step of the forward transform. 
If at least $2d$ bit planes are discarded at Step 8 (see Section \ref{Step8Sec}  for details), then the first two steps of the backwards transform will not introduce additional error.  
If between $1$ and $2d - 1$ bit planes are discarded, additional error may occur in the decompression step. However, since ZFP will result in a low compression ratio if only between $1$ and $2d-1$ bit planes are discarded, the remainder of the paper will assume at least $2d$ bit planes are discarded.
See Appendix \ref{sec:appendixb} for details.}}.}
The decompression operator for the particular implementation of
ZFP is defined as the corresponding lossless operator  
\[ \tilde{D}_3(\ba) = {D}_3(\ba)= F_\cB^{-1}{\cL}_d^{-1}F_\cB(\ba), \text{ for all } \bm{a} \in \mathcal{B}^{4^d}.   \]

\subsection{Reorder coefficients by total sequency}
\label{Step4Sec}
The fourth step performs a deterministic permutation on the components of the input. As such, it is an invertible operation. We define $C_4: \mathcal{B}^{4^d} \to \mathcal{B}^{4^d}$ to be the map that takes the components of a block in row-major order and permutes them so that the resulting block is in total sequency order \cite{zfp-doc}. The decompression operator performs the inverse permutation such that $D_4 C_4 (\bm{a}) = \bm{a}$, for all $\bm{a} \in \mathcal{B}^{4^d}$. We summarize the key details for these operators below.
\begin{prop}\label{Step4Prop}
Suppose $\bm{a} \in \mathcal{B}^{4^d}$. Then $\| C_4 (\bm{a}) \|_{\mathcal{B}, p} = \| \bm{a} \|_{\mathcal{B}, p} =\| D_4 (\bm{a}) \|_{\mathcal{B}, p}$, for all $1 \leq p \leq \infty$.
\end{prop}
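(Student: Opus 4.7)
The plan is to prove the result directly from the definitions by exploiting the fact that both $C_4$ and $D_4$ are component-wise permutations and that the norm $\|\cdot\|_{\mathcal{B},p}$ defined in (\ref{BpNorm}) only depends on the multiset $\{|f_{\mathcal{B}}(\bm{a}_i)|\}_{i=1}^{4^d}$, which is invariant under reordering of the index $i$.

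First, I would make explicit the fact already stated in the text: $C_4$ acts by sending a block in row-major order to the same block in total sequency order, i.e., there is a permutation $\sigma$ of $\{1,2,\ldots,4^d\}$ such that $C_4(\bm{a})_i = \bm{a}_{\sigma(i)}$ for every $\bm{a}\in\mathcal{B}^{4^d}$ and every $1\le i\le 4^d$. Similarly, $D_4$ corresponds to the inverse permutation $\sigma^{-1}$. In particular, the components of $C_4(\bm{a})$ are exactly the components of $\bm{a}$, merely in a different order, and likewise for $D_4(\bm{a})$.

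Next, I would simply plug into the definition of $\|\cdot\|_{\mathcal{B},p}$. For $1 \leq p < \infty$, because $\sigma$ is a bijection of $\{1,\ldots,4^d\}$, re-indexing the sum via $j=\sigma(i)$ gives
\begin{align*}
\|C_4(\bm{a})\|_{\mathcal{B},p}^p = \sum_{i=1}^{4^d}|f_{\mathcal{B}}(\bm{a}_{\sigma(i)})|^p = \sum_{j=1}^{4^d}|f_{\mathcal{B}}(\bm{a}_j)|^p = \|\bm{a}\|_{\mathcal{B},p}^p,
\end{align*}
and taking $p$-th roots yields equality of the norms. For $p=\infty$, the analogous calculation uses that $\max_{1\leq i\leq 4^d}|f_{\mathcal{B}}(\bm{a}_{\sigma(i)})| = \max_{1\leq j\leq 4^d}|f_{\mathcal{B}}(\bm{a}_j)|$ because the maximum is taken over the same finite set of values. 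The identical argument applied to $\sigma^{-1}$ handles $D_4$.

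There is really no hard step here; the only thing to be careful about is writing out the permutation structure of $C_4$ explicitly so that the re-indexing of the sum (or maximum) is formally justified. Since $\sigma$ does not depend on $\bm{a}$, the computation works uniformly over all of $\mathcal{B}^{4^d}$ and all $1\leq p\leq \infty$, yielding both equalities in one shot.
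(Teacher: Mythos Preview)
Your argument is correct. The paper does not actually supply a proof of this proposition; it is stated as an immediate consequence of the fact that $C_4$ and $D_4$ are componentwise permutations, and your write-up is exactly the natural justification one would give.
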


\subsection{Convert signed two's complement to negabinary}
\label{Step5Sec} 
At Step 5 of the algorithm, each component is converted from its two's complement representation to a negabinary representation. As we are representing values using a signed binary representation instead of a two's complement representation for our analysis, we will need to convert each signed binary representation to a negabinary representation. Using the operators defined in Section \ref{sec:notation}, we define the operator $C_5 : \mathcal{B}^{4^d} \to \mathcal{N}^{4^d}$ by
\begin{align*}
C_5 (\bm{a}) := F_{\cN}^{-1} F_\cB (\ba), \ \ \text{for all} \ \bm{a} \in \mathcal{B}^{4^d}.
\end{align*}

{A valid concern for this step is that the range of representable integers for an $N$-bit two's complement representation is not the same as the range of representable integers for an $N$-bit negabinary representation, for any integer $N \geq 2$. To account for this difference, ZFP uses an $(N-1)$-bit two's complement representation with one bit left unused, called a guard bit. In Step 3, the guard bit was required for the decorrelating transform but is unnecessary for the remaining steps. Thus, when the two's complement representation is converted to a negabinary representation in ZFP the guard bit is freed and used instead for an $N$-bit negabinary representation to ensure that the integer can be represented. Additionally, since the magnitude of each component is not increased in the following steps, the components can be converted back to two's complement without introducing any error due to round off. Hence, Step 5 is lossless. Lastly, it follows that the decompression operator is defined as $D_5 := F_{\cB}^{-1}F_\cN$. The following result summarizes the key result from this step used in the analysis in~\secref{sec:bounds}.}

\begin{prop}\label{Step5Prop}
Suppose $\bm{a} \in \mathcal{B}_{k}^n$. Then $\| C_5 (\bm{a}) \|_{\mathcal{N}, p} = \| \bm{a} \|_{\mathcal{B}, p}$ for all $1 \leq p \leq \infty$.
\end{prop}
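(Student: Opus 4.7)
The plan is to unwind the definition of $C_5$ and then appeal to the fact that both $\|\cdot\|_{\mathcal{B},p}$ and $\|\cdot\|_{\mathcal{N},p}$ are, by construction, pullbacks of the standard real $p$-norm under the bijections $F_\mathcal{B}$ and $F_\mathcal{N}$ respectively. The operator $C_5 = F_\mathcal{N}^{-1} \circ F_\mathcal{B}$ is, when viewed on the level of the real numbers represented, literally the identity: it only swaps the symbolic representation from signed binary to negabinary while preserving the underlying real value of each component. So the two norms must evaluate to the same real number.

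Concretely, I would proceed in three short steps. First, I would invoke the analog of Lemma \ref{BNormLemma} for $\mathcal{N}^n$ (noted in the paper immediately after Definition \ref{BopNorm}): for any $\bm{d} \in \mathcal{N}^n$ and $1 \leq p \leq \infty$,
\begin{align*}
\|\bm{d}\|_{\mathcal{N},p} = \|F_\mathcal{N}(\bm{d})\|_p.
\end{align*}
Applying this with $\bm{d} = C_5(\bm{a}) = F_\mathcal{N}^{-1} F_\mathcal{B}(\bm{a})$ and using that $F_\mathcal{N} \circ F_\mathcal{N}^{-1}$ is the identity on $\mathbb{R}^n$, one obtains
\begin{align*}
\|C_5(\bm{a})\|_{\mathcal{N},p} = \|F_\mathcal{N}(F_\mathcal{N}^{-1} F_\mathcal{B}(\bm{a}))\|_p = \|F_\mathcal{B}(\bm{a})\|_p.
\end{align*}
Second, I would apply Lemma \ref{BNormLemma} directly to rewrite $\|F_\mathcal{B}(\bm{a})\|_p = \|\bm{a}\|_{\mathcal{B},p}$. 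Chaining these equalities yields the claimed identity.

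There is essentially no obstacle: the proof is a one-line composition of bijections once the two norm-compatibility lemmas are in hand. The only subtlety worth mentioning explicitly is that the hypothesis $\bm{a} \in \mathcal{B}_k^n$ guarantees finite active bit sets, so $F_\mathcal{B}(\bm{a}) \in \mathbb{R}^n$ is unambiguously defined and $F_\mathcal{N}^{-1}$ applied to it lands in $\mathcal{N}^n$ (with finite negabinary support as well, by the choice of $\mathcal{N}$ made in Section \ref{sec:bitvectorSpace}); this legitimizes the use of both norm-equivalence lemmas without worrying about convergence issues in the defining sums.
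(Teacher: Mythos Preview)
Your proof is correct and is exactly the natural argument: unwind $C_5 = F_{\mathcal{N}}^{-1} F_{\mathcal{B}}$ and use the norm-compatibility identities (Lemma~\ref{BNormLemma} and its $\mathcal{N}$-analogue) to reduce both sides to $\|F_{\mathcal{B}}(\bm{a})\|_p$. The paper does not give an explicit proof of this proposition, treating it as an immediate consequence of the definitions; your write-up simply makes that immediacy explicit.
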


\subsection{Boolean matrix transposition}
\label{Step6Sec}
Next, the bit vectors are reordered by their bit index instead of their associated binary representation.
Under the bit vector representation, this corresponds to transposing the entire block. Since this operation is lossless and does not result in altering the representation of any element in the block, we do not define an operator here. For simplicity, we will work under the assumption that the transposition did not take place.

\subsection{Embedded block coding}
\label{Step7Sec} 
In Step 7, each bit plane of $4^d$ bits is individually coded with a variable-length code that is one to one and reversible (see \cite{zfp-doc} for details).  For purposes of the analysis, since Step 7 is lossless, we chose not to consider the encoding in Step 7 since the error analysis can be considered in any format. Hence, for the purposes of
simplifying the analysis, we take $C_7 = D_7 = I_{\mathcal{N}}$. 


\subsection{Finite-precision: Bit stream truncation}
\label{Step8Sec} 
\black{Step 8 is dependent on one parameter, denoted $\beta \geq
0$, and an index set dependent on $\beta$ and the input, denoted as $\mathcal{P}$.} Here, $\beta$ represents the number of most significant bit planes to keep
during Step 8 and any discarded bit plane is replaced with all-zero bits. \black{Note that 
	the value of $\beta$ corresponds to the parameter \texttt{zfp\_stream.maxprec} in ZFP and can be set to any positive integer by the user in the fixed precision 
	mode of ZFP.} The operator for Step 8 is given by $\tilde{C}_8:
\cN^{4^d} \rightarrow \cN^{4^d}$ and defined as 
\begin{align*}
\tilde{C}_8(\bm{d}) := T_{\mathcal{P}} (\bm{d}), \text{ for all } \bm{d} \in \mathcal{N}^{4^d},
\end{align*}
where \black{$\mathcal{P} = \{ i \in \mathbb{Z} : i > q + 1 - \beta \}$,} $q \in \mathbb{N}$ \black{is the value} from Step 2, and $T_\cP$ is the truncation operator with respect to set $\cP$. The lossless compression and decompression operators are then 
defined by $C_8 := I_\cN$ and $D_8 := I_\cN$, respectively. \black{We conclude this step with a proposition that immediately follows from Lemma~\ref{ProjectionLemma2}.}
\begin{prop}
	\label{Step8Prop}
	Suppose $\bm{a} \in \mathcal{B}^{4^d}$ such that $F_{\mathcal{B}} (\bm{a}) \in \mathbb{Z}^{4^d}$ and \red{$e_{max, \mathcal{B}} (F_{\mathcal{B}} (\bm{a})) \geq q-1$.} Then $\| \tilde{C}_8 C_5 \bm{a} - C_8 C_5 \bm{a} \|_{\mathcal{N}, \infty} \leq \frac{8}{3}  \epsilon_\beta \| \bm{a} \|_{\mathcal{B}, \infty}$.
\end{prop}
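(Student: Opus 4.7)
The plan is to apply Lemma \ref{ProjectionLemma2}(ii) directly to the negabinary representation $\bm{d} := C_5(\bm{a}) = F_{\mathcal{N}}^{-1}F_{\mathcal{B}}(\bm{a}) \in \mathcal{N}^{4^d}$, with the parameters in the lemma chosen so that its truncation set coincides with the set $\mathcal{P} = \{i \in \mathbb{Z} : i > q + 1 - \beta\}$ used in the definition of $\tilde{C}_8$. Since $C_8 = I_{\mathcal{N}}$, the quantity to control is simply $\|\tilde{C}_8 C_5 \bm{a} - C_8 C_5 \bm{a}\|_{\mathcal{N}, \infty} = \|T_{\mathcal{P}}(\bm{d}) - \bm{d}\|_{\mathcal{N}, \infty}$.

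To invoke Lemma \ref{ProjectionLemma2}(ii), I would match $\mathcal{P}$ with the lemma's set $\mathcal{S} = \{i \in \mathbb{Z} : i > p - q'\}$ by taking $p = q+1$ and $q' = \beta$. The lemma then yields
\begin{equation*}
\|T_{\mathcal{P}}(\bm{d}) - \bm{d}\|_{\mathcal{N}, \infty} \leq \tfrac{2}{3}\, \epsilon_\beta\, 2^{q+1} = \tfrac{4}{3}\, \epsilon_\beta \cdot 2^{q}.
\end{equation*}
This isolates the dependence on $\beta$ in the factor $\epsilon_\beta$ while leaving a power of two that still needs to be replaced by $\|\bm{a}\|_{\mathcal{B}, \infty}$.

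The remaining step is to convert $2^q$ into the $\|\bm{a}\|_{\mathcal{B}, \infty}$ appearing on the right-hand side of the statement. For this I would use Lemma \ref{ProjectionLemma1}(i) together with the hypothesis $e_{max, \mathcal{B}}(F_{\mathcal{B}}(\bm{a})) \geq q-1$, which gives $\|\bm{a}\|_{\mathcal{B}, \infty} \geq 2^{e_{max, \mathcal{B}}(F_{\mathcal{B}}(\bm{a}))} \geq 2^{q-1}$, i.e.\ $2^q \leq 2\,\|\bm{a}\|_{\mathcal{B}, \infty}$. Substituting into the previous inequality produces the claimed bound $\tfrac{8}{3}\,\epsilon_\beta \|\bm{a}\|_{\mathcal{B}, \infty}$.

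There is no real obstacle here, which is consistent with the authors' remark that the proposition follows immediately from Lemma \ref{ProjectionLemma2}. The only bookkeeping to get right is the parameter identification $(p, q') = (q+1, \beta)$ when applying the lemma, and the observation that trading $2^q$ for $2^{q-1}$ via the exponent hypothesis introduces one extra factor of $2$, turning the constant $\tfrac{4}{3}$ coming out of Lemma \ref{ProjectionLemma2}(ii) into the $\tfrac{8}{3}$ stated in the proposition.
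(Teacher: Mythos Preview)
Your proposal is correct and mirrors the paper's own proof essentially step for step: set $\bm{d}=C_5\bm{a}$, apply Lemma~\ref{ProjectionLemma2}(ii) with $(p,q')=(q+1,\beta)$ to get the bound $\tfrac{2}{3}\epsilon_\beta 2^{q+1}$, and then use $\|\bm{a}\|_{\mathcal{B},\infty}\ge 2^{q-1}$ from the exponent hypothesis (via Lemma~\ref{ProjectionLemma1}(i)) to absorb the extra factor of $2$ into the constant $\tfrac{8}{3}$.
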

\begin{proof}
	Let $\bm{d} = C_5 \bm{a}$. From Lemma \ref{ProjectionLemma2} ($ii$) we have that
	\black{\begin{align}
	\| \tilde{C}_8 \bm{d} - C_8 \bm{d} \|_{\mathcal{N}, \infty} &= \| T_{\mathcal{P}} \bm{d} - \bm{d} \|_{\mathcal{N}, \infty} \leq \frac{2}{3} \epsilon_{\beta} 2^{q + 1}. \label{Step8PropEQ0}
	\end{align}}
	From the assumption \red{$e_{max, \mathcal{B}} (F_{\mathcal{B}} (\bm{a})) \geq q-1$} it now follows that \red{$\|\bm{a}\|_\infty \geq 2^{q-1}$} and 
	\begin{align*}
	\| \tilde{C}_8 C_5 \bm{a} - C_8 C_5 \bm{a} \|_{\mathcal{N}, \infty} &\leq \frac{8}{3 }\epsilon_{\beta} \| \bm{a} \|_{\infty}. 
	\end{align*} 
\end{proof}

To conclude this section, it should be noted that the inputs at Step 5 of ZFP satisfy the hypotheses of Proposition \ref{Step8Prop} as each component is encoded as an integer up to precision $q$.

\subsection{Defining the ZFP Compression Operator}
\label{ZFPCompOperator}
To conclude this section, we define the ZFP fixed precision compression
and decompression operators by composing the operators defined for each
step of the algorithm. In order to simplify the definition of each
operator, we omit $C_7$, $D_7$, $C_8$, and $D_8$ from the composition, as
they were defined to be the identity operator $I_{\mathcal{N}}$. 

{\begin{definition}
		The lossy fixed precision compression operator, $\tilde{C}: \mathbb{R}^{4^d} \to \mathcal{N}^{4^d}$, is defined by 
		\begin{align*}
		\tilde{C} (\bm{x}) = \left( \tilde{C}_{8} \circ {C}_{5} \circ {C}_{4} \circ \tilde{C}_{3} \circ \tilde{C}_{2} \right) (\bm{x}), \ \ \ \text{for all} \ \bm{x} \in \mathbb{R}^{4^d},
		\end{align*}
		where $\circ$ denotes the usual composition of operators. The lossless fixed precision compression operator, $C: \mathbb{R}^{4^d} \to \mathcal{N}^{4^d}$, is defined by 
		\begin{align*}
		C(\bm{x}) = \left(C_{5} \circ C_{4} \circ C_{3} \circ C_{2} \right) (\bm{x}), \ \ \ \text{for all} \ \bm{x} \in \mathbb{R}^{4^d}.
		\end{align*} 
		Lastly, the lossy fixed precision decompression operator, $\tilde{D}:  \mathcal{N}^{4^d} \to \mathbb{R}^{4^d}$, is defined by 
		\begin{align*}
		\tilde{D}(\bm{d}) = \left( \tilde{D}_{2} \circ {D}_{3} \circ D_{4} \circ D_{5} \right) (\bm{d}), \ \ \ \text{for all} \ \bm{d} \in \mathcal{N}^{4^d},
		\end{align*} 
		and the the lossless fixed precision decompression operator ${D} : \mathcal{N}^{4^d} \to \mathbb{R}^{4^d}$ is defined by 
		\begin{align*}
		{D}(\bm{d}) = \left( D_{2} \circ {D}_{3} \circ D_{4} \circ D_{5} \right) (\bm{d}), \ \ \ \text{for all} \ \bm{d} \in \mathcal{N}^{4^d}.
		\end{align*} 
\end{definition}}

\section{Error Bounds for ZFP Compression and Decompression}
\label{sec:bounds}
Now that the ZFP fixed precision compression and decompression operators
have been defined, we can establish a bound on the forward error for an
arbitrary input that is compressed and decompressed. We begin by
analyzing the error introduced during compression. Recall that $\beta$
is the fixed precision parameter, i.e., $\beta$ bits for each of the ZFP
transform coefficients will be kept during compression.  
\begin{lemma}\label{lemma:difftildeCandC} Assume $\bx \in \mathbb{R}^{4^d}$ with $\bm{x} \neq \bm{0}$ such that $F^{-1}_{\mathcal{B}} (\bx) \in \cB_k^{4^d}$, for some precision $k$. Let {$\beta \geq  0$} be the fixed precision parameter. Then 
\begin{align*}
\|\tilde{C} \bx - C \bx\|_{\mathcal{N}, \infty} \leq 2^{-\ell} \left( \frac{8}{3} \epsilon_\beta+ \epsilon_q \left(1+ \frac{8}{3} \epsilon_\beta \right) \left(k_\cL(1+\epsilon_q)+1 \right )\right) \|\bx\|_\infty,
\end{align*}
where $q \in \mathbb{N}$ is the precision for the block-floating point representation in Step 2, \red{$\ell = e_{max,\cB}(\bx) - q +1$}, and \red{$k_\cL= \frac{7}{4} (2^d-1)$.} 
\end{lemma}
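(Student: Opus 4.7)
The plan is to decompose $\tilde{C}\bx - C\bx$ into contributions from the three lossy steps (Steps~2, 3, and 8) via a telescoping sum, bound each contribution using the results from Section~\ref{sec:analysis}, and reassemble the pieces into the factored form given. Since $C_8 = I_{\cN}$, we may write $C = C_8 \circ C_5 \circ C_4 \circ C_3 \circ C_2$. Inserting two hybrid compositions produces the telescoping identity
\begin{align*}
\tilde{C}\bx - C\bx
&= (\tilde{C}_8 - C_8)\, C_5 C_4 \tilde{C}_3 \tilde{C}_2 \bx \\
&\quad + C_5 C_4 (\tilde{C}_3 - C_3) \tilde{C}_2 \bx \\
&\quad + C_5 C_4 C_3 (\tilde{C}_2 - C_2) \bx,
\end{align*}
and the triangle inequality reduces the task to bounding each of these three terms.

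For the first term, Proposition~\ref{Step8Prop} applied with $\ba = C_4 \tilde{C}_3 \tilde{C}_2 \bx$, together with norm-preservation of $C_4$ (Proposition~\ref{Step4Prop}), yields the bound $\tfrac{8}{3}\epsilon_\beta \|\tilde{C}_3\tilde{C}_2 \bx\|_{\cB,\infty}$. For the second term, norm-preservation of $C_5$ and $C_4$ (Propositions~\ref{Step5Prop} and \ref{Step4Prop}) strips the outer operators, leaving $\|(\tilde{C}_3 - C_3)\tilde{C}_2\bx\|_{\cB,\infty}$, which Lemma~\ref{lemma:boundT} bounds by $k_\cL \epsilon_q \|\tilde{C}_2 \bx\|_{\cB,\infty}$. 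For the third term, I exploit that $\|\cL_d\|_\infty \leq \|\cL\|_\infty^d = 1$, so $C_3 = F_\cB^{-1}\cL_d F_\cB$ has induced $\cB$-norm at most one; combined with Proposition~\ref{Step2Prop}(i) this yields $2^{-\ell}\epsilon_q \|\bx\|_\infty$.

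It then remains to convert the intermediate norms $\|\tilde{C}_2 \bx\|_{\cB,\infty}$ and $\|\tilde{C}_3\tilde{C}_2\bx\|_{\cB,\infty}$ into multiples of $\|\bx\|_\infty$. The triangle inequality combined with Proposition~\ref{Step2Prop}(i), Lemma~\ref{BNormLemma}, and Lemma~\ref{ShiftLemma} gives $\|\tilde{C}_2 \bx\|_{\cB,\infty} \leq 2^{-\ell}(1+\epsilon_q)\|\bx\|_\infty$, since $\|C_2 \bx\|_{\cB,\infty} = \|S_\ell F_\cB^{-1}\bx\|_{\cB,\infty} \leq 2^{-\ell}\|\bx\|_\infty$. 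Similarly, the triangle inequality together with Lemma~\ref{lemma:boundT} and $\|\cL_d\|_\infty \leq 1$ yields $\|\tilde{C}_3 \tilde{C}_2 \bx\|_{\cB,\infty} \leq (1+k_\cL \epsilon_q)\|\tilde{C}_2 \bx\|_{\cB,\infty} \leq 2^{-\ell}(1+\epsilon_q)(1+k_\cL\epsilon_q)\|\bx\|_\infty$. Substituting these bounds into the three-term decomposition and factoring out $2^{-\ell}\|\bx\|_\infty$ reproduces the claimed expression (expanding both the target and the sum, every cross term matches).

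The main obstacle I anticipate is not analytical difficulty but careful bookkeeping: tracking which space ($\mathbb{R}^{4^d}$, $\cB^{4^d}$, or $\cN^{4^d}$) each intermediate quantity lives in; verifying that the hypotheses of Propositions~\ref{Step2Prop} and \ref{Step8Prop} and Lemma~\ref{lemma:boundT} (in particular $e_{max,\cB}\geq q-1$ and integrality of the block after Step~2) propagate along the composition chain; and algebraically rearranging the resulting sum of products into the precise factored form stated in the lemma.
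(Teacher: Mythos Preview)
Your proposal is correct and follows essentially the same approach as the paper: both arguments telescope $\tilde{C}\bx - C\bx$ into the Step~8, Step~3, and Step~2 contributions, invoke Proposition~\ref{Step8Prop}, Lemma~\ref{lemma:boundT}, and Proposition~\ref{Step2Prop}(i) respectively, and then convert the intermediate norms $\|\tilde{C}_2\bx\|_{\cB,\infty}$ and $\|\tilde{C}_3\tilde{C}_2\bx\|_{\cB,\infty}$ back to $\|\bx\|_\infty$ using the same triangle-inequality estimates. The only cosmetic difference is that you write out the three-term telescoping explicitly at the outset, whereas the paper splits off the Step~8 term first and then telescopes the remainder incrementally; the ingredients and the resulting constant are identical.
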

\begin{proof} Define $c(\bm{x}) := \|\tilde{C}\bx - C \bx\|_{\cN,\infty}$. First, we find that
\begin{align*}
c (\bm{x}) &= \| \tilde{C}_8 C_5 C_4 \tilde{C}_3 \tilde{C}_2 \bx - C \bx \|_{\cN,\infty}, \\
 &= \| \tilde{C}_8 C_5 C_4 \tilde{C}_3 \tilde{C}_2 \bx - C_5 C_4 \tilde{C}_3 \tilde{C}_2 \bx +C_5 C_4 \tilde{C}_3 \tilde{C}_2 \bx- C \bx \|_{\cN,\infty}, \\
&\leq \| \tilde{C}_8 C_5 C_4 \tilde{C}_3 \tilde{C}_2 \bx - C_5 C_4 \tilde{C}_3 \tilde{C}_2 \bx \|_{\mathcal{N}, \infty} + \| C_5 C_4 \tilde{C}_3 \tilde{C}_2 \bx - C \bx \|_{\mathcal{N}, \infty}. 
\end{align*}
By the definition of $C_4$ and $C_5$, we have $\| C_5 C_4 \tilde{C}_3 \tilde{C}_2 \bx - C \bx \|_{\mathcal{N}, \infty} = \| \tilde{C}_3 \tilde{C}_2 \bx - C_3 C_2 \bx \|_{\mathcal{B}, \infty}$. Additionally, $\| \tilde{C}_8 C_5 C_4 \tilde{C}_3 \tilde{C}_2 \bx - C_5 C_4 \tilde{C}_3 \tilde{C}_2 \bx \|_{\mathcal{N}, \infty} \leq 8 \epsilon_\beta \|\tilde{C}_3\tilde{C}_2 \bx \|_{\mathcal{B}, \infty}$ follows by applying Proposition \ref{Step8Prop} and the definition of $C_4$. Hence,
\begin{align*}
c (\bm{x}) &\leq 8 \epsilon_\beta \| \tilde{C}_3 \tilde{C}_2 \bx \|_{\mathcal{B}, \infty} + \| \tilde{C}_3 \tilde{C}_2 \bx - C_3 C_2 \bx \|_{\mathcal{B}, \infty} ,\\
& \leq \left( 1 + \frac{8}{3} \epsilon_\beta \right) \| \tilde{C}_3 \tilde{C}_2 \bx - C_3 \tilde{C}_2 \bm{x} \|_{\mathcal{B}, \infty} + \frac{8}{3} \epsilon_\beta \| {C}_3 \tilde{C}_2 \bx \|_{\mathcal{B}, \infty} + \|{C}_3\tilde{C}_2 \bx -C_3 C_2 \bx \|_{\mathcal{B}, \infty}, \\
& \leq \left( 1 + \frac{8}{3} \epsilon_\beta \right) \| \tilde{C}_3 \tilde{C}_2 \bx - C_3 \tilde{C}_2 \bm{x} \|_{\mathcal{B}, \infty} + \frac{8}{3} \epsilon_\beta \| \tilde{C}_2 \bx \|_{\mathcal{B}, \infty} + \| \tilde{C}_2 \bx - C_2 \bx \|_{\mathcal{B}, \infty},
\end{align*}
where the final inequality follows from the linearity of $C_3$ and $\|
C_3 \|_{\mathcal{B}, \infty} \leq 1$. By the definition of
$\tilde{C}_2$, we have that \red{$e_{max, \cB} (\tilde{C_2}\bx) \geq
q - 1$}. Hence, Lemma \ref{lemma:boundT} yields that $\|
\tilde{C}_3\tilde{C}_2 \bx -C_3\tilde{C}_2\bx\|_{\cB,\infty} \leq k_\cL
\epsilon_q \| \tilde{C}_2 \bm{x} \|_{\cB, \infty}$. Lastly, using
Proposition \ref{Step2Prop}, we have that $\| \tilde{C}_2 \bx - C_2 \bx
\|_{\mathcal{B}, \infty} \leq 2^{-\ell} \epsilon_q \|\bx\|_\infty$,
which yields the inequality $\| \tilde{C}_2 \bx\|_{\mathcal{B}, \infty}
\leq 2^{-\ell} (1+\epsilon_q) \|\bx\|_\infty$. 
Combining these observations provides the desired result.
\end{proof}
The following result provides {bound on the error} resulting from compressing then decompressing a $4^d$ block using ZFP. 
\begin{theorem}\label{thm:diffDCandDC} 
Assume $\bx \in \mathbb{R}^{4^d}$ with $\bm{x} \neq \bm{0}$ such that $F_{\mathcal{B}} (\bx) \in \cB_k^{4^d}$, for some precision $k$. \red{Let $0\leq \beta \leq  q- 2d+2$ be the fixed precision parameter.}\footnote{\red{In other words, it is assumed that at least $2d$ least significant bit planes are discarded in Step 8. If less than $2d$ bit planes are discarded, i.e.,  $q- 2d +2 <\beta <q+2 $, error will occur in Step 3 from round-off that may occur by the decompression operator, which is not taken into account in Theorem \ref{thm:diffDCandDC}.  Theorem \ref{thm:diffDCandDCappendix} is the generalization of Theorem \ref{thm:diffDCandDC} for the assumption $q- 2d +2 <\beta <q+2 $.  See Appendix \ref{sec:appendixb} for details.}} Then 
\begin{align}
\| \tilde{D}\tilde{C} \bx - \bx \|_\infty &\leq K_\beta \|\bx\|_\infty
\end{align}
where $q \in \mathbb{N}$ is the precision for the block-floating point representation in Step 2,
\begin{align}
K_\beta := \left( \frac{15}{4} \right)^d \left(  (1+\epsilon_k)\left( \frac{8}{3} \epsilon_\beta+ \epsilon_q \left(1+ \frac{8}{3} \epsilon_\beta \right) \left(k_\cL(1+\epsilon_q)+1 \right )\right)+\epsilon_k \right),
\end{align}
\red{and $k_\cL= \frac{7}{4} (2^d-1)$.} 
\end{theorem}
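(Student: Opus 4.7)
The plan is to split $\tilde{D}\tilde{C}\bx - \bx$ via the triangle inequality, using the fact that the lossless operators are genuine inverses (so $DC\bx = \bx$). Specifically, I would write
\begin{align*}
\|\tilde{D}\tilde{C}\bx - \bx\|_\infty = \|\tilde{D}\tilde{C}\bx - DC\bx\|_\infty \leq \underbrace{\|(\tilde{D}-D)\tilde{C}\bx\|_\infty}_{(\text{I})} + \underbrace{\|D(\tilde{C}\bx - C\bx)\|_\infty}_{(\text{II})},
\end{align*}
so the error naturally decomposes into a ``decompression error on the compressed input'' plus an ``amplified compression error.'' The verification that $DC\bx = \bx$ follows directly from the definitions: $D_2 C_2 = F_\cB S_{-\ell} S_\ell F_\cB^{-1} = I$ and $D_3 C_3 = F_\cB^{-1} \cL_d^{-1} \cL_d F_\cB = I$, while $D_4 C_4$ and $D_5 C_5$ are identities by construction.

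For (II), I would observe that $D$ acts on $\mathcal{N}^{4^d}$ by composition of $D_5$ (value-preserving conversion), $D_4$ (permutation, preserves $\|\cdot\|_{\mathcal{B},\infty}$), $D_3$ (multiplication by $\cL_d^{-1}$, whose $\infty$-norm is at most $\|\cL^{-1}\|_\infty^d = (15/4)^d$ via the Kronecker bound used in Section~\ref{Step3Sec}), and finally $D_2$ (shift by $-\ell$, contributing a factor $2^\ell$ by Lemma~\ref{ShiftLemma}). Hence $\|D\bu\|_\infty \leq 2^\ell (15/4)^d \|\bu\|_{\mathcal{N},\infty}$ for any $\bu \in \mathcal{N}^{4^d}$. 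Applying Lemma~\ref{lemma:difftildeCandC} to $\bu = \tilde{C}\bx - C\bx$ cancels the $2^\ell$ against the $2^{-\ell}$ in that lemma, leaving
\begin{align*}
(\text{II}) \leq (15/4)^d M \|\bx\|_\infty, \qquad M := \tfrac{8}{3}\epsilon_\beta + \epsilon_q\bigl(1 + \tfrac{8}{3}\epsilon_\beta\bigr)\bigl(k_\cL(1+\epsilon_q)+1\bigr).
\end{align*}

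For (I), note that $(\tilde{D}-D)\tilde{C}\bx = (\tilde{D}_2 - D_2)(D_3 D_4 D_5 \tilde{C}\bx)$ since only the Step~2 decompression is lossy. Applying Proposition~\ref{Step2Prop}(ii) gives $(\text{I}) \leq 2^\ell \epsilon_k \|D_3 D_4 D_5 \tilde{C}\bx\|_{\mathcal{B},\infty}$, and the same operator-norm argument as above (without the final shift) yields $\|D_3 D_4 D_5 \tilde{C}\bx\|_{\mathcal{B},\infty} \leq (15/4)^d \|\tilde{C}\bx\|_{\mathcal{N},\infty}$. The step requiring the most care is bounding $\|\tilde{C}\bx\|_{\mathcal{N},\infty}$: I would use the triangle inequality $\|\tilde{C}\bx\|_{\mathcal{N},\infty} \leq \|\tilde{C}\bx - C\bx\|_{\mathcal{N},\infty} + \|C\bx\|_{\mathcal{N},\infty}$, bound the first term again by Lemma~\ref{lemma:difftildeCandC} (giving $2^{-\ell}M\|\bx\|_\infty$), and bound the second by tracing through the lossless operators: $C_5$ and $C_4$ preserve the relevant norms, $\|\cL_d\|_\infty \leq 1$, and $C_2$ contributes a factor $2^{-\ell}$, so $\|C\bx\|_{\mathcal{N},\infty} \leq 2^{-\ell}\|\bx\|_\infty$. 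Assembling these bounds gives $(\text{I}) \leq (15/4)^d \epsilon_k (M+1)\|\bx\|_\infty$.

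Adding (I) and (II) gives
\begin{align*}
\|\tilde{D}\tilde{C}\bx - \bx\|_\infty \leq (15/4)^d \bigl[(1+\epsilon_k)M + \epsilon_k\bigr]\|\bx\|_\infty,
\end{align*}
which is exactly $K_\beta \|\bx\|_\infty$. The main obstacle I anticipate is not any single inequality but the bookkeeping: making sure the $2^\ell$ and $2^{-\ell}$ factors cancel cleanly in both terms, and remembering that $\|\tilde{C}\bx\|_{\mathcal{N},\infty}$ must itself be controlled by a triangle-inequality detour through $\|C\bx\|_{\mathcal{N},\infty}$ (this is precisely what generates the $(1+\epsilon_k)$ prefactor on $M$). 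The hypothesis $\beta \leq q - 2d + 2$ is used implicitly to justify that no additional rounding occurs in the backward transform of Step~3, so that $\tilde{D}_3 = D_3$ and the decomposition above is valid.
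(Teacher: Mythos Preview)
Your proposal is correct and follows essentially the same route as the paper's proof: the same triangle-inequality split into $\|(\tilde{D}_2-D_2)D_3D_4D_5\tilde{C}\bx\|_\infty$ and $\|D(\tilde{C}\bx-C\bx)\|_\infty$, the same invocation of Proposition~\ref{Step2Prop}(ii) and Lemma~\ref{lemma:difftildeCandC}, the same operator-norm bound $\|D_2D_3D_4D_5\|\le 2^\ell(15/4)^d$, and the same detour $\|\tilde{C}\bx\|_{\mathcal{N},\infty}\le\|\tilde{C}\bx-C\bx\|_{\mathcal{N},\infty}+\|C\bx\|_{\mathcal{N},\infty}$ to generate the $(1+\epsilon_k)$ factor. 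Your bookkeeping and the final assembly match the paper exactly.
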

\begin{proof} 
Observe that
\begin{align}
 \|\tilde{D} \tilde{C}\bx - D C \bx \|_\infty &= \| \tilde{D}_2{D}_3 D_4 D_5 \tilde{C} \bx - D_2 D_3 D_4 D_5 C \bx \|_\infty, \nonumber \\ 
&\leq  \| \tilde{D}_2{D}_3 D_4 D_5 \tilde{C} \bx -{D}_2{D}_3 D_4 D_5 \tilde{C} \bx\|_\infty + \|{D}_2{D}_3 D_4 D_5 \tilde{C} \bx - D_2 D_3 D_4 D_5 C \bx \|_\infty, \nonumber \\ 
&\leq 2^\ell\epsilon_k\|{D}_3 D_4 D_5 \tilde{C} \bx\|_{\cB,\infty}+\| D_2{D}_3 D_4 D_5\|   \| \tilde{C} \bx -  C \bx \|_{\mathcal{N}, \infty}, \label{5.2eq1} \\
&\leq 2^{\ell} \left ( \frac{15}{4} \right)^d \left(\epsilon_k\| \tilde{C} \bx \|_{\black{\mathcal{N}}, \infty} +  \| \tilde{C} \bx -  C \bx \|_{\mathcal{N}, \infty}\right),  \label{5.2eq2} \\
&\leq 2^{\ell} \left ( \frac{15}{4} \right)^d \left((1+\epsilon_k) \| \tilde{C} \bx -  C \bx \|_{\mathcal{N}, \infty} + \black{\epsilon_k}\|{C} \bx \|_{\mathcal{N}, \infty} \right), \nonumber \\ 
&\leq 2^{\ell} \left ( \frac{15}{4} \right)^d \left((1+\epsilon_k) \| \tilde{C} \bx -  C \bx \|_{\mathcal{N}, \infty} + \black{2^{-\ell}\epsilon_k}\|\bx \|_{\infty} \right), \label{5.2eq3} 
\end{align}
where (\ref{5.2eq1}) follows from Proposition~\ref{Step2Prop}($ii$) and (\ref{5.2eq2}) follows from the linearity of $D_2$, $D_3$, $D_4$, and $D_5$ and \red{$\ell = e_{max,\cB} (\bx) - q + 1$}. 
Applying Lemma~\ref{lemma:difftildeCandC} in (\ref{5.2eq3}) yields the desired result.
\end{proof}
\DIFdelbegin 
\DIFdelend \DIFaddbegin 

\black{Since the constant $K_{\beta}$ appears in the bound, which is dependent on $k$, $q$, $d$, and $\beta$, we provide a brief discussion on $K_{\beta}$ in Appendix \ref{sec:appendixc}.} Note that Theorem \ref{thm:diffDCandDC} yields the following bound on the maximum of the component-wise relative error: 
\begin{align}
\max_{i, \bx_i \neq 0} \left | \frac{(\tilde{D} \tilde{C} \bx)_i -\bx_i}{ \bx_i} \right| & \leq  \frac{1}{\min_{i, \bx_i \neq 0}|\bx_i| }\left  \| \tilde{D} \tilde{C}\bx -\bx\right \|_\infty \leq K_\beta \black{2^{e_{max,\cB}(\bx) - e_{min,\cB}(\bx)}}.
\end{align}

{So far, the discussion and error analysis has focused on the fixed precision mode. However, as mentioned during the introduction, ZFP also has a fixed accuracy and fixed rate mode. While we will not spend much time providing details for the fixed accuracy and fixed rate modes, it should be noted that the error bound in Theorem \ref{thm:diffDCandDC} allows us to develop error bounds for the fixed accuracy and fixed rate modes.}

{In the fixed accuracy mode, the transform coefficients in each $4^d$ block are encoded up to a minimum bit plane number. The index of the minimum bit plane will be dependent on the largest absolute magnitude and the constant $K_\beta$ found in Theorem~\ref{thm:diffDCandDC}. The following theorem is an extension of Theorem~\ref{thm:diffDCandDC} for fixed accuracy mode of ZFP. 
\begin{theorem}\label{thm:diffDCandDCFixedAccuracy} 
Assume $\bx \in \mathbb{R}^{n}$ with $\bm{x} \neq \bm{0}$ such that $F_{\mathcal{B}} (\bx) \in \cB_k^{n}$, for some precision $k$ and let $\hat{\bx}$ represent the compressed and decompressed values from using the fixed accuracy mode of ZFP. To guarantee $b\in \mathbb{N}$ bits of accuracy, i.e., $\| \hat{\bx} - \bx \|_\infty  \leq 2^{-b}$, $\beta$ must satisfy: 
\begin{equation} \label{eqn:fixedaccuracy} \beta \geq \log_2\left(\frac{ \frac{16}{3}(1+ c)} {\frac{\left(\left( \frac{4}{15} \right)^d  2^{-b-e_{max}} - \epsilon_k\right)}{(1+\epsilon_k)}- c} \right),  \end{equation}
where $c = \epsilon_q \left(k_\cL(1+\epsilon_q)+1\right)$ and $e_{max}:= e_{max}(\bx)$. 
\end{theorem}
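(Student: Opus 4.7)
The plan is to derive this statement as an algebraic inversion of Theorem~\ref{thm:diffDCandDC}. In the fixed accuracy mode, ZFP selects, per block, a bit-plane cutoff so as to meet a user-specified absolute tolerance; within the framework of Section~\ref{sec:bounds} this is equivalent to choosing, for each block, a fixed precision parameter $\beta$ determined by the block's $e_{max}$. Thus the task reduces to determining the smallest $\beta$ for which the estimate $\|\tilde{D}\tilde{C}\bx - \bx\|_\infty \leq K_\beta \|\bx\|_\infty$ from Theorem~\ref{thm:diffDCandDC} already implies $\|\hat{\bx} - \bx\|_\infty \leq 2^{-b}$.

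First, I would combine Theorem~\ref{thm:diffDCandDC} with an upper bound for $\|\bx\|_\infty$ in terms of $2^{e_{max}}$ consistent with the conventions of Section~\ref{sec:notation}, reducing the goal $\|\hat{\bx}-\bx\|_\infty \leq 2^{-b}$ to the sufficient condition $K_\beta \leq 2^{-b - e_{max}}$. Writing $c := \epsilon_q(k_\cL(1+\epsilon_q)+1)$ for brevity and expanding the definition of $K_\beta$, division through by $(15/4)^d$ produces the scalar inequality
\begin{equation*}
(1+\epsilon_k)\left( \tfrac{8}{3}\epsilon_\beta (1+c) + c \right) + \epsilon_k \;\leq\; \left(\tfrac{4}{15}\right)^d 2^{-b - e_{max}}.
\end{equation*}

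Second, I would collect the $\epsilon_\beta$ terms on the left, move $(1+\epsilon_k) c + \epsilon_k$ to the right, and divide by the positive factor $\tfrac{8}{3}(1+\epsilon_k)(1+c)$, which yields
\begin{equation*}
\epsilon_\beta \;\leq\; \frac{3}{8(1+c)} \left( \frac{ (4/15)^d\, 2^{-b - e_{max}} - \epsilon_k }{ 1+\epsilon_k } - c \right).
\end{equation*}
Substituting $\epsilon_\beta = 2^{1-\beta}$ and taking $\log_2$ (which reverses the inequality, since $\beta \mapsto 2^{1-\beta}$ is decreasing) produces the claimed lower bound on $\beta$; the constant $16/3$ in the numerator comes from absorbing the factor of $2$ in $2^{1-\beta}$ into $8/3$.

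The main obstacle is careful bookkeeping: the parameter $\epsilon_\beta$ sits inside $K_\beta$ in two places---once as $\tfrac{8}{3}\epsilon_\beta$ in isolation, and once as the multiplier of $c$ via the factor $(1 + \tfrac{8}{3}\epsilon_\beta)$---so the two occurrences must be combined before inverting. A secondary matter worth flagging is the implicit feasibility condition that the denominator $\tfrac{(4/15)^d\, 2^{-b - e_{max}} - \epsilon_k}{1+\epsilon_k} - c$ be strictly positive, which encodes the fundamental floor that the requested accuracy $b$ cannot exceed the intrinsic roundoff limits set by $k$, $q$, and $d$; once $b$ is too aggressive, no finite $\beta$ suffices and the bound becomes vacuous.
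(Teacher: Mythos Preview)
Your approach is correct and matches the paper's: the paper's proof also reduces the claim to the sufficient condition $K_\beta \leq 2^{-b-e_{max}}$ (applied blockwise, with $e_{max}$ the global maximum over all $4^d$ blocks), and treats the stated inequality \eqref{eqn:fixedaccuracy} as the algebraic equivalent of that condition. You spell out the inversion from $K_\beta \leq 2^{-b-e_{max}}$ to the bound on $\beta$ explicitly, whereas the paper simply asserts it; and you correctly flag the implicit positivity constraint on the denominator, which the paper leaves unstated.
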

\begin{proof}
Let $\bx^i$ denote the $i$-th $4^d$ block of the $d$-dimensional data $\bx \in \mathbb{R}^{n}$. Then we have that $e_{max}:= e_{max}(\bx) = \max_i e_{max}(\bx^i)$. From the hypothesis it follows that $K_{\beta}\leq 2^{-b-e_{max}}$. From Theorem ~\ref{thm:diffDCandDC} and the fact that $\|\bx^i\|_\infty\geq 2^{e_{max}}$ for all $i$, we conclude that 
\begin{align*}
\| \hat{\bx} - \bx \|_\infty &= \max_i \| \tilde{D} \tilde{C}\bx^i - \bx^i \|_\infty \leq  \max_i K_{\beta} \|\bx^i\|_\infty \leq 2^{-b}.
\end{align*}
\end{proof}}
\noindent If we assume $k = q = \infty$, i.e., infinite precision, then Equation (\ref{eqn:fixedaccuracy}) simplifies to  
\begin{equation}  \beta \geq \log_2\left(\left( \frac{15}{4} \right)^d  \frac{16}{3}2^{b+e_{max}}  \right).  \end{equation}
{Similarly, an upper bound for the fixed rate mode of ZFP can be obtained using Theorem ~\ref{thm:diffDCandDC}. For the purposes of understanding the following result, it suffices to know that for the fixed rate mode of ZFP the user provides a maximum rate, denoted $r$, or number of bits per value to be stored.  
\begin{theorem}\label{thm:diffDCandDCFixedRate} 
Assume $\bx \in \mathbb{R}^{4^d}$ with $\bm{x} \neq \bm{0}$ such that $F_{\mathcal{B}} (\bx) \in \cB_k^{4^d}$, for some precision $k$. Let $b_e$ be the number of bits to encode the exponent and let $\hat{\bx}$ represent the compressed and decompressed values in the fixed rate mode with rate, $r\in \mathbb{N}$. For some $\beta \in \mathbb{N}$, if 
\[ \|\hat{\bx} - \bx \|_\infty \leq K_\beta \|x\|,  \]
then $r \geq\frac{4^d\beta +b_e}{4^d} +1$. 
\end{theorem}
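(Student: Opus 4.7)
The plan is to reduce the fixed rate bound to a bit-counting argument using Theorem \ref{thm:diffDCandDC} as a black box. In fixed rate mode, ZFP allots a budget of exactly $4^d \cdot r$ bits for each block of $4^d$ values. Any encoding that achieves the error bound $K_\beta \|\bx\|_\infty$ must retain the same information about the transform coefficients that the fixed precision mode retains with parameter $\beta$, since $K_\beta$ was derived in Theorem \ref{thm:diffDCandDC} under the exact assumption that $\beta$ bit planes survive Step 8. Hence the proof will consist of exhibiting a lower bound on the number of bits a block must consume, then dividing by $4^d$ to obtain the rate.

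First I would account for the mandatory per-block overhead. Every compressed block must store the common block exponent produced in Step 2, which takes $b_e$ bits regardless of the coefficient values. Next I would count the bits required by the coefficient encoding itself. Since Theorem \ref{thm:diffDCandDC} requires the operator $\tilde{C}_8 = T_\cP$ to preserve $\beta$ bit planes, and each bit plane consists of $4^d$ bits (one per transform coefficient), the bit planes alone demand $4^d \cdot \beta$ bits before any lossless compression by the embedded coder in Step 7. Because the embedded coder is a reversible variable-length code, it cannot in the worst case reduce the budget below the raw count of significant bits, and so the bit-plane data requires at least $4^d \cdot \beta$ bits in the budget.

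It remains to justify the additional $4^d$ bits that give rise to the ``$+1$'' in $r \geq \beta + b_e / 4^d + 1$. These arise from the per-coefficient overhead inherent to the representation: in the negabinary form produced by Step 5 each coefficient carries an implicit sign through its leading nonzero bit, and the embedded coder together with the group-test bits used in Step 7 contributes at least one additional bit per coefficient of structural overhead. Summing these contributions yields a lower bound of $4^d \cdot \beta + b_e + 4^d$ bits per block. Setting this at most the allotted budget $4^d \cdot r$ and dividing by $4^d$ produces the claimed inequality.

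The main obstacle will be pinning down precisely the ``$+1$'' term, since this depends on accounting for the implementation of the embedded coder of Step 7 and the sign/group-test overhead in a way consistent with the rest of the paper. The bit-plane count and the exponent term are unambiguous, but one must be careful to argue that the overhead cannot be absorbed into the embedded encoding when $\beta$ bit planes are required. The remaining verification is an arithmetic rearrangement and is straightforward once the bit accounting is fixed.
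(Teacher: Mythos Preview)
Your overall strategy---a bit-budget count against the per-block allotment of $4^d r$ bits---is exactly the paper's approach, and the terms $b_e$ for the common exponent and $4^d\beta$ for the $\beta$ bit planes are correct. The final arithmetic is also the same.

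The gap is in your justification of the extra $4^d$ bits. The negabinary argument is wrong: one of the points of converting to negabinary in Step~5 is precisely that there is \emph{no} separate sign bit, so there is no ``one bit per coefficient for sign'' to charge. Nor is the overhead naturally organized as one bit per coefficient. The paper's accounting for these $4^d$ bits is instead: (i) one bit that ZFP always spends to flag whether the block is identically zero, and (ii) in the worst case---when the very first bit plane is all ones---the embedded coder in Step~7 performs $4^d-1$ positive group tests, each costing one bit. That gives $1 + (4^d-1) = 4^d$ bits of worst-case overhead on top of $b_e$ and the raw bit-plane data, leaving $4^d r - b_e - 4^d$ bits for bit planes, hence $4^d\beta \le 4^d(r-1) - b_e$ and the stated bound on $r$. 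So you already anticipated the right obstacle; replace the sign-bit heuristic with the all-zeros flag plus worst-case group-test count and the argument closes.
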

\begin{proof}
In the worst case scenario, the first bit plane is all-ones, which would imply $4^d - 1$ positive group tests (see \cite{zfp-doc} for details) and thus, $4^d-1$ bits to encode the the group tests. Each bit plane would then take $4^d$ bits to encode. Note that, ZFP uses one bit to indicate if the block is all zeros. Thus, if given rate $r$, there is a total of $4^d r$ bits that can be used to encode the block, $b_e$ of those bits must be used to encode the block floating-point exponent in Step 2, one bit is used for the leading all-zeros bit and $4^d-1$ bits for the group testing, leaving $(4^dr - b_e-1-(4^d-1))$ bits to encode the bit planes. Implying 
\begin{equation}\beta \leq \frac{(4^d(r-1) - b_e)}{4^d} \Rightarrow  r \geq \frac{4^d\beta +b_e}{4^d} +1.\end{equation}
\end{proof}}

Now that we have established bounds on the error introduced by ZFP compression and decompression, we consider several numerical experiments in order to observe the tightness of these bounds.

\section{Numerical Experiments}
\label{sec:results}
In the following numerical tests, we consider two types of error, which
we will refer to as \emph{block relative error} and \emph{componentwise
  relative error}, and their respective bounds:  
\begin{itemize}
\item[]{Block Relative Error:} $\displaystyle \frac{\|\tilde{D} \tilde{C} \bx -\bx\|_\infty}{\|\bx\|_\infty} \leq K_{\beta}$,
\item[]{Componentwise Relative Error:}  $\displaystyle \max_{i, \bx_i \neq 0} \frac{| \tilde{D} \tilde{C} \bx_i - \bx_i |} {|\bx_i|} \leq K_{\beta} \black{2^{e_{max}-e_{min}}}$.
\end{itemize}
As observed in Section~\ref{sec:analysis}, Steps 2, 3, and 8 of ZFP are
the sources of round-off error. So, in constructing our numerical tests,
we considered what conditions will vary the round-off error at these
steps. Information is only lost in Step 2, if, when the block is
converted from floating-point to block-floating-point, the exponent
range of components in the block is above some threshold, as can be seen
in equation (\ref{eqn:componentwiseerror}).  
At Step 3, a linear transform is applied to the block, and information
is lost whenever round-off occurs. At Step 8, if the number of
compressed bit-planes does not coincide with the negabinary precision,
we will again lose information. Since varying the exponent range of the
input is an easy parameter to control, the value $e_{\max} - e_{\min}$
is used as a parameter in many of the numerical tests. Additionally, we
chose to vary the number of bit planes kept in Step 8, denoted $\beta$,
in the following numerical experiments. 

The first numerical experiment is designed to test how well the bound
established in Section~\ref{sec:bounds} captures the round-off error
introduced by ZFP as the range of exponents and the number of bit-planes
varies within a single block with dimension $d$.  While the first test
works on data generated to demonstrate the worst-case behavior, the
second experiment shows the behavior for a data set taken from an actual
physical simulation.

\subsection{Generated $4^d$ Block}
\label{sec:generatedBlock}
In the first numerical test, a 4 by 4 block was formed with absolute
values ranging from $2^{e_{min}}$ to $2^{e_{max}}$. The exponent
$e_{min}$ remains stationary while $e_{max}$ varies, depending on the
chosen exponent range. The interval $[e_{min}, e_{max}]$ was divided
into 16 evenly spaced subintervals. Each value of the block was randomly 
selected from a uniform distribution in the range
$[2^{e_{min}+(h-1)\frac{e_{max}-e_{min}}{16}},
  2^{e_{min}+h\frac{e_{max}-e_{min}}{16}}]$ with {sub}interval
$h\in \{1,\dots,16 \}$ and uniform randomly assigned sign. The block was
then randomly permuted, using the C$++$ standard library function
{\it{random\_shuffle}}, to remove any bias in the total sequency order
and then compressed and decompressed with precision, $\beta$. This
specific construction of data is designed to {mimic} the worst
possible input for ZFP for a chosen exponent range. For a given exponent
range, $\rho = e_{max}-e_{min} $, we expect the componentwise relative
error and block relative error to increase as the precision
decreases. However, {as the bound is only dependent on $\beta$,}
the block relative error bound will remain constant as the exponent
range varies. The componentwise relative error should increase as the
exponent range increases as the representable numbers in a
block-floating-point representation are dependent on the largest
exponent of the block and the value of $q$.   

For
Figures~\ref{fig:componenterror_precision_4by4}-\ref{fig:normerror_exp_4by4},
the data is represented and compressed in single-precision (32-bit IEEE
standard) with $e_{min}  = 0$, while $e_{max}$ varies with respect to
the required exponent range. Similar results {can be produced} for
any value of $e_{min}$. Figure~\ref{fig:componenterror_precision_4by4}
shows how the {componentwise relative error} (top) and block relative
error (bottom) vary with respect to the fixed precision parameter,
$\beta$, for a fixed exponent range.  For a single $\beta$, {one}
decompressed. The blue band represents the sampled maximum and minimum
of the true componentwise relative error or block relative error, i.e.,  
\begin{align*}
\max_{i, \bx_i \neq 0} \frac{|\tilde{D}\tilde{C}\bx_i -\bx_i|} {|\bx_i|} \quad {\text{and}}  \quad \frac{\|\tilde{D}\tilde{C}\bx -\bx\|_\infty}{\|\bx\|_\infty},
\end{align*}
respectively, of all 1 million runs. The red line depicts the
theoretical bound and the dashed green line represents the asymptotic
behavior of the bound, i.e., the smallest predictive value of the
theoretical bound. For $ e_{max} - e_{min} = 0$, meaning that the
magnitude of the absolute values of the 4 by 4 block are similar, the
componentwise relative error increases as $\beta$ decreases. As the
exponent range increases, {fewer bits will be
  used to represent the smaller values in each block during Step 2 of
  ZFP, which will result in a larger relative error}. As anticipated,
{in Figure \ref{fig:componenterror_precision_4by4}} the entire plot
{shifts} toward the upper right corner as the exponent range
increases, indicating that the componentwise relative error increases
with respect to the range of compressed values. With respect to the
block relative error, the block relative error remains the same for all
$\beta$ as the exponent range varies.  

Similar trends can also be seen in Figure~\ref{fig:normerror_exp_4by4},
where the exponent range varies for a single $\beta$. In
Figure~\ref{fig:normerror_exp_4by4}, for each $\beta$, the componentwise 
relative error (top) increases as the exponent range increases while the
block relative error remains constant, as expected. As $\beta$ increases,
both the componentwise relative error and the block relative error plots
are shifted upwards, indicating an increase in error. For $\beta = 32$, there is a gap between the bound and the observed error, which corresponds to the gap in the far right of the plots in Figure~\ref{fig:componenterror_precision_4by4}, i.e., the theoretical error bound is limited to precision of the IEEE representation. It can be concluded, in
Figures~\ref{fig:componenterror_precision_4by4} and 
\ref{fig:normerror_exp_4by4}, that the theoretical bound (red) completely
bounds the maximum sampled error (blue). 

Next, we repeated this same experiment using a different machine
precision. Note that Figures~\ref{fig:2d_double_1} and
\ref{fig:2d_double_4} represent the same two-dimensional test outlined
above, but the values in the block are represented using double-precision
(64-bit IEEE standard). The same relationships can be concluded for the
double precision case.  

Finally, since the dimensionality of the block plays an important part
in ZFP, Figures~\ref{fig:1d_double_1} and \ref{fig:1d_double_4} and
Figures~\ref{fig:3d_double_1} and \ref{fig:3d_double_4} represent
results in double precision for one-dimensional and three-dimensional
blocks, respectively. Again, similar relationships can be seen as those
outlined in the two-dimensional, single-precision experiment.


\begin{figure}
    \centering
    \begin{subfigure}[b]{0.32\textwidth}
        \includegraphics[width=\textwidth]{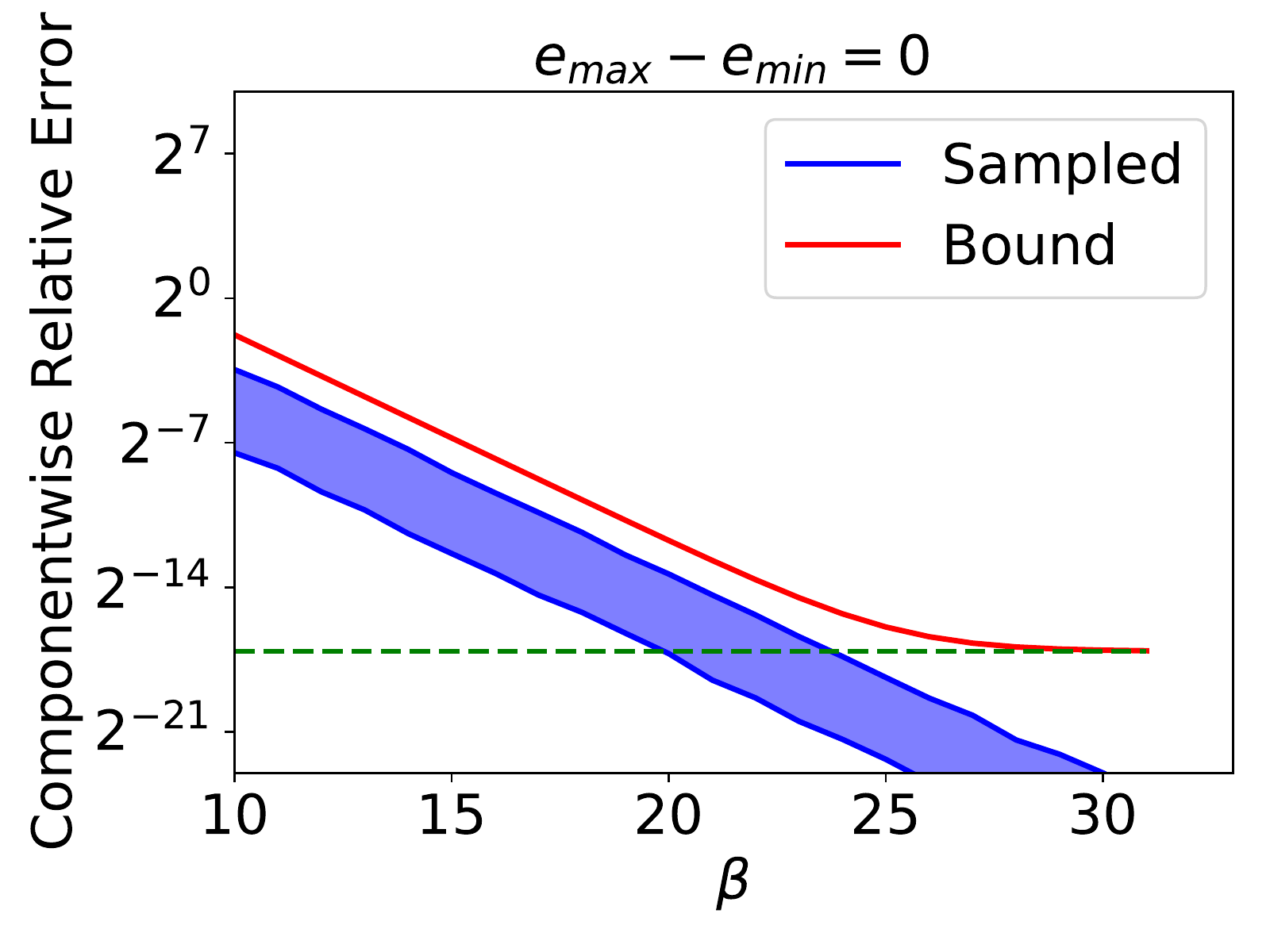}
        \label{fig:componenterror_precision_4by4parta}
    \end{subfigure}
      \begin{subfigure}[b]{0.32\textwidth}
        \includegraphics[width=\textwidth]{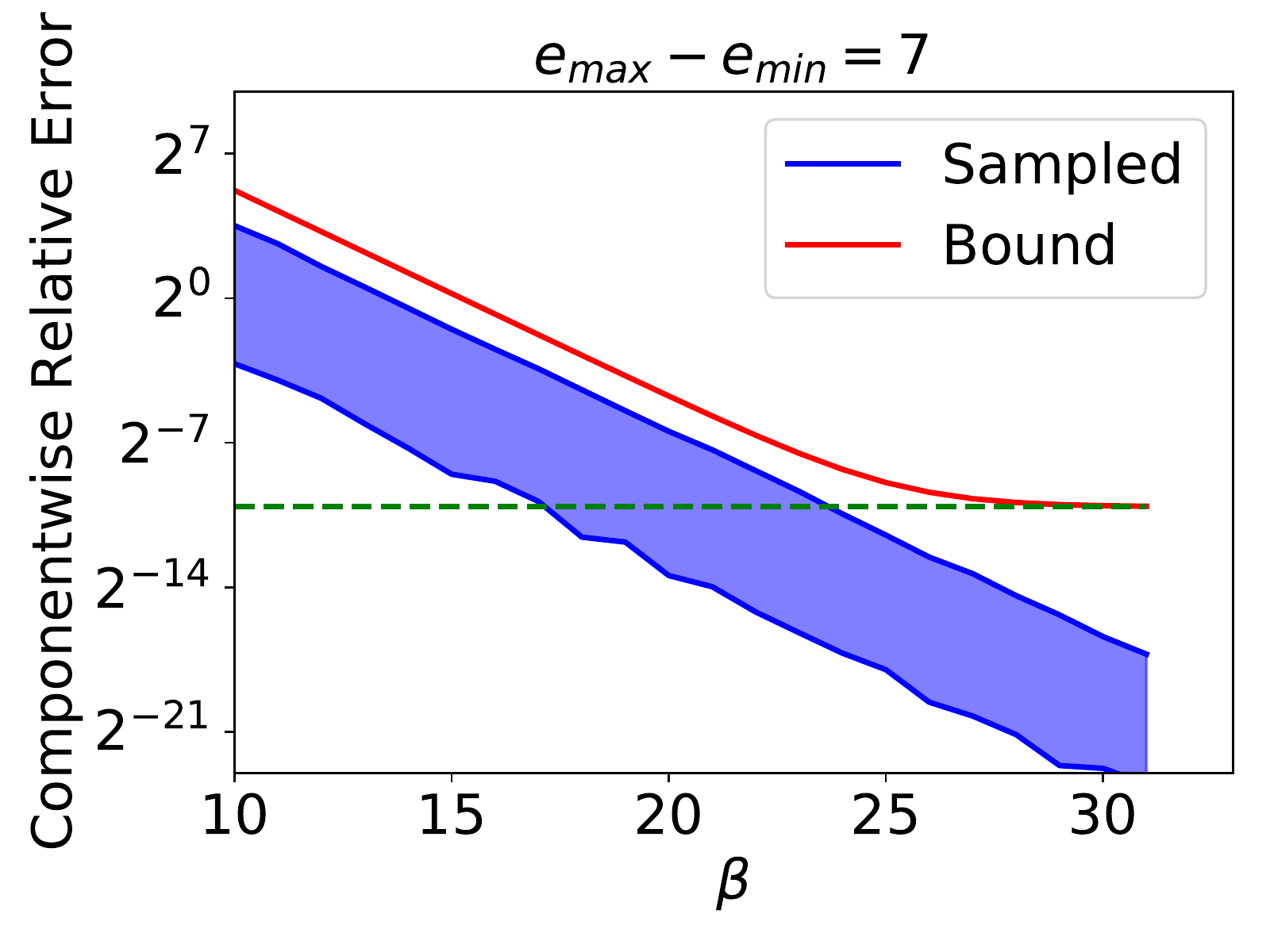}
        \label{fig:componenterror_precision_4by4partb}
    \end{subfigure}
       \begin{subfigure}[b]{0.32\textwidth}
        \includegraphics[width=\textwidth]{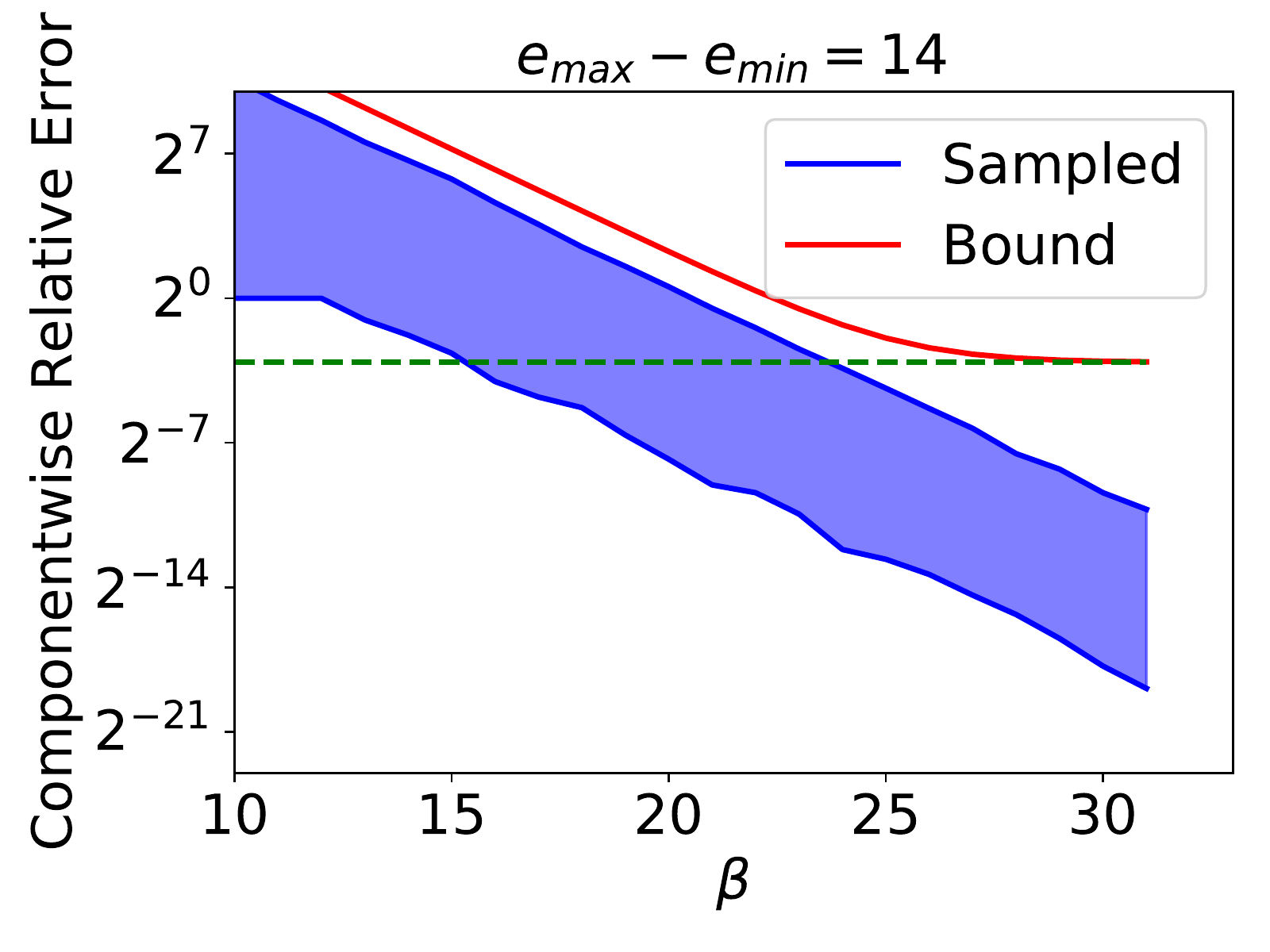}
        \label{fig:componenterror_precision_4by4partc}

    \end{subfigure}
    \begin{subfigure}[b]{0.32\textwidth}
        \includegraphics[width=\textwidth]{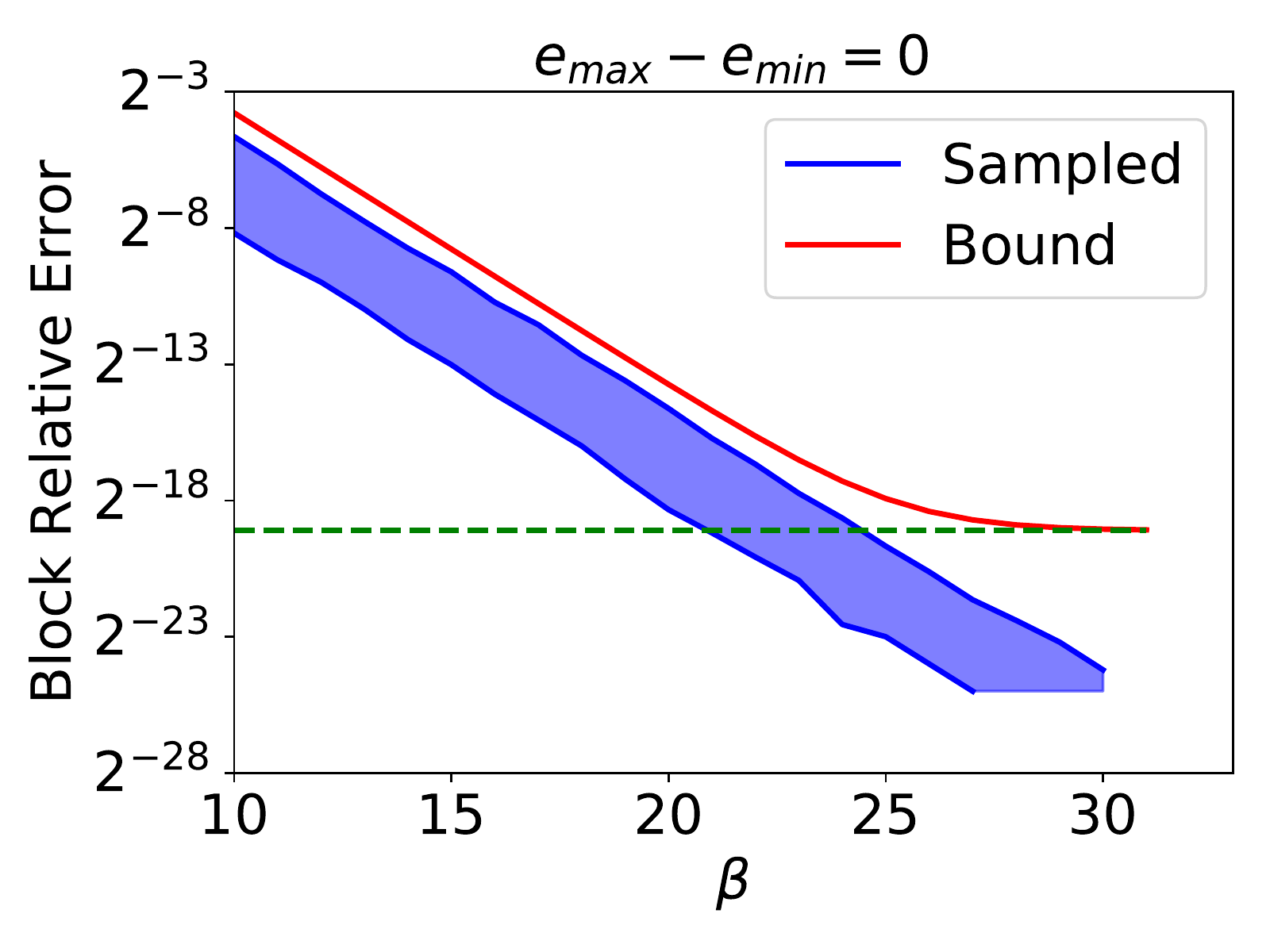}
                \label{fig:normerror_precision_4by4parta}
    \end{subfigure}
        \begin{subfigure}[b]{0.32\textwidth}
        \includegraphics[width=\textwidth]{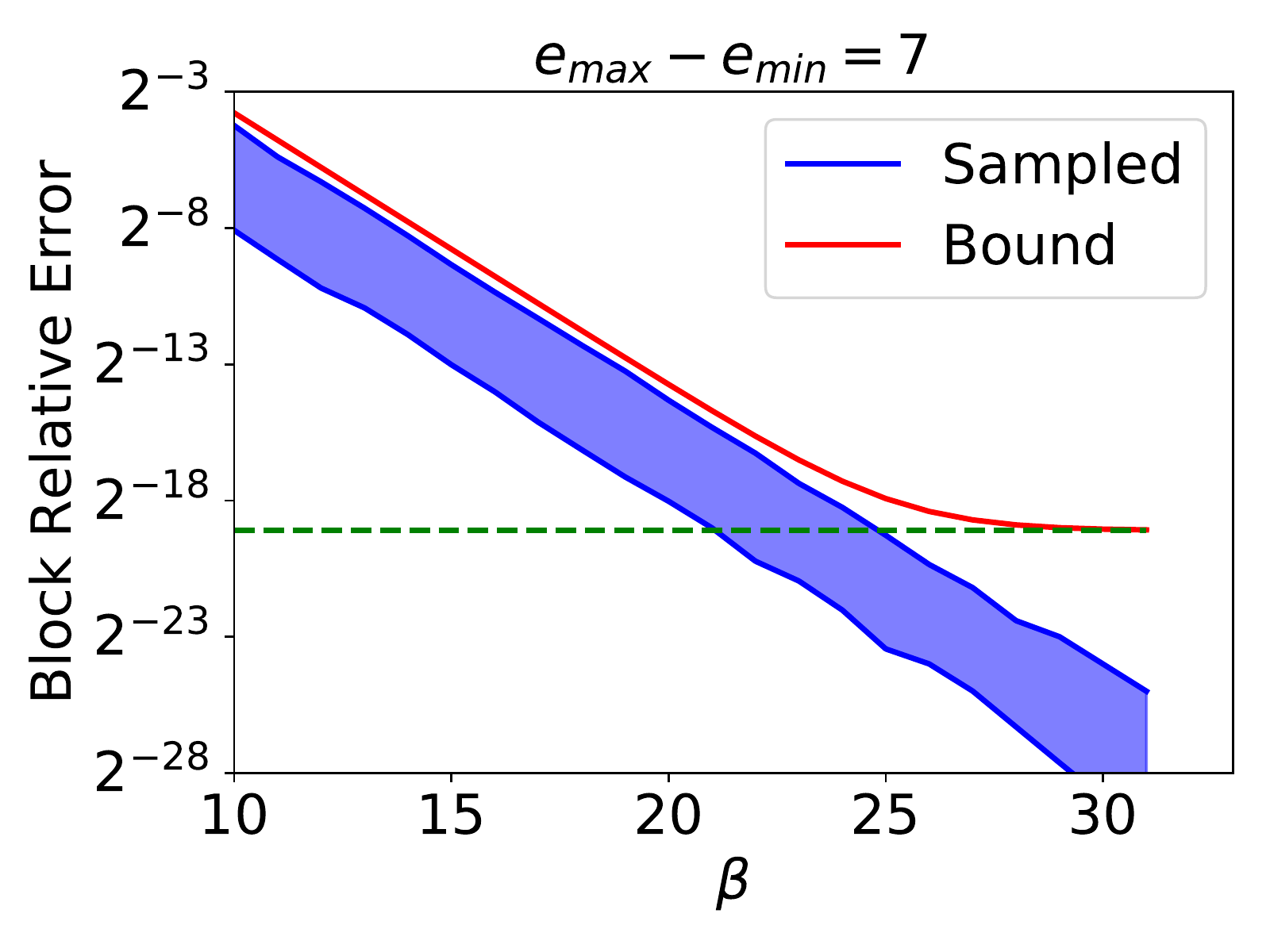}
  \label{fig:normerror_precision_4by4partb}
    \end{subfigure}
    \begin{subfigure}[b]{0.32\textwidth}
        \includegraphics[width=\textwidth]{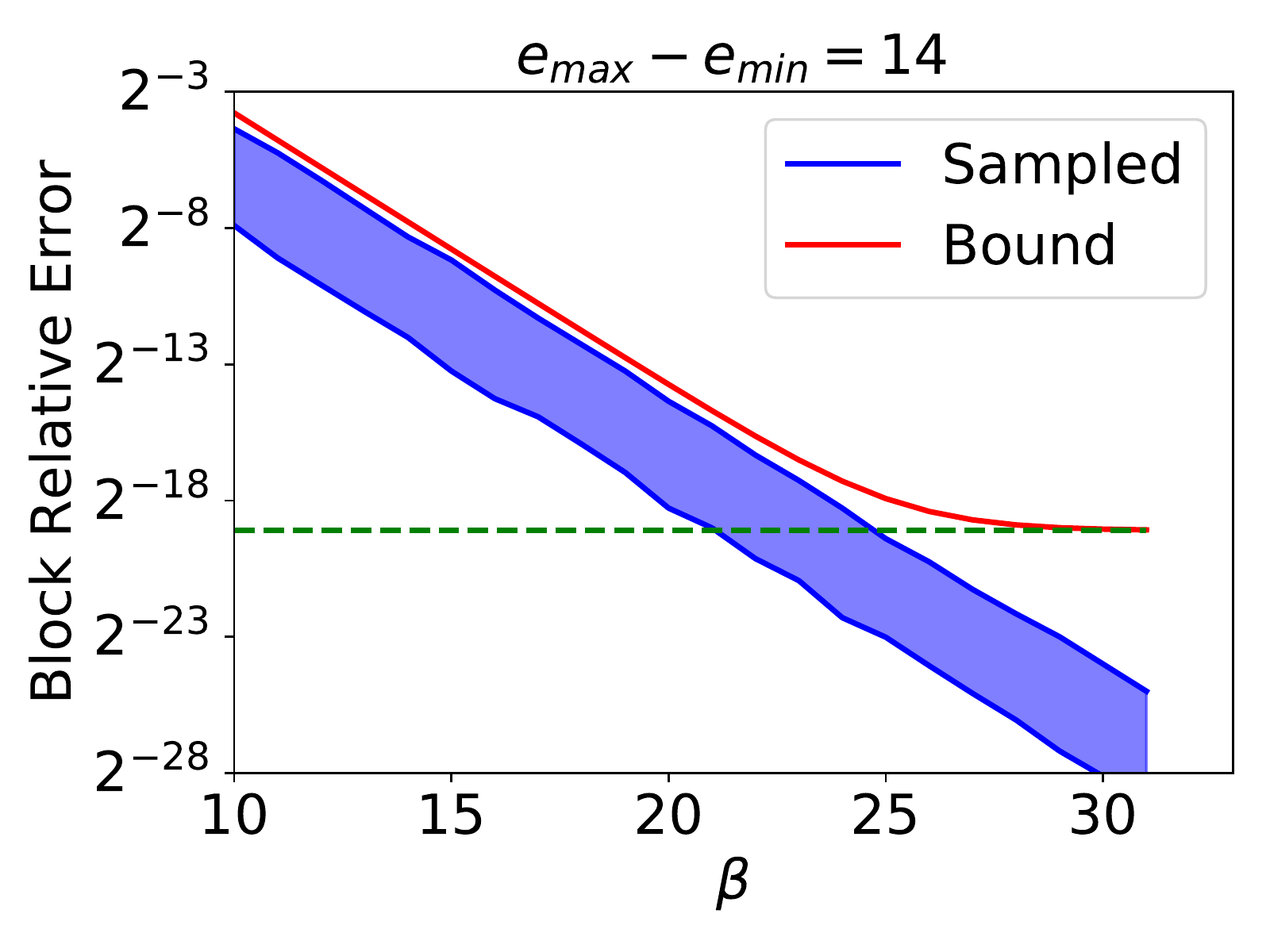}
  \label{fig:normerror_precision_4by4partc}
    \end{subfigure}
    \caption{2-d Example with single precision:  componentwise relative error (top) and block relative error (bottom) with respect to {the precision parameter ($\beta$) for $ e_{max}-e_{min} \in \{ 0,7,14\} $.  The blue band represents the sampled maximum and minimum error, the red line depicts the theoretical bound, and the dashed green line represents the asymptotic behavior of the theoretical bound. } }
                \label{fig:componenterror_precision_4by4}

\end{figure}

\begin{figure}
    \centering
        \begin{subfigure}[b]{0.32\textwidth}
        \includegraphics[width=\textwidth]{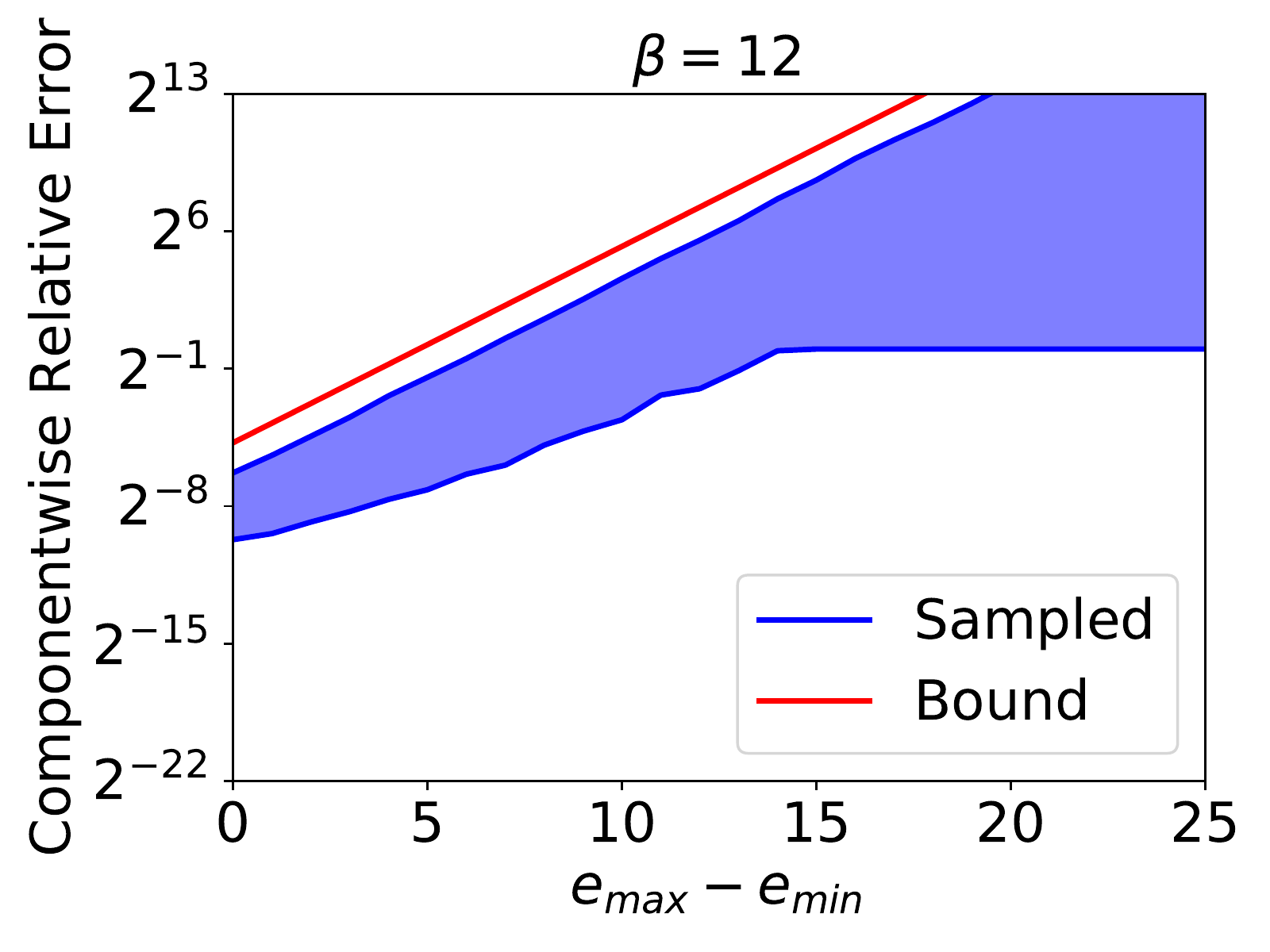}
    \end{subfigure}
  \begin{subfigure}[b]{0.32\textwidth}
        \includegraphics[width=\textwidth]{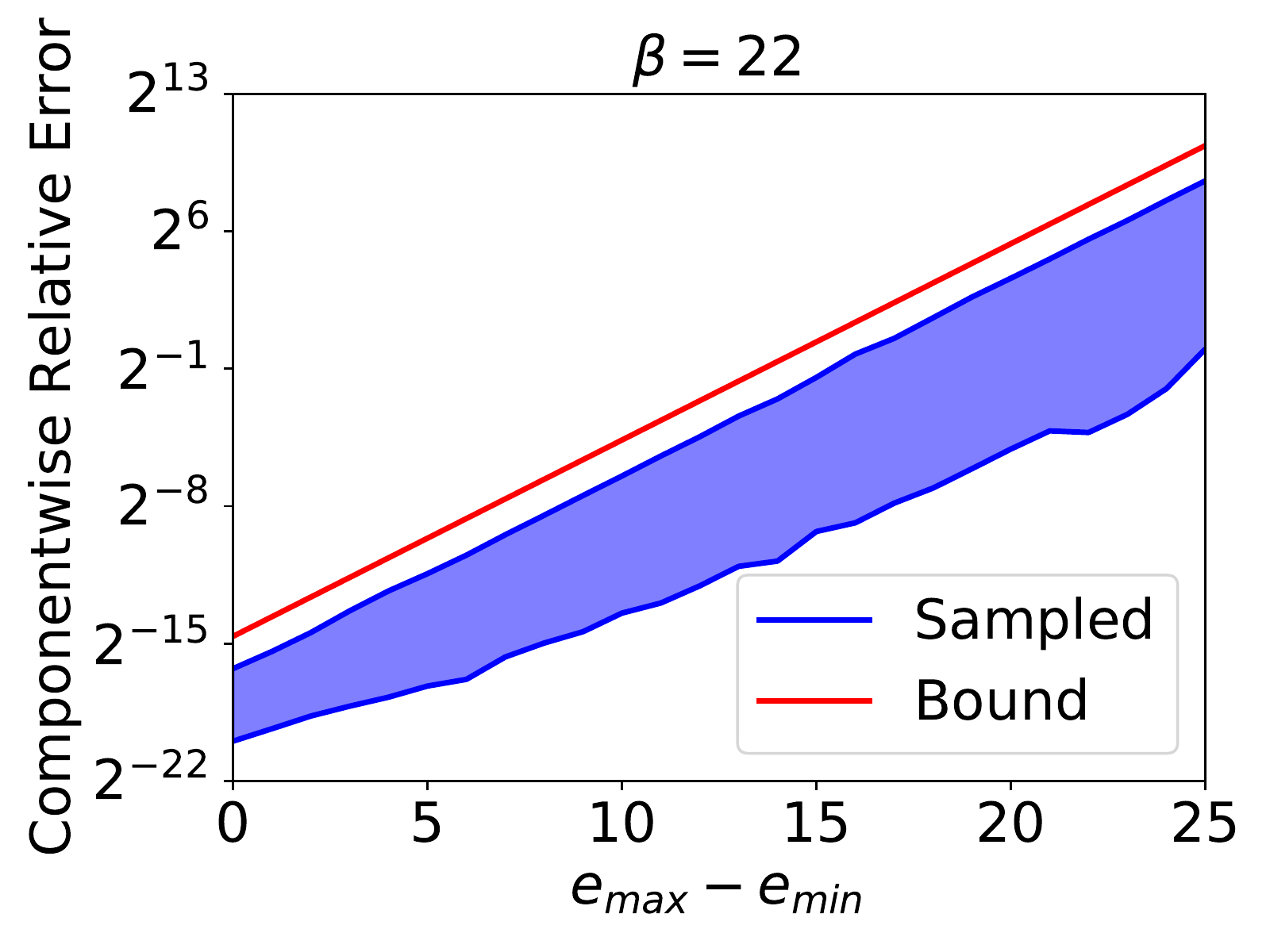}
    \end{subfigure}
    \begin{subfigure}[b]{0.32\textwidth}
        \includegraphics[width=\textwidth]{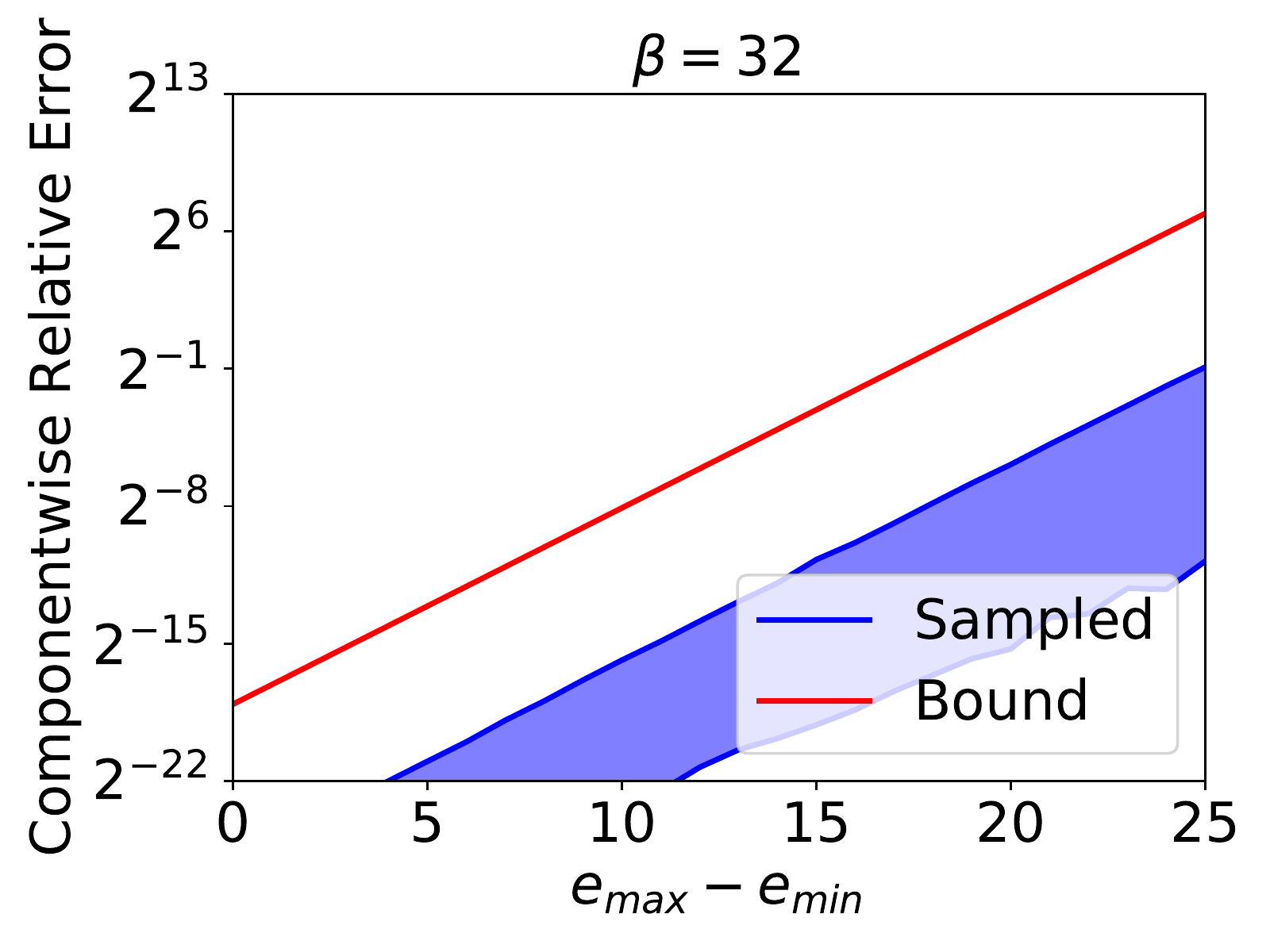}
    \end{subfigure}
          
    \begin{subfigure}[b]{0.32\textwidth}
        \includegraphics[width=\textwidth]{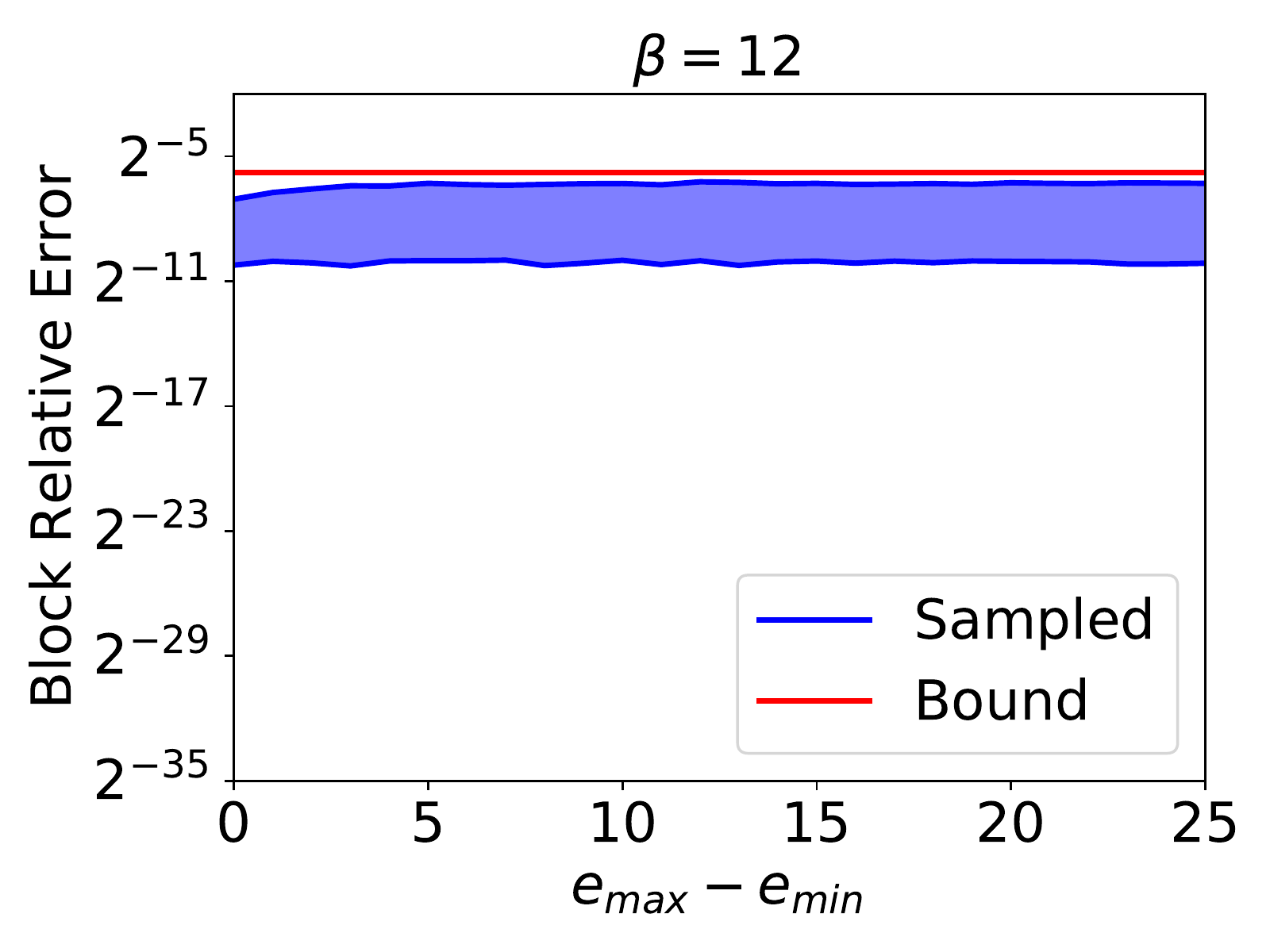}
    \end{subfigure}
    \begin{subfigure}[b]{0.32\textwidth}
        \includegraphics[width=\textwidth]{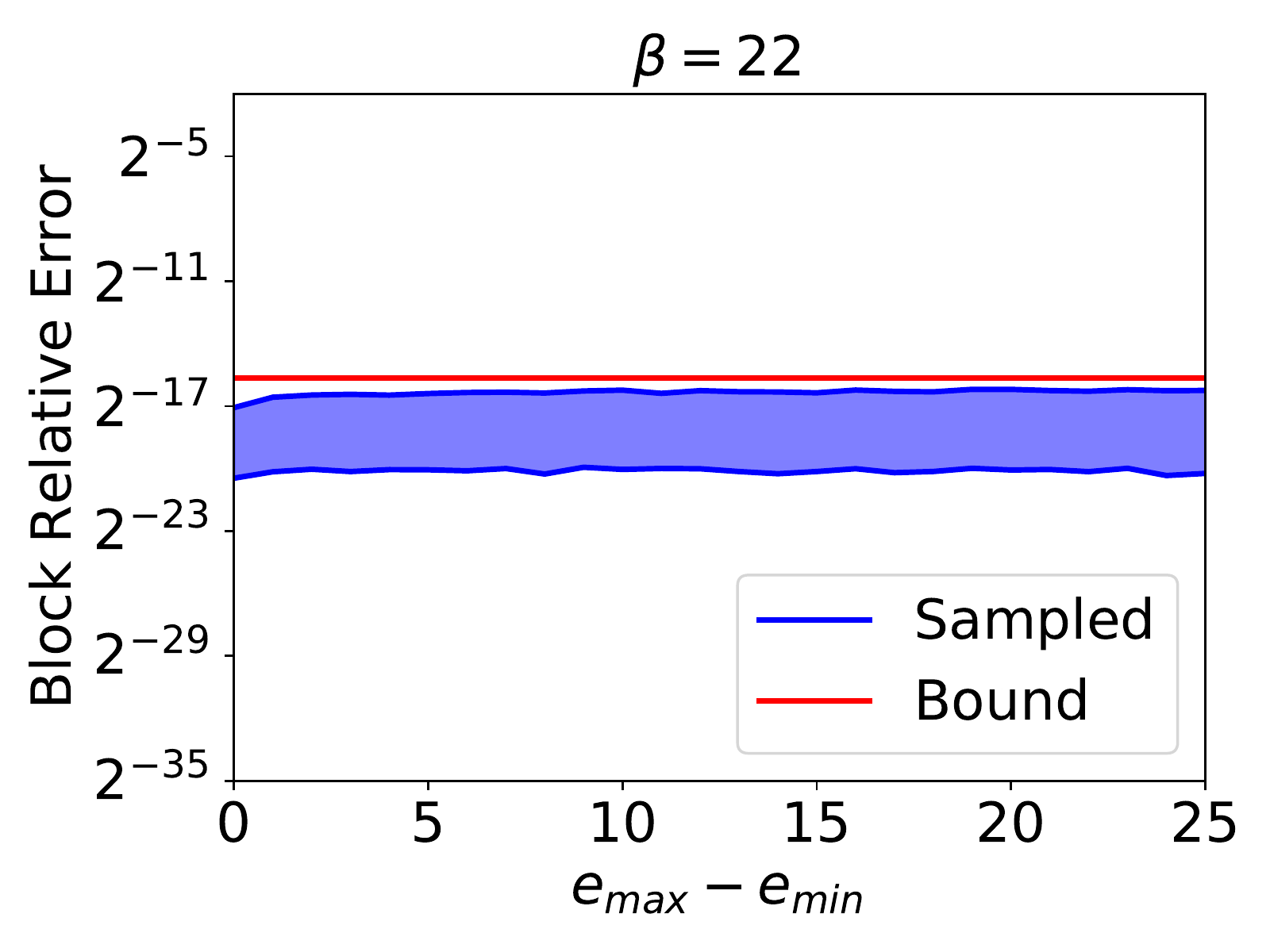}
\end{subfigure}
        \begin{subfigure}[b]{0.32\textwidth}
        \includegraphics[width=\textwidth]{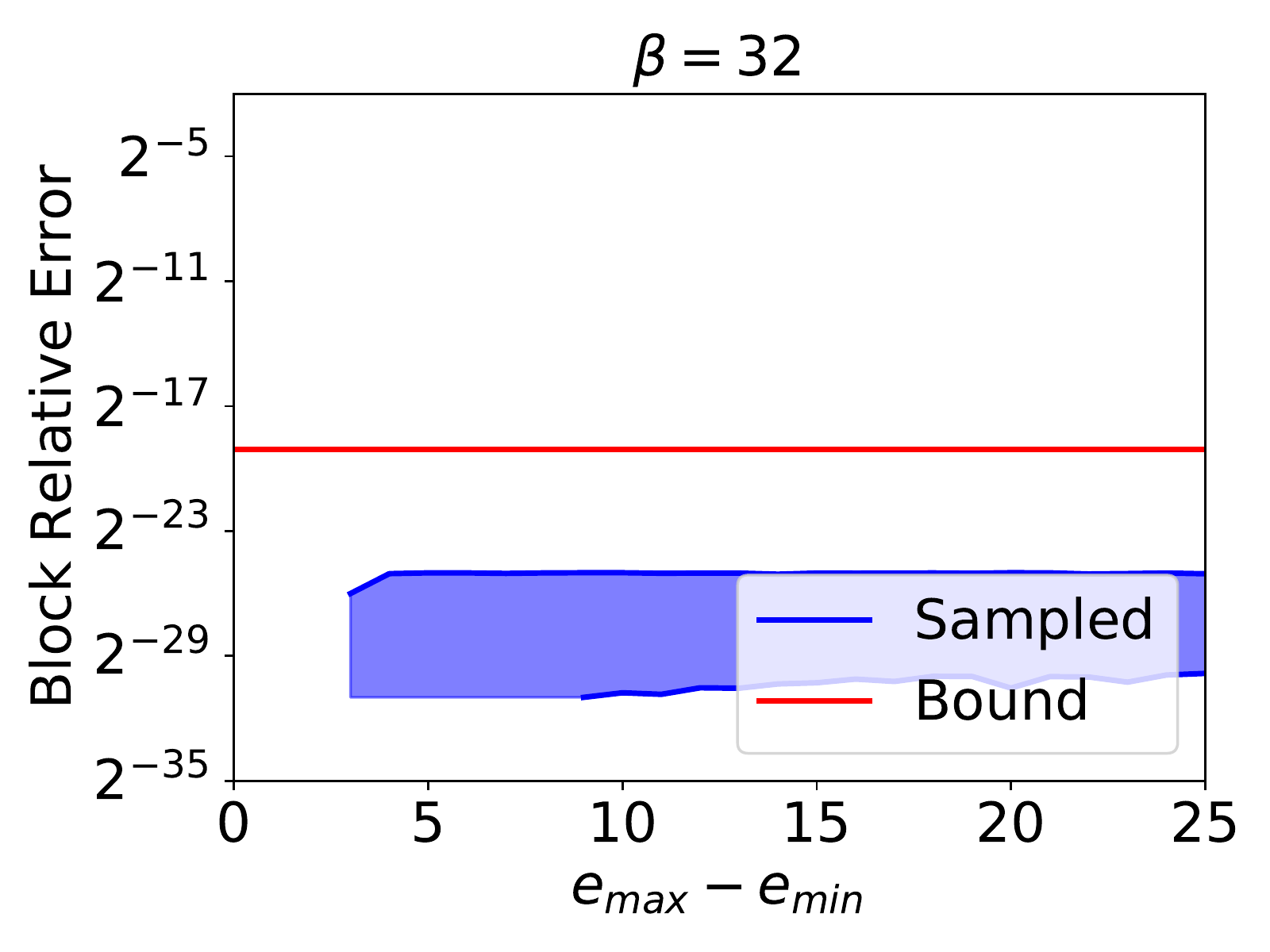}
    \end{subfigure}
      \caption{2-d Example with single precision:  componentwise relative error (top) and block relative error (bottom) with respect to the difference in exponents ($e_{max}-e_{min}$) for $\beta \in \{12,22,32\}${. The blue band represents the sampled maximum and minimum error and the red line depicts the theoretical bound. }}
    \label{fig:normerror_exp_4by4}
\end{figure}

\begin{figure}
    \centering
    \begin{subfigure}[b]{0.32\textwidth}
        \includegraphics[width=\textwidth]{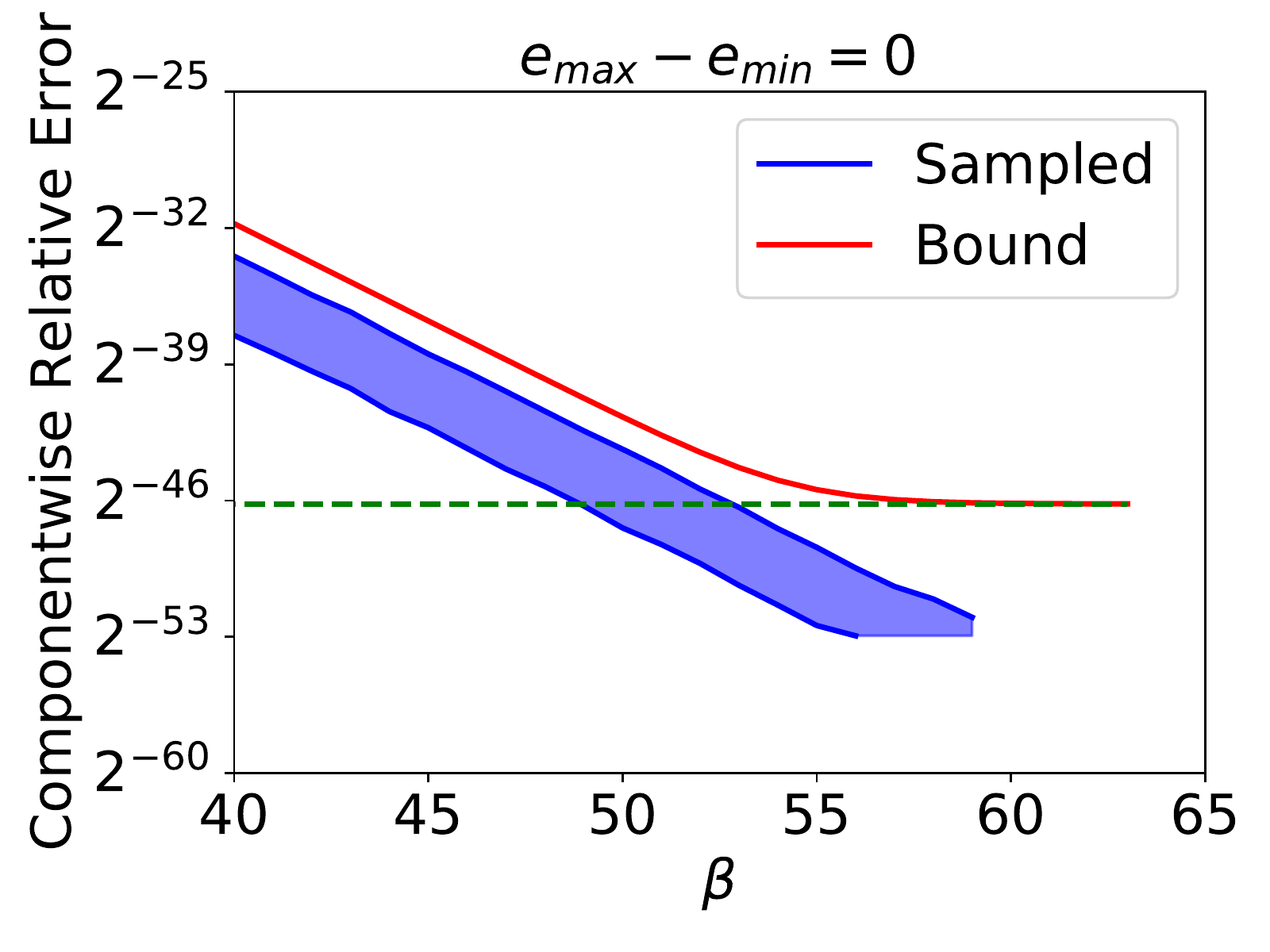}
    \end{subfigure}
      \begin{subfigure}[b]{0.32\textwidth}
        \includegraphics[width=\textwidth]{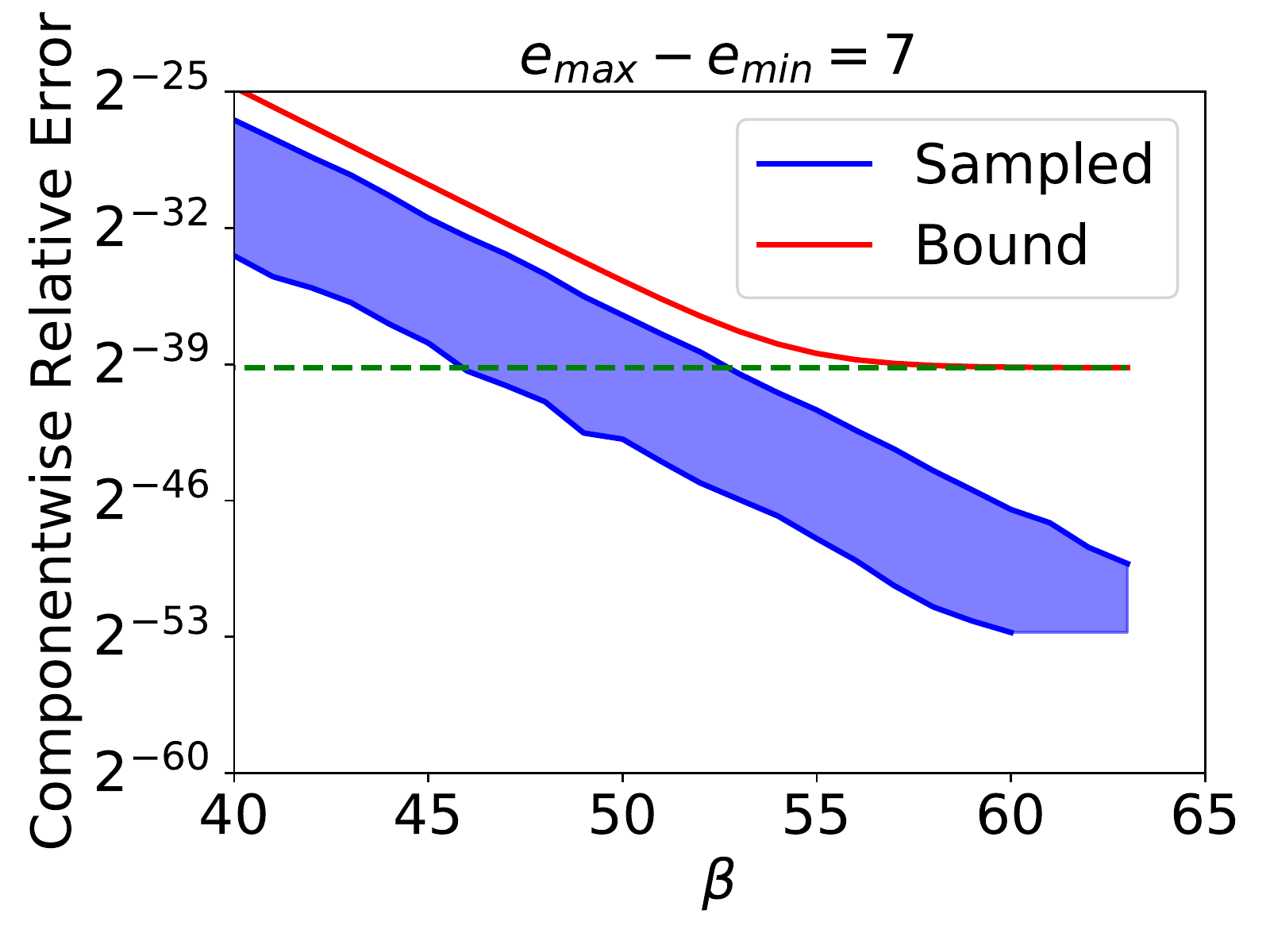}
    \end{subfigure}
       \begin{subfigure}[b]{0.32\textwidth}
        \includegraphics[width=\textwidth]{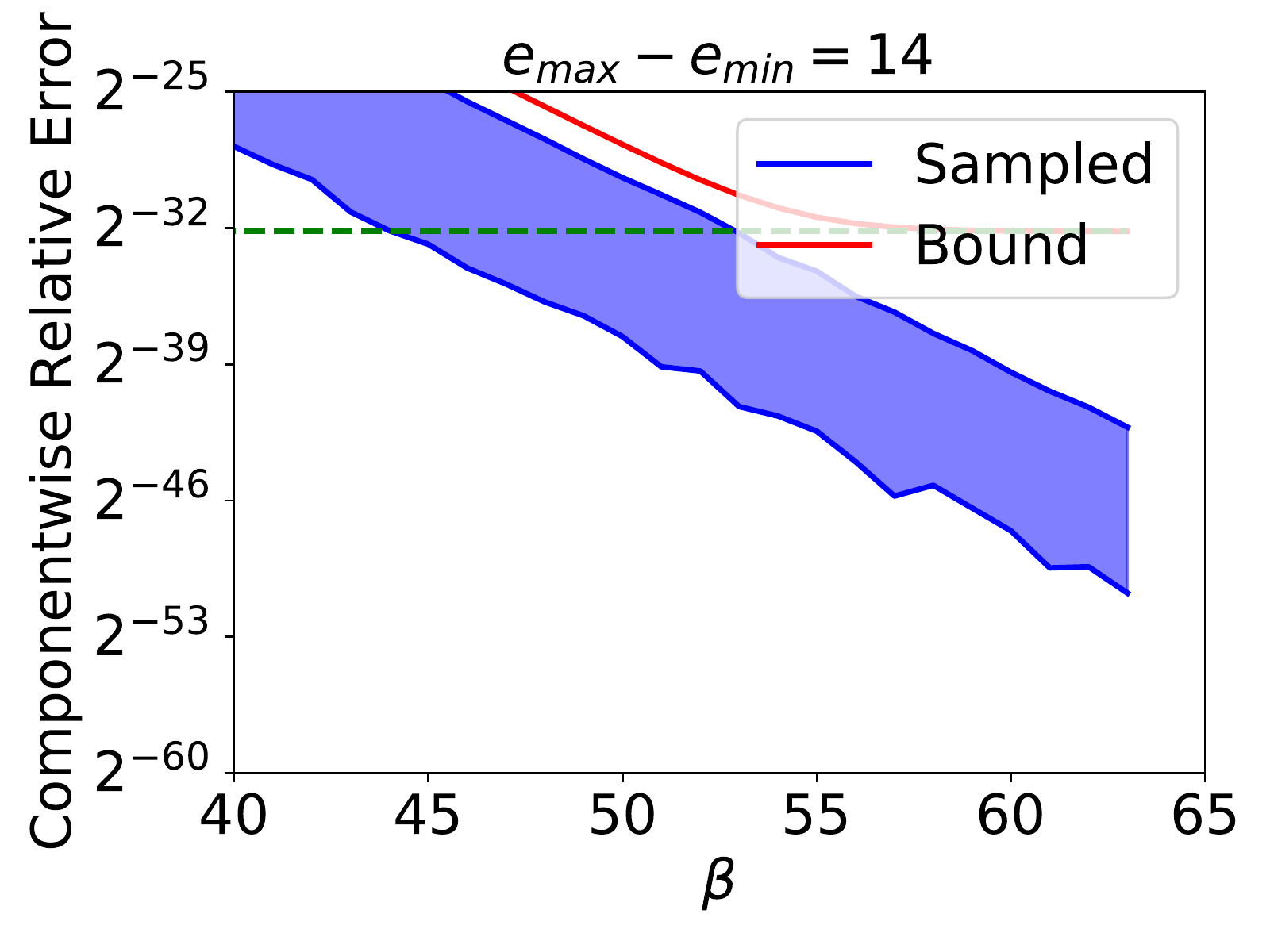}

    \end{subfigure}
     \begin{subfigure}[b]{0.32\textwidth}
        \includegraphics[width=\textwidth]{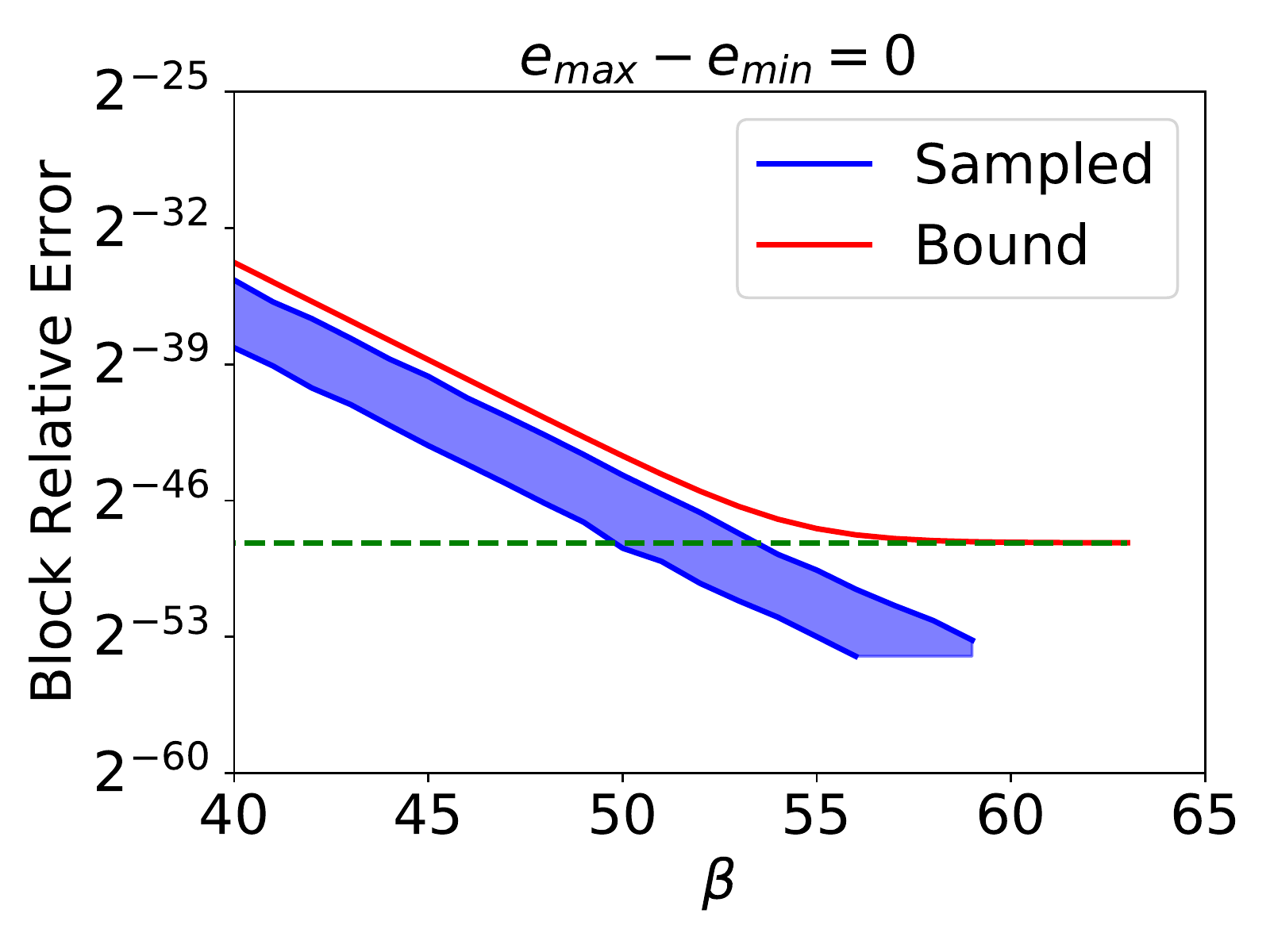}
    \end{subfigure}
        \begin{subfigure}[b]{0.32\textwidth}
        \includegraphics[width=\textwidth]{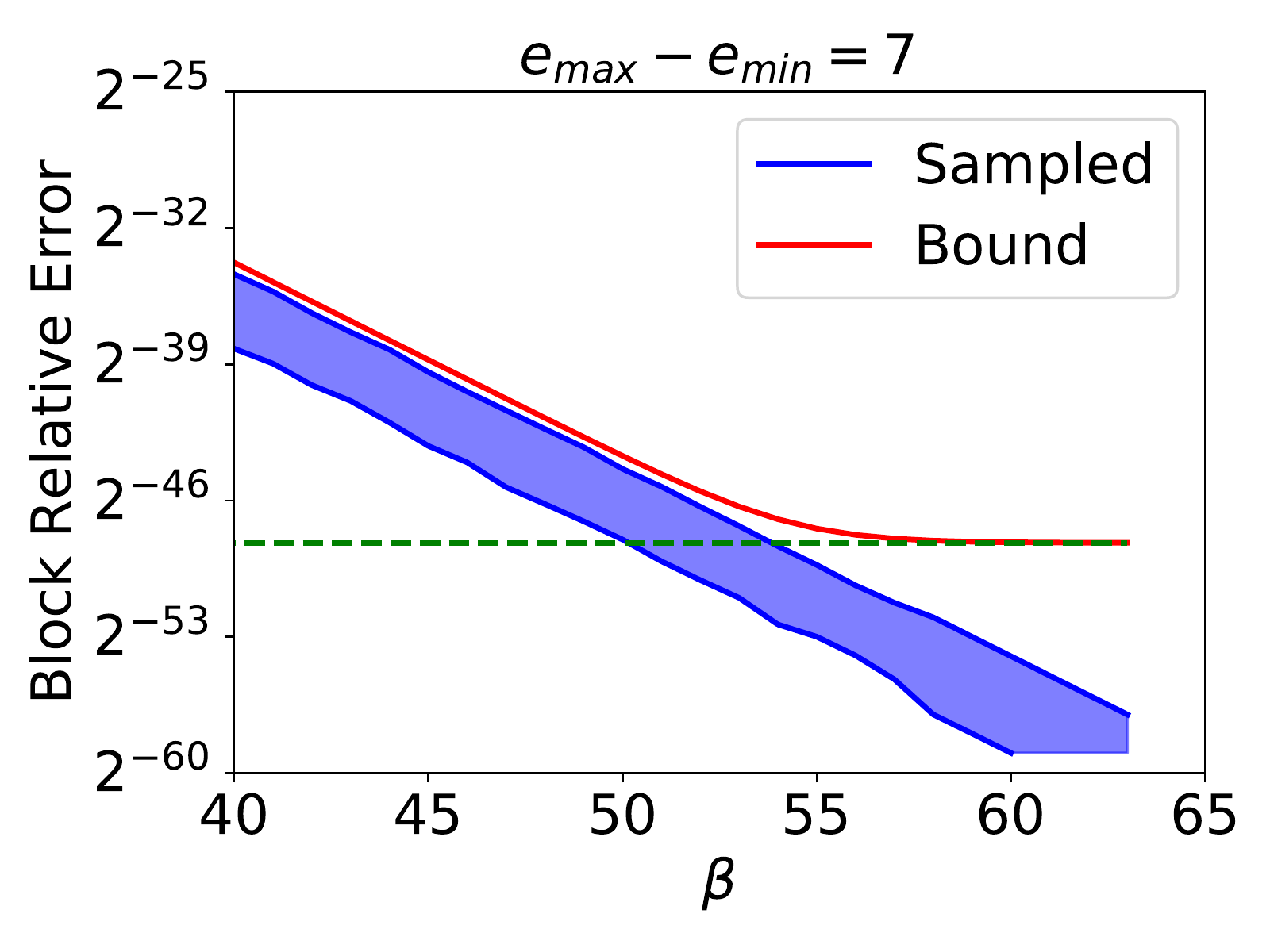}
    \end{subfigure}
    \begin{subfigure}[b]{0.32\textwidth}
        \includegraphics[width=\textwidth]{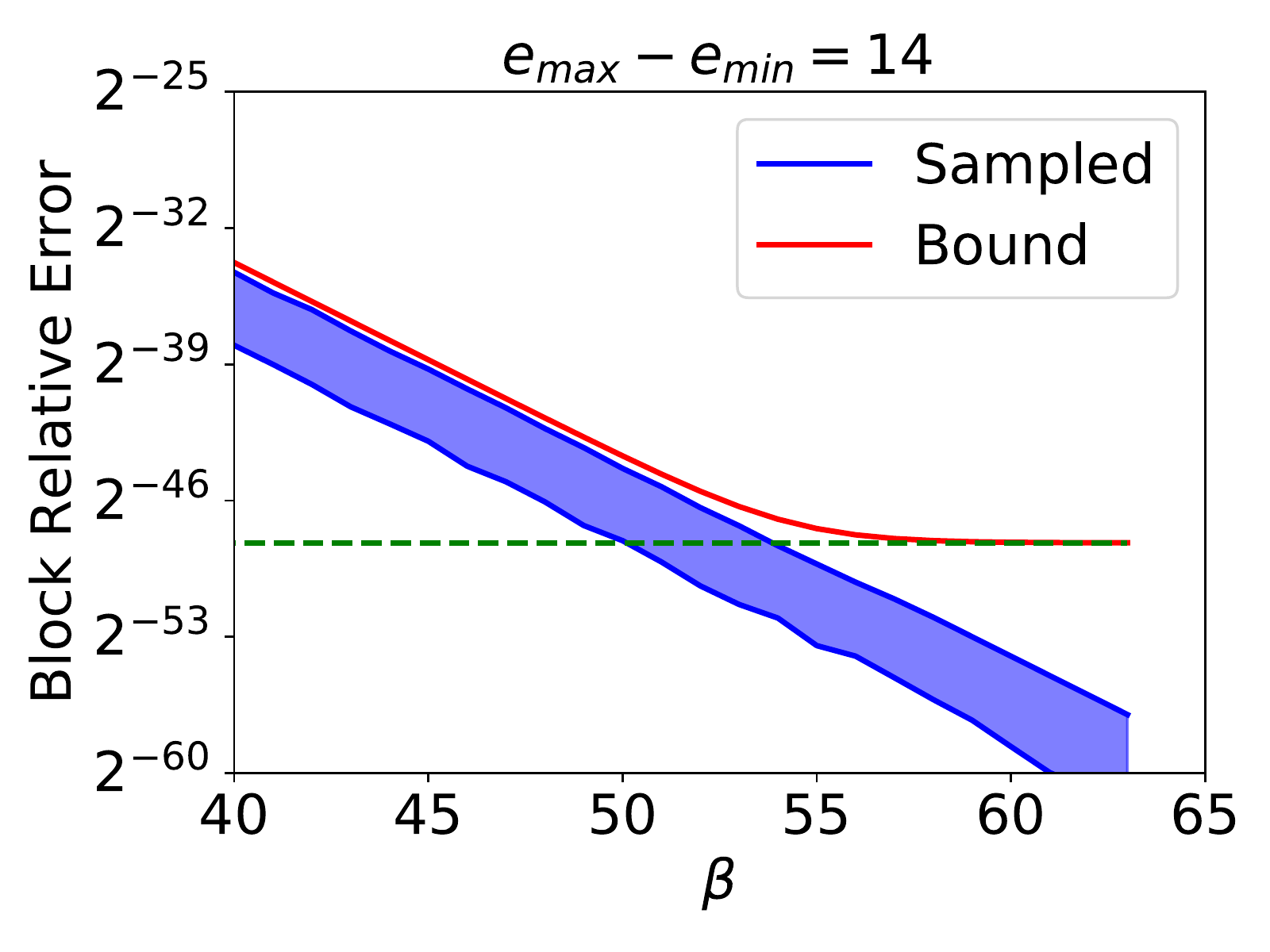}
    \end{subfigure}
  \caption{2-d Example with double precision:  componentwise relative error (top) and block relative error (bottom) with respect to the precision parameter ($\beta$) for $ e_{max}-e_{min} \in \{0,7,14\}${. The blue band represents the sampled maximum and minimum error, the red line depicts the theoretical bound, and the dashed green line represents the asymptotic behavior of the theoretical bound. }}
    \label{fig:2d_double_1}
\end{figure}

\begin{figure}
    \centering
    \begin{subfigure}[b]{0.32\textwidth}
        \includegraphics[width=\textwidth]{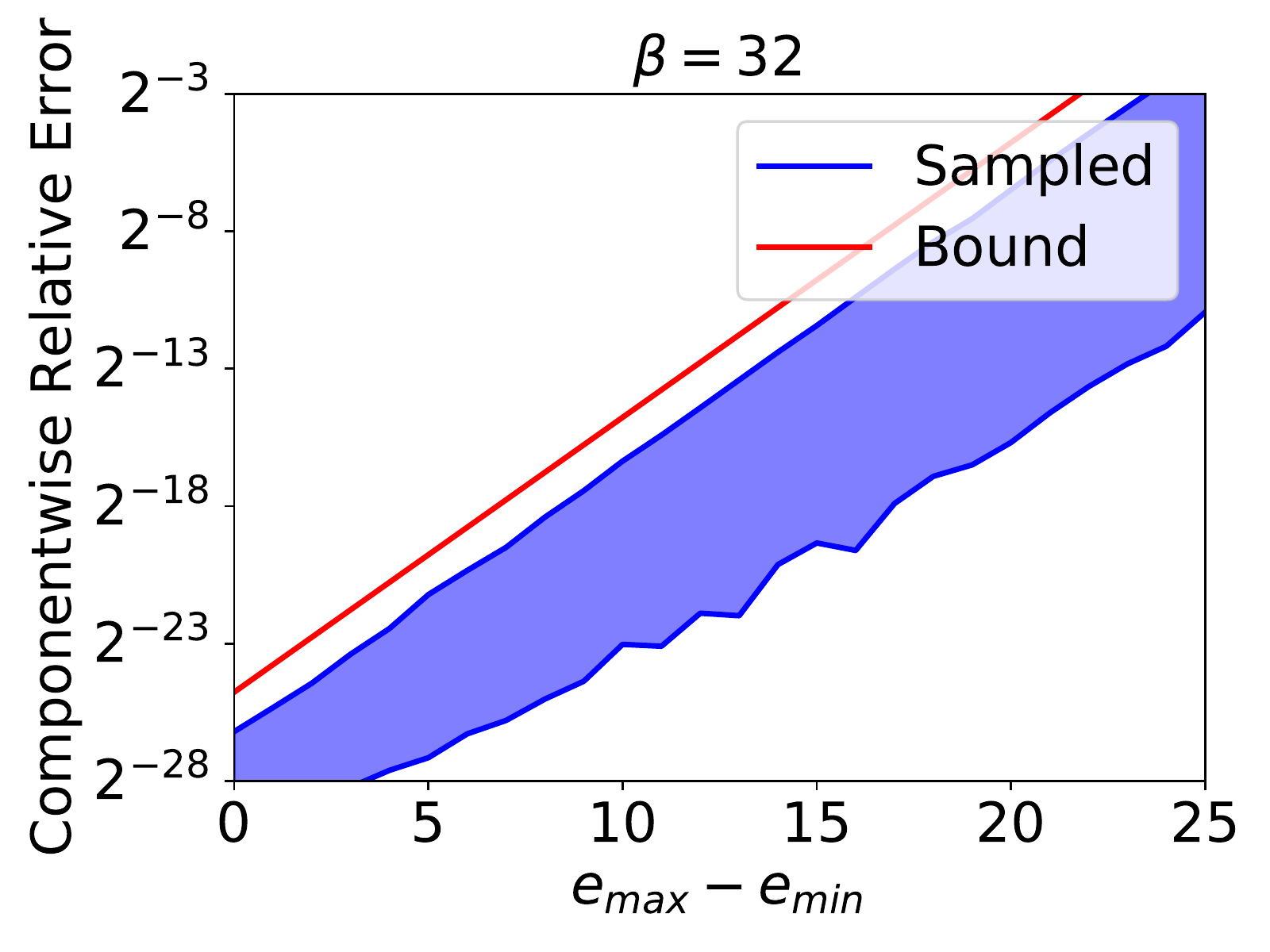}
    \end{subfigure}
        \begin{subfigure}[b]{0.32\textwidth}
        \includegraphics[width=\textwidth]{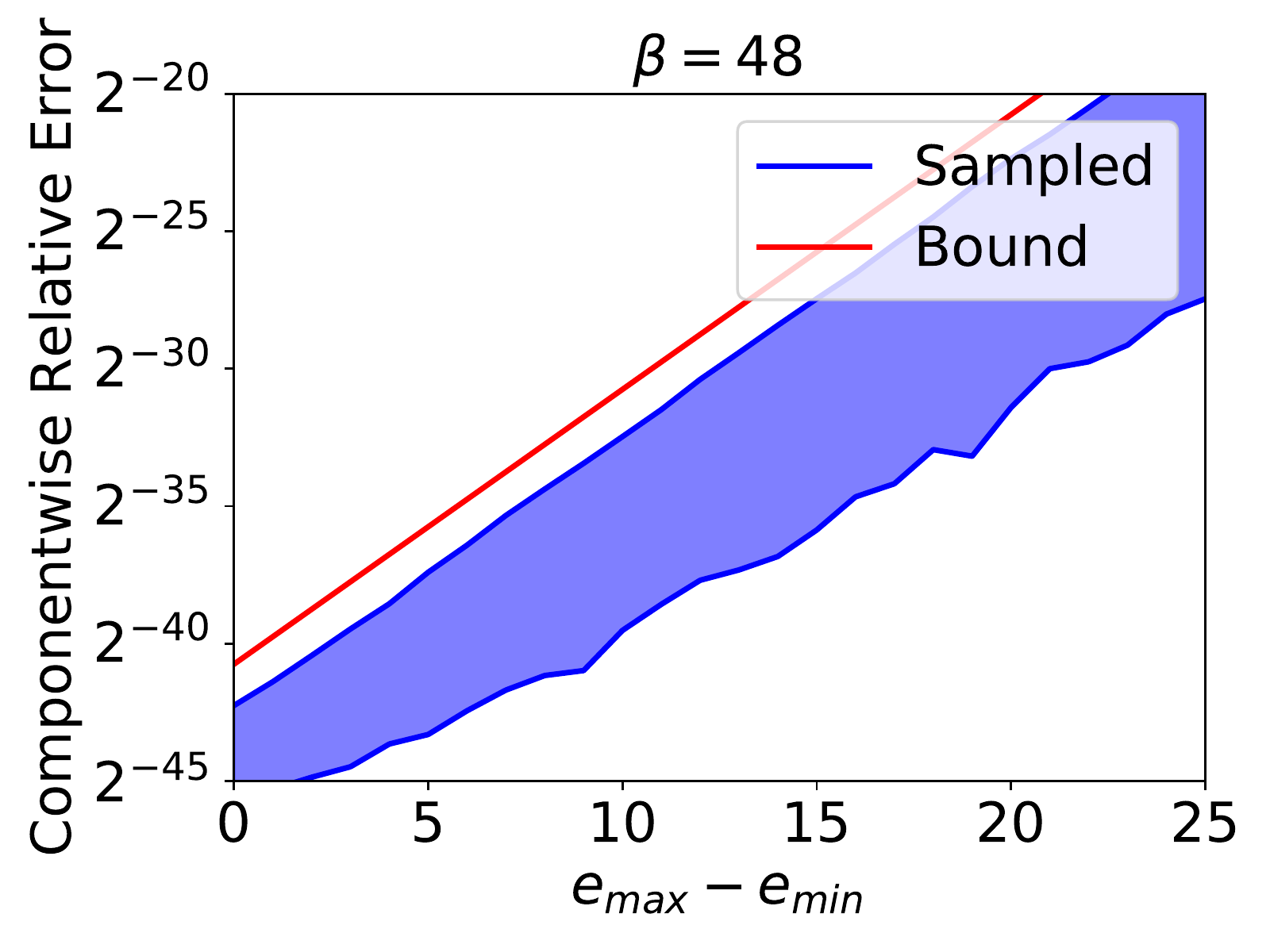}
    \end{subfigure}
        \begin{subfigure}[b]{0.32\textwidth}
        \includegraphics[width=\textwidth]{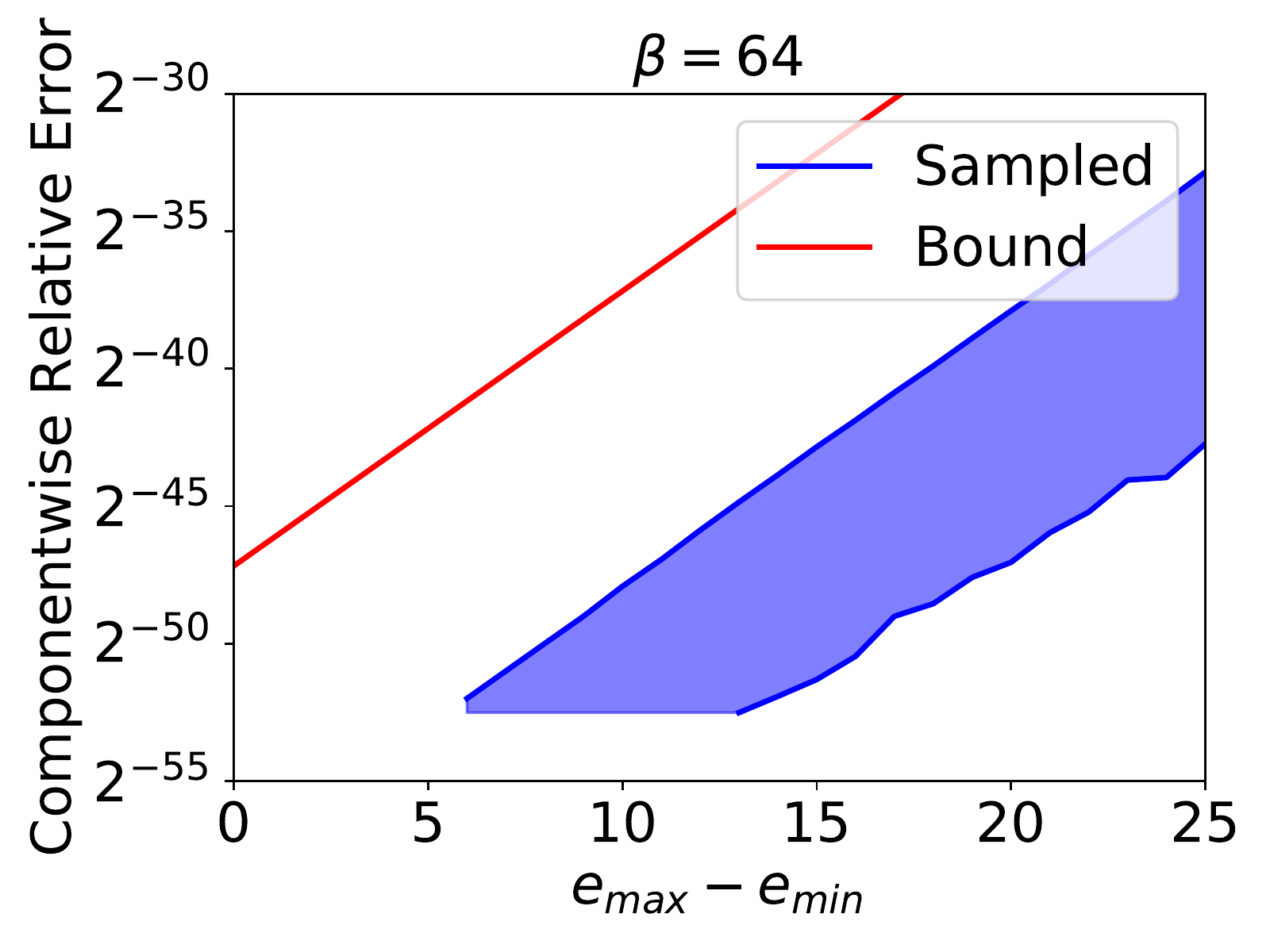}
\end{subfigure}
    \begin{subfigure}[b]{0.32\textwidth}
        \includegraphics[width=\textwidth]{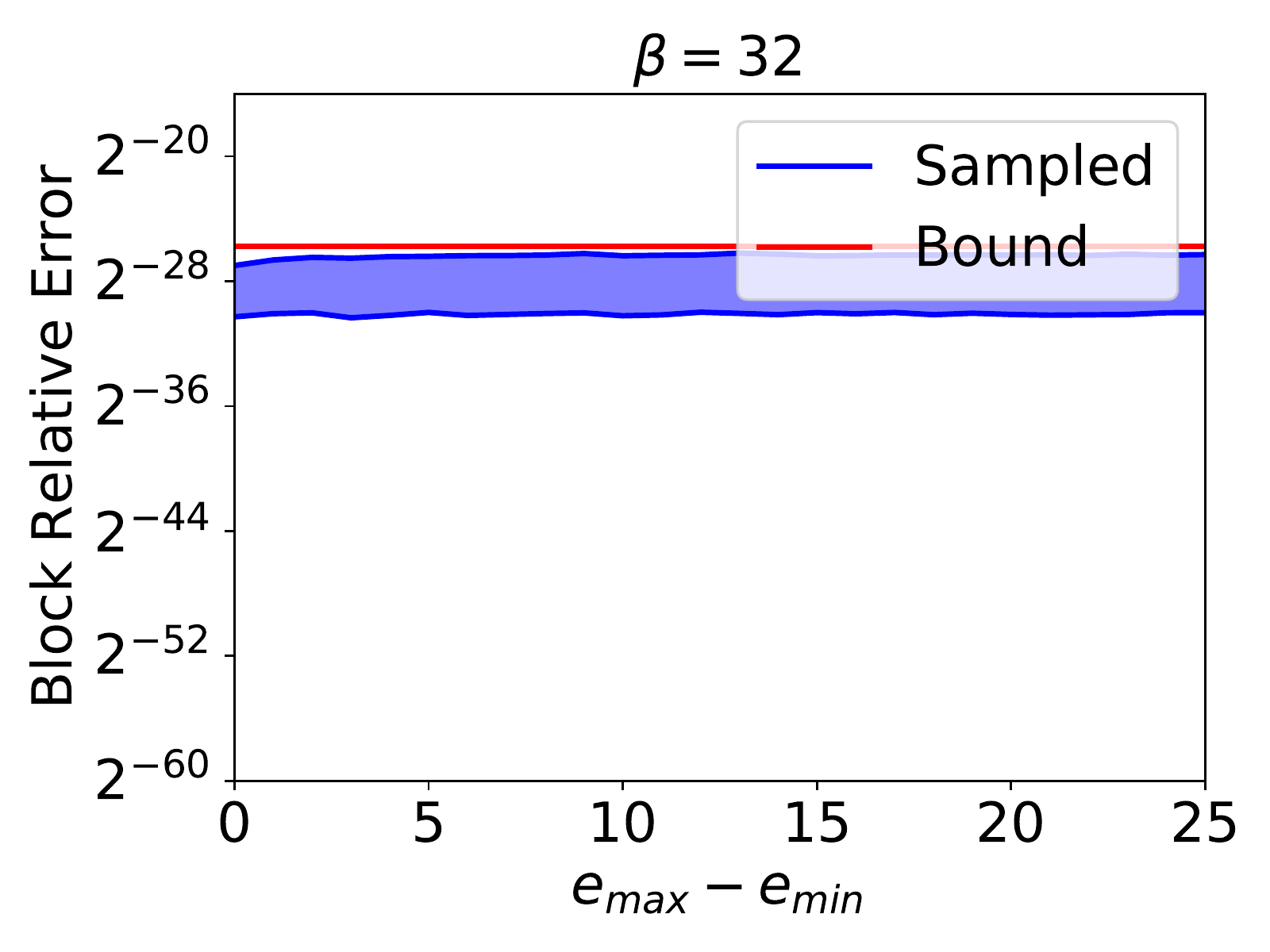}
    \end{subfigure}
    \begin{subfigure}[b]{0.32\textwidth}
        \includegraphics[width=\textwidth]{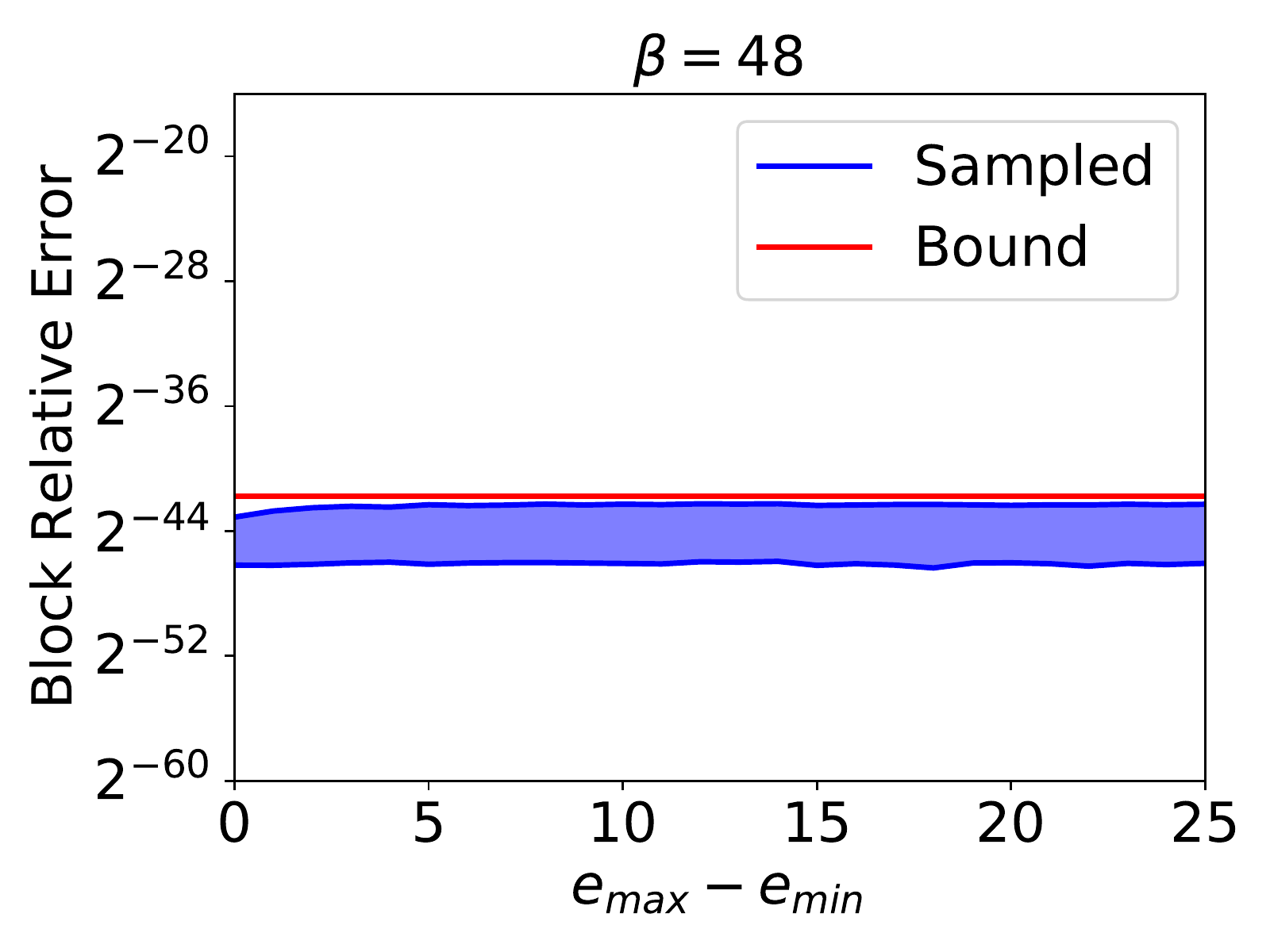}
    \end{subfigure}
    \begin{subfigure}[b]{0.32\textwidth}
        \includegraphics[width=\textwidth]{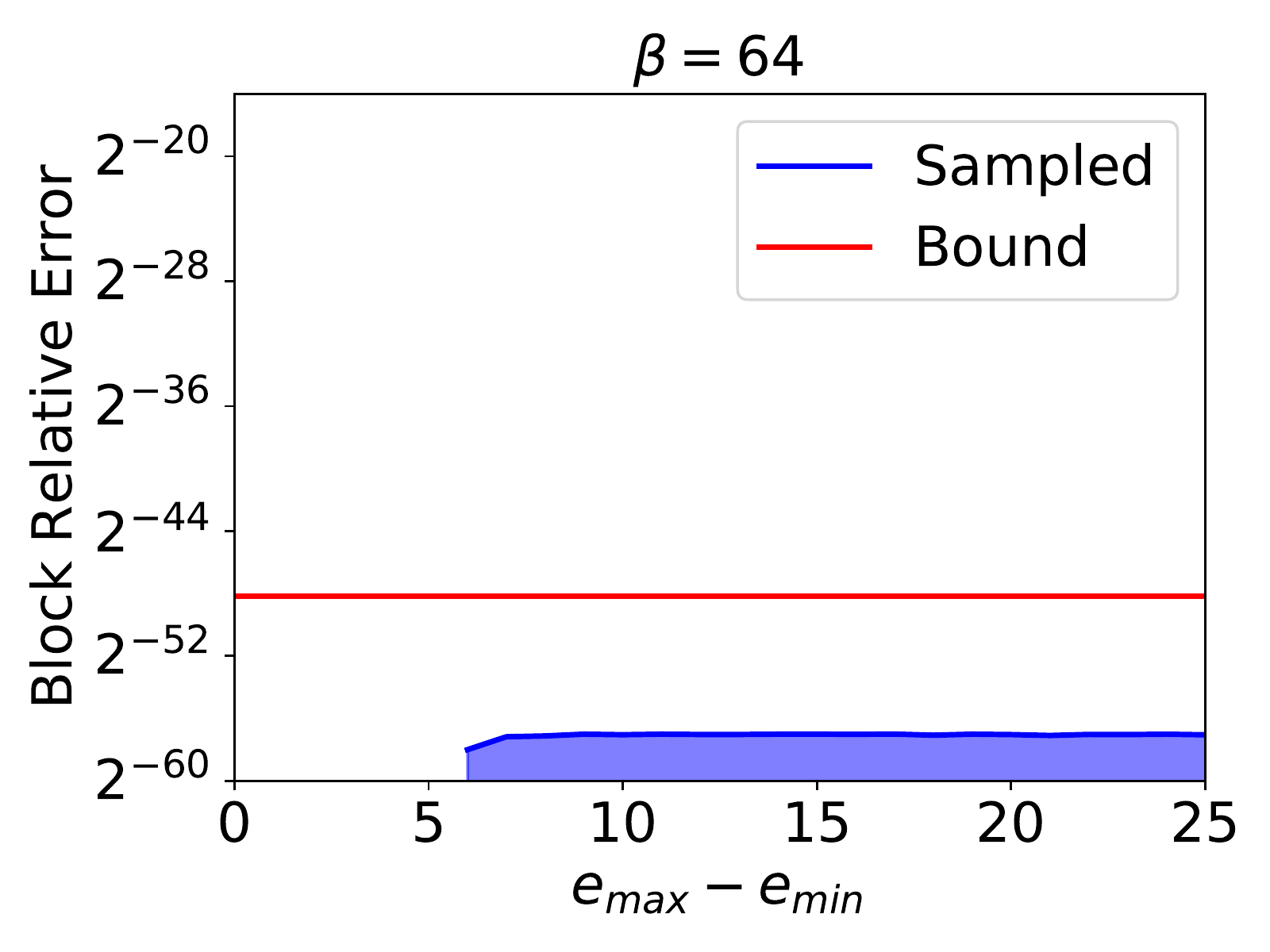}
    \end{subfigure}
 \caption{2-d Example with double precision:  componentwise relative error (top) and block relative error (bottom) with respect to the difference in exponents ($e_{max}-e_{min}$) for $\beta \in \{32,48,64\}${. The blue band represents the sampled maximum and minimum error and the red line depicts the theoretical bound.}}
     \label{fig:2d_double_4}
\end{figure}


\begin{figure}
    \centering
    \begin{subfigure}[b]{0.32\textwidth}
        \includegraphics[width=\textwidth]{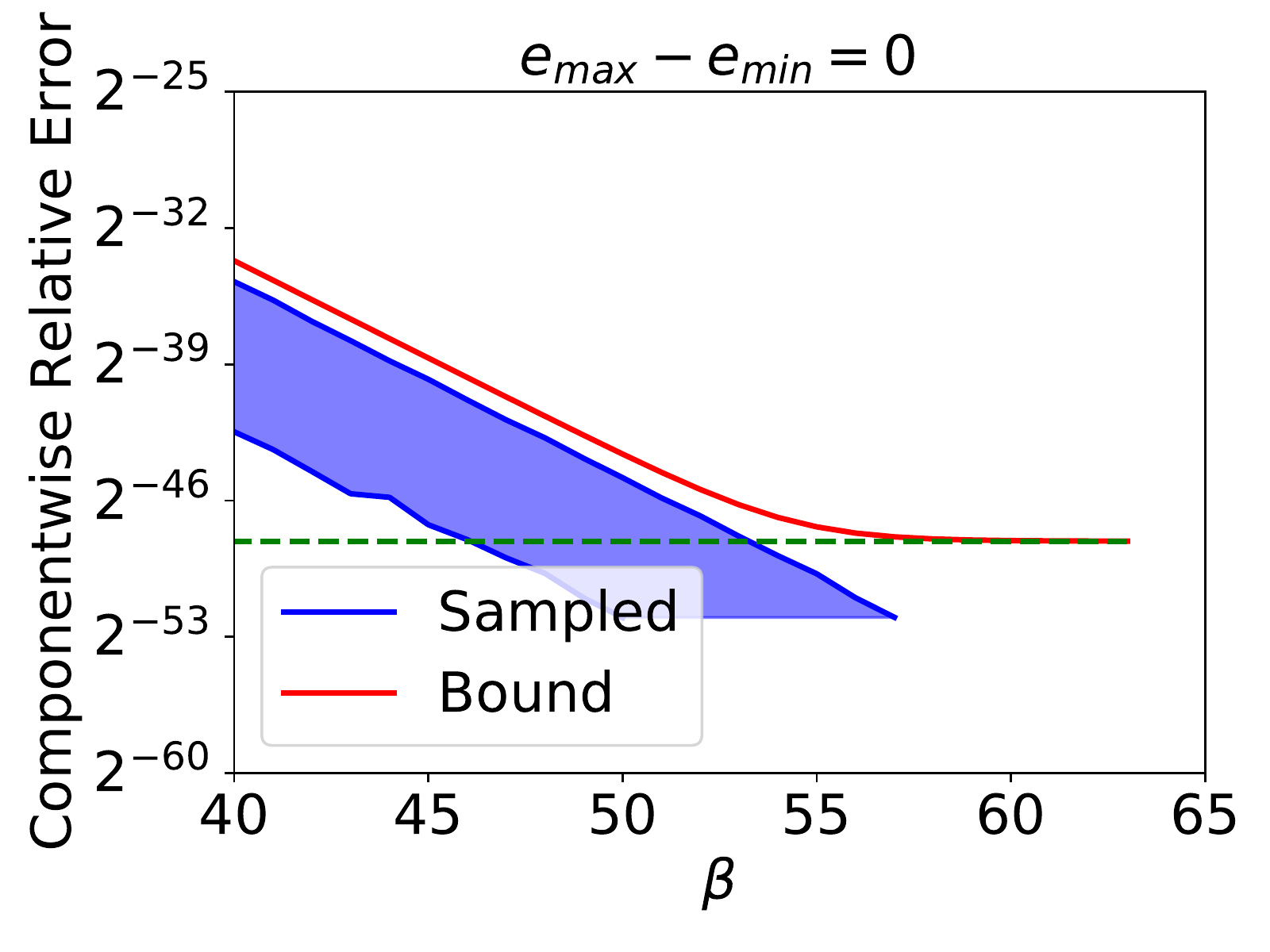}
    \end{subfigure}
      \begin{subfigure}[b]{0.32\textwidth}
        \includegraphics[width=\textwidth]{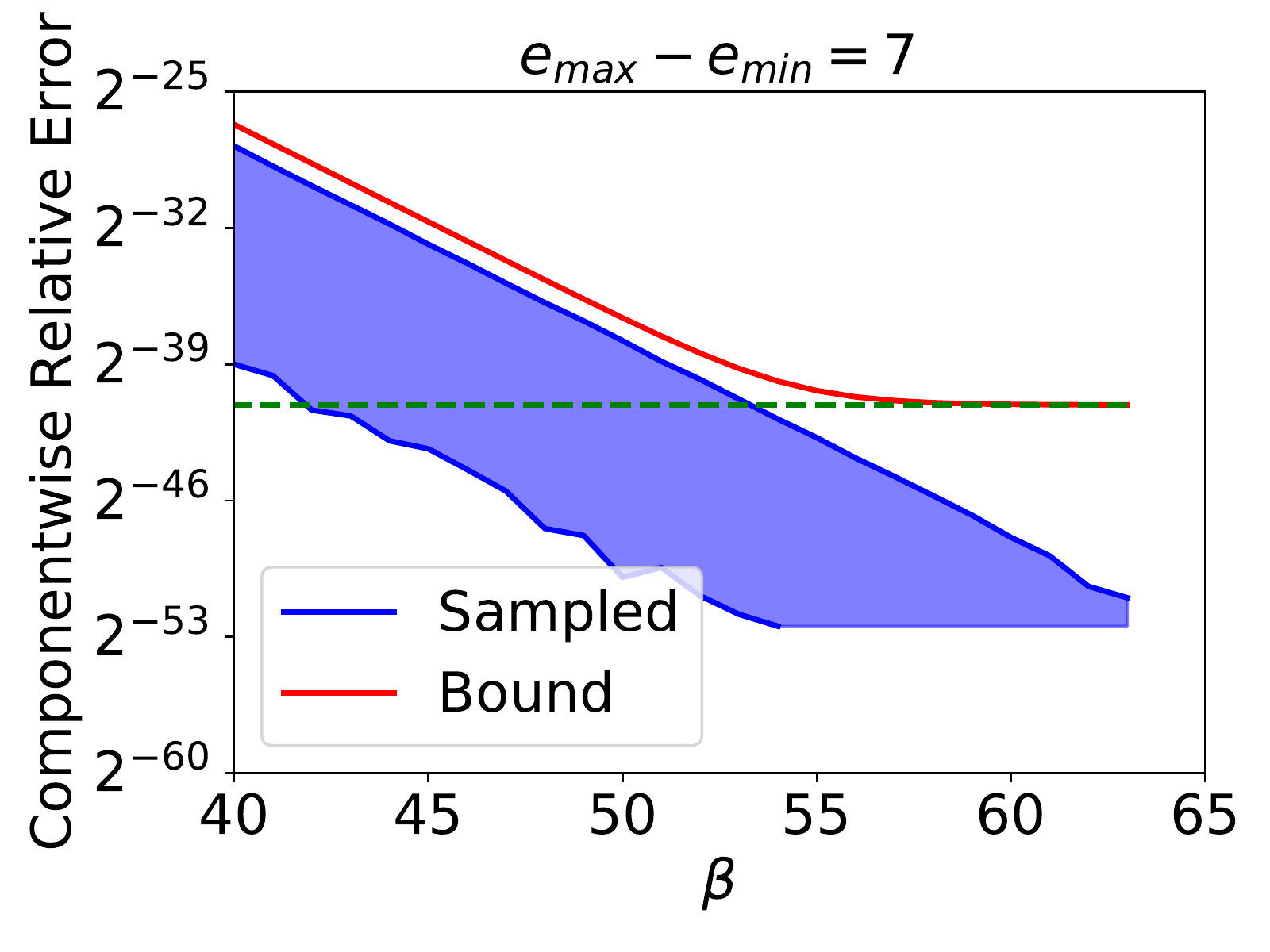}
    \end{subfigure}
       \begin{subfigure}[b]{0.32\textwidth}
        \includegraphics[width=\textwidth]{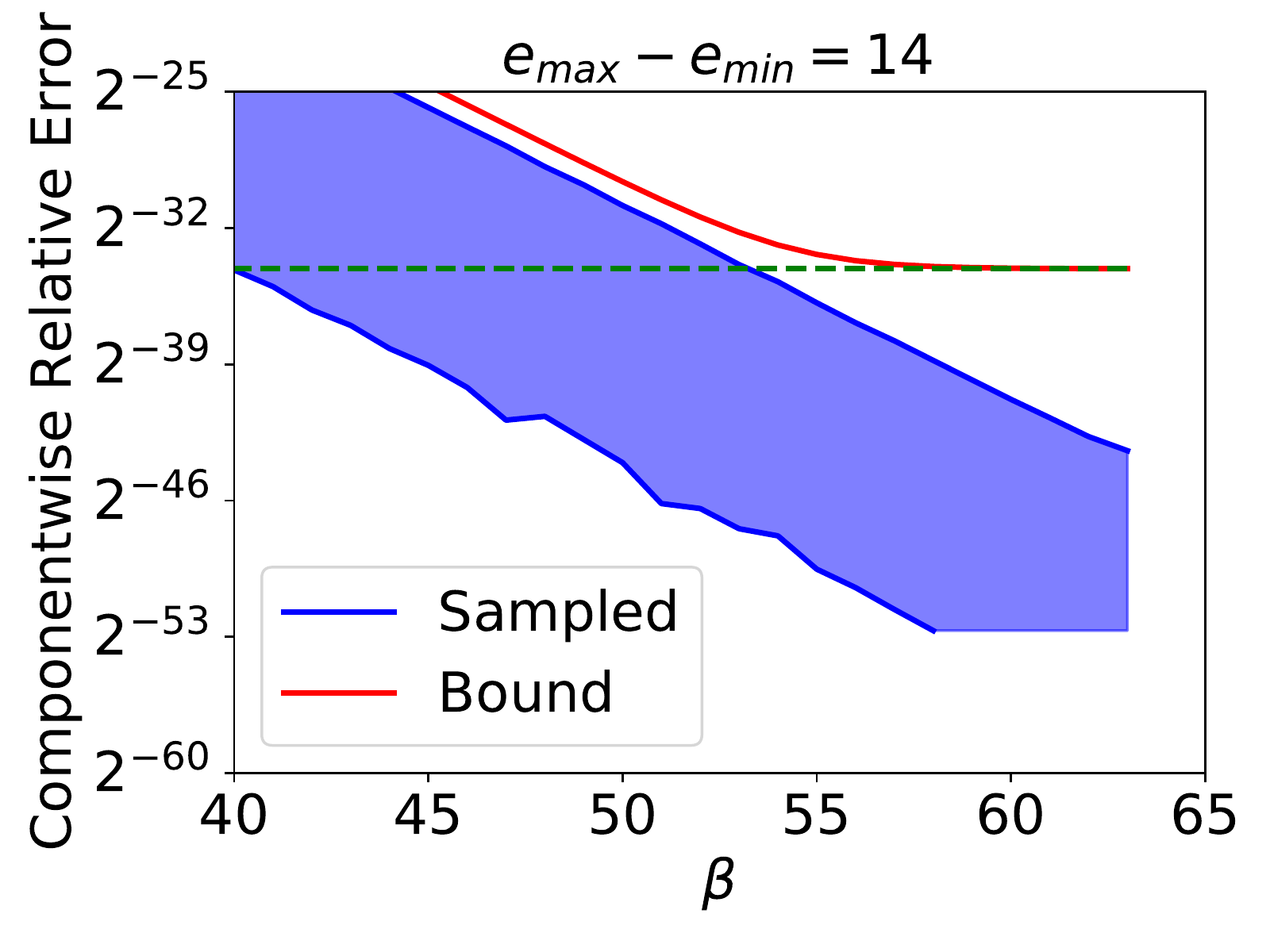}
    \end{subfigure}
        \begin{subfigure}[b]{0.32\textwidth}
        \includegraphics[width=\textwidth]{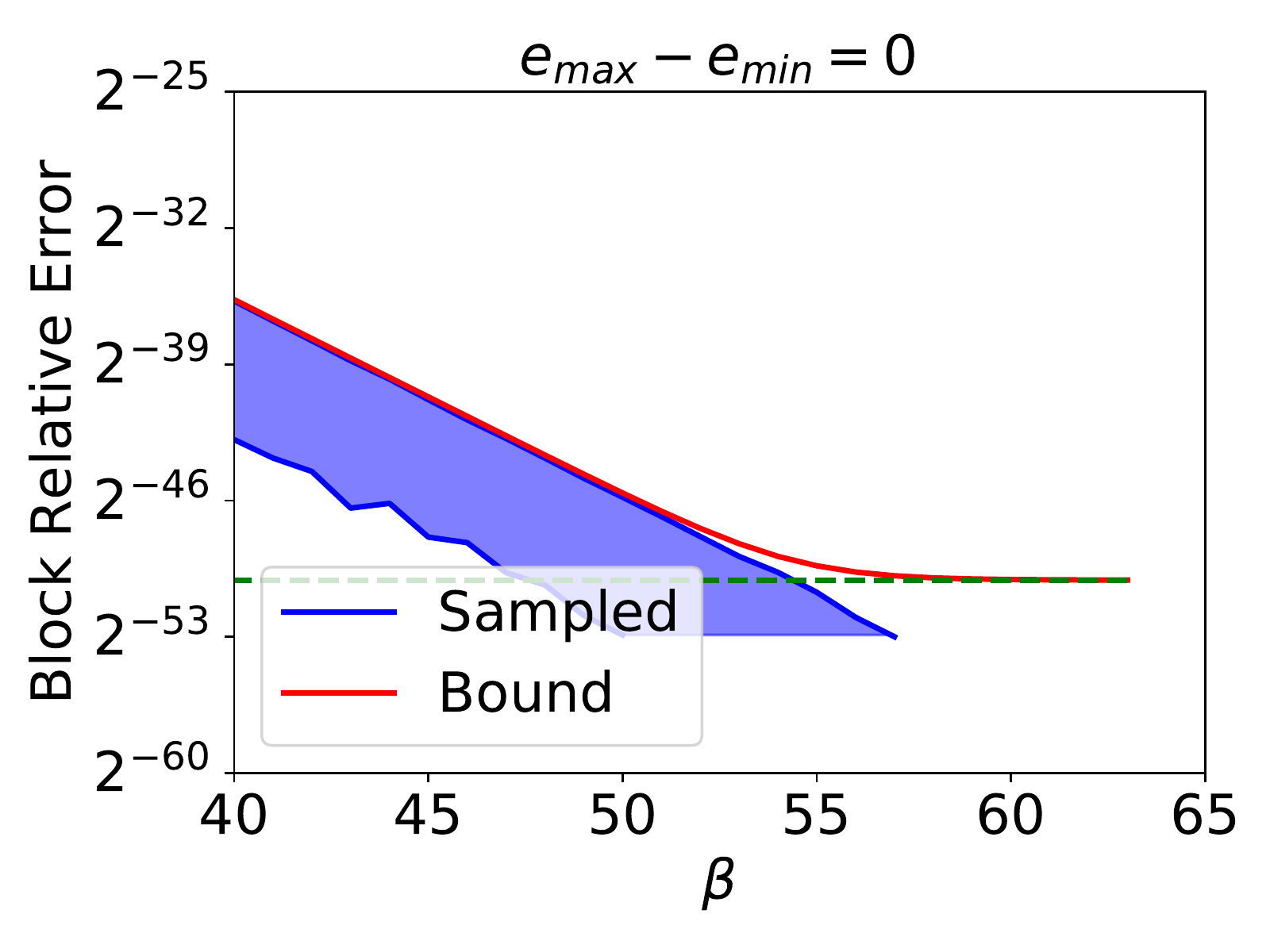}
    \end{subfigure}
        \begin{subfigure}[b]{0.32\textwidth}
        \includegraphics[width=\textwidth]{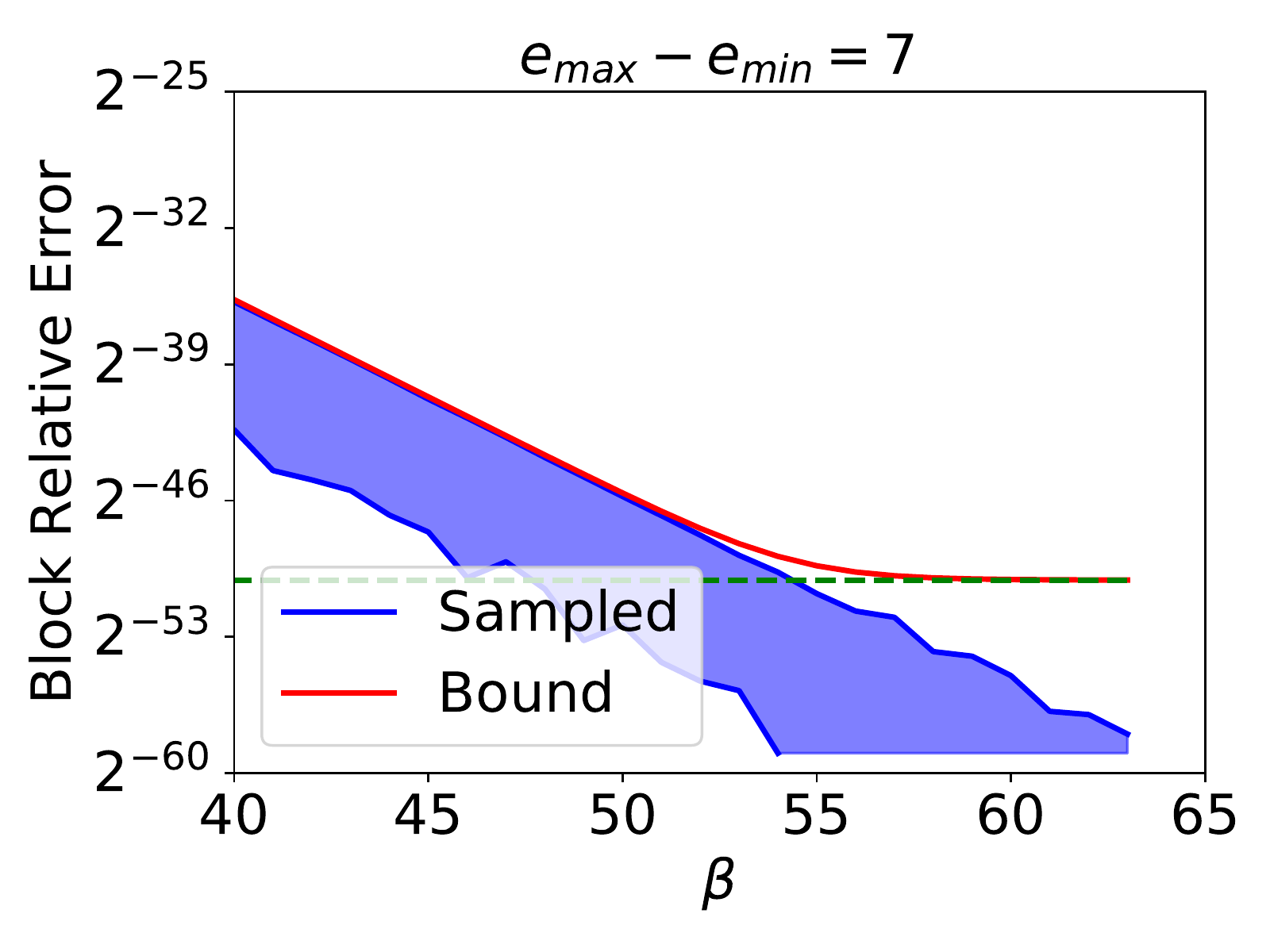}
    \end{subfigure}
    \begin{subfigure}[b]{0.32\textwidth}
        \includegraphics[width=\textwidth]{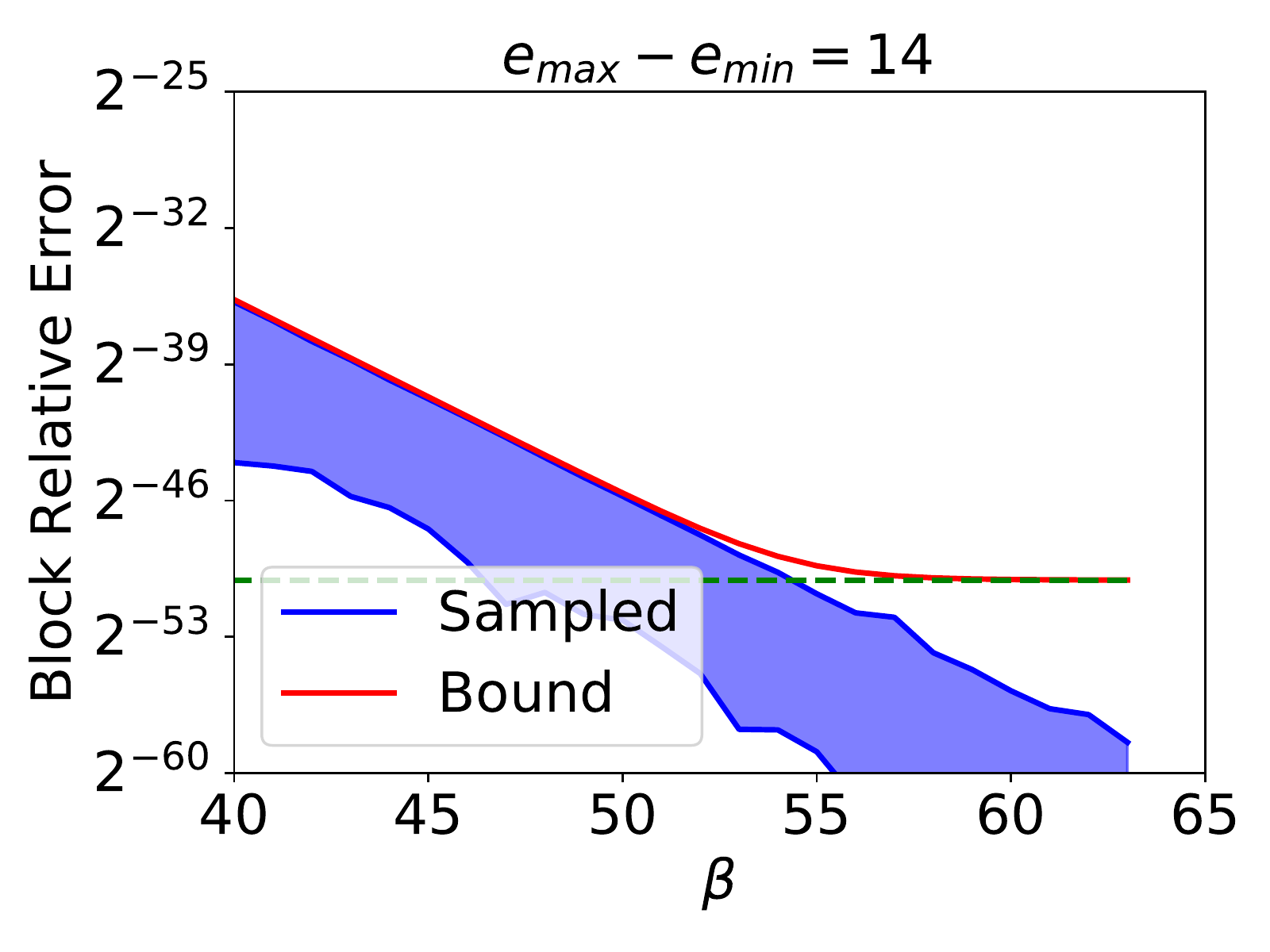}
    \end{subfigure}
  \caption{1-d Example with double precision:  componentwise relative error (top) and block relative error (bottom) with respect to the precision parameter ($\beta$) for $ e_{max}-e_{min} \in\{ 0,7,14\}${. The blue band represents the sampled maximum and minimum error, the red line depicts the theoretical bound, and the dashed green line represents the asymptotic behavior of the theoretical bound. }}
      \label{fig:1d_double_1}
\end{figure}

\begin{figure}
    \centering
    \begin{subfigure}[b]{0.32\textwidth}
        \includegraphics[width=\textwidth]{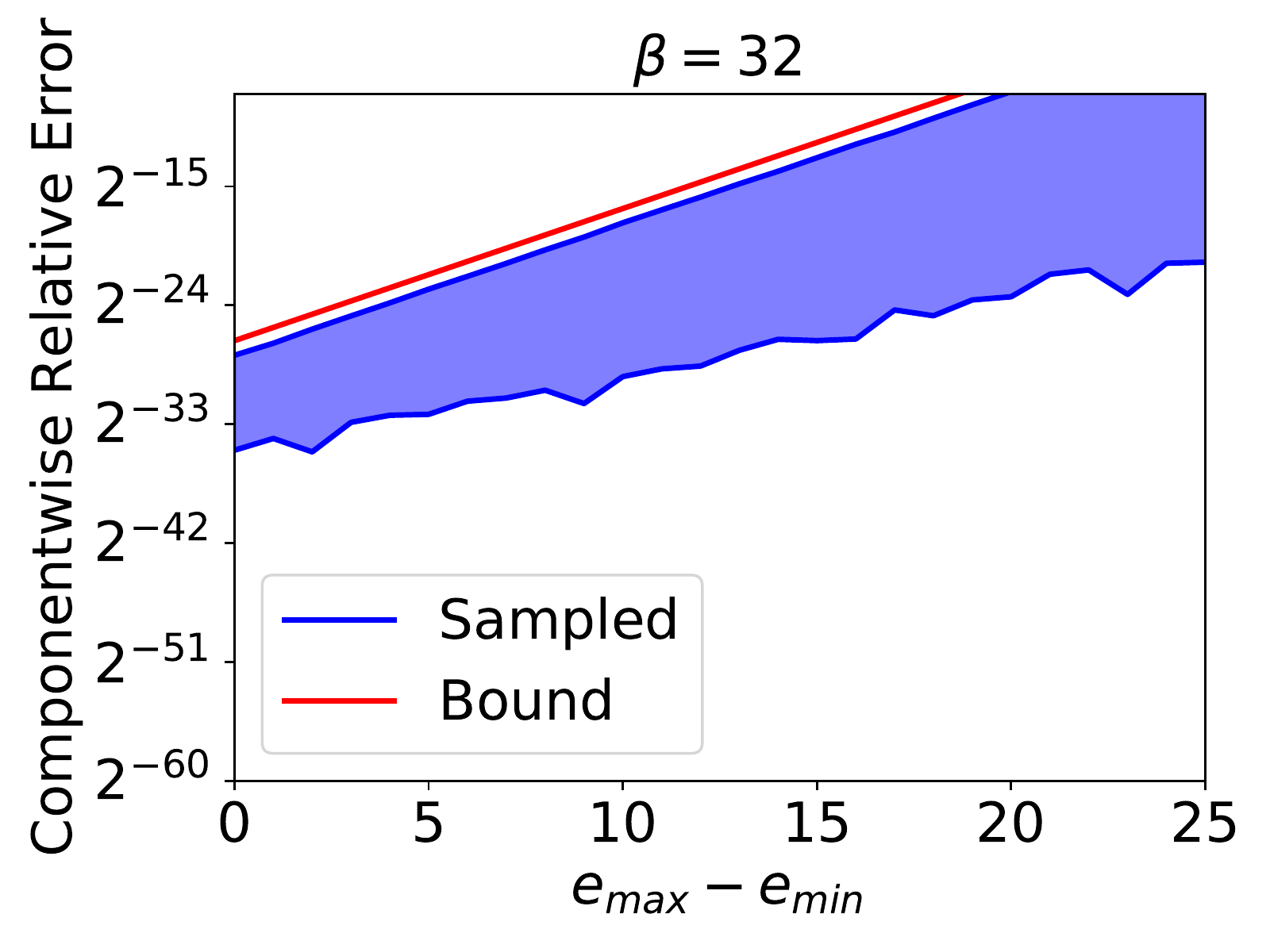}
    \end{subfigure}
        \begin{subfigure}[b]{0.32\textwidth}
        \includegraphics[width=\textwidth]{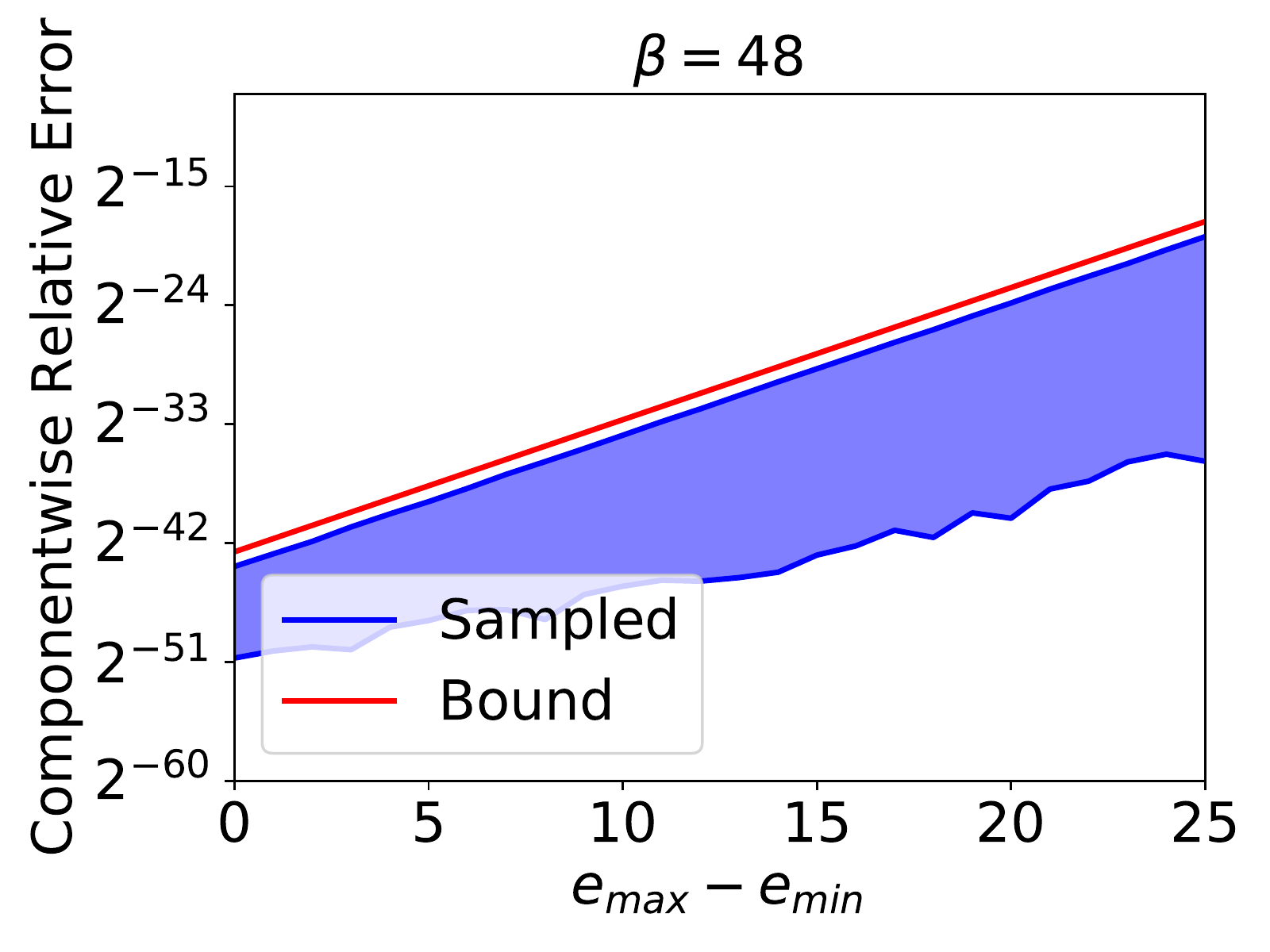}
    \end{subfigure}
        \begin{subfigure}[b]{0.32\textwidth}
        \includegraphics[width=\textwidth]{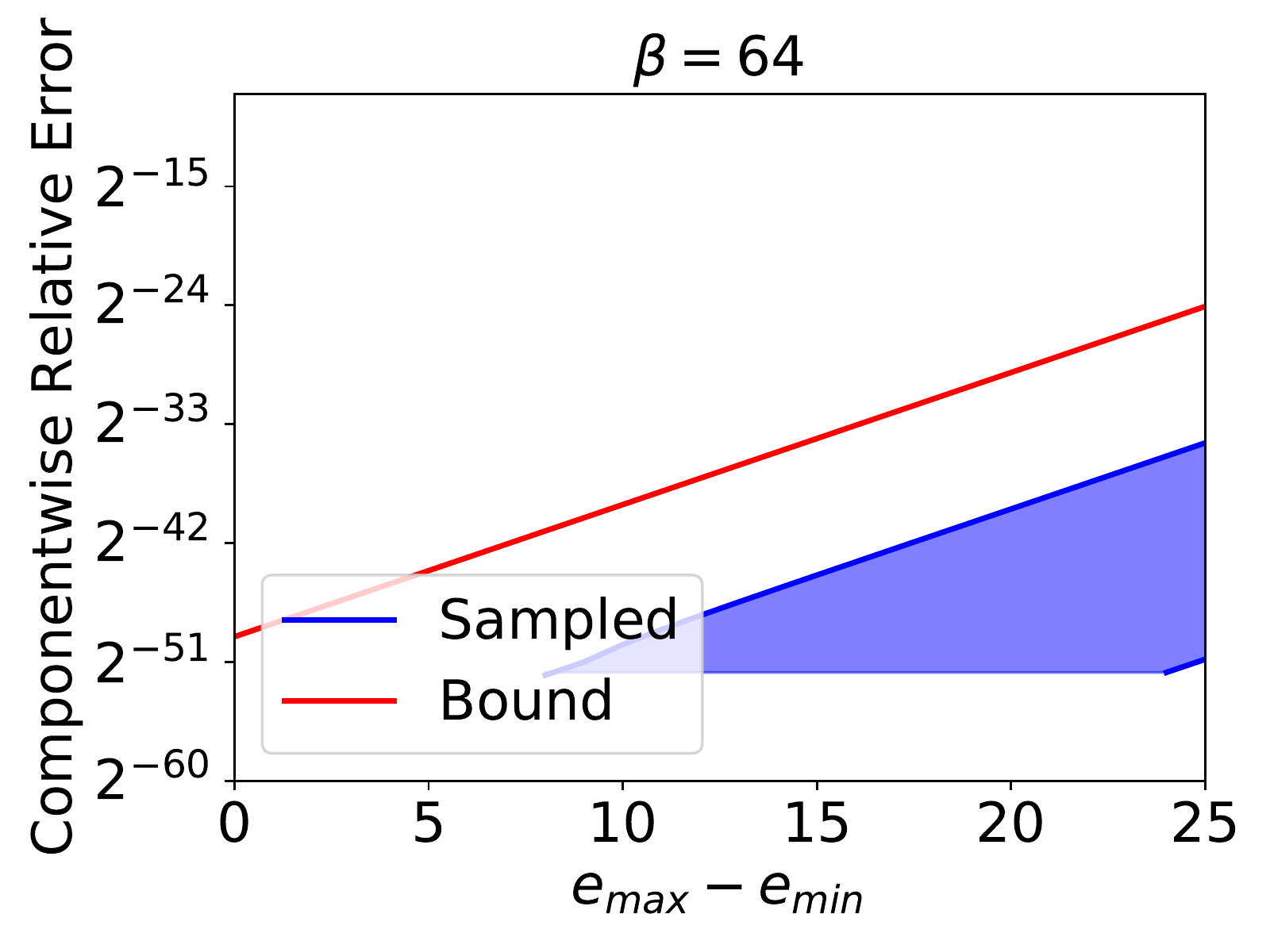}
\end{subfigure}

    \begin{subfigure}[b]{0.32\textwidth}
        \includegraphics[width=\textwidth]{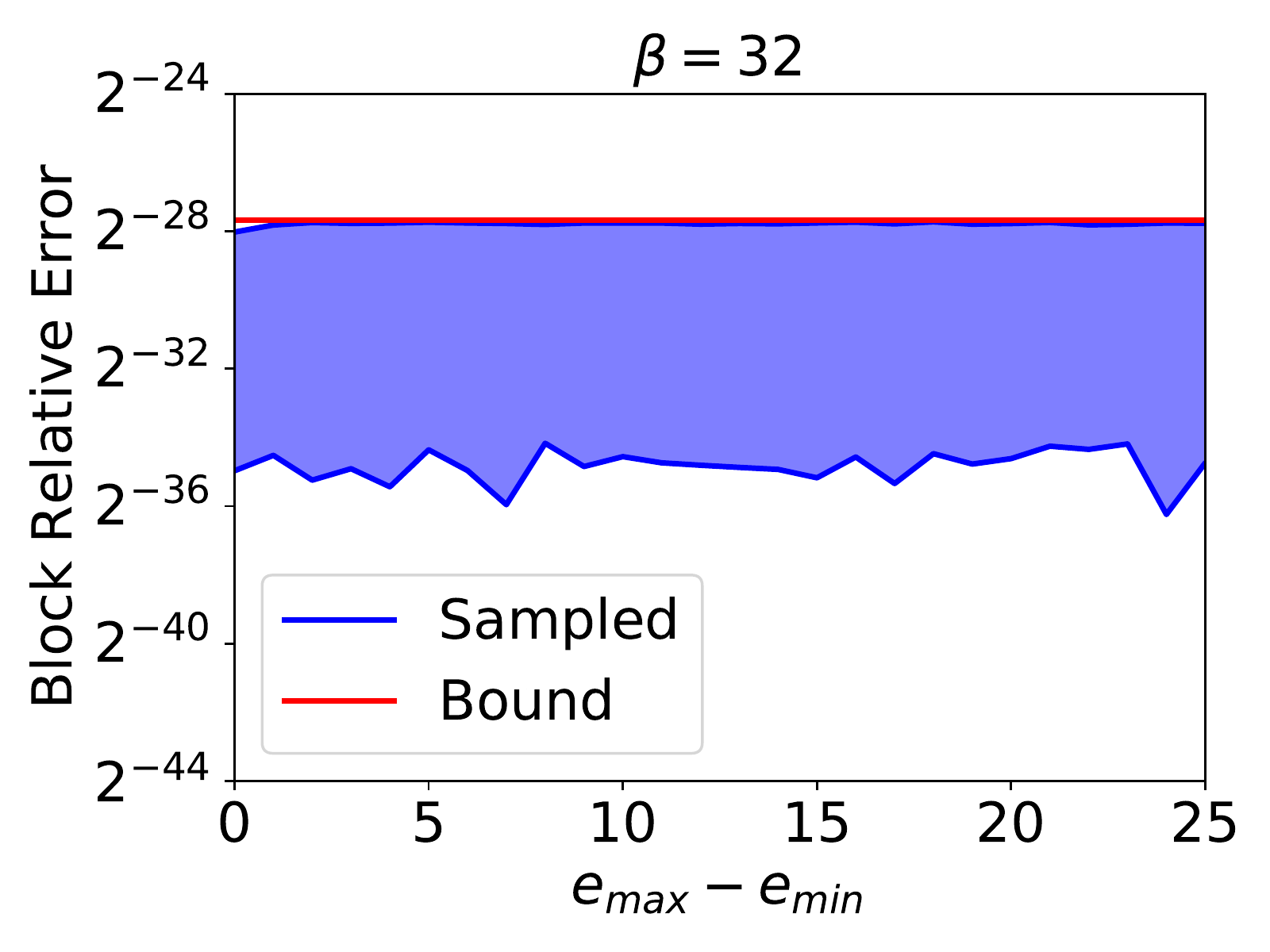}
    \end{subfigure}
    \begin{subfigure}[b]{0.32\textwidth}
        \includegraphics[width=\textwidth]{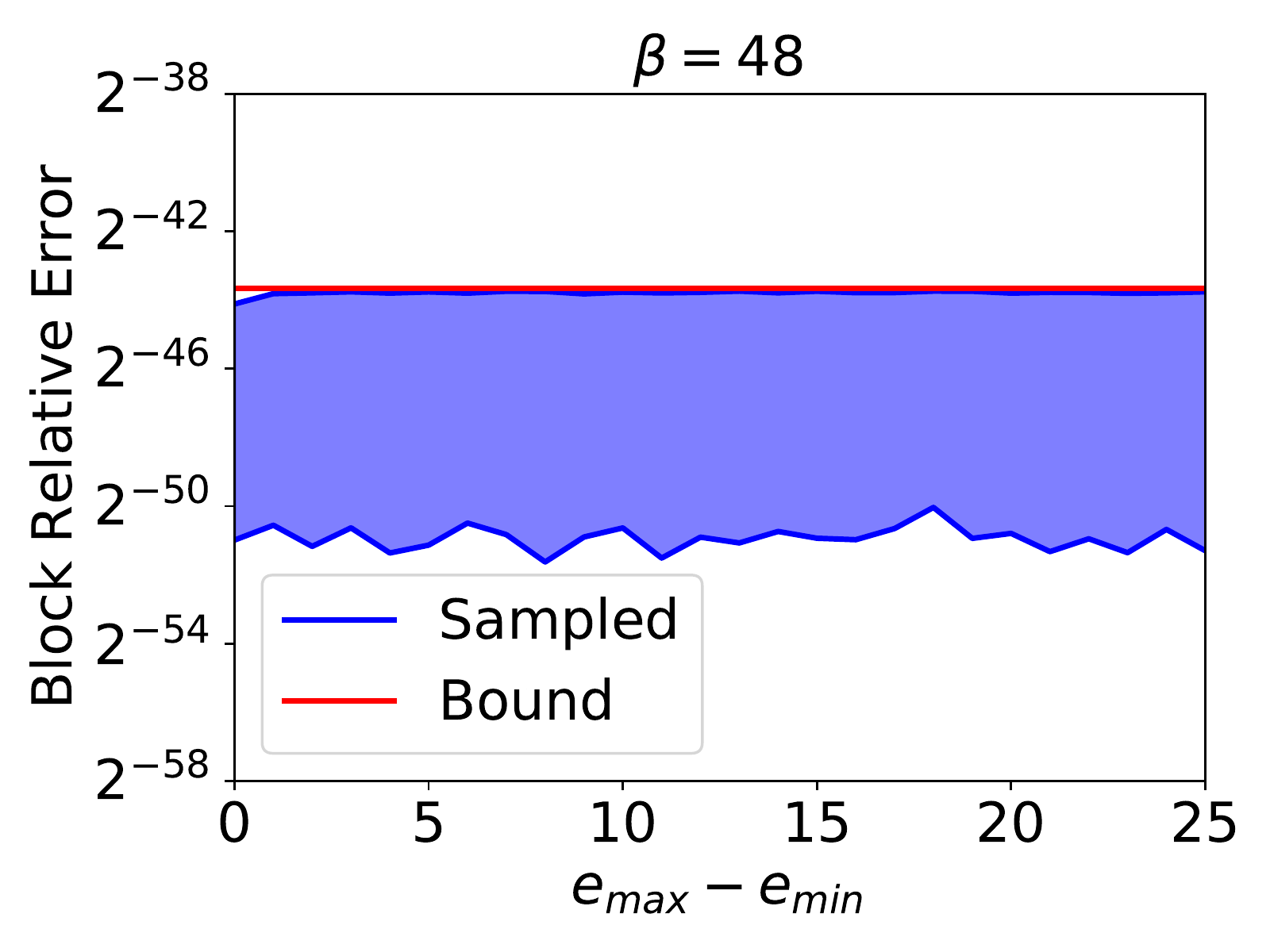}
    \end{subfigure}
    \begin{subfigure}[b]{0.32\textwidth}
        \includegraphics[width=\textwidth]{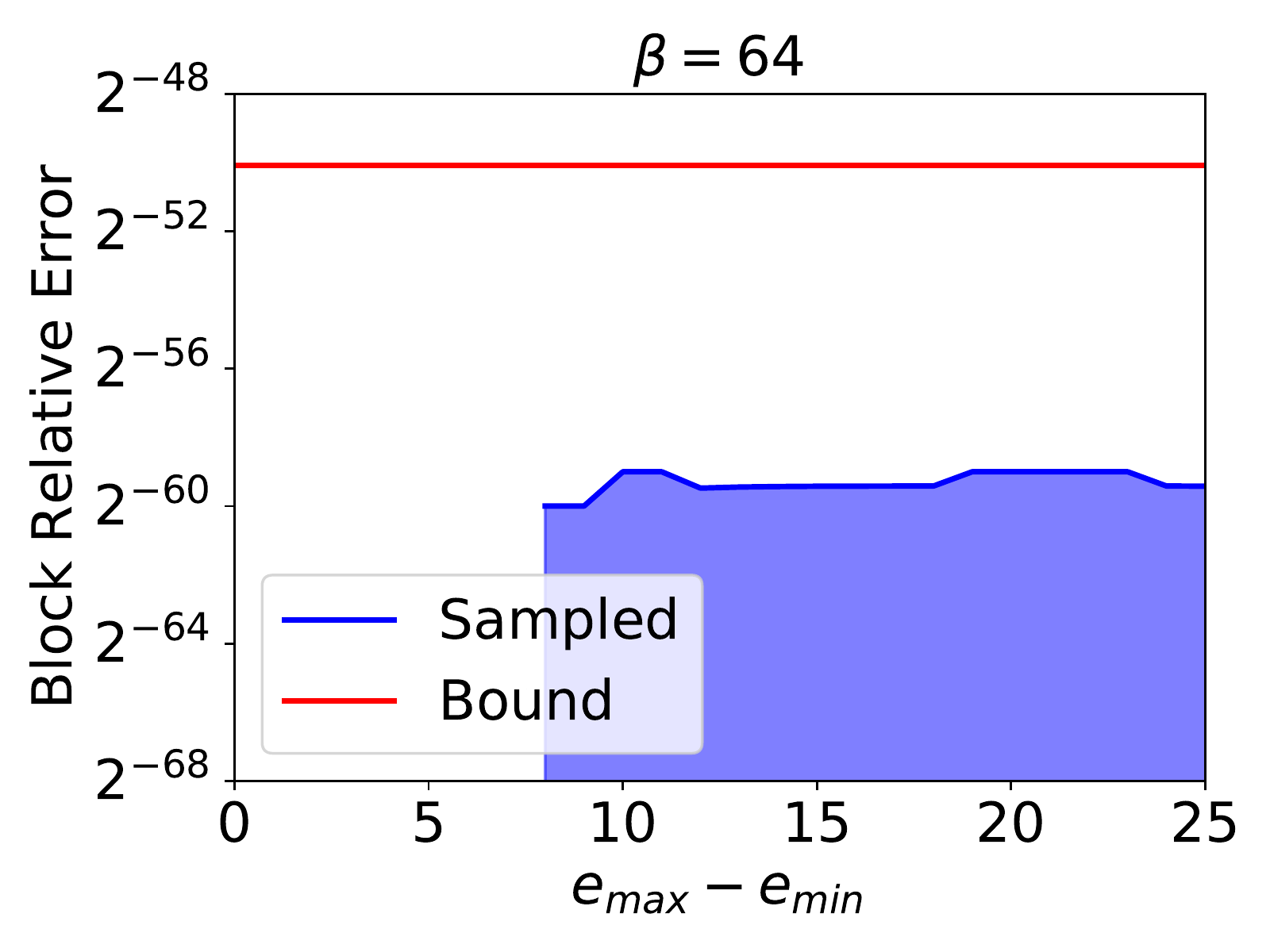}
    \end{subfigure}
 \caption{1-d Example with double precision:  componentwise relative error (top) and block relative error (bottom) with respect to the difference in exponents ($e_{max}-e_{min}$) for $\beta \in \{32,48,64\}${. The blue band represents the sampled maximum and minimum error and the red line depicts the theoretical bound. }}
    \label{fig:1d_double_4}
\end{figure}


\begin{figure}
    \centering
    \begin{subfigure}[b]{0.32\textwidth}
        \includegraphics[width=\textwidth]{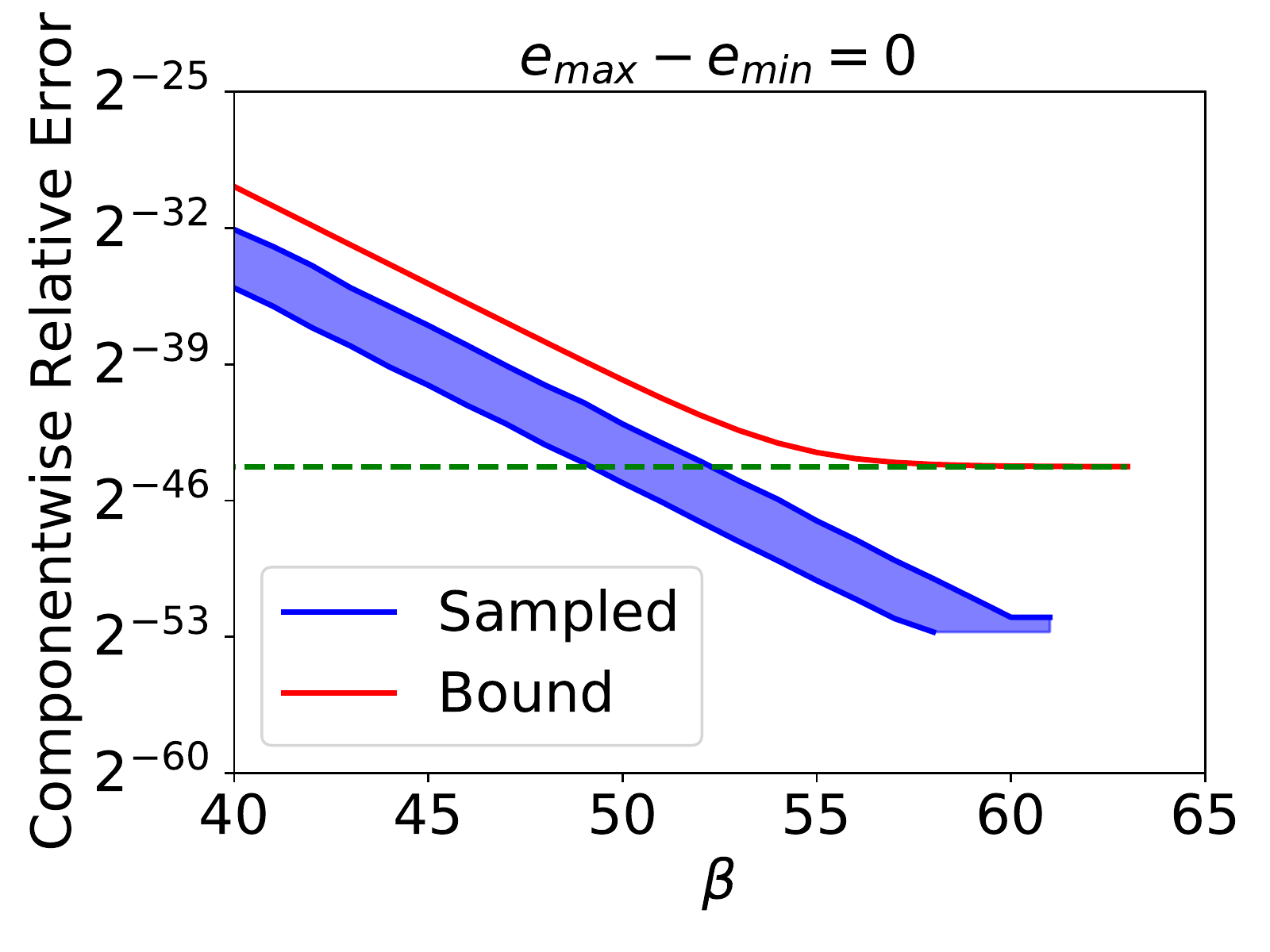}
    \end{subfigure}
      \begin{subfigure}[b]{0.32\textwidth}
        \includegraphics[width=\textwidth]{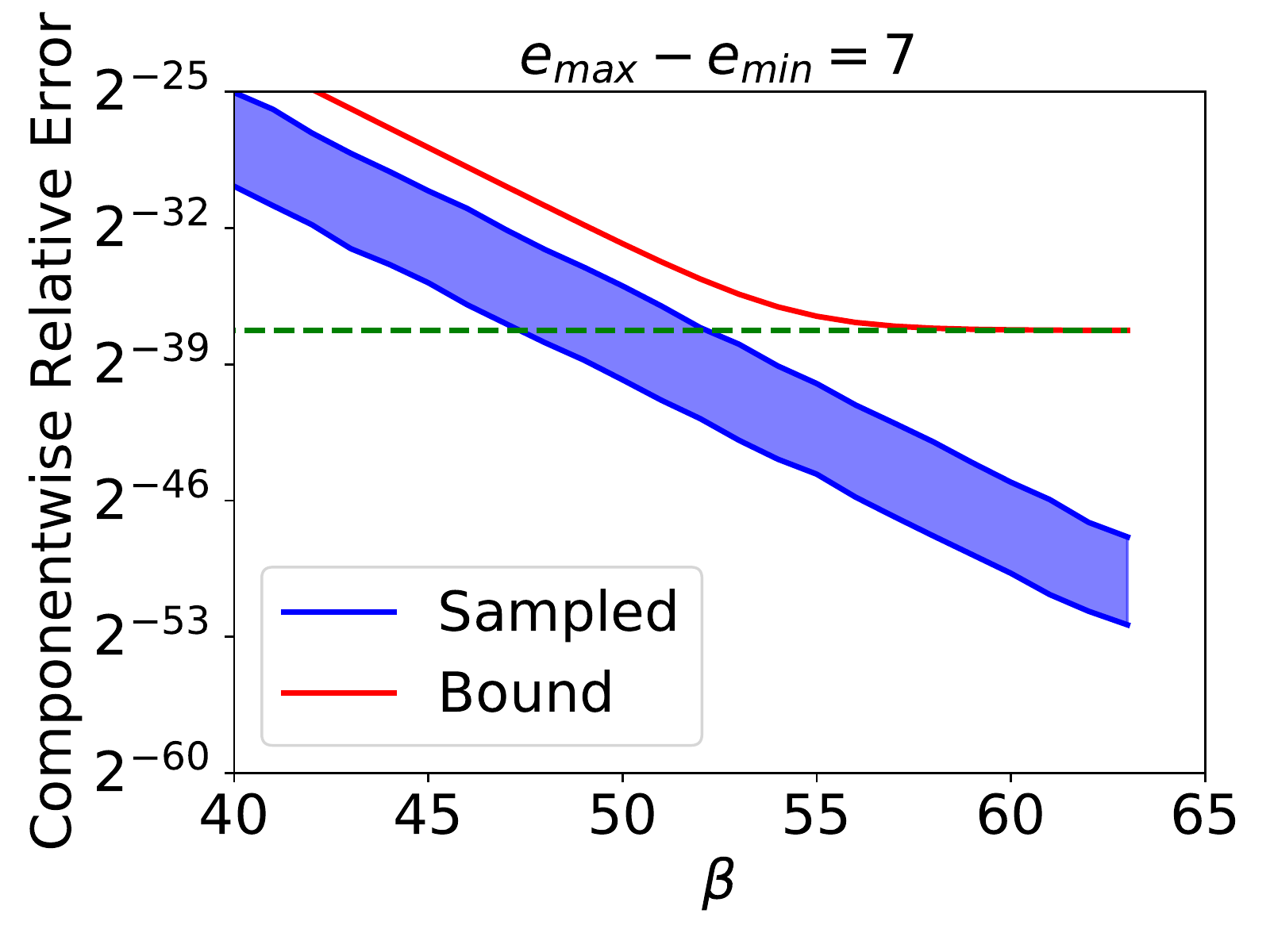}
    \end{subfigure}
       \begin{subfigure}[b]{0.32\textwidth}
        \includegraphics[width=\textwidth]{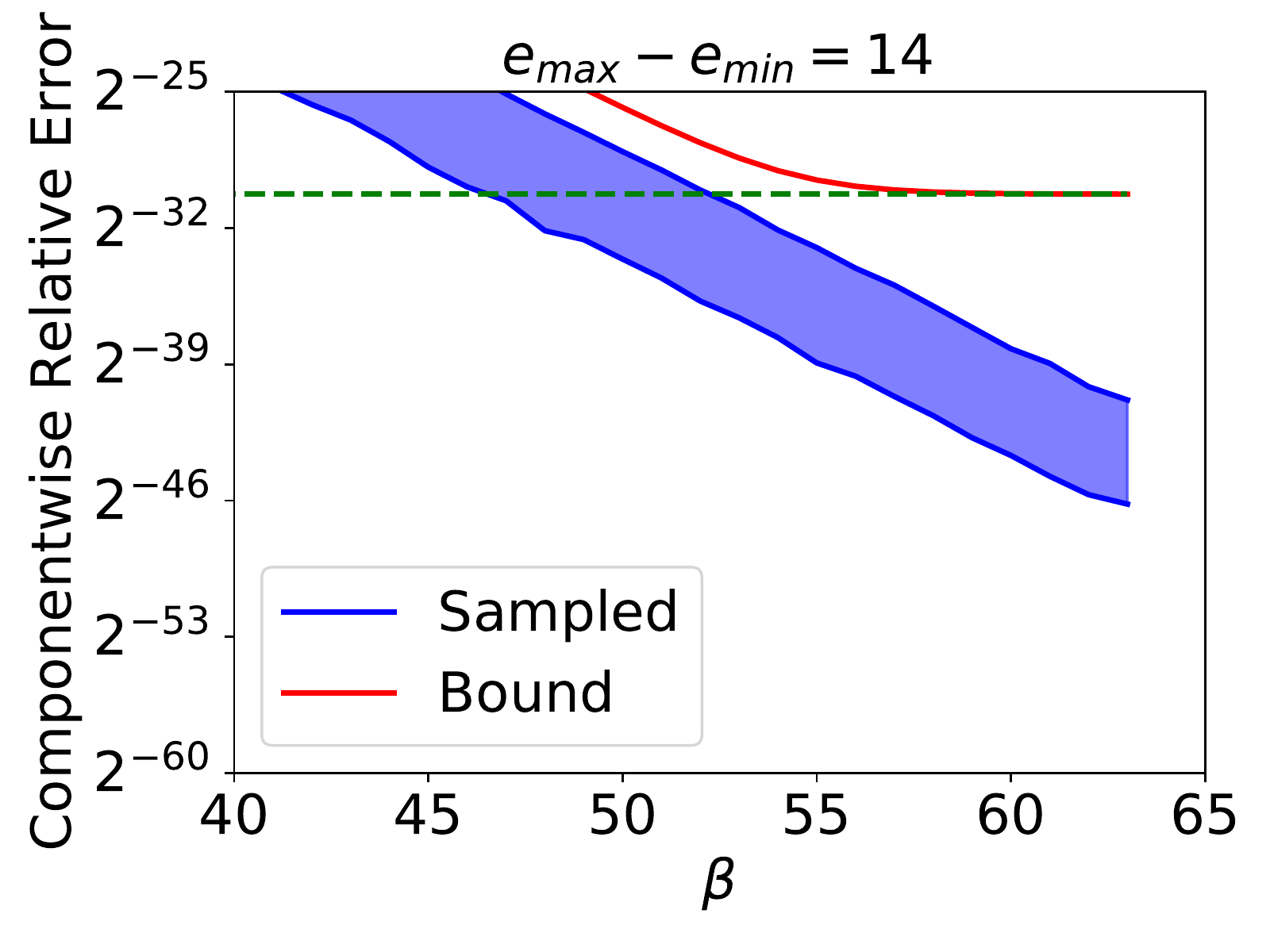}

    \end{subfigure}

    \begin{subfigure}[b]{0.32\textwidth}
        \includegraphics[width=\textwidth]{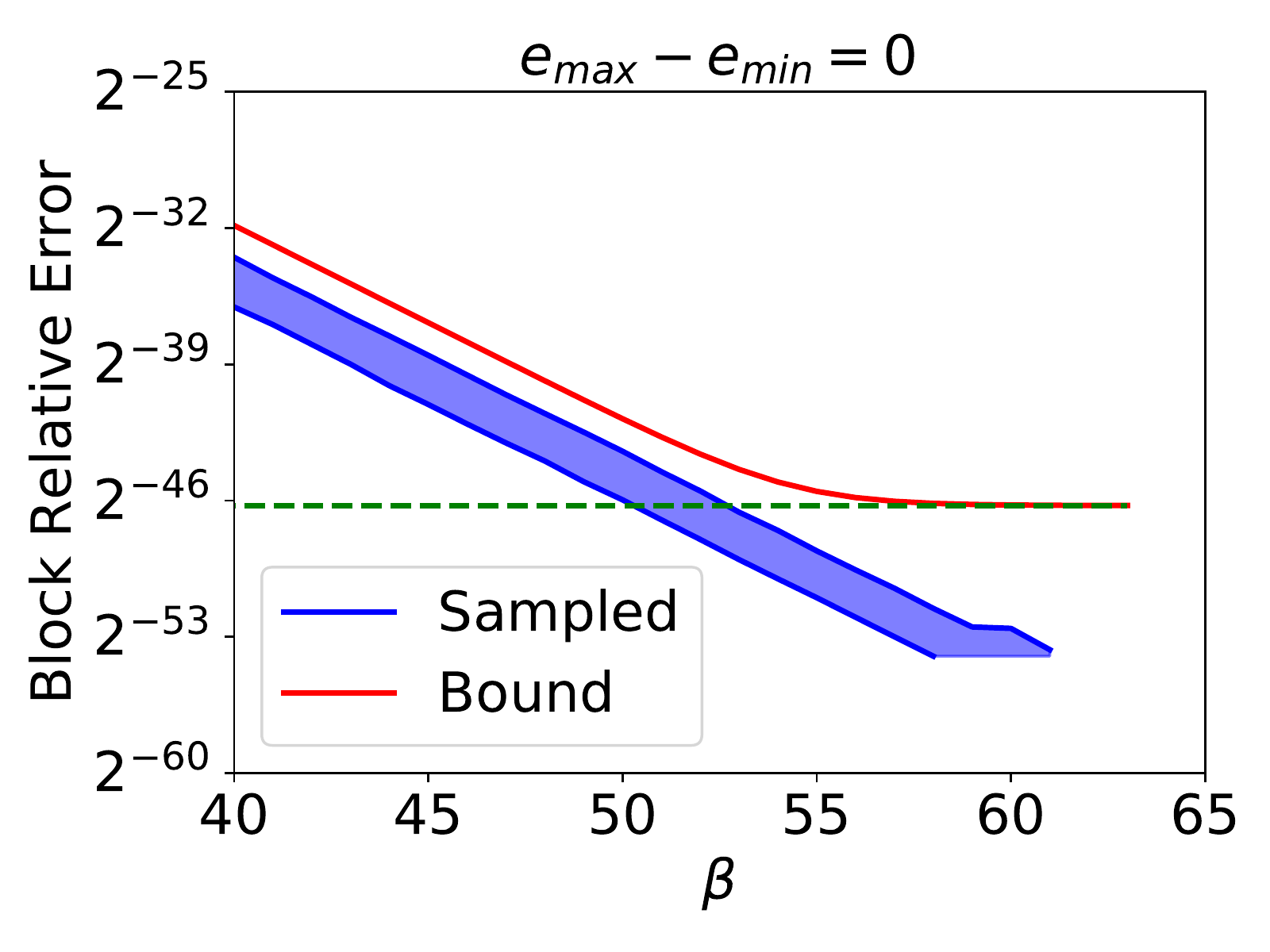}
    \end{subfigure}
        \begin{subfigure}[b]{0.32\textwidth}
        \includegraphics[width=\textwidth]{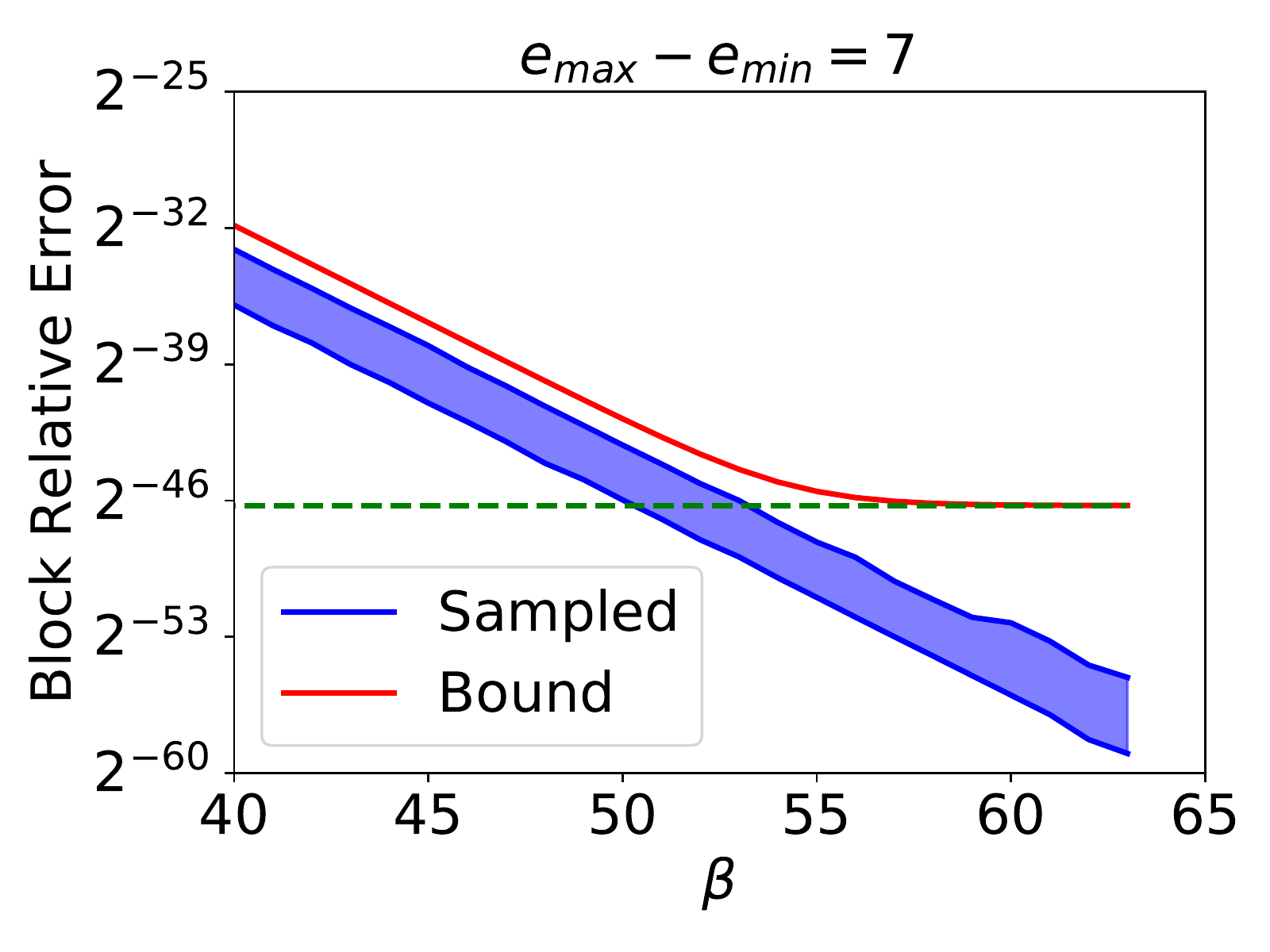}
    \end{subfigure}
    \begin{subfigure}[b]{0.32\textwidth}
        \includegraphics[width=\textwidth]{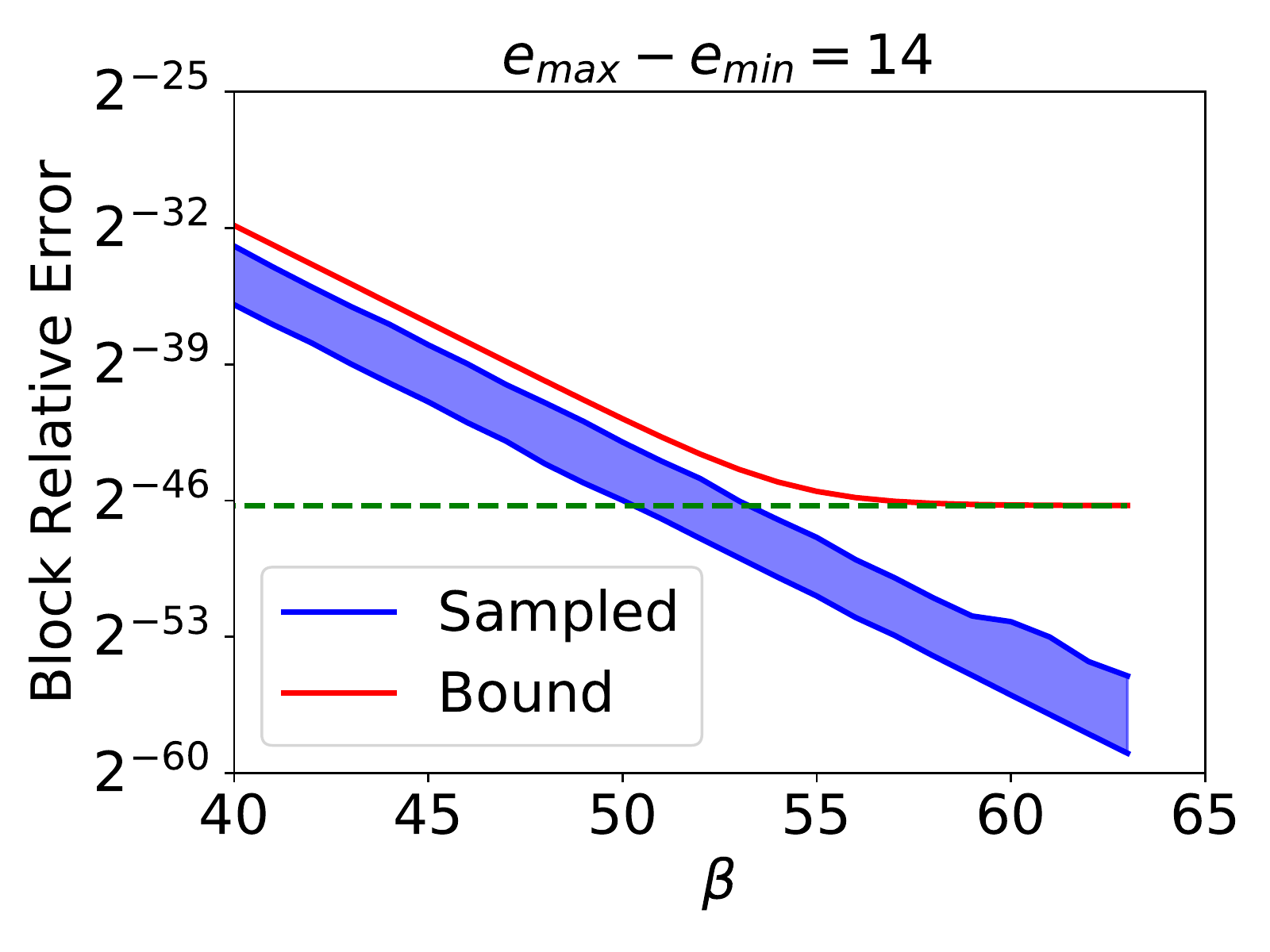}
    \end{subfigure}
  \caption{3-d Example with double precision:  componentwise relative error (top) and block relative error (bottom) with respect to the precision parameter ($\beta$) for $ e_{max}-e_{min} \in \{0,7,14\}${. The blue band represents the sampled maximum and minimum of the error, the red line depicts the theoretical bound, and the dashed green line represents the asymptotic behavior of the theoretical bound. }}
    \label{fig:3d_double_1}
\end{figure}

\begin{figure}
    \centering
    \begin{subfigure}[b]{0.32\textwidth}
        \includegraphics[width=\textwidth]{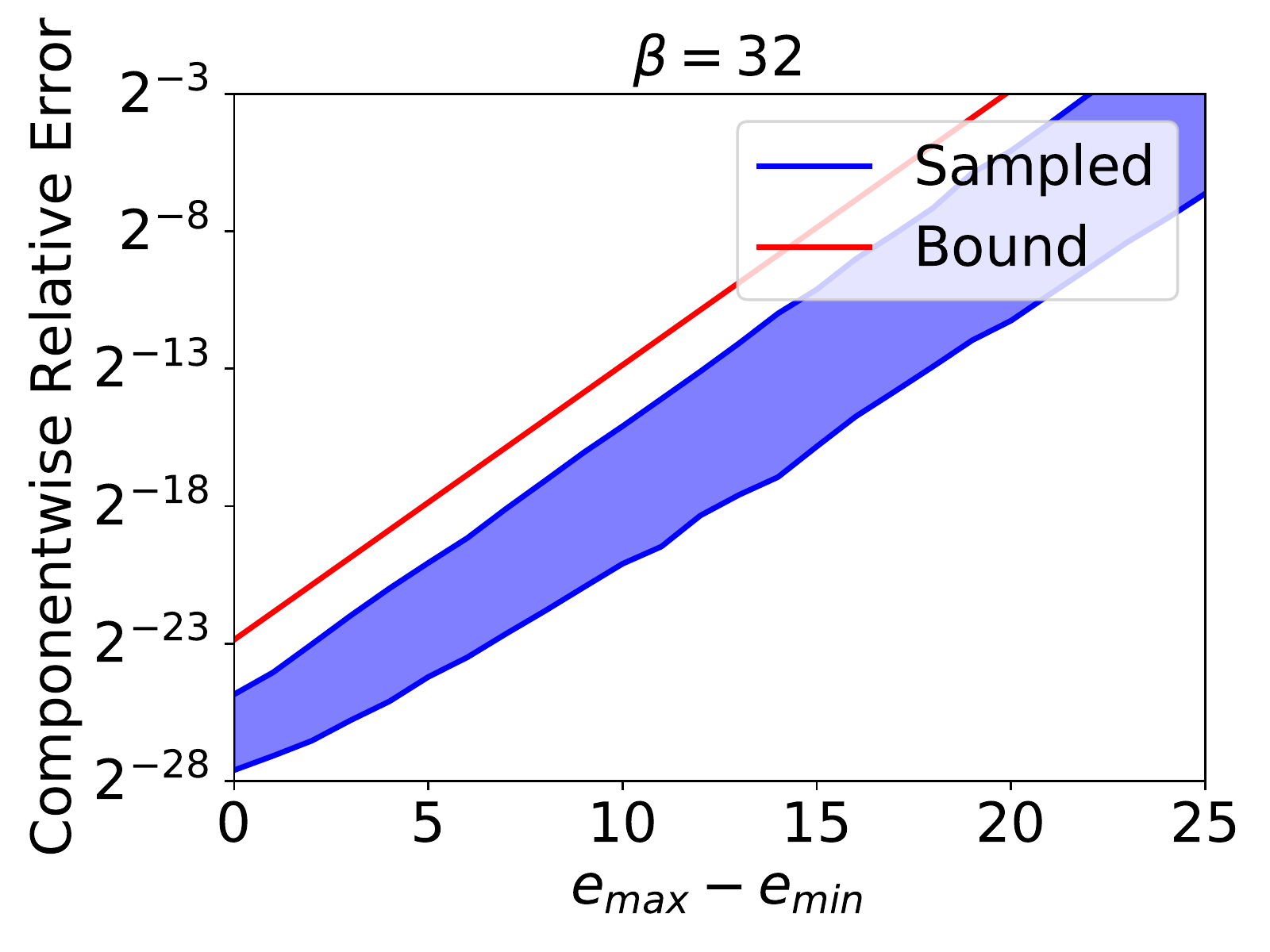}
    \end{subfigure}
        \begin{subfigure}[b]{0.32\textwidth}
        \includegraphics[width=\textwidth]{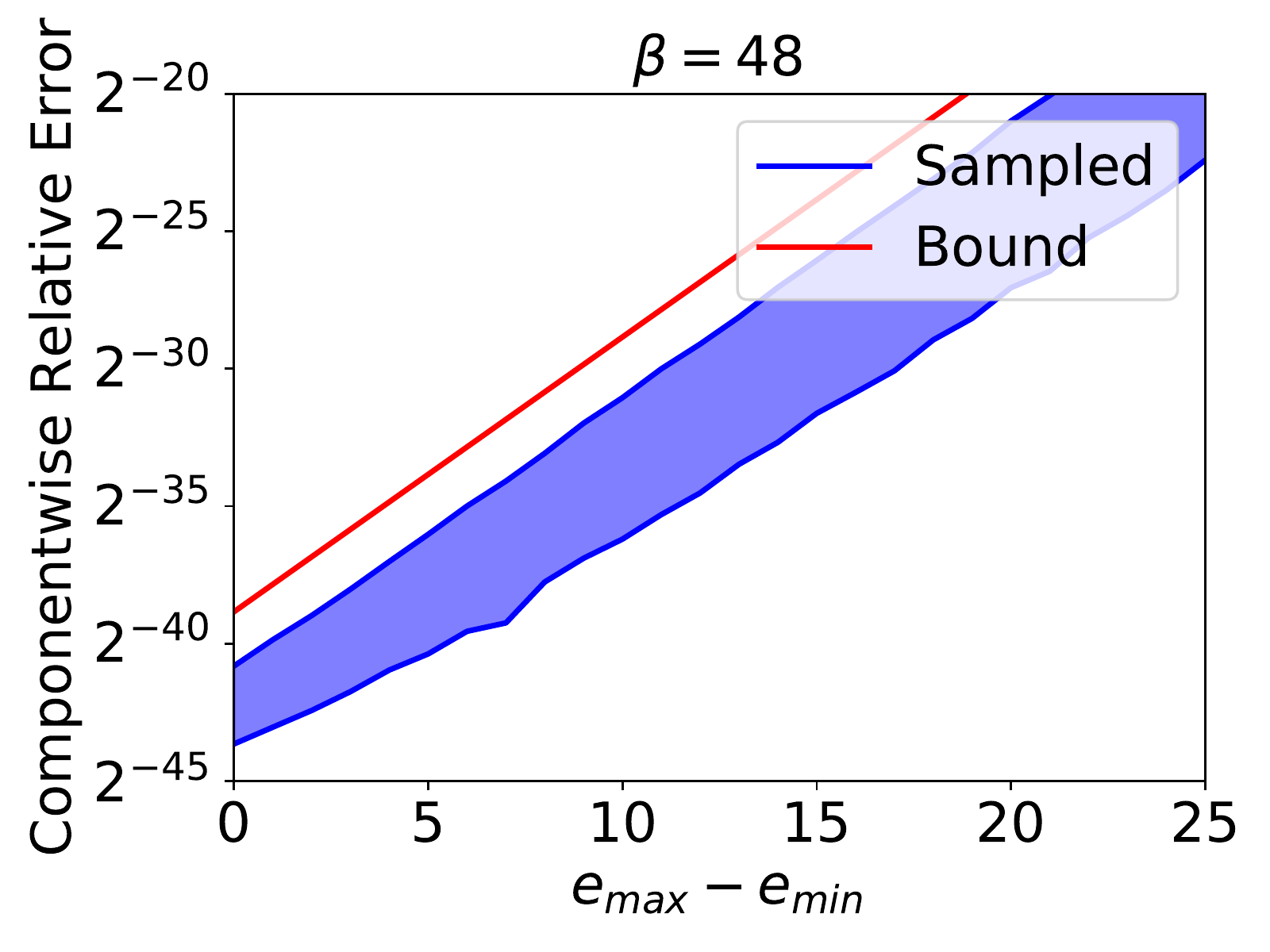}
    \end{subfigure}
        \begin{subfigure}[b]{0.32\textwidth}
        \includegraphics[width=\textwidth]{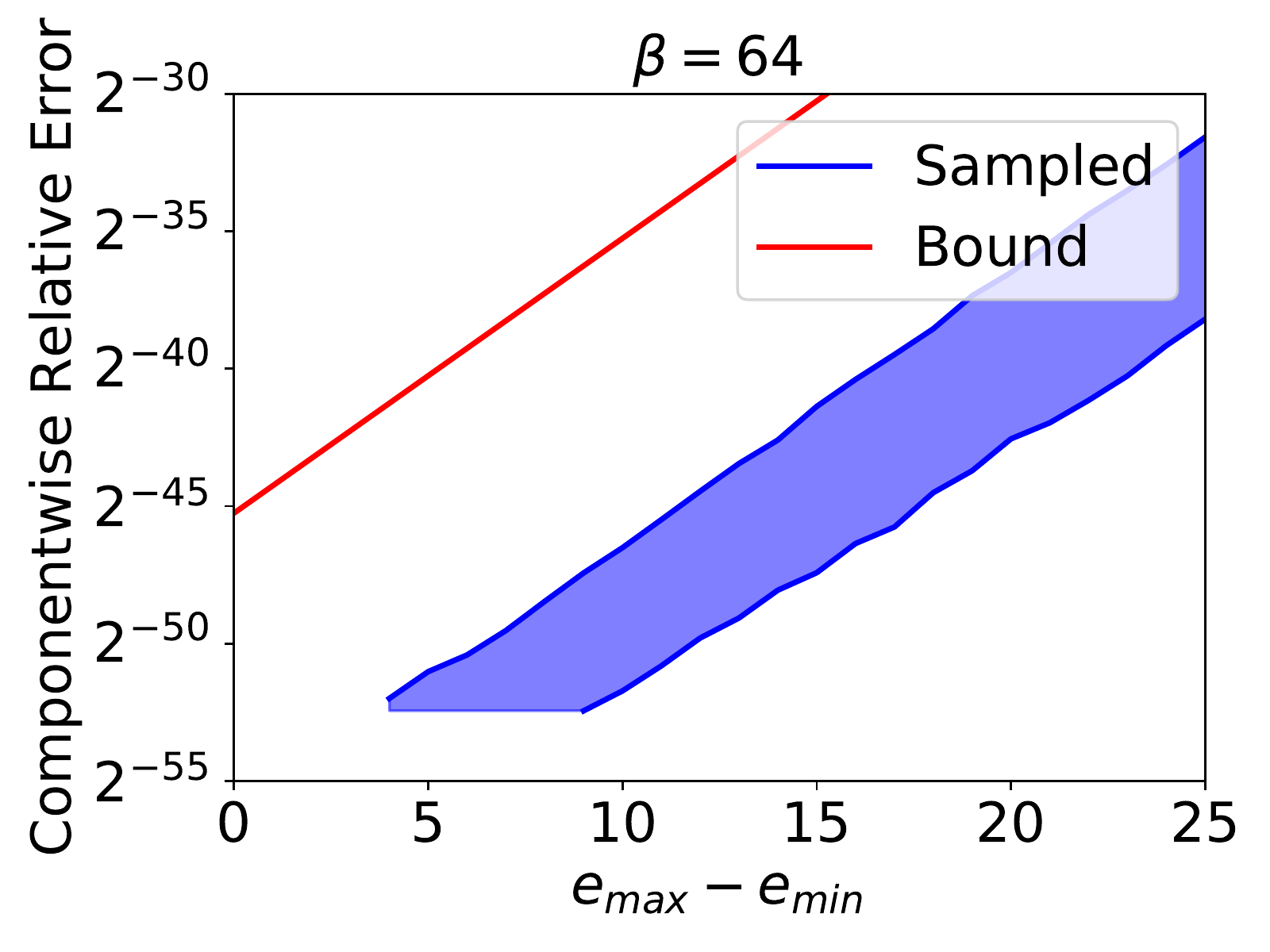}
\end{subfigure}
        \begin{subfigure}[b]{0.32\textwidth}
        \includegraphics[width=\textwidth]{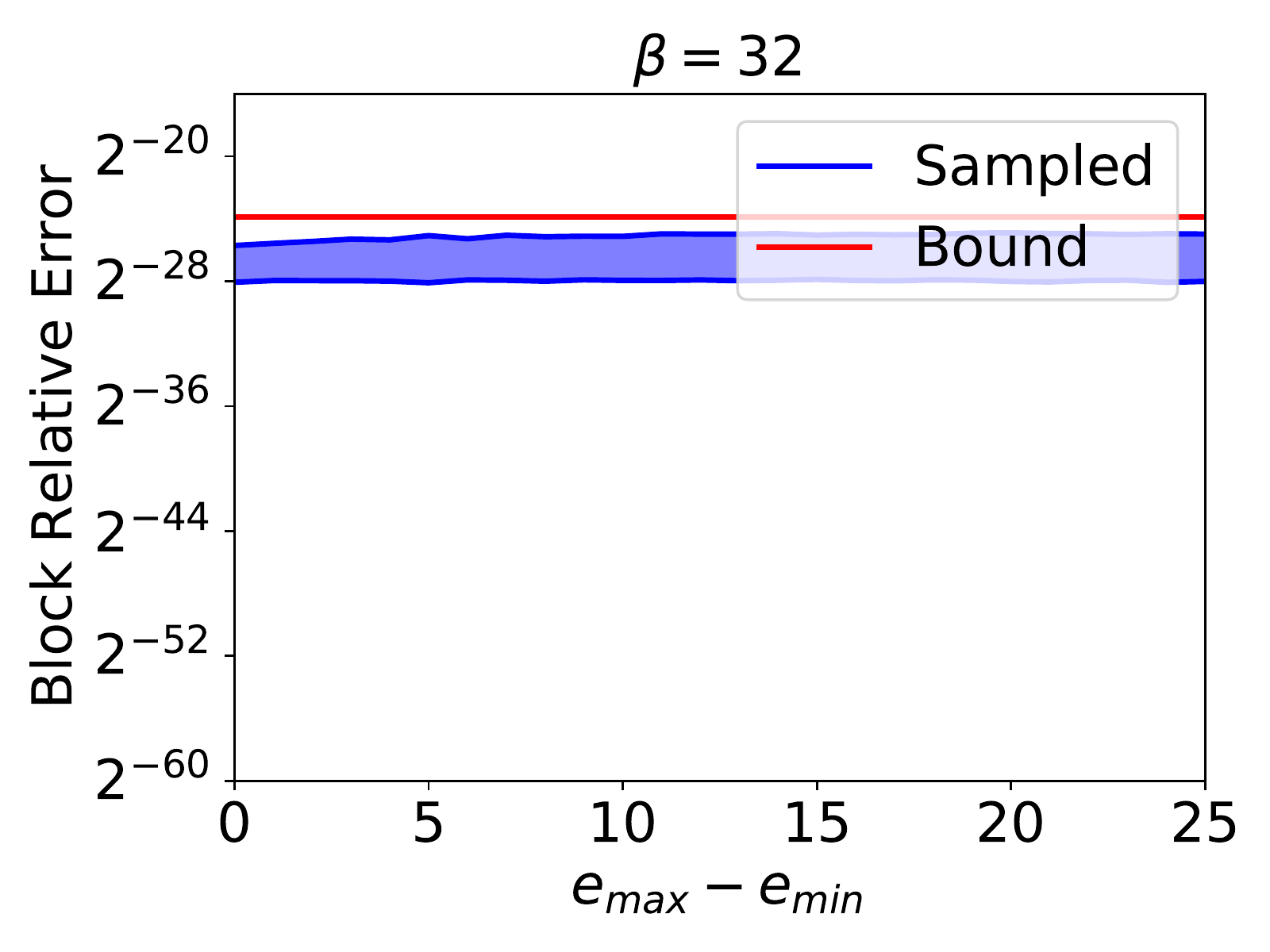}
    \end{subfigure}
    \begin{subfigure}[b]{0.32\textwidth}
        \includegraphics[width=\textwidth]{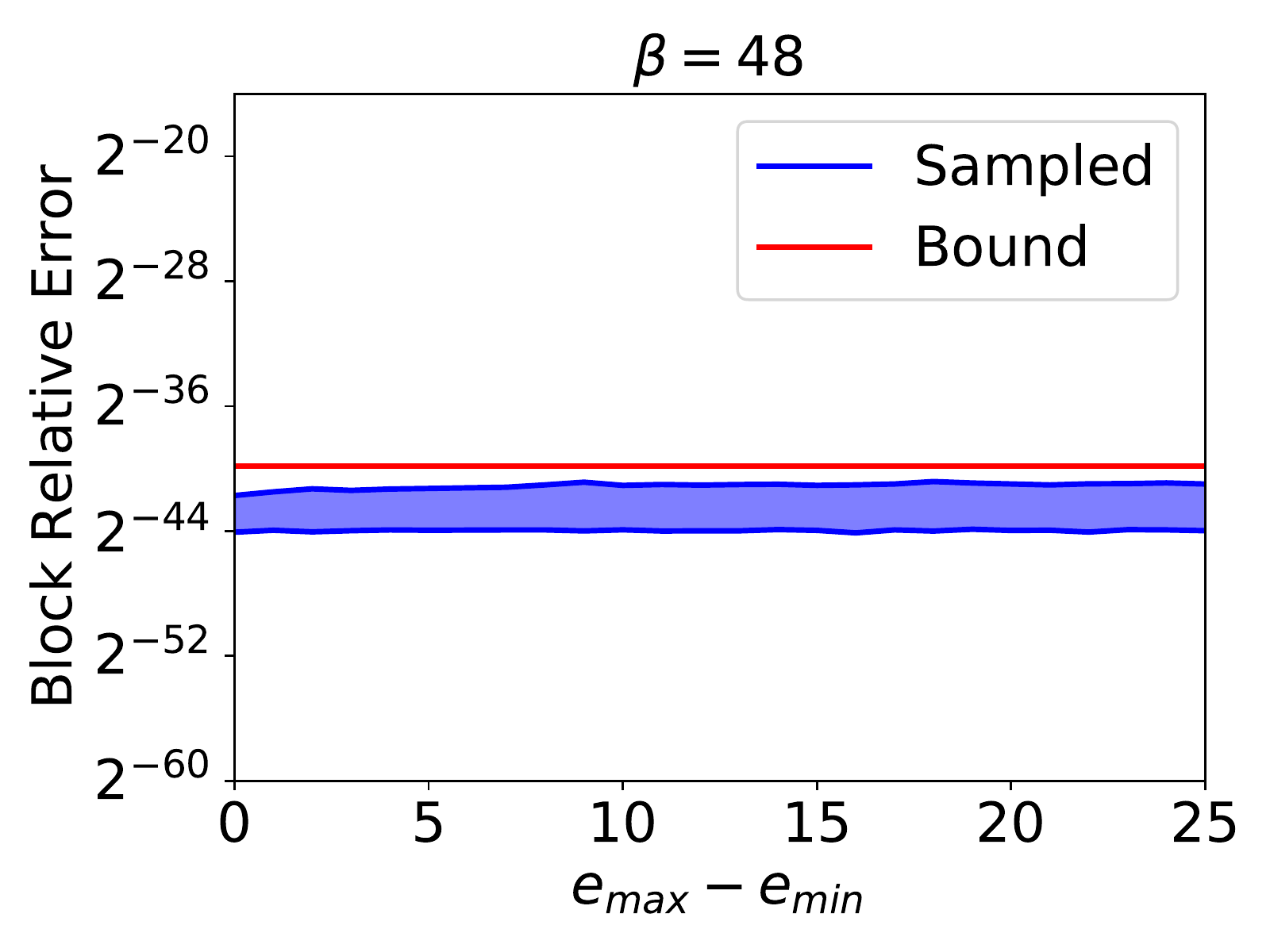}
    \end{subfigure}
    \begin{subfigure}[b]{0.32\textwidth}
        \includegraphics[width=\textwidth]{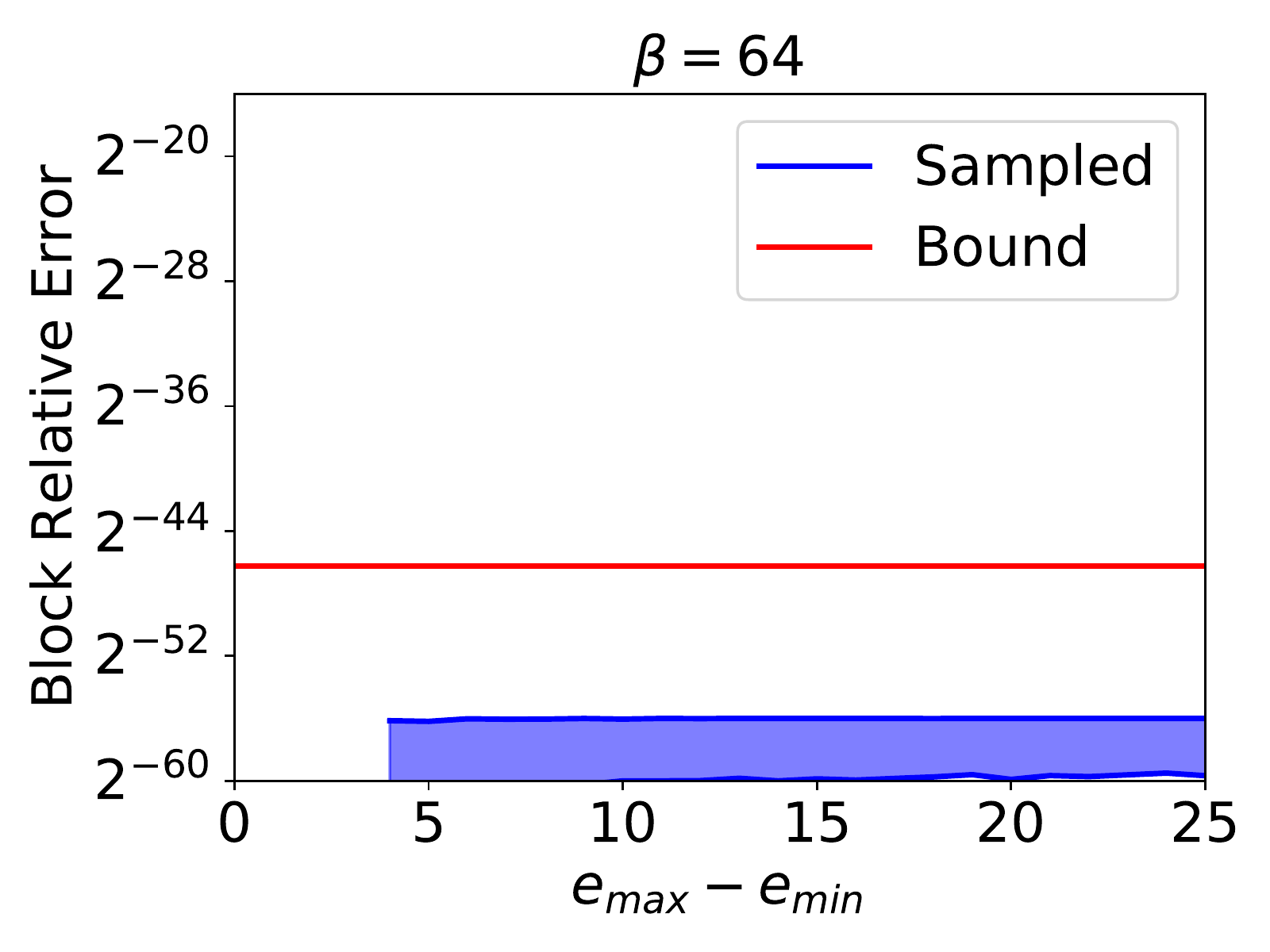}
    \end{subfigure}
 \caption{3-d Example with double precision:  componentwise relative error (top) and block relative error (bottom) with respect to the difference in exponents ($e_{max}-e_{min}$) for $\beta = \{32,48,64\} ${. The blue band represents the sampled maximum and minimum of the error and the red line depicts the theoretical bound. }}
    \label{fig:3d_double_4}
\end{figure}

\subsection{Real-World Example}
\label{ex:RealWorld}
For this example, we compress data from a real-world three dimensional
viscosity and density field from 
a Rayleigh-Taylor instability simulation produced by Miranda
\cite{viscosity}. For the viscosity field, the average exponent range over all blocks is
approximately 7.32. This data set is a highly variable example, as the viscosity
values are signed and have a high dynamic range.  The density field has a much smaller dynamic range than the viscosity field. That means the density field is a more compressible data set for ZFP. For both fields, the same value of
$\beta$ was used across all blocks during 
compression to simplify the visualization of the results. In Figure
\ref{fig:viscosity} and \ref{fig:density}, the block relative error is plotted after the data
has been compressed and decompressed (blue) as a function of
$\beta$. The theoretical bound is plotted in red.  Again, we conclude
that the theoretical bound completely bounds the true error for both examples. As for the compression ratio, since the density field has a smaller dynamic range, there is a substantial increase in the compression ratio for every bit plane removed, especially compared to the viscosity field. It can be concluded, that for some error tolerance, ZFP compresses at a higher ratio for data that is ``smooth," i.e., the exponent range for each block is small. 

\begin{figure}
    \centering
                    \begin{subfigure}[b]{0.32\textwidth}
        \includegraphics[width=\textwidth]{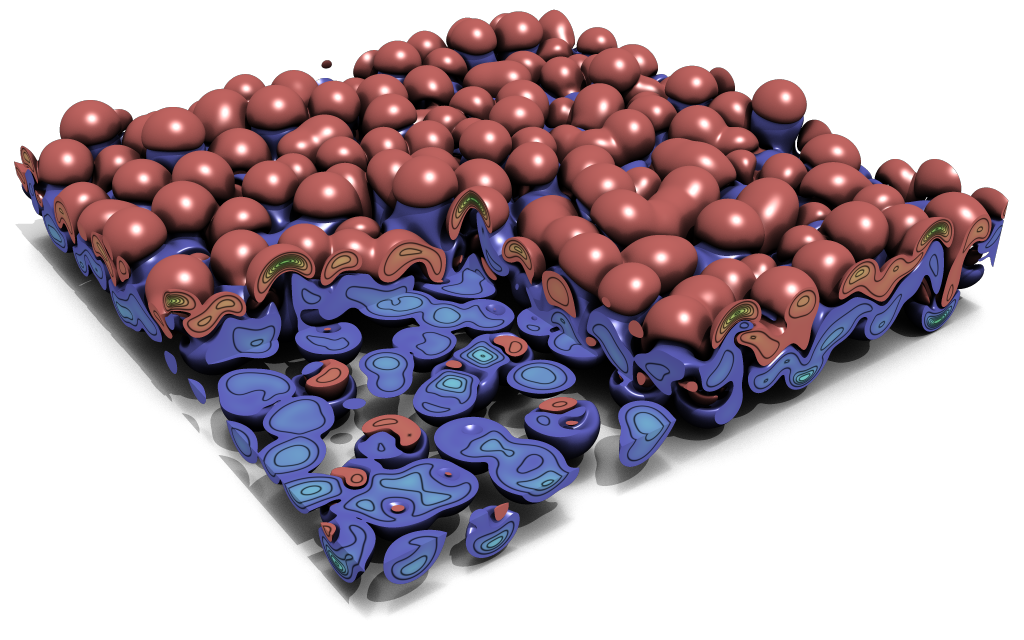}
        \end{subfigure}
        \begin{subfigure}[b]{0.32\textwidth}
        \includegraphics[width=\textwidth]{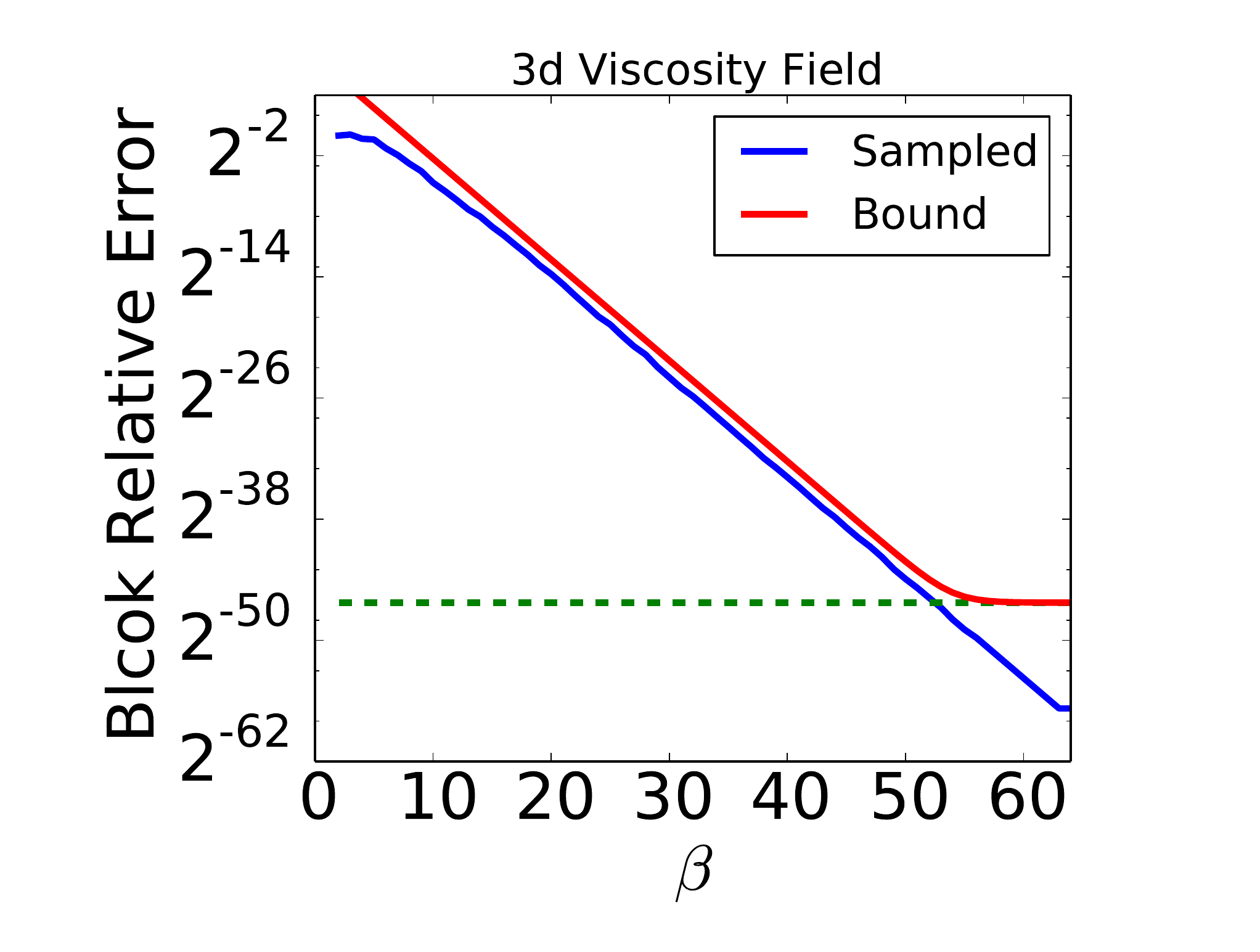}
        \end{subfigure}
             \begin{subfigure}[b]{0.32\textwidth}
        \includegraphics[width=\textwidth]{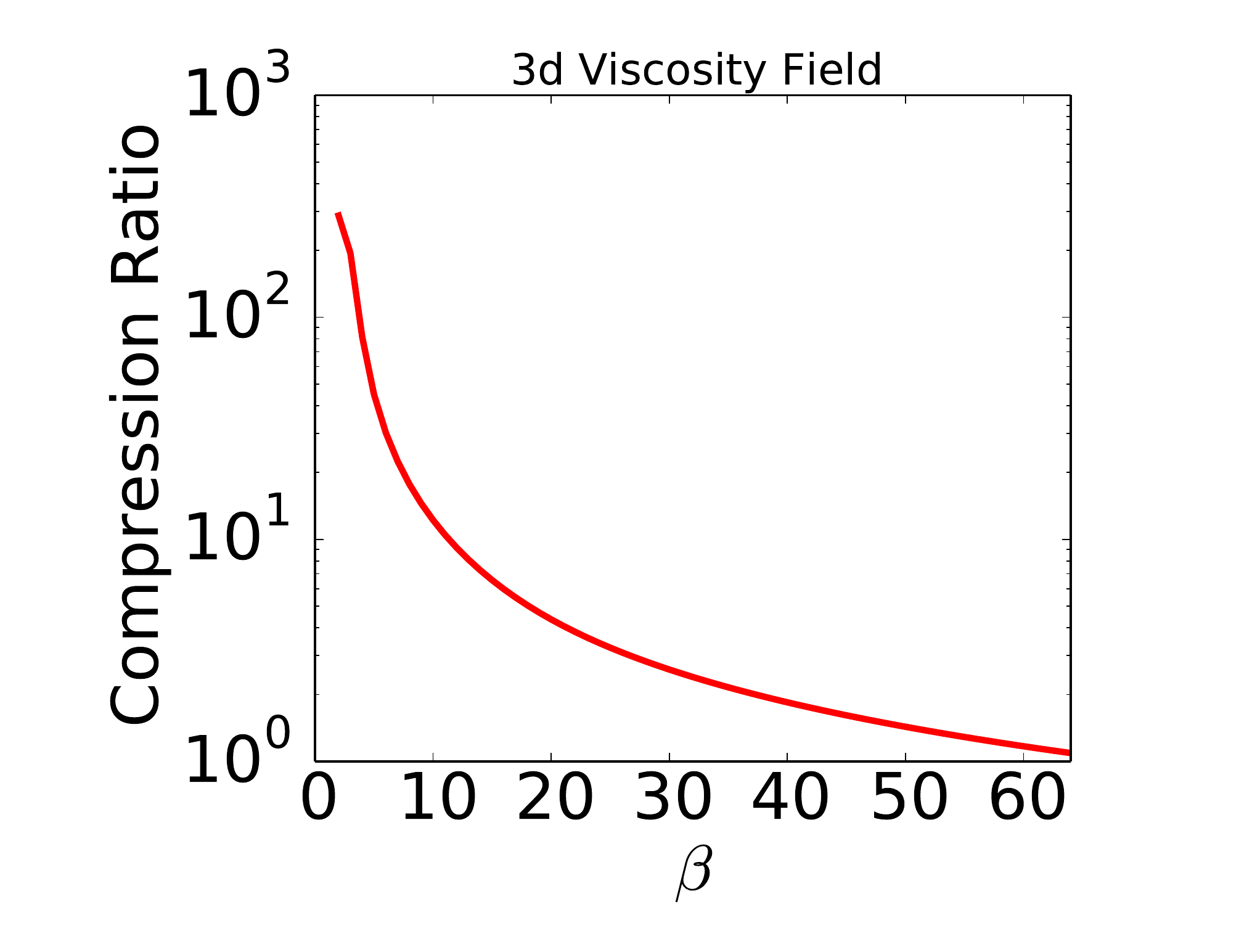}
        \end{subfigure}

           \caption{3-d viscosity field example with double precision. On the left is a 3-d rendering of the viscosity field. In the middle is the maximum block relative error as a function of the precision parameter $\beta$. The blue line is the error from ZFP compression and decompression with fixed $\beta$ and the red line depicts the theoretical bound. On the right is the compression ratio as a function of $\beta$.  }
                              \label{fig:viscosity}

\end{figure}

\begin{figure}
    \centering
     \begin{subfigure}[b]{0.32\textwidth}
        \includegraphics[width=\textwidth]{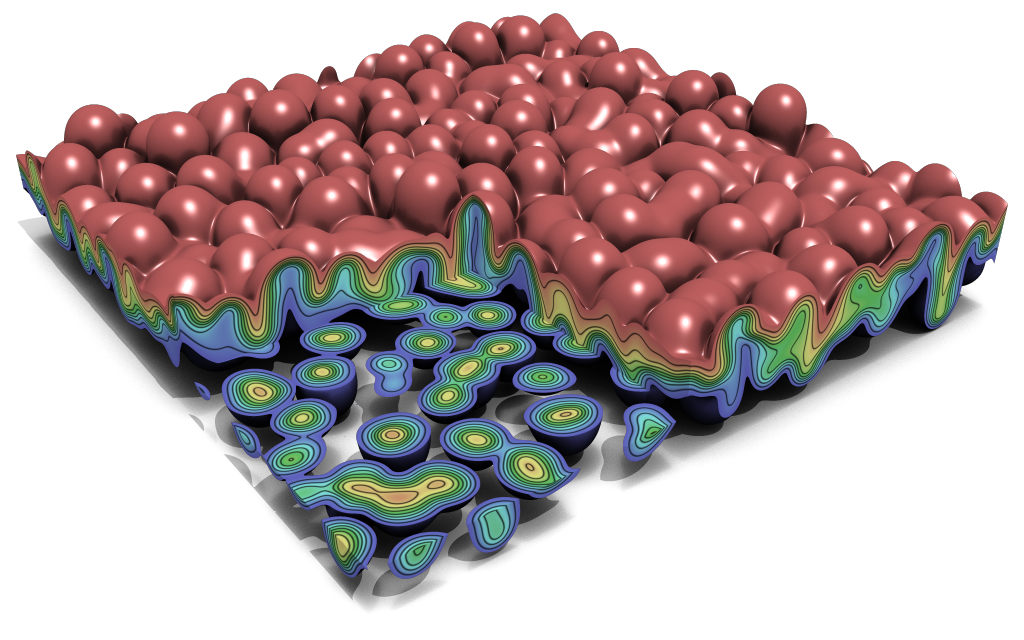}
        \end{subfigure}
        \begin{subfigure}[b]{0.32\textwidth}
        \includegraphics[width=\textwidth]{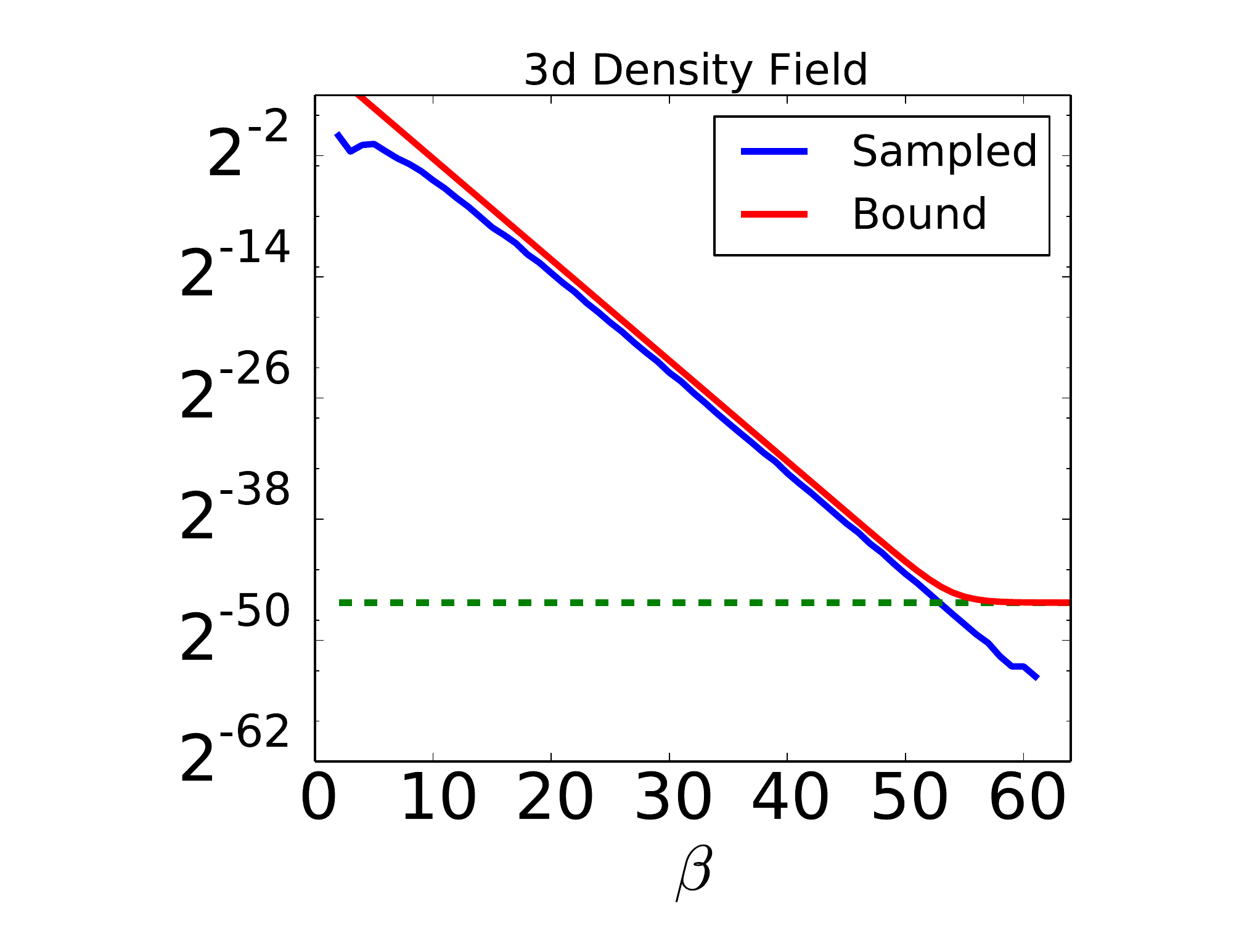}
        \end{subfigure}
             \begin{subfigure}[b]{0.32\textwidth}
        \includegraphics[width=\textwidth]{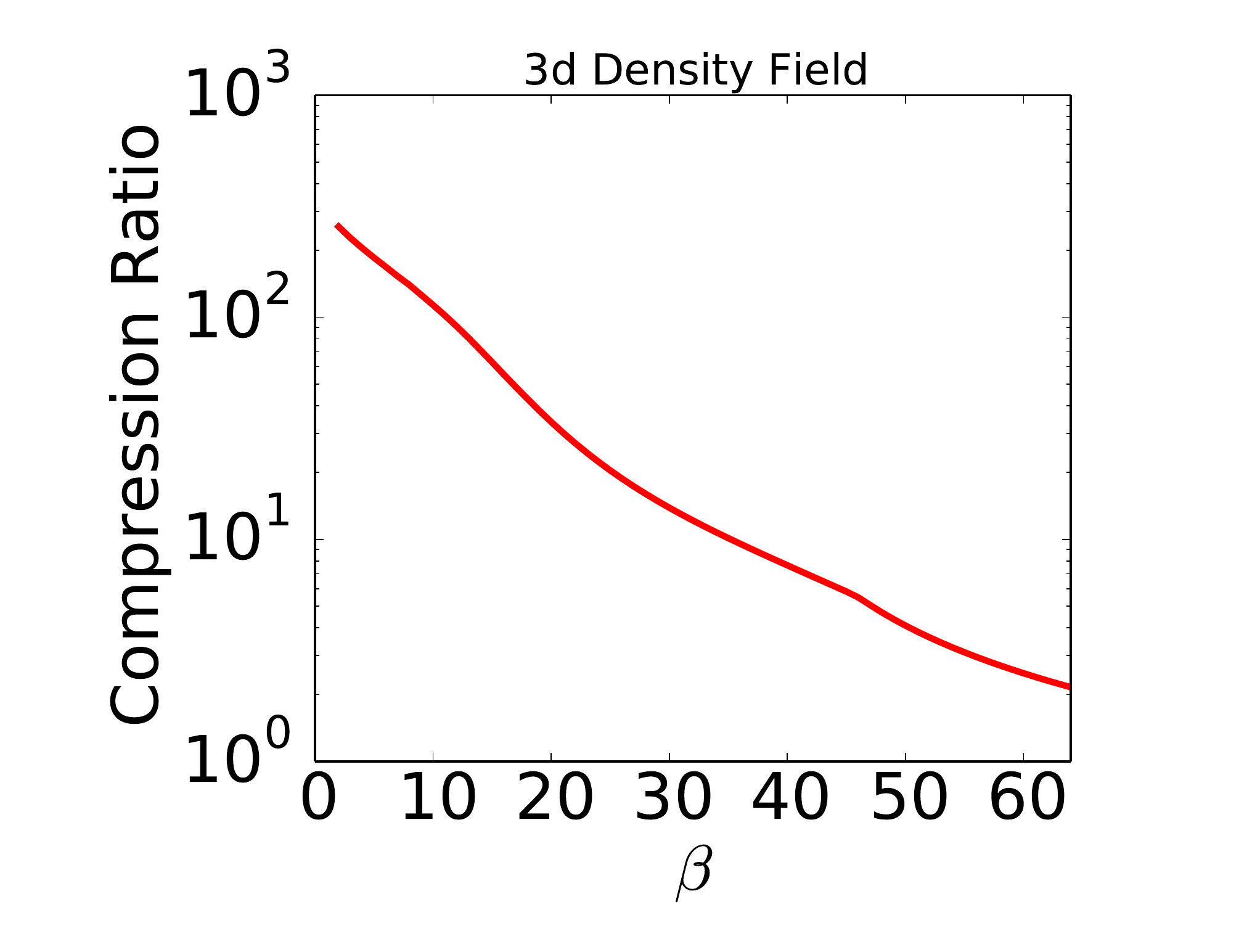}
        \end{subfigure}
            \caption{3-d density field example with double precision. On the left is a 3-d rendering of the density field. In the middle is the maximum block relative error as a function of the precision parameter $\beta$. The blue line is the error from ZFP compression and decompression with fixed $\beta$ and the red line depicts the theoretical bound. On the right is the compression ratio as a function of $\beta$.  }
                              \label{fig:density}

\end{figure}

\section{Conclusion}

In this paper, we addressed the error introduced in the use of lossy
compression of floating-point data. An important contribution of this paper is the formulation of the problem in
a way that simplifies analysis.
The vector space, $\cB^n$, introduced in
Section~\ref{sec:bitvectorSpace}, proved to be useful in developing
operators that accurately represent each step of the ZFP compression
algorithm. Section~\ref{sec:bounds} presented the error analysis of  the
current implementation of the fixed precision mode of ZFP and the
numerical tests presented in Section~\ref{sec:results} provided a
demonstration that the theoretical bounds established in this paper
capture the error introduced by ZFP. The techniques presented in
Section~\ref{IntroSec:seqspace} and methodology from Sections~\ref{sec:analysis}
and \ref{sec:bounds} could be applied to any compression algorithm or
numerical method involving direct manipulation of bits.  

In the majority of mesh-based PDE simulations, it is reasonable to
assume that most of the blocks 
provided as an input to the ZFP compression algorithm will be ``smooth"
in the sense that the exponent range, $e_{max}-e_{min}$, will be small
and there will be some natural ordering correlation between the values
within the block. As Theorem \ref{thm:diffDCandDC} represents the worst
possible error achieved, the tests presented in Section
\ref{sec:generatedBlock} were constructed to provide an exposition of
the worst case scenario (i.e., not ``smooth'') of the error introduced
by ZFP compression and decompression. Even so, the error bound
established in Theorem \ref{thm:diffDCandDC} accurately, and narrowly,
bounded the error in each example. 

We limited our detailed analysis to the fixed precision implementation of ZFP,
which is one of three possible compression modes implemented by ZFP. Using the bound in Theorem~\ref{thm:diffDCandDC}, we were able to provide a similar bound for the fixed accuracy and fixed rate modes. Further research is needed to determine a tighter error bound for the fixed
rate mode for real-world data as most real-world problems will produce ``smooth" values and the bound in Theorem~~\ref{thm:diffDCandDCFixedRate} is for the worst case scenario.

Additionally, given the trends in computing
hardware, methods are needed to reduce the memory capacity and bandwidth
demands in simulation codes.  \black{One common technique is to use mixed precision algorithms, which typically require changing the underlying algorithms to achieve the same purpose. However, one technique with promise, particularly for grid-based PDE methods, is the use of lossy floating-point compression. In \cite{zfp-doc}, the C++ compressed array primitives handle the complexity of decompression, caching, and compression transparently. By using ZFP instead, we can achieve bandwidth reduction without changing the underlying structure of the algorithm. }
An extension of our work would include an error analysis of the propagation of the errors of storing the solution
state in compressed format for an iterative method, i.e., repeatedly decompressing and
recompressing the solution data at each time step or iteration of the
numerical algorithm. 

Lastly, it should be noted that this paper has
analyzed the error of converting an IEEE representation to ZFP. However,
ZFP can be seen as a number representation itself, just like
IEEE. Another interesting direction for future work would be to consider
the behavior of round-off error of floating-point arithmetic conducted directly on
the ZFP format.



\clearpage
\appendix
\section{ZFP Toy Example}
\label{sec:appendixa}
Here we include a toy example of ZFP compression and decompression as implemeted by the operators defined in Section~\ref{sec:analysis}. As the embedded coding implemented by ZFP in Step 7 is nontrivial and lossless we exclude it from the following example. More details on Step 7 can be found in the \emph{Algorithm} section of \cite{zfp-doc}. As our analysis is focused on the fixed precision mode of ZFP and it is the simplest mode to illustrate, we only present the output for fixed precision mode at Step 8. Note that we will write $x^{(i)}$ to denote the output from step $i$ of ZFP.

\noindent \textit{Compression}: We first outline the steps for compression on $x = [ 5632, \ 3072, \ 400, \ 68 ]^T \in \mathbb{R}^4$. \\
\noindent \textit{Step 1}: As $d = 1$ and the vector is already in $\mathbb{R}^4$ there is no partitioning to be done. \\
\noindent \textit{Step 2}: For simplicity, we will use $k = 13$ and $q = 9$. First, as outlined in Section \ref{Step2Sec}, the components are converted to a bit representation in $\cB$. Next, we apply the shift operator $S_{\ell}$ with $\ell = e_{\max} (x) - q + 1= 12 - 9 + 1 = 4$. This operation amounts to shifting each bit four positions to the right. Finally, we apply the truncation operator $T_{\mathcal{S}}$, where $\mathcal{S} = \{ i \in \mathbb{Z} : i \geq 0 \}$. Hence, any bits after the decimal are dropped which will result in the loss of some information in this example. This procedure is illustrated below in (A.1). Note that the representations in (A.1) include the guard bit so they will have $q + 1 = 10$ bits in their representation after the truncation phase. Additionally, there is one bit allotted for the sign bit which will not be represented below.
\begin{align}
\begin{array}{c<{\hspace{-1.3mm}} c<{\hspace{-0.5mm}} c<{\hspace{-0.5mm}} c<{\hspace{-0.5mm}} c<{\hspace{-0.5mm}} c<{\hspace{-0.5mm}} c}
\begin{bmatrix} 
5632 \\ 
3072 \\ 
400 \\ 
68 \end{bmatrix} &\longrightarrow &\begin{bmatrix} 
01011000000000 \\ 
00110000000000 \\ 
00000110010000 \\ 
00000001000100 \end{bmatrix} &\longrightarrow &\begin{bmatrix} 
0101100000.0000 \\ 
0011000000.0000 \\ 
0000011001.0000 \\ 
0000000100.0100 \end{bmatrix} &\longrightarrow &\begin{bmatrix} 
0101100000 \\ 
0011000000 \\ 
0000011001 \\ 
0000000100 \end{bmatrix} \vspace{2mm} \\
\text{Decimal} & & \text{Signed Binary} & & \text{Bit Shift} & & x^{(2)}
\end{array} \tag{A.1} 
\end{align} 
In addition to the bits used to store $x^{(2)}$, note that ZFP also encodes the value $e_{\max} (x) = 12$. \\
\noindent \textit{Step 3}: Using Table~\ref{table:actionTlossy} from Section \ref{Step3Sec}, with $\ba_1 = 0101100000$, $\ba_2 = 0011000000$, $\ba_3 = 0000011001$, and $\ba_4 = 0000000100$, we can compute $x^{(3)}$. This process yields the vector 
\begin{align*}
x^{(3)} = \begin{bmatrix*}[r]
0010001111 \\
0001111000 \\
-0000100011 \\
0000010011
\end{bmatrix*}
\end{align*}
and is outlined step by step below (steps work from left to right starting with upper left entry): 
\begin{center}
	\begin{adjustbox}{width=0.9\textwidth} 
		\begin{tabular}{ | l<{\hspace{-3mm}} l<{\hspace{-2mm}} r<{\hspace{2mm}} l<{\hspace{-3mm}} l<{\hspace{-2mm}} r<{\hspace{2mm}} l<{\hspace{-3mm}} l<{\hspace{-2mm}} r | } 
			\hline
			\multicolumn{9}{| c |}{\vspace*{-4mm}} \\
			\multicolumn{9}{| c |}{$\tilde{L}$} \\ \hline \hline
			\vspace*{-4mm} & & & & & & & & \\
			$\ba_1 \leftarrow \ba_1 + \ba_4 $ &$=$  &$0101100100$,     & $\ba_1 \leftarrow r(\ba_1) $ &$=$  &$0010110010$,      & $\ba_4 \leftarrow \ba_4 - \ba_1 $ &$=$ &$-0010101110$, \\ \hline
			$\ba_3 \leftarrow \ba_3 + \ba_2 $ &$=$  &$0011011001$,      & $\ba_3 \leftarrow r(\ba_3) $ &$=$  &$0001101100$,      & $\ba_2 \leftarrow \ba_2 - \ba_3 $ &$=$  &$0001010100$, \\ \hline
			$\ba_1 \leftarrow \ba_1 + \ba_3 $ &$=$  &$0100011110$,      & $\ba_1 \leftarrow r(\ba_1) $ &$=$  &$0010001111$,      & $\ba_3 \leftarrow \ba_3 - \ba_1 $ &$=$  &$-0000100011$, \\ \hline
			$\ba_4 \leftarrow \ba_4 + \ba_2 $ &$=$  &$-0001011010$,     & $\ba_4 \leftarrow r(\ba_4) $ &$=$  &$-0000101101$,      & $\ba_2 \leftarrow \ba_2 - \ba_4 $ &$=$  &$0010000001$, \\ \hline
			$\ba_4 \leftarrow \ba_4 + r(\ba_2) $ &$=$  &$0000010011$,    & $\ba_2  \leftarrow \ba_2 - r(\ba_4) $ &$=$  &$0001111000$. & & & \\
			\hline
		\end{tabular}
	\end{adjustbox}
\end{center}
\vspace{3mm}
\noindent \textit{Step 4}: Note that the components of a vector in $\mathbb{R}^4$ are already in total sequency order. \\
\noindent \textit{Step 5}: The components are converted to a negabinary representation. Note that $q+2 = 11$ bits are available for the negabinary representation as it does not require a dedicated sign bit. Hence, 
\begin{align*}
\begin{array}{c<{\hspace{-2mm}} c<{\hspace{-4mm}} c}
x^{(5)} = &\begin{bmatrix}
00110010011 \\
00110001000 \\
00000101101 \\
00000010111
\end{bmatrix} &. \vspace{1.2mm} \\
&\text{Negabinary} &
\end{array}
\end{align*}
\noindent \textit{Step 6}: The transposition is performed so that the first row corresponds to the most significant bit while the last row corresponds to the least significant bit. Hence $x^{(6)} = (x^{(5)})^T$. 
Note that the terminology \emph{bit plane} used in the discussion of ZFP can be realized as the rows of bits in $x^{(6)}$ or the columns of bits of $x^{(5)}$. For example, the third bit plane of $x^{(6)}$ is $1100$ and the sixth bit plane of $x^{(6)}$ is $0010$. \\
\noindent \textit{Step 8}: Recall that we have decided to exclude Step 7 from this example for simplicity. In Step 8, the user provides a fixed number of bit planes, $\beta$, to keep from $x^{(6)}$ starting with the most significant bit plane. The remaining bit planes are then ordered in a sequence as the rows of $x^{(6)}$. Supposing the number of bit planes to keep is $\beta = 7$, the output would be $x^{(8)} = 0000 \ 0000 \ 1100 \ 1100 \ 0000 \ 0010 \ 1001$.
It should be noted that $x^{(8)}$ is close to the compressed bit stream format of ZFP but is not in the exact compressed format since we did not perform Step 7. Note that if Step 7 had been performed then the final compressed bit sequence would have been $0 \ 0 \ 11110 \ 110 \ 000 \ 00110 \ 1011$, requiring 6 fewer bits. This completes the steps for compression. \\

\noindent \textit{Decompression}: We now highlight the steps for decompression. To decompress, we first convert the bit sequence back into a vector format and transpose. We then place zeros to the end of each row until we have the same number of bits before we dropped bit planes (in this case four zeros per row). This vector will be very similar to $x^{(5)}$, however, due to the removal of bit planes in Step 8 some information was lost that cannot be restored. Next, we convert each negabinary representation to a signed binary representation. These steps are illustrated in (A.2).
\begin{align}
\begin{array}{ccccc}
\begin{bmatrix}
0011001 \\
0011000 \\
0000010 \\
0000001
\end{bmatrix} &\longrightarrow &\begin{bmatrix}
00110010000 \\
00110000000 \\
00000100000 \\
00000010000
\end{bmatrix} &\longrightarrow &\begin{bmatrix*}[r]
0010010000 \\  
0010000000 \\  
-0000100000 \\  
0000010000      
\end{bmatrix*} 
\vspace{2mm} \\
& & \text{Negabinary} & & \text{Signed Binary} 
\end{array}
\tag{A.2}
\end{align}
We now use the final vector from (A.2) as the input for the routine outlined in Table~\ref{table:actionTlossy} in Section \ref{Step3Sec}. 
\begin{center}
	\begin{adjustbox}{width=0.9\textwidth} 
		\begin{tabular}{ | l<{\hspace{-3mm}} l<{\hspace{-2mm}} r<{\hspace{2mm}} l<{\hspace{-3mm}} l<{\hspace{-2mm}} r<{\hspace{2mm}} l<{\hspace{-3mm}} l<{\hspace{-2mm}} r | } 
			\hline
			\multicolumn{9}{| c |}{\vspace*{-4mm}} \\
			\multicolumn{9}{| c |}{$\tilde{L}^{-1}$} \\ \hline \hline
			\vspace*{-4mm} & & & & & & & & \\
			$\ba_2 \leftarrow \ba_2 + r(\ba_4) $ &$=$  &$0010001000$,  & $\ba_4 \leftarrow \ba_4 - r(\ba_2) $ &$=$  &$-0000110100$, & & & \\ \hline
			$\ba_2 \leftarrow \ba_2 + \ba_4 $ &$=$  &$0001010100$,   & $\ba_4 \leftarrow s_{-1}(\ba_4) $ &$=$  &$-0001101000$,  & $\ba_4 \leftarrow \ba_4 - \ba_2 $ &$=$  &$-0010111100$, \\ \hline
			$\ba_3 \leftarrow \ba_3 + \ba_1 $ &$=$  &$0001110000$,   & $\ba_1 \leftarrow s_{-1}(\ba_1) $ &$=$  &$0100100000$,  & $\ba_1 \leftarrow \ba_1 - \ba_3 $ &$=$  &$0010110000$, \\ \hline
			$\ba_2 \leftarrow \ba_2 + \ba_3 $ &$=$  &$0011000100$,   & $\ba_3 \leftarrow s_{-1}(\ba_3) $ &$=$  &$0011100000$,  & $\ba_3 \leftarrow \ba_3 - \ba_2 $ &$=$  &$0000011100$, \\ \hline
			$\ba_4 \leftarrow \ba_4 + \ba_1 $ &$=$  &$-0000001100$,   & $\ba_1 \leftarrow s_{-1}(\ba_1) $ &$=$  &$0101100000$,  & $\ba_1 \leftarrow \ba_1 - \ba_4 $ &$=$  &$0101101100$. \\
			\hline
		\end{tabular}
	\end{adjustbox}
\end{center}
\vspace{4mm}
Lastly, we perform a bit shift of four bits to the left to undo the shift performed during Step 2 and convert to decimal to yield the decompressed vector. This procedure is illustrated below in (A.3).
\begin{align}
\begin{array}{ccccc}
\begin{bmatrix*}[r]
0101101100.0000 \\
0011000100.0000 \\
0000011100.0000 \\
-0000001100.0000
\end{bmatrix*} &\longrightarrow &\begin{bmatrix*}[r]
01011011000000 \\
00110001000000 \\
00000111000000 \\
-00000011000000
\end{bmatrix*} &\longrightarrow &\begin{bmatrix*}[r]
5824 \\
3136 \\
448 \\
-192
\end{bmatrix*}
\vspace{2mm} \\
\text{Signed Binary} & & \text{Bit Shift} & & \text{Decimal}
\end{array}
\tag{A.3}
\end{align}
We conclude by comparing the error with the bound in Theorem~\ref{thm:diffDCandDC}. Since $d = 1$, $k = 13$, $q = 9$, and $\beta = 7$ for this example, we have that $K_{\beta} \approx 0.19831$. Since $K_{\beta} \| x \|_\infty \leq (0.19832) (5632) \leq 1117$ and $\| \tilde{D}\tilde{C} x - x \|_\infty = 260$ we observe that the bound established in Theorem~\ref{thm:diffDCandDC} holds. 

\section{Round-off Error of the Lossy Decorrelating Backwards Linear Transform}
	\label{sec:appendixb}
	There are three cases that must be considered: $(i)$ $\beta = q+2,$ $(ii)$  $q- 2d +2<\beta < q+2$, and $(iii)$ $\beta \leq  q- 2d+2$. We will investigate each separately in the following sections. 
	\begin{itemize}
		\item[$(i)$]
		When $\beta=q+2$, no information is lost at Step 8. Thus, from Table~\ref{table:actionTlossy} we observe that the last two steps for the forward transform are exactly reversed by the first two steps of the backwards transform, 
		$$\begin{array}{l<{\hspace{-4mm}} r l<{\hspace{-2mm}} l<{\hspace{-2mm}} l}
		\tilde{L} &\text{Step 13}: & \ba_4 \leftarrow \ba_4 + r(\ba_2) & \Rightarrow \ba_4 = \ba_4 + r(\ba_2),&\\
		\tilde{L} &\text{Step 14}: & \ba_2  \leftarrow \ba_2 - r(\ba_4)  & \Rightarrow \ba_2 =\ba_2-r(\ba_4 + r(\ba_2)), &\\ 
		\tilde{L}^{-1} &\text{Step 1}: & \ba_2 \leftarrow \ba_2 + r(\ba_4) & \Rightarrow \ba_2 =\ba_2-r(\ba_4 + r(\ba_2)) +r(\ba_4 + r(\ba_2) )&= \ba_2,\\ 
		\tilde{L}^{-1} &\text{Step 2}: & \ba_4 \leftarrow \ba_4- r(\ba_2) & \Rightarrow \ba_4 = (\ba_4 + r(\ba_2)) - r(\ba_2) &= \ba_4. \\
		\end{array}$$
		Thus, no error occurs by applying $\tilde{L}_d^{-1}$. 
	
		\item[$(ii)$]
		If $q- 2d +2<\beta < q+2 $, then using similar techniques as for the forward linear transform operator, a bound can be found for the lossy backwards linear transform operator.
		\begin{lemma}
			\label{LemmaLingBound}
			Suppose $\bx \in \mathbb{Z}^4$ such that $e_{max}(\bx) = q - 1$, $\bx \neq \bfz$ and $q- 2d +2<\beta < q+2 $. Given the bit arithmetic implementation in Table \ref{table:actionT} and Table \ref{table:actionTlossy} for ZFP's backwards linear transforms, we have
			\begin{align*}
			\|\cL^{-1} \bx - \tilde{\cL}^{-1} \bx\|_\infty \leq \frac{5}{2} \epsilon_q \|\bx\|_\infty \quad and \quad \|\cL^{-1}_d\bx - \tilde{\cL}^{-1}_d \bx \|_\infty & \leq k_{\cL^{-1}}\epsilon_q \|\bx\|_\infty, 
			\end{align*}
			where $k_{\cL^{-1}} = \frac{5}{2} \left( 2^d - 1 \right)$. 
		\end{lemma}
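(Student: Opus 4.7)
The plan is to mirror the proof strategy of Lemmas \ref{LemmaL1DBound} and \ref{lemma:boundT}, adapting it to the backward decorrelating transform. For the one-dimensional bound, first express $\cL^{-1}$ and $\tilde{\cL}^{-1}$ as compositions of the fourteen elementary operations listed in the right-hand columns of Tables \ref{table:actionT} and \ref{table:actionTlossy}, respectively. The key observation is that the two operators differ only in the first two steps, where $s_1(\cdot)$ is replaced by $r(\cdot)$; all subsequent operations---additions, subtractions, and $s_{-1}$ shifts---are identical. Since $s_{-1}$ corresponds to multiplication by two, it introduces no round-off for bit-vector representations.

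Next, apply Lemma \ref{lemma:boundr} at each of the two substitutions to bound each local error by $\tfrac{1}{2}$. Using the triangle inequality, propagate these two error terms through the twelve remaining (exact) linear operations, tracking them component by component. Letting $\by = \cL^{-1}\ba$ and $\tilde{\by} = \tilde{\cL}^{-1}\ba$, bound $\| \by_i - \tilde{\by}_i \|_{\cB,1}$ for each $i \in \{1,2,3,4\}$ by accumulating the contributions; each subsequent $s_{-1}$ amplifies the accumulated error by a factor of two, and additions or subtractions may combine contributions from both substitutions. Taking the maximum over the four components yields an absolute bound on $\| \cL^{-1} \bx - \tilde{\cL}^{-1} \bx \|_\infty$, which is then converted to the relative bound $\tfrac{5}{2}\epsilon_q \| \bx \|_\infty$ via $\| \bx \|_\infty \geq 2^{q-1}$, obtained from $e_{max}(\bx) = q-1$.

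For the $d$-dimensional extension, adopt the approach of Lemma \ref{lemma:boundT}: write $\tilde{\cL}^{-1} = \cL^{-1} + \Delta \cL^{-1}$, where the 1-d bound gives $\| \Delta \cL^{-1}\bx \|_\infty \leq \tfrac{5}{2} \epsilon_q \| \bx \|_\infty$. Expanding the Kronecker power $(\cL^{-1} + \Delta \cL^{-1})^{\otimes d}$ produces $2^d$ tensor terms, of which one is the lossless $\cL^{-1}_d$ and the remaining $2^d - 1$ mixed terms contribute to the error. Each mixed term can be bounded using the operator-norm inequalities in conjunction with $\| \Delta \cL^{-1} \|_\infty \leq 1$ (valid under our precision assumptions), yielding the factor $\sum_{i=1}^{d} \binom{d}{i} = 2^d - 1$ and thus $k_{\cL^{-1}} = \tfrac{5}{2}(2^d - 1)$.

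The main obstacle will be the 1-d case: unlike the forward transform, whose $s_1$ operations (division by two) tend to shrink errors as they propagate, the backward transform uses $s_{-1}$ (multiplication by two) in several of its later steps, which doubles the accumulated error at each application. This requires careful bookkeeping, and the hypothesis $q - 2d + 2 < \beta < q + 2$ is relevant precisely because it places us in the regime where the bit shifted off by $r$ may indeed be nonzero; outside this range (i.e., when $\beta \leq q - 2d + 2$) the shifted bit would be zero and $r$ would coincide with $s_1$, eliminating the round-off error entirely.
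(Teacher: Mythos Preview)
Your proposal is correct and follows precisely the approach the paper indicates: the paper's proof of Lemma~\ref{LemmaLingBound} consists solely of the instruction ``Use outline of proof from Lemma~\ref{LemmaL1DBound} and~\ref{lemma:boundT},'' and your plan is a faithful, more detailed instantiation of that outline for the backward transform. Your observation that $\tilde{L}^{-1}$ and $L^{-1}$ differ only in the first two operations (the two occurrences of $r$ versus $s_1$) is exactly the structural simplification that makes the backward case tractable, and the subsequent propagation through additions, subtractions, and $s_{-1}$ shifts mirrors the componentwise bookkeeping in Lemma~\ref{LemmaL1DBound}.
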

		\begin{proof}
			Use outline of proof from Lemma \ref{LemmaL1DBound} and \ref{lemma:boundT}. 
		\end{proof}
		\item[$(iii)$]
	If $\beta \leq q - 2d+2$, then the rightmost $2d$ least significant bits of $\ba_i$ are zero, for all $i$, resulting in the following equivalences for the first two steps from Table~\ref{table:actionTlossy}: $r(\ba_4) = s_{-1}(\ba_4)$, $ r(\ba_2) = s_{-1}(\ba_2)$, and $r(\ba_4 + r(\ba_2))  = s_{-1}(\ba_4 + s_{-1}(\ba_2)) $. Thus, the lossy backwards transform operator is exactly the lossless version, resulting in no additional error. 
	\end{itemize}
	If $q- 2d +2<\beta < q+2 $, only a modest reduction of the data will be achieved over using $\beta =  q- 2d+2$. Thus, for the analysis of this paper, we chose to assume $\beta \leq q - 2d+2$ so that
	\[ \tilde{D}_3(\ba) = {D}_3(\ba)= F_\cB^{-1}{\cL}_d^{-1}F_\cB(\ba), \text{ for all } \bm{a} \in \mathcal{B}^{4^d}. \]
	Note, Theorem \ref{thm:diffDCandDC} can be modified to accommodate the error that occurs from the lossy backwards decorrelating operator. 
	\begin{theorem}\label{thm:diffDCandDCappendix} 
		Assume $\bx \in \mathbb{R}^{4^d}$ with $\bm{x} \neq \bm{0}$ such that $F_{\mathcal{B}} (\bx) \in \cB_k^{4^d}$, for some precision $k$. Let $q- 2d +2<\beta < q+2 $ be the fixed precision parameter. Then 
		\begin{align}
		\| \tilde{D}\tilde{C} \bx - \bx \|_\infty &\leq B_\beta \|\bx\|_\infty
		\end{align}
		where $q \in \mathbb{N}$ is the precision for the block-floating point representation in Step 2,
		\begin{align}
		B_\beta :=k_{\cL^{-1}} \epsilon_q(1+\epsilon_{k})\left( \frac{8}{3} \epsilon_\beta+ \epsilon_q \left(1+ \frac{8}{3} \epsilon_\beta \right) \left(k_\cL(1+\epsilon_q)+1 \right )\right) +K_\beta,
		\end{align}
		$k_\cL= \frac{7}{4} (2^d-1)$, and $k_{\cL^{-1}} = \frac{5}{2} \left( 2^d - 1 \right)$.
	\end{theorem}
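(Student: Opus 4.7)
The plan is to split the quantity $\|\hat{D}\tilde{C}\bx - \bx\|_\infty$ — where $\hat{D} := \tilde{D}_2 \circ \tilde{D}_3 \circ D_4 \circ D_5$ denotes the true decompression operator in the regime $q - 2d + 2 < \beta < q + 2$, differing from the $\tilde{D}$ of Section \ref{ZFPCompOperator} only by the replacement of $D_3$ with $\tilde{D}_3$ — into the previously analyzed error plus an additional correction that captures the lossy backward decorrelating transform. The triangle inequality
\begin{align*}
\|\hat{D}\tilde{C}\bx - \bx\|_\infty \leq \|\hat{D}\tilde{C}\bx - \tilde{D}\tilde{C}\bx\|_\infty + \|\tilde{D}\tilde{C}\bx - \bx\|_\infty
\end{align*}
reduces the task to bounding the first term, since the second term is at most $K_\beta \|\bx\|_\infty$ by Theorem \ref{thm:diffDCandDC}.

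For the first term, $\hat{D}$ and $\tilde{D}$ agree everywhere except in Step 3, so the difference collapses to $\tilde{D}_2\tilde{D}_3 \bm{y} - \tilde{D}_2 D_3 \bm{y}$, where $\bm{y} := D_4 D_5 \tilde{C}\bx \in \cB^{4^d}$. Introducing the linear operator $D_2$ via the splitting
\begin{align*}
\tilde{D}_2\tilde{D}_3\bm{y} - \tilde{D}_2 D_3\bm{y} = (\tilde{D}_2 - D_2)\tilde{D}_3\bm{y} + D_2(\tilde{D}_3 - D_3)\bm{y} + (D_2 - \tilde{D}_2) D_3\bm{y}
\end{align*}
isolates the essential contribution in the middle, to which Lemma \ref{ShiftLemma} (giving $\|D_2 \bm{c}\|_\infty = 2^\ell\|\bm{c}\|_{\cB,\infty}$) and Lemma \ref{LemmaLingBound} (giving $\|(\tilde{D}_3 - D_3)\bm{y}\|_{\cB,\infty} \leq k_{\cL^{-1}}\epsilon_q\|\bm{y}\|_{\cB,\infty}$) apply directly. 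The two outer terms are controlled by Proposition \ref{Step2Prop}(ii), each yielding an $\epsilon_k$-sized perturbation that will contribute to the $(1+\epsilon_k)$ prefactor of $B_\beta$. Propositions \ref{Step4Prop} and \ref{Step5Prop} then convert $\|\bm{y}\|_{\cB,\infty}$ to $\|\tilde{C}\bx\|_{\cN,\infty}$, which is subsequently bounded in terms of $\|\bx\|_\infty$ via Lemma \ref{lemma:difftildeCandC}; this is what brings the factor $M := \tfrac{8}{3}\epsilon_\beta + \epsilon_q(1+\tfrac{8}{3}\epsilon_\beta)(k_\cL(1+\epsilon_q)+1)$ into the additional error.

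The main obstacle I anticipate is the non-linearity of $\tilde{D}_2$ induced by the componentwise truncation $fl_k$, which means $\tilde{D}_2$ cannot be factored out of its argument the way $D_2$ can. I plan to handle this by using Proposition \ref{Step2Prop}(ii) not as a matrix-norm bound but as a uniform perturbation estimate, absorbing the $\tilde{D}_2$--$D_2$ discrepancy into the $(1+\epsilon_k)$ factor that multiplies the dominant middle term. A secondary subtlety lies in matching constants precisely so that the extra error reads $k_{\cL^{-1}}\epsilon_q(1+\epsilon_k)M$ rather than $k_{\cL^{-1}}\epsilon_q(1+\epsilon_k)(M+1)$; a naive application of $\|\tilde{C}\bx\|_{\cN,\infty} \leq \|\tilde{C}\bx - C\bx\|_{\cN,\infty} + \|C\bx\|_{\cN,\infty}$ would yield the latter, and obtaining the cleaner form most likely requires folding the $\|C\bx\|_{\cN,\infty}$ contribution into $K_\beta$ by reopening the proof of Theorem \ref{thm:diffDCandDC} instead of invoking its statement as a black box.
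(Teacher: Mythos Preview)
Your proposal is essentially correct and converges to the paper's approach. The paper's own proof is a single sentence: ``Use outline of proof from Theorem \ref{thm:diffDCandDC} along with Lemma \ref{LemmaLingBound},'' i.e., rerun the computation in Theorem \ref{thm:diffDCandDC} with $\tilde{D}_3$ in place of $D_3$ and control the extra discrepancy via Lemma \ref{LemmaLingBound}. Your initial splitting into $\|\hat{D}\tilde{C}\bx - \tilde{D}\tilde{C}\bx\|_\infty + \|\tilde{D}\tilde{C}\bx - \bx\|_\infty$ is a slightly different organization, but it relies on the same three ingredients (Lemma \ref{LemmaLingBound}, Proposition \ref{Step2Prop}(ii), Lemma \ref{lemma:difftildeCandC}) and, as you yourself anticipate in the final paragraph, ultimately requires reopening the proof of Theorem \ref{thm:diffDCandDC} rather than invoking it as a black box---at which point the two routes become the same calculation.

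One point worth making explicit: your black-box invocation of Theorem \ref{thm:diffDCandDC} for the second summand is formally illegitimate, since that theorem's hypothesis is $0 \leq \beta \leq q - 2d + 2$ while here $\beta > q - 2d + 2$. This is harmless in practice---the constraint on $\beta$ in Theorem \ref{thm:diffDCandDC} is only there so that the operator $\tilde{D}$ (which is defined with the lossless $D_3$) coincides with the true ZFP decompression, and the proof itself never uses the constraint---but it is one more reason you will have to reopen that proof, which you already plan to do. Your diagnosis of the constant-matching issue (getting $k_{\cL^{-1}}\epsilon_q(1+\epsilon_k)M$ rather than $k_{\cL^{-1}}\epsilon_q(1+\epsilon_k)(M+1)$) is accurate and is exactly the sort of detail the paper's one-line proof sketch leaves to the reader.
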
\begin{proof}
	Use outline of proof from Theorem \ref{thm:diffDCandDC} along with Lemma \ref{LemmaLingBound}. 
\end{proof}
\section{Discussion of error bound constant $K_{\beta}$}
\label{sec:appendixc}
First, note that $K_{\beta}$ is a function of $q$, $k$, $d$, and $\beta$. Since $k$ depends on the precision of the data provided to ZFP and $d$ is the dimension of the input data it is important to note that two of the variables used in computing $K_{\beta}$ are dependent on the input data and cannot be changed by the user in ZFP. The value of $q$ depends on the precision of data and is set to a value larger than $k$. For example,  if the input values are IEEE single or double precision, $q \in \{30, 62\}$, since one bit is used to represent the sign bit and another to represent the overflow guard bit, as discussed in Section \ref{Step2Sec}. The remaining variable, $\beta$, can be set to any positive integer when using the fixed precision mode of ZFP, as noted in Section \ref{Step8Sec}. Figure~\ref{fig:KBetaPlot} helps illustrate how $K_{\beta}$ varies with respect to $\beta$ and the dimensionality of the data, $d$. The lines on the contour plot in Figure~\ref{fig:KBetaPlot} represent the log base 10 value of $K_{\beta}$, i.e. $\log_{10} (K_\beta)$. As suspected from the formula for $K_{\beta}$, we observe that a larger value of $d$ has a greater effect on the value of $K_{\beta}$ for small values of $\beta$.

\begin{figure}[h!]
	\centering
	\includegraphics[width=.45\linewidth]{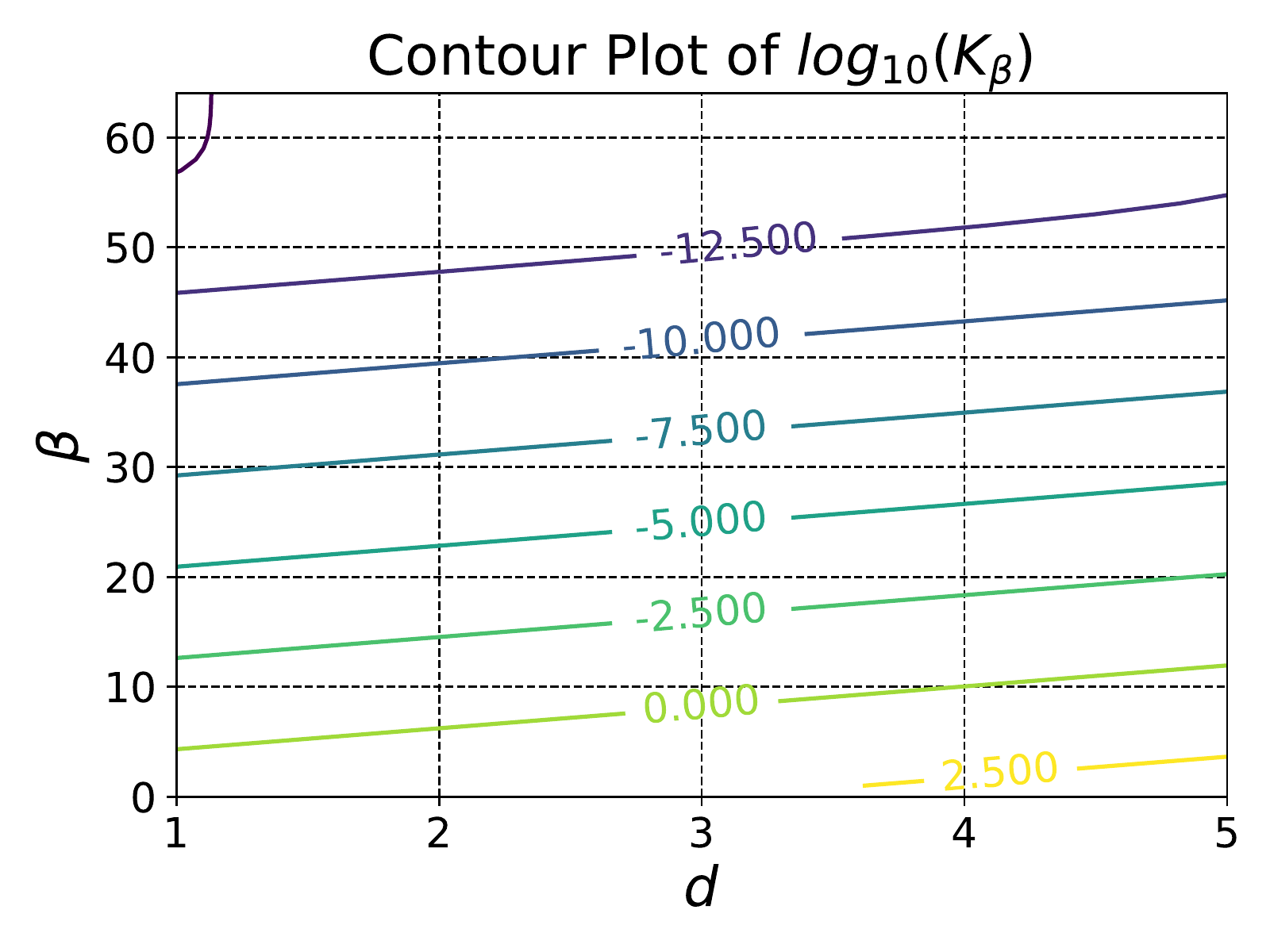}
	\caption{Contour plot of $\log_{10} (K_\beta)$ for $\beta \in [1, 64]$ and dimension $d \in [1, 5]$ with $k = 53$ and $q = 62$.}
	\label{fig:KBetaPlot}
\end{figure}

\bibliographystyle{siamplain}
\bibliography{compression}

\end{document}